\newcommand{\Exp}{\mathbf{E}}
\newcommand{\wN}{w^{\eps,n}}
\newcommand{\vN}{v^{\eps,n}}
\newcommand{\tvN}{\tilde v^{\eps,n}}
\newcommand{\WN}{\mathcal{W}^{\eps,n}}
\newcommand{\Weps}{\mathcal{W}^{\eps}}
\renewcommand{\R}{\mathbb{R}}
\renewcommand{\N}{\mathbb{N}}
\newcommand{\T}{\mathbb{T}}
\renewcommand{\Z}{\mathbb{Z}}
\renewcommand{\E}{\mathbb{E}}
\renewcommand{\P}{\mathbb{P}}
\newcommand{\J}{\mathbb{J}}
\newcommand{\BM}{\mathbf{M}}
\newcommand{\BP}{\mathbf{P}}
\newcommand{\bj}{\mathbf{j}}
\newcommand{\bz}{\mathbf{z}}
\newcommand{\cA}{\mathcal{A}}
\newcommand{\cD}{\mathcal{D}}
\newcommand{\cF}{\mathcal{F}}
\newcommand{\cG}{\mathcal{G}}
\newcommand{\cH}{\mathcal{H}}
\newcommand{\cL}{\mathcal{L}}
\newcommand{\cM}{\mathcal{M}}
\newcommand{\cN}{\mathcal{N}}
\newcommand{\cS}{\mathcal{S}}
\newcommand{\cW}{\mathcal{W}}
\newcommand{\dd}{\mathrm{d}}      
\newcommand{\eff}{\mathrm{eff}}
\newcommand{\SH}{\mathscr{H}}
\newcommand{\ST}{\mathscr{T}}
\newcommand{\fw}{\mathfrak{w}}
\newcommand{\sym}{\mathrm{sym}}
\def\one{\mathrm{(I)}}
\newcommand{\1}{\mathds{1}}
\def\restr{\mathord{\upharpoonright}}
\newcommand{\vphi}{\varphi}
\def\sym{{\mathrm{sym}}}
\newcommand{\Di}{\mathrm{diag}}
\newcommand{\oD}{\mathrm{off}}
\newcommand{\oDi}{\mathrm{off_1}}
\newcommand{\ooDi}{\mathrm{off_2}}
\newcommand{\SHE}{\mathrm{SHE}}
\def\one{\mathrm{(I)}}
\def\indN#1{{	\J^\eps_{#1}	}}
\newcommand{\fock}{\Gamma L^2}	
\def\sint{I}					
\def\wc{\SH}					
\def\swn{{	\eta	}}
\newcommand{\gen}{\cL^\eps}
\newcommand{\genn}{\cL^\eps_{i,n}}
\newcommand{\geneff}{\cL^\eff}
\newcommand{\gensy}{\cL_0}
\newcommand{\gena}{\cA^{\eps}}
\newcommand{\genap}{\cA^{\eps}_+}
\newcommand{\genam}{\cA^{\eps}_-}
\newcommand{\genan}{\cA^{\eps}_{i,n}}
\newcommand{\genapn}{\cA_{i,n}^{\eps,+}}
\newcommand{\cGN}{\cG^\eps}
\newcommand{\NN}{\cN^\eps}
\newcommand{\Ll}{\mathrm{L}^\eps}
\colorlet{darkblue}{blue!90!black}
\colorlet{darkred}{red!90!black}
\colorlet{darkgreen}{green!50!black}
\colorlet{darkyellow}{yellow!90!black}
\title{Gaussian Fluctuations for the stochastic Burgers equation in dimension $d\geq 2$}
\begin{document}

\maketitle

\vspace{-2cm}

\noindent{ \bf Giuseppe Cannizzaro$^1$,  Massimiliano Gubinelli$^2$, Fabio Toninelli$^3$}
\newline

\noindent{\small $^1$University of Warwick, UK, %
    $^2$University of Oxford, UK, %
    $^3$Technical University of Vienna, Austria\\}

\noindent{\small\email{giuseppe.cannizzaro@warwick.ac.uk, 
gubinelli@maths.ox.ac.uk, \\
fabio.toninelli@tuwien.ac.at}}
\newline

\begin{abstract}
The goal of the present paper is to 
establish a framework which allows to rigorously determine the large-scale Gaussian  
fluctuations for a class of singular SPDEs at and above criticality, and therefore beyond 
the range of applicability of pathwise techniques, such as the theory of Regularity Structures. 
To this purpose, we focus on a $d$-dimensional generalization of the Stochastic Burgers equation (SBE) 
introduced in [H. van Beijeren, R. Kutner and H. Spohn, Excess noise for driven diffusive systems, PRL, 1985]. 
In both the critical $d=2$ and super-critical $d\geq 3$ cases, we show that the scaling limit of 
(the regularised) SBE is given by a stochastic heat equation with non-trivially renormalised coefficient, 
introducing a set of tools that we expect to be applicable more widely. 
For $d\ge3$  the scaling adopted is the classical diffusive one, while in $d=2$ it is the 
so-called {\it weak coupling} scaling which corresponds to tuning down the strength of the interaction 
in a scale-dependent way. 
%
%
%
%
\end{abstract}

\bigskip\noindent
{\it Key words and phrases.}
Stochastic Partial Differential Equations, critical and super-critical dimension, KPZ equation, weak coupling scaling, 
diffusion coefficient,  asymmetric simple exclusion 
\bigskip

\setcounter{tocdepth}{2} 
\tableofcontents

\section{Introduction}

The study of (singular) Stochastic Partial Differential Equations (SPDEs) has known tremendous advances 
in recent years. Thanks to the groundbreaking theory of regularity structures~\cite{Hai}, 
paracontrolled calculus~\cite{Para} or renormalization group techniques~\cite{Kup}, a local 
solution theory can be shown to hold for a large family of SPDEs in the so-called {\it sub-critical} 
dimensions (see~\cite{Hai} for the notion of {\it sub-criticality} in this context). 
In contrast, SPDEs in the {\it critical} and {\it super-critical} dimensions
are, from a mathematical viewpoint, still 
largely unexplored and poorly understood. 
The aim of the present paper is to 
establish a framework which allows to rigorously determine the large-scale (Gaussian) 
fluctuations for a class of SPDEs at and above criticality under (suitable) diffusive scaling. 

To present the ideas and the techniques in our analysis, 
we will focus on a specific example, the Stochastic Burgers Equation (SBE), which, in any dimension, 
can (formally) be written as 
\begin{equ}[e:BKS]
    \partial_t\eta = \tfrac{1}{2}\Delta\eta+\fw\cdot\nabla\eta^2 + \div \boldsymbol{\xi}
 \end{equ}
where the scalar field $\eta=\eta(x,t)$ depends on $t\ge 0$ and $x\in \R^d$ with $d\ge1$, 
$\fw\in \R^d$ is a fixed vector and $\boldsymbol{\xi}=(\xi_1,\dots,\xi_d)$ is a $d$-dimensional space-time white noise on 
$\R_+\times \R^d$, 
i.e. a centred $d$-dimensional Gaussian process whose covariance is formally given by 
$\Exp[\xi_i(t,x)\xi_j(s,y)]=\delta(t-s)\delta(x-y)\1_{i=j}$. 

The above equation was introduced by van Beijeren, Kutner and Spohn in
\cite{BKS85} as an approximation of the fluctuations of general driven diffusion systems with one conserved quantity, 
e.g. the Asymmetric Simple Exclusion Process (ASEP) on $\Z^d$. 
The nonlinear term in~\eqref{e:BKS} expresses the fact that the velocity at which the fluctuation density 
travels depends on its magnitude. In the context of ASEP, 
comes from the gradient of the current and is due to its asymmetry, so that in particular the vector $\fw$  
models how the ASEP under consideration is asymmetric in the different spatial directions. 
In the {\it sub-critical} dimension $d=1$,~\eqref{e:BKS} reduces to the (derivative of the)  
celebrated Kardar--Parisi--Zhang (KPZ) equation~\cite{Kardar} whose local solution theory is by now 
well-understood~\cite{KPZ,KPZreloaded,GonJara,GPuni} and whose large-scale statistics 
have been recently determined~\cite{QSkpz,Vir}. 
In higher dimensions the only available results are in the 
discrete setting. For $d\geq 3$, corresponding to the super-critical case, 
ASEP has been shown to be diffusive at large scales~\cite{EMY,LY} -- 
its fluctuations are Gaussian 
and given by the solution of a Stochastic Heat equation (SHE) with additive noise,
see~\cite{CLO,LOV} and~\cite{Komorowski2012} for a review. 
The critical dimension $d=2$ is more subtle. Only qualitative information is available and 
no scaling limit has been obtained thus far. The most important result is that of~\cite{Yau}, 
which states that two-dimensional ASEP is logarithmically superdiffusive, 
meaning that the diffusivity $D(t)$ satisfies $D(t)\sim (\log t)^{2/3}$ for $t$ large.\footnote{The diffusivity measures 
how the correlations of a process grow in space as a function of time so that, denoting by $\ell(t)$ 
the correlation length, it formally satisfies $\ell(t)\sim \sqrt{ t D(t)}$. }

As mentioned above, our focus is on the critical and super-critical dimensions, 
i.e. we will be studying~\eqref{e:BKS} (in the continuum) in $d\geq 2$. 
In both cases, we will prove that the large-scale behaviour is Gaussian and that the fluctuations 
evolve as a SHE with {\it renormalized coefficients} (see Theorem~\ref{thm:main}), 
but while for $d\ge3$ the scaling 
adopted is the classical diffusive scaling, for $d=2$ we will consider the so-called {\it weak coupling} 
scaling. This corresponds to tuning down the strength of the nonlinearity (morally, the ``norm'' of the vector $\fw$) 
at a suitable rate together with the scale parameter, see~\eqref{eq:lambdaeps}. 
The tuning is chosen so that we can ``tame'' the 
growth caused by the non-linear term without destroying its large-scale effects  
(see e.g.~\cite{CD,CSZ,Gu2020,CES,CETWeak} for a similar choice of scaling in the critical dimension for different equations). 
\medskip

Let us mention that the study of large-scale Gaussian fluctuations of space-time random fields 
with local non-linear random dynamics is  not restricted the context of SPDEs but
is a classical topic in probability theory and mathematical physics.
From the more general point of view of theoretical physics, large-scale Gaussian fluctuations are expected 
for equilibrium diffusive systems above their critical dimension 
and this belief is informally explained via (non-rigorous) Renormalization Group (RG) arguments 
and effective dynamical field theories. 

A variety of tools have been used to mathematically justify these heuristics in the {\it static} setting, i.e. 
for the equilibrium measure of local non-linear systems. RG methods have been applied 
to derive the scaling limit of unbounded spin systems (and other statistical mechanical models)  at and above their critical dimension $d\ge 4$  in~\cite{bauerschmidtRenormalisationGroupMethod2015a, bauerschmidtScalingLimitsCritical2014, bauerschmidtIntroductionRenormalisationGroup2019}\footnote{the authors further claim 
that ``Our emphasis here is on the critical dimension $d = 4$, which is more difficult than dimensions $d > 4$."}. 
The related problem of triviality of $\Phi^4_d$ theories for $d > 4$ was treated in~\cite{Aiz82, Fro82, fernandezRandomWalksCritical1992} using apriori bounds on correlation functions, an approach 
which ultimately lead to a proof in the more challenging critical dimension $d=4$, 
see~\cite{aizenmanMarginalTrivialityScaling2021a}. 
Another class of equilibrium models for which scaling limits have been successfully determined 
are equilibrium interface models, so called $\nabla \phi$ models in all dimensions~\cite{brydgesGradPhiPerturbations1990, naddafHomogenizationScalingLimit1997, Funaki19971,  giacominEquilibriumFluctuationsNabla2001a}, for which the Helffer-Sj\"ostrand representation of correlations together 
with homogeneization arguments was exploited. Analogous results have been proven, with very different methods, for \emph{discrete} $\nabla\phi$ models in dimension $d=2$ (in particular dimer models, both integrable ones \cite{Kenyon} and their non-integrable perturbations \cite{Giul}).

On the other hand, establishing large-scale Gaussian fluctuations 
for {\it dynamical} problems has proven to be challenging  
and we are not aware of general results in this direction.
The major issue seems to stem from the fact that there is currently no framework 
which allows to naturally identify the law of the limit process. 
A lot of attention has been recently devoted to the above mentioned KPZ equation in high dimension, 
whose expression in its most general form is as~\eqref{e:BKS} but 
with nonlinearity given by $\langle \nabla\eta, Q\nabla\eta\rangle$, for $Q$ 
a $d\times d$ deterministic symmetric matrix, 
and a real valued space-time white noise in place of $\div\boldsymbol{\xi}$. 
Most of the works focus on the case in which $Q$ is given by the identity matrix  
and the specific form of the resulting equation plays a crucial role. Indeed, one can use the Cole--Hopf 
transformation to linearise the SPDE and turn it into a {\it linear} multiplicative 
stochastic heat equation or the associated directed-polymer models. 
These latter models possess an ``explicit" representation (via, e.g., Feynman-Kac formula) 
and one can then leverage Malliavin calculus tools to derive the scaling limit 
(see~\cite{dunlapRandomHeatEquation2021, guEdwardsWilkinsonLimit2018} for 
the multiplicative stochastic heat equation and~\cite{Gu2018b, CCM1, LZ, CNN} for KPZ). 
In this context, let us  mention the recent striking results on the fluctuations of KPZ at the critical dimension $d=2$ 
in the weak coupling scaling (or the intermediate disorder regime) starting with \cite{CD,CSZ,Gu2020} (for KPZ)
and culminating with the characterisation of the stochastic heat flow~\cite{caravennaCritical2dStochastic2023}.
An alternative approach is that used in~\cite{magnenScalingLimitKPZ2018}, 
which is based on re-expressing the SPDE as a functional integral via the Martin--Siggia--Rose formalism 
(essentially a Girsanov transformation, which again is only possible in view of the specific form of the equation) 
and then leveraging constructive quantum field theory techniques and RG ideas to control 
the large scale fluctuations in the regime of small non-linearity. 

In the critical dimension $d=2$, for $Q$ diagonal and with diagonal entries $1$ and $-1$ (so-called 
Anisotropic KPZ or AKPZ), progress has been made in~\cite{CET}, 
where it is shown that the equation is logarithmically superdiffusive, and in~\cite{CETWeak}, 
where the large-scale behaviour in the weak coupling scaling is determined. In both articles, 
the tools adopted are closer to those in the present paper and we will comment more on them below. 

Let us also mention recent work in extending the homogeneization methods 
(originated in the work of Naddaf and Spencer~\cite{naddafHomogenizationScalingLimit1997}) to dynamical problems~\cite{armstrongQuantitativeHydrodynamicLimits2022, cardaliaguetScalingLimitsStochastic2022}.
\medskip

One of the main advantages of the approach taken in the present paper is that 
it moves away from the usual technique of Cole--Hopf transformation, which is {\it not applicable}, 
towards a set of tools which on the one hand makes a link with interacting particle systems, 
i.e. via the martingale problem and the associated infinite-dimensional generator, and on the other 
has better chances to apply more generally. A crucial tool for us is the fact that we have control 
over the invariant measure for~\eqref{e:BKS}, which significantly simplifies the already involved 
analysis of the generator. While this is another non-generic situation, there is a wide class of examples 
in which a similar control is available and to which our tools potentially directly apply, 
e.g. (super-)critical Stochastic Navier--Stokes~\cite{GT,CK,HZZ},  
diffusions in divergence-free random vector fields~\cite{CHT}, lattice gas models~\cite{LRY} and many others. 

Our results open the way to gradually attack more challenging models and we discuss some open problems below in Section~\ref{sec:openproblems}.

Before turning to the core of the proof, in the next section we will precisely state our main result 
and outline the ideas behind our arguments.

\subsection{The equation and main result}

As written~\eqref{e:BKS} is meaningless as the noise is too irregular for the non-linearity to be well-defined. 
Nonetheless, as we are interested in the large-scale behaviour, 
we regularise it, so to have a well-defined field at the microscopic level, and 
our goal then becomes to control its fluctuations while zooming out at the correct scale. 
We choose the regularisation in such a way to retain a fundamental property of the solution, 
namely its (formal) invariant measure. 
This amounts to smoothening the quadratic term via a Fourier cut-off as follows
\begin{equ}[eq:toroN]
    \partial_t\eta =\frac{1}{2}\Delta\eta+ \fw\cdot\Pi_1\nabla(\Pi_1\eta)^2  + (-\Delta)^{\frac{1}{2}}\xi,
\end{equ}
where for $a>0$, $\Pi_a$ acts in Fourier space as
\begin{equ}[eq:FourPi]
\widehat{\Pi_a\eta}(k)=\widehat{\eta}(k)1_{|k|\leq a}\,.
\end{equ}
Without loss of generality, 
we also replaced $\div\boldsymbol{\xi}$ in~\eqref{e:BKS} with $(-\Delta)^{\frac{1}{2}}\xi$ for $\xi$ a space-time white noise 
(see~\eqref{e:fLapla} for the definition of the fractional Laplacian), since the two can be easily seen to have the same law. 

\begin{remark}\label{rem:pignolerie}
In principle, the results below can be shown to hold using the same techniques of the present paper, 
even if instead of the Fourier cut-off we used a more general mollifier. That said, 
to simplify some (minor but annoying) technical points, we will stick to $\Pi_1$ and further assume that 
  \begin{itemize}[noitemsep]
  \item for $d\ge3$, $1/\eps\in \N+1/2$ and that the norm in \eqref{eq:FourPi} is the sup-norm $|\cdot|_\infty$, 
  \item for $d=2$, the norm  in \eqref{eq:FourPi}  is the Euclidean norm $|\cdot|$ (no further assumption on $\eps$ is made).
  \end{itemize}
We refer the reader to Remark~\ref{rem:G} for some additional (technical) advantages of our choice. 
\end{remark}

In order to avoid technicalities related to infinite volume issues, 
we restrict~\eqref{eq:toroN} to 
the $d$-dimensional torus $\mathbb T^d_\eps$ of side-length $2\pi /\eps$. Here, $\eps$ is 
the ``scale'' parameter which will later be sent to $0$. 
Note that via the diffusive rescaling
\begin{equ}[e:scaling]
    \eta^\eps(x,t)\eqdef \eps^{-\frac d2}\eta\left(x/\eps,t/\eps^{2}\right)\,, 
\end{equ}
equation~\eqref{eq:toroN} becomes
\begin{equ}\label{e:BurgersScaled}
    \partial_t\eta^\eps = \frac{1}{2}\Delta\eta^\eps +\lambda_\eps \fw\cdot\Pi_{1/\eps}\nabla(\Pi_{1/\eps}\eta^\eps)^2  + (-\Delta)^{\frac{1}{2}}\xi
\end{equ}
where now the spatial variable takes values in the $d$-dimensional torus $\T^d\eqdef \T^d_1$ of side-length $2\pi$, 
and $\xi$ is a space-time white noise on $\R^+\times \mathbb T^d$. 
The coefficient $\lambda_\eps$ depends on the dimension 
and is defined as
\begin{equ}[eq:lambdaeps]
  \lambda_\eps\eqdef
  \begin{cases}
   \frac1{\sqrt{\log \eps^{-2}}} &\text{if $d=2$}\\
    \eps^{\frac{d}{2}-1} & \text{if $d\ge3$}.
  \end{cases}
\end{equ}
While in the super-critical case $d\geq 3$, $\lambda_\eps$ is directly determined by~\eqref{e:scaling}, 
for $d=2$,~\eqref{e:BKS} is formally scale invariant under the diffusive scaling  
(another reason why such dimension is critical) 
so that one would have $\lambda_\eps=1$. The choice made in~\eqref{eq:lambdaeps} 
corresponds to the so-called {\it weak coupling} scaling alluded to earlier and we will comment more on it 
after we state the main result. Before doing so, let us give the notion of solution for~\eqref{e:BurgersScaled} 
we will be working with. 

\begin{definition}\label{def:StatSol}
We say that $\eta^\eps$ is a stationary solution to~\eqref{e:BurgersScaled} with coupling constant $\lambda_\eps$ if 
$\eta^\eps$ solves~\eqref{e:BurgersScaled} and its initial condition is $\eta(0,\cdot)\eqdef\mu$, for $\mu$ 
a spatial white noise on $\T^d$, 
i.e. a mean zero Gaussian distribution on $\T^d$ such that 
\begin{equ}[e:CovS]
\E[\mu(\phi)\mu(\psi)]=\langle\phi,\psi\rangle_{L^2}\,,\qquad \text{for all $\phi,\,\psi\in L^2_0(\T^d)$}
\end{equ}
where $\langle\cdot,\cdot\rangle_{L^2}$ is the usual scalar product in $L^2(\T^d)$ and 
$L^2_0(\T^d)$ is the space of zero-average square integrable functions on $\T^d$. 
We will denote by $\P$ the law of $\mu$ and by $\E$ the corresponding expectation.
\end{definition}

The reason why the solution $\eta^\eps$ of~\eqref{e:BurgersScaled} started from a spatial white noise 
is called {\it stationary} is that $\P$ is its invariant measure 
and, as we will prove in Lemma \ref{lem:generalproperties} below,  this holds irrespective of the choice of $\fw,\eps$. 
Instead, the law $\mathbf P$ (with corresponding expectation $\mathbf E$) of a stationary 
solution to~\eqref{e:BurgersScaled} clearly depends on $\fw,\eps$. 

For $T>0$, let us denote by $C([0,T],\cS'(\T^d))$ the space of continuous 
functions with values in the space of tempered distributions $\cS'(\T^d)$ and by $\cF$ the Fourier transform 
on $\cS'(\T^d)$ (see~\eqref{e:FT} for a definition). 
We are now ready to state the main result of the paper. 

\begin{theorem}\label{thm:main}
Let $\fw\in\R^d\setminus\{0\}$ and $T>0$. 
For $\eps>0$, let $\lambda_\eps$ be defined according to \eqref{eq:lambdaeps} and 
$\eta^\eps$ be the stationary solution of \eqref{e:BurgersScaled} started from a spatial white noise $\mu$. 
Then, there exists a constant $D_\SHE>0$, depending only on the dimension $d$ and the norm $|\fw|$ of $\fw$, 
such that, as $\eps\to0$, $\eta^\eps$ converges in law in $C([0,T],\cS'(\T^d))$ to the unique stationary solution $\eta$ of
\begin{equ}[e:SHEintro]
\partial_t \eta =\tfrac12(\Delta + D_\SHE(\fw\cdot\nabla)^2)\eta + (-\Delta - D_\SHE(\fw\cdot\nabla)^2)^{1/2}\xi\,,\qquad \eta(0,\cdot)=\mu\,,
\end{equ}
where $(\fw\cdot\nabla)^2$ is the linear operator defined as $\cF((\fw\cdot\nabla)^2\phi)(k)\eqdef -(\fw\cdot k)^2\cF(\phi)(k)$, for $\phi\in \cS'(\T^d)$ and $k\in\Z^d$. 

In the case $d=2$, $D_\SHE$ is explicit and given by the formula
\begin{equ}
  \label{eq:DSHEd2}
  D_\SHE= \frac1{|\fw|^2}\left[\left(\frac{3|\fw|^2}{2\pi} +1\right)^{\frac{2}{3}}-1\right],
\end{equ}
$|\fw|$ being the Euclidean norm of $\fw$.
\end{theorem}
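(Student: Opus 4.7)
By stationarity, $\mathbf{E}|\eta^\eps_t(\phi)|^2=\|\phi\|^2_{L^2}$ uniformly in $\eps,t$, so finite-dimensional marginals of $\eta^\eps$ are tight. Applying Itô's formula to $\eta^\eps_t(\phi)$ for a smooth test function $\phi\in\cS(\T^d)$ gives
\begin{equation*}
\eta^\eps_t(\phi) = \eta^\eps_0(\phi) + \tfrac12\int_0^t \eta^\eps_s(\Delta\phi)\,ds + \int_0^t \gena \eta^\eps_s(\phi)\,ds + M^{\eps,\phi}_t,
\end{equation*}
where the full generator decomposes as $\gen=\cL_0^\eps+\gena$, with $\cL_0^\eps$ the Ornstein--Uhlenbeck part (symmetric in $L^2(\P)$), $\gena$ the antisymmetric part carrying the nonlinearity $\lambda_\eps\fw\cdot\Pi_{1/\eps}\nabla(\Pi_{1/\eps}\cdot)^2$, and $M^{\eps,\phi}$ a square-integrable martingale with explicit quadratic variation. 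Combining this identity with Mitoma--Kolmogorov criteria yields tightness in $C([0,T],\cS'(\T^d))$. To match any limit point with \eqref{e:SHEintro}, the task reduces to showing that in an $L^2(\mathbf P)$-sense
\begin{equation*}
\int_0^t \gena \eta^\eps_s(\phi)\,ds \;\xrightarrow[\eps\to0]{}\; \tfrac{1}{2}D_\SHE\int_0^t \eta^\eps_s((\fw\cdot\nabla)^2\phi)\,ds,
\end{equation*}
together with the corresponding identification of the martingale quadratic variation.

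\textbf{Chaos expansion and Itô trick.} The invariant measure $\P$ being Gaussian, I work in the Fock space $\fock$ attached to white noise. Then $\eta^\eps_s(\phi)$ lives in the first chaos, while $\gena$ maps chaos $n$ into chaoses $n\pm1$; the zero-chaos component of $\gena\eta^\eps(\phi)$ vanishes (Wick ordering is built-in through $\Pi_{1/\eps}$), so $\gena\eta^\eps(\phi)$ is purely in the second chaos. Under stationarity, the Kipnis--Varadhan / Itô trick provides the key estimate
\begin{equation*}
\mathbf{E}\Bigl[\sup_{t\in[0,T]}\Bigl|\int_0^t F(\eta^\eps_s)\,ds\Bigr|^2\Bigr] \lesssim T\,\bigl\langle F,(-\cL_0^\eps)^{-1}F\bigr\rangle_{\fock},
\end{equation*}
for any zero-mean $F$. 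The problem thus becomes: bound the $H^{-1}_{\cL_0^\eps}$-norm of the residual $\gena\eta^\eps(\phi)-\tfrac12 D_\SHE\eta^\eps((\fw\cdot\nabla)^2\phi)$, i.e.\ approximately solve the Poisson equation $\gen u^\eps = \gena\eta^\eps(\phi)-\text{(first-chaos target)}$.

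\textbf{Resolvent iteration and emergence of $D_\SHE$.} This is the main obstacle. A formal Neumann-type inversion of $\gen=\cL_0^\eps+\gena$ generates the series
\begin{equation*}
u^\eps\;\approx\;\sum_{n\geq 0}(-\cL_0^\eps)^{-1}\bigl(\gena_-(-\cL_0^\eps)^{-1}\gena_+\bigr)^n (\cdot),
\end{equation*}
over arbitrarily high chaoses, where $\gena_\pm$ are the chaos-raising/lowering parts of $\gena$. In the super-critical case $d\geq 3$ the prefactor $\lambda_\eps=\eps^{d/2-1}$ is so small that only the single term $n=0$ survives in the limit: the relevant momentum sum $\lambda_\eps^2\sum_{|k|\leq 1/\eps}(\fw\cdot k)^2/|k|^4$ is absolutely convergent and produces $\tfrac12 D_\SHE$. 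In the critical dimension $d=2$ this same sum diverges like $\log\eps^{-1}$, and the tuning $\lambda_\eps^2=1/\log\eps^{-2}$ is precisely designed so that \emph{every} level $n$ of the iteration contributes an $O(1)$ correction. One then sets up an inductive Fock-space truncation (cutting the expansion at chaos $n\lesssim \log\eps^{-1}$) and shows that at level $n$ the reduced operator acts on the first chaos essentially as multiplication by $\tfrac12 D^{(n)}_\eps(\fw\cdot\nabla)^2$, with $D^{(n)}_\eps$ satisfying a recursion whose continuum limit, upon rescaling $n/\log\eps^{-1}\to\tau\in[0,1]$, is the autonomous ODE
\begin{equation*}
\dot D(\tau) \;=\; \frac{|\fw|^2/\pi}{1+|\fw|^2 D(\tau)},\qquad D(0)=0.
\end{equation*}
Solving this ODE at $\tau=1$ yields exactly the closed formula \eqref{eq:DSHEd2}. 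Setting up this inductive resolvent analysis, propagating sharp Fock-space error bounds through the hierarchy, and verifying that the non-diagonal (non-first-chaos) remainders are negligible under $(-\cL_0^\eps)^{-1}$ is the technical core of the proof.

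\textbf{Identification of the limit.} Once convergence of the drift is established, an analogous (and simpler, since already quadratic in $\eta^\eps$) argument handles the quadratic variation of $M^{\eps,\phi}$, producing the correct Gaussian noise of \eqref{e:SHEintro}. Hence any limit point $\eta$ of $(\eta^\eps)_\eps$ solves the martingale problem for \eqref{e:SHEintro} with coefficient $D_\SHE>0$, started from white noise. Since \eqref{e:SHEintro} is a linear Gaussian SPDE, its stationary martingale problem is well-posed, so the limit is unique in law and the full family $\eta^\eps$ converges, concluding the proof.
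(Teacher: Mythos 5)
Your high-level framework (stationarity, Mitoma tightness, the It\^o trick, and reduction to an approximate solution of the Poisson/generator equation so that the nonlinearity splits into a martingale plus $\tfrac12 D_\SHE\eta((\fw\cdot\nabla)^2\phi)$) is exactly the paper's, but the core of your argument — where $D_\SHE$ actually comes from — contains two concrete errors. First, in $d\ge3$ it is \emph{not} true that only the $n=0$ term of the Neumann series survives: each block $\genam(-\gensy)^{-1}\genap$ carries $\lambda_\eps^2=\eps^{d-2}$ against a momentum sum of size $\eps^{-(d-2)}$, so \emph{every} order contributes at $O(1)$. The entire difficulty of the supercritical case (Section~\ref{sec:Diff3}, Proposition~\ref{p:limite}) is showing that each order, indexed by a random-walk path of chaos levels, has a finite $\eps\to0$ limit $c(p)$ and that the resulting series is summable; this is also why $D_\SHE$ is \emph{not} explicit for $d\ge3$, whereas your claim would make it so. Second, in $d=2$ your ODE is wrong: the nonlinearity is anisotropic, so at each scale the denominator is $1+|\fw|^2\cos^2\theta\,D$ and the angular average yields $\pi/\sqrt{1+|\fw|^2D}$. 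The correct fixed point is \eqref{e:IntegralG}, i.e.\ $\dot D(\tau)=\bigl(\pi\sqrt{1+|\fw|^2D(\tau)}\bigr)^{-1}$, whose solution is \eqref{eq:defG} with the $2/3$ exponent; your equation $\dot D=(|\fw|^2/\pi)/(1+|\fw|^2D)$ integrates to $\frac1{|\fw|^2}\bigl[(1+2|\fw|^2\tau/\pi)^{1/2}-1\bigr]$ and does not reproduce \eqref{eq:DSHEd2}. (The paper also avoids your level-by-level induction in $d=2$ altogether, replacing it by a single ansatz with the diagonal operator $\cGN=G(\Ll(-\gensy))$ and the Replacement Lemma~\ref{l:Replacement}.)

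A further gap is the identification of the limit law. Convergence of the drift plus the first-order martingale condition does not determine the noise: one must also verify the martingale property for $F_2(\eta)=\eta(\phi)^2-\|\phi\|_{L^2}^2$ (Definition~\ref{def:MPSHE}, Theorem~\ref{thm:WellPosedMP}), and this requires solving the generator equation with input in the \emph{second} chaos ($i=3$). Far from being ``analogous and simpler,'' this is the more delicate half of the analysis in $d\ge3$ (it is where the two-component graph combinatorics of Lemmas~\ref{l:fastidi}--\ref{l:Card} enters), so you cannot dismiss it; as written, your argument does not close the identification of the quadratic variation.
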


Before proceeding to the proof of the above theorem, let us make a few comments on 
its statement and on the choice of $\lambda_\eps$ in~\eqref{eq:lambdaeps}. 
Notice that in any dimension $d\geq 2$, the constant $D_\SHE$ appearing in the limiting 
equation is {\it strictly} positive. Hence, despite the presence of the vanishing factor 
$\lambda_\eps$ in front of the nonlinearity, not only the quadratic term does not go to $0$ 
but actually produces a new ``Laplacian'' and a new noise. Further, this new Laplacian (and noise), in a sense,  
``feels'' the small-scale behaviour of the field $\eta$ in~\eqref{eq:toroN} 
as it depends on the vector $\fw$, which in turn describes 
its microscopic dynamics. A similar phenomenon has already been observed for other 
critical and super critical SPDEs, even though we are not aware of examples 
in which a Laplacian of the form above was derived. 

We have already pointed out that in $d=2$~\eqref{e:BKS} is formally scale invariant. 
That said, the above mentioned result in~\cite{Yau} 
for ASEP suggests that also~\eqref{eq:toroN} is logarithmically superdiffusive and the additional 
diffusivity can only come from the non-linear term. What the previous theorem 
shows is that the choice in~\eqref{eq:lambdaeps} guarantees that 
the nonlinearity ultimately gives a non-vanishing order 1 contribution. 
What might appear puzzling is that, by translating the result of Yau to~\eqref{e:BurgersScaled}, 
the diffusivity $D^\eps(t)$ of the scaled process grows as $|\log \eps|^{2/3}$ so that one might be led 
to think that $\lambda_\eps$ should be chosen accordingly. While this is not 
the case, the expression in~\eqref{eq:DSHEd2} formally implies the result of Yau, as can 
be seen by taking $\fw=\fw_\eps$ such that $|\fw_\eps|\eqdef \lambda_\eps^{-1}$. 

Furthermore, a similar scaling has been considered 
in the critical dimension $d=2$ in the context of the KPZ 
equation~\cite{CD,CSZ,Gu2020, caravennaCritical2dStochastic2023}, 
of the Anisotropic KPZ equation~\cite{CES,CETWeak} and of the stochastic Navier--Stokes equation~\cite{CK}. 
What is interesting is that, even though these latter examples in principle have 
different large scale diffusivity, i.e., $D(t)\sim t^\beta$ for some $\beta>0$~\cite{Forrest} for KPZ, 
$D(t)\sim \sqrt{\log t}$ for AKPZ 
and Navier--Stokes ~\cite{CET,Adler}, they all display {\it non-trivial} 
(in that the non-linearity nontrivially contributes to the limit) {\it Gaussian
fluctuations} under the same weak coupling scaling $\lambda_\eps=|\log\eps|^{-1/2}$, thus suggesting some sort of universality for it. 
This is to be compared with the one-dimensional case, in which, setting $\lambda_\eps\eqdef\sqrt\eps$, 
one recovers the so-called {\it weakly asymmetric} scaling under which convergence of 
ASEP to the one-dimensional KPZ equation was first shown in~\cite{BG} and since then for a wide 
variety of models. 
\subsection{Open problems}\label{sec:openproblems}

Our results raise several interesting questions, as
\begin{itemize}[noitemsep]
\item obtain Gaussian fluctuations for~\eqref{e:BKS} in the case of more general non-linearity, 
i.e. $\fw\cdot\nabla \eta^2$ replaced by $\fw\cdot\nabla F(\eta)$, for instance for polynomial $F$. 
The main difficulty is to obtain operator estimates similar to those of Lemma \ref{l:GenaBound}, 
which though seem highly non-trivial.
\item Prove the analog of Theorem \ref{thm:main} (for $d=2$) for ASEP on $\Z^2$ in the same limit of weak asymmetry.
\item In dimension $d=2$, define $\tilde\eta^\eps(x,t)=\eps^{-1}\eta( x/\eps, t/(\eps^2\tau(\eps)))$ 
(to be compared with \eqref{e:scaling}) and find the right correction $\tau(\eps)$ to the diffusive scaling 
so that a non-trivial scaling limit as $\eps\to0$ exists. This corresponds to a \emph{strong coupling regime}. 
On the basis of \cite{Yau}, the natural guess is $\tau(\eps)=|\log \eps|^{2/3}$ but the identification of the limit 
is hard as the regularising properties of the Laplacian vanish. 
\end{itemize}

More challenging will be to move towards models for which the invariant measure is non-Gaussian or non-explicit. 
One guiding heuristic could be that the limiting Gaussian fluctuations and 
associated Gaussian stationary distribution could still provide a good  setting for the analysis.

\subsection{Idea of the proof }\label{sec:ideas}

The idea of the proof finds its roots in the approach detailed in~\cite{Komorowski2012} in the context of 
interacting particle systems. To summarise it and see how it translates to the present context, 
let us consider the weak formulation of~\eqref{e:BurgersScaled} started from a spatial white noise $\mu$,  
which, for a given test function $\phi$, reads 
\begin{equ}[e:WeakIntro]
\eta^\eps_t(\phi)-\mu(\phi)-\tfrac12\int_0^t \eta^\eps_s(\Delta\phi)\dd s-\int_0^t\cN^\eps_\phi(\eta^\eps_s)\dd s=M_t(\phi)
\end{equ}
where $\cN^\eps_\phi$ stands for the nonlinearity tested against $\phi$ and $M$ is the martingale 
associated to the space-time white noise $\xi$. Now, upon assuming the sequence $\{\eta^\eps\}_\eps$ 
to be tight in the space $C([0,T]\cS'(\T^d))$, we see that all the terms in the above expression 
converge (at least along subsequences) but we have no information concerning the nonlinearity. 
For Theorem~\ref{thm:main} to be true, we need this latter term to produce both a {\it dissipation} term 
of the form of $D_\SHE\gensy^\fw\eta$, where $\gensy^\fw\eqdef\frac12(\fw\cdot\nabla)^2$, and a {\it fluctuation} 
part which should encode the additional noise in~\eqref{e:SHEintro}. 
In other words, the problem is to identify a {\it fluctuation-dissipation} relation~\cite{LY}, i.e. to determine $D_\SHE>0$ 
and $\cW^\eps$ such that 
\begin{equ}[e:FD]
\cN^\eps_\phi(\eta^\eps)=-\gen \cW^\eps(\eta^\eps) + D_\SHE\gensy^\fw\eta^\eps(\phi)
\end{equ} 
where $\gen$ is the generator of $\eta^\eps$. Indeed, since by Dynkin's formula, there exists a martingale $\cM^\eps(\Weps)$ 
such that
\begin{equ}
\cW^\eps(\eta^\eps_t)-\cW^\eps(\mu)-\int_0^t \gen\cW^\eps(\eta^\eps_s)\dd s=\cM_t^\eps(\Weps), 
\end{equ}
given~\eqref{e:FD}, we could rewrite the nonlinear term in~\eqref{e:WeakIntro} as
\begin{equ}[e:NonlinExp]
\int_0^t\cN^\eps_\phi(\eta^\eps_s)\dd s=\int_0^t D_\SHE\gensy^\fw\eta_s^\eps(\phi)\dd s -\cM_t^\eps(\Weps)+o(1)\,,
\end{equ}
where $o(1)$ is a vanishing summand which contains the boundary terms.
The advantage of the above is that we have expressed the nonlinearity
in terms of a drift which captures the additional diffusivity and a
martingale part which instead econdes the extra noise.  At this point,
tightness would ensure convergence of the former and we would be left
to prove that the sequence $\cM^\eps(\Weps)$ converges to a martingale
with the correct quadratic variation. 
\medskip

The (hardest) problem is clearly the derivation of the fluctuation-dissipation relation and 
this is the point from which our analysis departs from that in~\cite{Komorowski2012}. 
Even though, as we will see, it is enough to determine~\eqref{e:FD} approximately (i.e. that the difference 
of left and right hand side is small in a suitable sense), in the equation there are two unknowns, 
$D_\SHE\gensy^\fw$ which is {\it not} apriori given to us, and 
$\cW^\eps$ which, even if we were given the previous, is the solution of an infinite dimensional equation. 
To separate the two issues, we introduce a suitable truncation which removes the second 
summand from the right hand side of~\eqref{e:FD}, so that we can first solve for $\cW^\eps$ and 
then project back and determine $D_\SHE\gensy^\fw$. 
This is done in $d\geq 3$ and $d=2$ in very different ways. In the former case, we will control 
$\cW^\eps$ similarly to~\cite{LY}, as we can show it satisfies a graded sector condition 
in the spirit of~\cite[Section 2.7.4]{Komorowski2012}. Nonetheless, 
their functional analytic approach does not apply in our setting and in particular, for the identification 
of $D_\SHE$ and $\gensy^\fw$, we devise a new method which is detailed in Section~\ref{sec:Diff3}. 
For $d=2$ instead, we introduce a novel ansatz which is based on the idea that at large scales 
the generator of $\eta^\eps$ should approximate a {\it modulated version} of the generator of~\eqref{e:SHEintro}. 
This ansatz simplifies dramatically the (iterative) analysis performed in~\cite{CETWeak} and 
is heavily based on the Replacement Lemma in Section~\ref{sec:ReplLemma}.

\subsection*{Organisation of the article}

The rest of this work is organised as follows. 
Below we introduce notations, conventions, function spaces and 
elements of Wiener space analysis which will be used throughout. 
Section~\ref{sec:EqandGen} is the bulk of the paper. In Section~\ref{sec:Burgers}, 
after recalling basic properties of~\eqref{e:BurgersScaled} 
and its generator, we discuss the one of the main technical tools, i.e. the 
Replacement Lemma for $d=2$. In Section~\ref{sec:apriori} we determine apriori estimates on the solution to the 
generator equation and in the next two we identify $D_\SHE$ and $\gensy^\fw$, 
so to obtain a refined version of the fluctuation-dissipation relation~\eqref{e:FD}. 
Section~\ref{sec:ProofofMain} is devoted to the proof of Theorem~\ref{thm:main}. 
At last, Appendix~\ref{a:Approx} contains some technical steps necessary in the proof of the 
Replacement Lemma.

\subsection*{Notations, function spaces and Wiener space analysis}

We let $\T^d$ be the $d$-dimensional torus of side length $2\pi $. 
We denote by $\{e_k\}_{k\in\Z^d}$ the Fourier basis defined via 
$e_k(x) \eqdef \frac{1}{(2\pi)^{d/2}} e^{i k \cdot x}$ which, for all $j,\,k\in\Z^d$, satisfies 
$\langle e_k, e_{-j}\rangle_{L^2}= \mathds{1}_{k=j}  $. 

The Fourier transform of a function $\phi\in L^2(\T^d)$ will be denoted by 
$\cF(\phi)$ or $\hat\phi$ and, for $k\in\Z^d$ is given by the formula
	\begin{equation}\label{e:FT}
	\cF(\varphi)(k) =\hat\varphi(k)\eqdef  \int_{\T^d} \varphi(x) e_{-k}(x)\dd x\,, 
	\end{equation}
so that in particular
\begin{equation}\label{e:FourierRep}
\varphi(x) = 
\sum_{k\in\Z^d} \hat\varphi(k) e_k(x)\,,\qquad\text{for all $x\in\T^d$. }
\end{equation}
Let $\cS(\T^d)$ be the space of smooth functions on $\T^d$ and $\cS'(\T^d)$ 
the space of real-valued distributions given by the dual of $\cS(\T^d)$. 
For any $\eta\in\cS'(\T^d)$ and $k\in\Z^d$, 
we will denote its Fourier transform by 
\begin{equation}\label{e:complexPairing}
\hat \eta(k)\eqdef \eta(e_{-k})\,.
\end{equation} 
Note that $\overline{\hat\eta(k)}=\hat\eta(-k)$. Since the zero mode $\hat\eta(0)$ of the solution is automatically zero, 
we will only care about $\hat\eta(k)$ for $k\in\Z^d_0\eqdef\Z^d\setminus\{0\}$. 
Moreover, we recall that the Laplacian $\Delta$ on $\T^d$ has eigenfunctions $\{e_k\}_{k \in \Z^d}$ 
with eigenvalues $\{-|k|^2\,:\,k\in\Z^d\}$, so that the operator $(-\Delta)^{\frac12}$ in \eqref{eq:toroN} is defined
by its action on the basis elements 
	\begin{equation}\label{e:fLapla}
	(-\Delta)^{\frac12} e_k\eqdef |k|e_k\,,\qquad k\in\Z^d\,.
	\end{equation}	
        In particular, $(-\Delta)^{ \frac12}$ is an invertible linear
        bijection on distributions with null $0$-th Fourier mode.
We denote by $H^1(\T^d)$ the space of mean-zero functions $\phi$ such that the norm 
\begin{equ}
\|\phi\|_{H^1}^2\eqdef\|(-\Delta)^{\frac12}\phi\|_{L^2}^2\eqdef\sum_{k\in\Z^d_0} |k|^2 |\hat\phi(k)|^2
\end{equ} 
 is finite.  
\medskip 

Let $(\Omega,\cF,\P)$ be a complete probability space and 
$\eta$ be a mean-zero spatial white noise on the $d$-dimensional torus $\T^d$, i.e. 
$\eta$ is a Gaussian field with covariance
\begin{equ}\label{eq:spatial:white:noise}
\E[\swn(\vphi)\swn(\psi)]=\langle \vphi, \psi\rangle_{L^2}
\end{equ}
where $\varphi,\psi\in H\eqdef L^2_0(\T^d)$, 
the space of square-integrable functions with $0$ total mass, 
and $\langle\cdot,\cdot\rangle_{L^2}$ is the usual scalar product in $L^2(\T^d)$. 
For $n\in\N$, let $\SH_n$ be the {\it $n$-th homogeneous Wiener chaos}, i.e. the closed linear subspace of 
$L^2(\Omega)$ generated by the random variables $H_n(\eta(h))$, 
where $H_n$ is the $n$-th Hermite polynomial, and 
$h\in H$ has norm $1$. By~\cite[Theorem 1.1.1]{Nualart2006}, $\wc_n$ and $\wc_m$ are orthogonal whenever 
$m\neq n$ and $L^2(\Omega)=\bigoplus_{n}\SH_n$. 
Moreover, there exists a canonical contraction $\sint{}:\bigoplus_{n\ge 0} L^2(\T^{2n}) \to L^2(\Omega)$, 
which restricts to an isomorphism $\sint{}:\fock \to L^2(\Omega)$ on the Fock space $\fock:=\bigoplus_{n \ge 0} \fock_n$, 
where $\fock_n$ denotes the space $L_\sym^2(\T^{dn})$ of functions in $L^2_0(\T^{dn})$ which are symmetric
with respect to permutation of variables. The restriction $\sint_n$ of $\sint$ to $\fock_n$, 
is itself an isomorphism from $\fock_n$ to $\SH_n$
and, by~\cite[Theorem 1.1.2]{Nualart2006}, for every $F\in L^2(\Omega)$ there exists a family of kernels 
$(f_n)_{n\in\N}\in\fock$ such that $F=\sum_{n\geq 0} I_n(f_n)$ and 
\begin{equ}[e:Isometry]
\E[F^2]\eqdef\|F\|^2 = \sum_{n\geq 0} n!\|f_n\|_{L^2_n}^2
\end{equ}
where $L^2_n\eqdef L^2(\T^{dn})$, and we take the right hand side as the definition of the scalar product on $\fock$, i.e. 
\begin{equ}[e:ScalarPFock]
 \langle f,g\rangle=\sum_{n\geq 0} \langle f_n,g_n\rangle \eqdef \sum_{n\geq 0} n!\langle f_n,g_n\rangle_{L^2_n}.
\end{equ}

\begin{remark}\label{rem:NotAbuse}
In view of the isometry between $\fock$ and $L^2(\Omega)$, throughout the paper, we will abuse notation 
and denote with the same symbol operators acting on $\fock$ and their composition 
with the isometry $I$, which is an operator acting on $L^2(\Omega)$. 
\end{remark}

For $F(\eta)=f(\eta(\phi_1),\dots,\eta(\phi_n))$, $f\colon\R^n\to\R$ smooth and growing at most polynomially at infinity and 
$\phi_1,\dots,\phi_n\in L^2_0(\T^d)$, we define the Malliavin derivative $DF$ according to 
\begin{equ}[e:MalDer]
DF(\eta)(\cdot)\eqdef \sum_{i=1}^n \partial_i f(\eta(\phi_1),\dots,\eta(\phi_n))\phi_i(\cdot)
\end{equ}
and, for $k\in\Z_0^d$, denote its Fourier transform by 
\begin{equation}\label{e:Malliavin}
D_{k} F(v)\eqdef \cF(DF(v))(k)=\langle DF, e_{-k}\rangle\,.
\end{equation}
Notice that, for $p\in\Z$, the action of $D_p$ on $f\in\fock_n$ is 
\begin{equ}[e:MallDerFock]
\cF(D_pf)(k_{1:n-1})=n \hat f_n(k_{1:n-1},p)\,,
\end{equ}
so that in particular $D_p f\in\fock_{n-1}$. 
At last, we also recall the following integration by parts formula on Wiener space, i.e. 
\begin{equation}\label{eq:intbyparts}
\E[G D_kF]=\E[G\langle DF, e_{-k}\rangle]= \E[-F D_kG + FG\hat{\eta}(k)]\,. 
\end{equation}

Throughout the paper, we will write $a\lesssim b$ if there exists a constant $C>0$ such that $a\leq C b$. 
We will adopt the previous notations only in case in which 
the hidden constants do not depend on any quantity which is relevant for the result. 
When we write $\lesssim_T$ for some quantity $T$, it means that the constant $C$ implicit in the bound depends on $T$.

\section{The equation and its generator}\label{sec:EqandGen}

In this section, we first collect a number of preliminary properties of the solution $\eta^\eps$ to 
the regularised Burgers equation~\eqref{e:BurgersScaled} and its generator (Section~\ref{sec:Burgers}). 
Then, we carry out a detailed analysis of the generator equation (a.k.a. resolvent equation) and 
obtain the main estimates we will need in order to determine the large--scale behaviour of $\eta^\eps$.

\subsection{Multidimensional Burgers generator}\label{sec:Burgers}

As in Section~\ref{sec:ideas}, let us first write~\eqref{e:BurgersScaled} in its weak formulation. 
For $\phi\in\cS(\T^d)$ and $t\geq 0$, it reads 
\begin{equ}[e:SPDEweak]
\eta^\eps_t(\phi)-\eta^\eps_0(\phi)=\int_0^t\eta^\eps_s(\Delta\phi)\dd s+\int_0^t\cN^\eps_\phi(\eta^\eps)\dd s +\int_0^t\xi(\dd s, (-\Delta)^{1/2}\phi)
\end{equ}
where $\eta^\eps_0$ is the initial condition, $\xi$ is a space-time white noise so that 
\begin{equ}
\xi(\dd t, (-\Delta)^{1/2}\phi)=\sum_{k\in\Z^d_0} |k|\hat\phi(k)\dd B_t(k)
\end{equ}
for $B(k)$'s complex valued Brownian motions satisfying 
$\overline{B(k)}= B(-k)$ and $\dd\langle B(k), B(\ell) \rangle_t =  \mathds{1}_{\{k+\ell=0\}}\, \dd t$, 
and $\NN_\phi(\eta)$ is the nonlinearity tested against $\phi$, i.e.
\begin{equs}[e:nonlin1]
\NN_\phi(\eta)&\eqdef \lambda_\eps \fw\cdot\Pi_{1/\eps}\nabla(\Pi_{1/\eps}\eta)^2(\phi)\\
&=\frac{\iota}{(2\pi)^{\frac{d}2}}\lambda_\eps\sum_{\ell,m}  \indN{\ell,m}\fw\cdot(\ell+m)\hat\phi(-\ell-m)\hat\eta(\ell)\hat\eta(m)
\end{equs}
with $\iota=\sqrt{-1}$ and 
\begin{equ}[eq:J]
    \indN{\ell, m}= \1\{0<\eps |\ell|_\infty\le 1, 0<\eps|m|_\infty\le 1,0<\eps|\ell+m|_\infty\le 1\}\,.
\end{equ}
In accordance with Remark \ref{rem:pignolerie}, for $d=2$ the sup-norm 
$|\cdot|_\infty$ in~\eqref{eq:J} is replaced by the Euclidean norm $|\cdot|$ instead.

From~\eqref{e:SPDEweak} follows that 
the Fourier modes $\hat \eta^\eps(k)$, $|k|\ge \eps^{-1}$ evolve like
independent (and $\eps$-independent) Ornstein-Uhlenbeck processes.

\begin{lemma}
\label{lem:generalproperties}
For every deterministic initial condition $\eta_0$, the solution $t\mapsto \eta^\eps_t$ 
of \eqref{e:SPDEweak} exists globally in time and is a strong Markov process. 
The generator $\gen$ of $\eta^\eps$ can be written as
$\gen=\gensy+\gena$ and the action of $\gensy$ and $\gena$ on smooth cylinder functions $F$
is given by 
 \begin{align}
 (\gensy F)(\eta) &\eqdef \frac{1}{2} \sum_{k \in \Z^d_0} |k|^2 (-\hat\eta(-k) D_k  +  D_{-k}D_k )F(\mu) \label{e:gens}\\
   (\gena F)(\eta) &\eqdef \frac\iota{(2
    \pi)^{d/2}}\lambda_\eps \sum_{m,\ell \in \Z_0^d} \indN{\ell,m} \fw\cdot (\ell+m) \hat \eta(m) \hat \eta(\ell) D_{-m-\ell} F(\eta). \label{e:gena}
 \end{align}
 Moreover, $\gensy$ is symmetric with respect to $\mathbb P$ while $\gena$ is skew-symmetric.
Finally, the law $\mathbb P$ of the average-zero space white noise is stationary.
\end{lemma}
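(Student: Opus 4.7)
My plan is to exploit the Fourier decomposition to split the dynamics into a decoupled high-mode Ornstein--Uhlenbeck part and a closed finite-dimensional low-mode SDE, establish non-explosion for the latter via a Lyapunov argument, then read off $\gen=\gensy+\gena$ from It\^o's formula, and finally verify the (skew-)symmetry by means of the Wiener integration-by-parts formula~\eqref{eq:intbyparts}. The cutoff $\indN{\ell,m}$ in~\eqref{eq:J} forces $\ell$, $m$ and $\ell+m$ to lie in the ball of radius $1/\eps$, so the modes $\hat\eta^\eps(k)$ with $|k|_\infty>1/\eps$ decouple completely and evolve as independent linear OU processes (for which global existence is trivial), while the low modes form a closed polynomial, hence locally Lipschitz, SDE. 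Global existence for the low modes follows from the divergence-form structure of the nonlinearity: since $\Pi_{1/\eps}$ is a self-adjoint projector and $\fw\cdot\nabla(\Pi_{1/\eps}\eta)^3$ has vanishing integral over $\T^d$, one has
\begin{equ}
\bigl\langle \Pi_{1/\eps}\eta,\,\fw\cdot\Pi_{1/\eps}\nabla(\Pi_{1/\eps}\eta)^2\bigr\rangle_{L^2}=\tfrac{1}{3}\int_{\T^d}\fw\cdot\nabla(\Pi_{1/\eps}\eta)^3\,\dd x=0,
\end{equ}
so It\^o's formula applied to $V(\eta)\eqdef\|\Pi_{1/\eps}\eta\|_{L^2}^2+1$ yields $\gen V\le C_\eps$, precluding finite-time blow-up; the strong Markov property then follows from standard SDE theory together with pathwise uniqueness.

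To identify the generator I would apply It\^o's formula to a smooth cylinder function $F(\eta)=f(\hat\eta(k_1),\dots,\hat\eta(k_n))$ along the componentwise SDE
\begin{equ}
\dd\hat\eta^\eps_t(k)=-\tfrac{|k|^2}{2}\hat\eta^\eps_t(k)\,\dd t+\tfrac{\iota\lambda_\eps}{(2\pi)^{d/2}}\fw\cdot k\!\!\sum_{\ell+m=k}\!\!\indN{\ell,m}\hat\eta^\eps_t(\ell)\hat\eta^\eps_t(m)\,\dd t+|k|\,\dd B_t(k),
\end{equ}
with $\dd\langle B(k),B(\ell)\rangle_t=\mathds{1}_{k+\ell=0}\,\dd t$. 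Using $\partial_j f=D_{-k_j}F$ (since $\phi_j=e_{-k_j}$ gives $\hat\phi_j(p)=\mathds{1}_{p=-k_j}$) and $\partial_j\partial_{l(j)}f=D_{-k_j}D_{k_j}F$ with $l(j)$ defined by $k_{l(j)}=-k_j$, the dissipative drift and the quadratic covariation reproduce together the two terms of $\gensy F$ in~\eqref{e:gens} after the change of summation index $k\mapsto -k_j$, while relabelling $k=\ell+m$ in the nonlinear drift reproduces $\gena F$ in~\eqref{e:gena}.

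For (skew-)symmetry, two applications of~\eqref{eq:intbyparts} to $\E[FD_{-k}D_kG]$ yield $\E[F\gensy G]=-\tfrac12\sum_k|k|^2\E[D_{-k}F\,D_kG]$: the boundary term $\E[FD_kG\,\hat\eta(-k)]$ cancels exactly the dissipative contribution $-\E[F\hat\eta(-k)D_kG]$, and what remains is visibly invariant under $(F,G)\!\leftrightarrow\!(G,F)$ via the index change $k\leftrightarrow -k$. For $\gena$, IBP on $\E[F\hat\eta(\ell)\hat\eta(m)D_{-\ell-m}G]$ picks up no contribution from differentiating the monomials $\hat\eta(\ell),\hat\eta(m)$ (indeed $D_{-\ell-m}\hat\eta(\ell)=\mathds{1}_{m=0}$, which vanishes on $\indN{\ell,m}$), and gives $-\E[G\gena F]$ plus the cubic boundary term
\begin{equ}
S\eqdef\sum_{\ell,m}\indN{\ell,m}\fw\cdot(\ell+m)\,\E\bigl[FG\,\hat\eta(\ell)\hat\eta(m)\hat\eta(-\ell-m)\bigr].
\end{equ}
The main obstacle is precisely the cancellation $S=0$: the changes of variables $(\ell,m)\mapsto(m,-\ell-m)$ and $(\ell,m)\mapsto(-\ell-m,\ell)$ both preserve $\indN{\ell,m}$ and the symmetric triple product, while replacing $\fw\cdot(\ell+m)$ respectively by $-\fw\cdot\ell$ and $-\fw\cdot m$; summing the three equivalent expressions for $S$ yields $3S=\sum_{\ell,m}\indN{\ell,m}[\fw\cdot(\ell+m)-\fw\cdot\ell-\fw\cdot m]\,\E[\,\cdots\,]=0$, which is the algebraic reflection of the divergence-form / $L^2$-orthogonality already used for non-explosion. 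Stationarity of $\mathbb P$ is then immediate: $\gensy 1=\gena 1=0$ directly from~\eqref{e:gens}--\eqref{e:gena}, so setting $G\equiv 1$ in the (skew-)symmetry identities gives $\E[\gensy F]=\E[\gena F]=0$ for every cylinder $F$, whence $\mathbb P$ is invariant under the Kolmogorov evolution.
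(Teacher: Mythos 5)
Your proposal is correct and follows essentially the same route as the paper: Itô's formula to read off \eqref{e:gens}--\eqref{e:gena}, Gaussian integration by parts \eqref{eq:intbyparts} for the (skew-)symmetry, and invariance via $\E[\gensy F]=\E[\gena F]=0$, with the paper deferring global existence to the references whose high-mode/low-mode Lyapunov argument you spell out. The only cosmetic difference is that you kill the cubic boundary term $S$ by Fourier-space symmetrization over the cyclic changes of variables, whereas the paper identifies it with $\E[\langle\cN^\eps_\phi(\eta),\eta\rangle FG]$ and invokes the divergence identity $\langle\fw\cdot\nabla(\Pi_{1/\eps}\eta)^2,\Pi_{1/\eps}\eta\rangle=\tfrac13\langle\fw\cdot\nabla(\Pi_{1/\eps}\eta)^3,1\rangle=0$ --- the same cancellation, as you note.
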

\begin{proof}
Very similar arguments were provided in a number of references for equations which 
share features similar to those of~\eqref{e:SPDEweak}, e.g. \cite{CES, GubinelliJara2012} and, 
more comprehensively,~\cite{Gubinelli2016}, so 
we will limit ourselves to sketch some of the proofs. 

For the global in time existence of~\eqref{e:SPDEweak} for fixed $\eps$, we refer the reader to \cite[Prop. 3.4]{CES} 
or~\cite[Section 3]{GubinelliJara2012}, while the expressions \eqref{e:gens} and \eqref{e:gena} 
can be easily derived by It\^o's formula and the definition of Malliavin derivative in~\eqref{e:MalDer}.

To see that $\mathbb P$ is stationary, we start by noting that
stationarity for the linear equation with $w=0$ is well-known (and
easy to check), see e.g.~\cite[Section 2.3]{Gubinelli2016}, 
where it is further shown that $\gensy$ is symmetric with respect to $\P$. 
Hence, by~\cite{Ech}, it suffices to check that
$\E [\gena F] = 0$ for every cylinder function $F$. In fact, 
this follows immediately if we prove that $\gena$ is skew-symmetric, 
since, if so, $\E [\gena F]=-\E [F\gena 1]=0$ as the Malliavin derivative of the constant random variable $F=1$ is $0$. 
Now, to prove skew-symmetry for $\gena$, we apply Gaussian integration by parts~\eqref{eq:intbyparts} 
and get 
\begin{equs}
\E [G \gena F]=&-\E[F\gena G]\\
&+ \frac{\iota\lambda_\eps}{(2
 \pi)^{d/2}} \sum_{m,\ell \in \Z_0^d}\indN{\ell,m} \fw\cdot (\ell+m) \E \Big[\hat\eta(-\ell-m)\hat\eta(m)\hat\eta(\ell) F(\eta)G(\eta)\Big]\\
 =& -\mathbb E F \gena G +\E\Big[ \langle \cN^\eps_\phi(\eta),\eta\rangle_{L^2}  F(\eta)G(\eta)\Big]
\end{equs}
so that, since 
\begin{equ}[e:InvMeas]
\langle \cN^\eps_\phi(\eta),\eta\rangle_{L^2}=\lambda_\eps\langle \fw\cdot\nabla(\Pi_{1/\eps}\eta)^2, (\Pi_{1/\eps}\eta)\rangle=\frac{\lambda_\eps}{3}\langle \fw\cdot\nabla(\Pi_{1/\eps}\eta)^3, 1\rangle=0
\end{equ}
the proof is completed. 
\end{proof}

Next, we want to determine the action of $\gensy$ and $\gena$ on the Fock space $\fock$. 
To lighten notations, for a set of integers $I$, we will denote by $k_I$ 
the vector $(k_i)_{i\in I}$ and 
\begin{equ}[e:notations] 
|k_I|^2\eqdef \sum_{i\in I}|k_i|^2\qquad\text{and}\qquad k_{1:n}\eqdef k_{\{1:n\}}\,,
\end{equ}
where  $\{1:n\}\eqdef \{1,\dots,n\}$.

\begin{lemma}\label{lem:essid}
Let $\eps>0$, $\gensy$ and $\gena$ be the operators defined according to~\eqref{e:gens} and~\eqref{e:gena} 
respectively. Then, $\gena$ can be written as
  \begin{equ}[eq:split]
    \gena=\genap+\genam,\qquad\text{with}\qquad \genam=-(\genap)^*\,.
  \end{equ}
For any $n\in\N$, the action of the operators $\gensy,\genam,\genap$ on $f\in\fock_n$ 
is given by
\begin{equs}
\cF(\gensy f) (k_{1:n})=&-\tfrac12|k_{1:n}|^2\hat f(k_{1:n})\\
\cF(\genap f) (k_{1:n+1})=&-\frac{2\iota}{(2\pi)^{d/2}} \frac{\lambda_\eps}{n+1}\times\label{e:gen}\\
&\times\sum_{1\le i<j\le n+1} \,[\fw\cdot (k_i+k_j)]\indN{k_i,k_j}\hat f(k_i+k_j,k_{\{1:n+1\}\setminus\{i,j\}})\\
\cF(\genam  f) (k_{1:n-1})=&\frac{2\iota}{(2\pi)^{d/2}} n\lambda_\eps\,\sum_{j=1}^{n-1}(\fw\cdot k_j) \sum_{\ell+m=k_j}\indN{\ell,m}\hat f(\ell,m,k_{\{1:n-1\}\setminus\{j\}}).
\end{equs}
and, if $n=1$, then $\genam f$ is identically $0$. {Moreover, for
any $i=1,\dots,d$, the momentum operator $M_i$, defined for
$f\in\fock_n$ as
$\cF(M_if)(k_{1:n})=(\sum_{j=1}^n k^i_j)\hat f(k_{1:n})$, with $k^i_j$
the $i^{th}$ component of the vector $k_j$, commutes with
$\gensy, \genap$ and $\genam$. }
\end{lemma}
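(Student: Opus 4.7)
The strategy is to unfold the definitions \eqref{e:gens} and \eqref{e:gena} on the $n$-th chaos using the Malliavin identity \eqref{e:MallDerFock}, the Wiener product formula
$I_1(h)\,I_p(g)=I_{p+1}(h\tilde\otimes g)+p\,I_{p-1}(\mathrm{contr}(h,g))$,
and integration by parts \eqref{eq:intbyparts}; a single symmetrization argument is then used to discard an \emph{a priori} present but vanishing contribution.

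For $\gensy$, acting on $F=I_n(f)$ the term $D_{-k}D_k$ is purely multiplicative, while $-\hat\eta(-k)D_k$, after Gaussian integration by parts, is the standard Ornstein--Uhlenbeck piece; both preserve chaos degree and combine to give multiplication by $-\tfrac12|k_{1:n}|^2$ on $\hat f(k_{1:n})$. For $\gena$, by \eqref{e:MallDerFock} one has $D_{-m-\ell}I_n(f)=I_{n-1}(g_{m,\ell})$ with $\cF(g_{m,\ell})(k_{1:n-1})=n\,\hat f(k_{1:n-1},-m-\ell)$; multiplying successively by $\hat\eta(\ell)=I_1(e_{-\ell})$ and $\hat\eta(m)=I_1(e_{-m})$ via the product formula generates \emph{a priori} three contributions in $\fock_{n+1}$, $\fock_{n-1}$ and $\fock_{n-3}$, corresponding to zero, one, or two of the two $\hat\eta$'s contracting with existing slots of $g_{m,\ell}$. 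The $\fock_{n+1}$ piece is the pure symmetrization of $e_{-\ell}$ and $e_{-m}$ with $g_{m,\ell}$ and, once the sum over $(\ell,m)$ is rewritten as a sum over pairs of output indices $\{i,j\}\subset\{1,\ldots,n+1\}$, produces the stated formula for $\genap$ with the combinatorial factor $1/(n+1)$; the $\fock_{n-1}$ piece has two symmetric variants (the creating $\hat\eta$ is either $\hat\eta(\ell)$ or $\hat\eta(m)$) which combine to give the factor $2\iota n\lambda_\eps/(2\pi)^{d/2}$ in the formula for $\genam$.

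The key structural observation is that the $\fock_{n-3}$ contribution vanishes. When both $\hat\eta(\ell),\hat\eta(m)$ contract with $g_{m,\ell}$, three slots of $f$ are pinned at the values $\ell,m,-\ell-m$, which sum to zero. Under the change of variables $(\ell,m)\mapsto(m,-\ell-m)$ the indicator $\indN{\ell,m}$ is preserved (it depends only on the multiset $\{|\ell|,|m|,|\ell+m|\}$) and the kernel $\hat f(\ell,m,-\ell-m,\cdot)$ is preserved by symmetry of $f$, while the coefficient $\fw\cdot(\ell+m)$ becomes $-\fw\cdot\ell$; a further rotation produces $-\fw\cdot m$. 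Averaging these three equivalent expressions collapses the coefficient to $\tfrac13[\fw\cdot(\ell+m)-\fw\cdot\ell-\fw\cdot m]=0$, so the $\fock_{n-3}$ piece disappears entirely and $\gena=\genap+\genam$.

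The remaining statements reduce to bookkeeping. The identity $\genam=-(\genap)^*$ follows from the skew-symmetry of $\gena$ (Lemma \ref{lem:generalproperties}) transferred via the isometry $I$ to $\fock$: matching chaos degrees in $-\genap-\genam=\gena^*=(\genap)^*+(\genam)^*$ forces $(\genap)^*=-\genam$. Vanishing of $\genam$ on $\fock_1$ is visible from the formula, where the outer sum over $j\in\{1,\ldots,n-1\}$ is empty for $n=1$. Commutation with $M_i$ is an inspection: $\gensy$ and $M_i$ are both diagonal in Fourier, while $\genap$ and $\genam$ preserve the total momentum $\sum_r k_r^i$ by construction, since $(k_i,k_j)\mapsto k_i+k_j$ leaves it unchanged in $\genap$ and $k_j\mapsto(\ell,m)$ with $\ell+m=k_j$ does so in $\genam$. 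The main technical obstacle throughout is the combinatorial bookkeeping of the product-formula calculus together with the complex-Fourier conventions; the conceptual heart of the lemma is the triple-symmetrization that kills the $\fock_{n-3}$ piece.
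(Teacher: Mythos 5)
Your proposal is correct, and it actually supplies the computation that the paper itself omits: the published proof of this lemma is a one-line deferral to \cite[Lemma 3.5]{CES}, so there is nothing in the paper to compare against line by line. The mechanism you describe is the right one. In particular, you correctly identify the only non-obvious structural point, namely that the product $\hat\eta(m)\hat\eta(\ell)D_{-m-\ell}$ applied to the $n$-th chaos a priori produces a $\fock_{n-3}$ component, and your proof that it vanishes — averaging over the $3$-cycle $(\ell,m)\mapsto(m,-\ell-m)$, under which $\indN{\ell,m}$ and the symmetric kernel $\hat f(\ell,m,-\ell-m,\cdot)$ are invariant while the coefficients $\fw\cdot(\ell+m),\,-\fw\cdot\ell,\,-\fw\cdot m$ sum to zero — is exactly the Fock-space avatar of the cubic cancellation \eqref{e:InvMeas} used in Lemma~\ref{lem:generalproperties} to prove stationarity; this is a genuinely informative way to present it. Your derivation of $\genam=-(\genap)^*$ by taking adjoints in $\gena^*=-\gena$ and matching the chaos-raising and chaos-lowering components is clean and non-circular, since the skew-symmetry of $\gena$ was established independently in Lemma~\ref{lem:generalproperties}. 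The momentum-operator and $n=1$ statements are handled correctly.

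One small point of care: when you multiply by the two first-chaos factors via the product formula, the enumeration ``zero, one, or two of the two $\hat\eta$'s contract with slots of $g_{m,\ell}$'' misses the term in which $\hat\eta(\ell)$ and $\hat\eta(m)$ contract \emph{with each other} (equivalently, the constant $\mathds{1}_{\ell+m=0}$ in the Wick decomposition of $\hat\eta(\ell)\hat\eta(m)$), which also lands in $\fock_{n-1}$. It is harmless because it is annihilated by the prefactor $\fw\cdot(\ell+m)$, but a complete write-up should say so. Beyond that, the remaining work is the sign and combinatorial bookkeeping (the $1/(n+1)$ and the factor $2n$), which you correctly flag as routine; it is the part one would in any case check against the explicit statement rather than rederive.
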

\begin{proof}
The proof is identical to that of~\cite[Lemma 3.5]{CES}, to which we refer the interested reader. 
{Regarding the fact that the $\gensy$, $\gena_\pm$ commute with the momentum operator, 
this is immediate to verify. }
\end{proof}

In order to state the following lemma, we introduce the so-called {\it number operator}.

\begin{definition}\label{def:NoOp}
We define the {\it number operator} $\cN\colon \fock\to\fock$ to be the linear diagonal operator acting on $\psi=(\psi_n)_n\in\fock$ 
as $(\cN\psi)_n\eqdef n \psi_n$. 
\end{definition}
 
Our analysis will need some preliminary estimates on the asymmetric part of the generator, 
which corresponds to proving the so-called {\it graded sector condition} in~\cite[Section 2.7.4]{Komorowski2012}. 

\begin{lemma}\label{l:GenaBound}
For $d\ge2$ here exist a constant $C=C(d)>0$ such that for every $\psi\in\fock$ the following estimate holds
\begin{equ}\label{e:Abound}
\|(-\gensy)^{-\frac12}\gena_{\sigma} \psi\|\leq C \|\sqrt{\cN}(-\mathcal L_0)^\frac12 \psi\|\,,
\end{equ}
for $\sigma\in\{+,-\}$. 
In particular, for $\sigma\in\{+,-\}$, it  implies 
\begin{equ}\label{e:Tbound}
\|(-\gensy)^{-\frac12}\gena_{\sigma} (-\gensy)^{-\frac12} \psi\|\leq C \|\sqrt{\cN} \psi\|\,.
\end{equ}
\end{lemma}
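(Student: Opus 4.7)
The plan is to prove the bound for $\gena_+$ by a direct estimate on each chaos level, and then deduce the bound for $\gena_-$ by duality, since $\gena_- = -\gena_+^*$. Both parts of the statement will follow easily once the estimate for $\gena_+$ is established on a fixed $\fock_n$ with a constant independent of $n$: the first part follows since $\gensy, \genap, \genam$ all preserve or shift the Fock grading by a fixed amount, and the second (i.e.\ \eqref{e:Tbound}) follows by applying \eqref{e:Abound} with $\psi$ replaced by $(-\gensy)^{-1/2}\psi$, using that $\sqrt\cN$ is scalar on each $\fock_n$ and hence commutes with $(-\gensy)^{\pm1/2}$.

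Fix $f\in\fock_n$ and set $g \eqdef \gena_+ f \in \fock_{n+1}$. The main idea is to apply Cauchy--Schwarz to the sum over pairs $(i,j)$ in the explicit formula for $\cF(g)(k_{1:n+1})$ given by Lemma \ref{lem:essid}, with the weight $1/(|k_i|^2+|k_j|^2)$. Using $|\fw\cdot(k_i+k_j)|^2\le 2|\fw|^2(|k_i|^2+|k_j|^2)$ together with the identity $\sum_{i<j}(|k_i|^2+|k_j|^2) = n|k_{1:n+1}|^2$, one obtains
\begin{equ}
\frac{|\cF(g)(k_{1:n+1})|^2}{|k_{1:n+1}|^2}\lesssim \frac{\lambda_\eps^2 n}{(n+1)^2}\sum_{i<j}\frac{\indN{k_i,k_j}|\hat f(k_i+k_j,k_{\{1:n+1\}\setminus\{i,j\}})|^2}{|k_i|^2+|k_j|^2}\,.
\end{equ}
Summing over $k_{1:n+1}$, each of the $\binom{n+1}{2}$ pairs gives an equal contribution by symmetry of $\hat f$. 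The change of variables $\ell = k_i+k_j$ then reduces the inner sum over the remaining momentum to the lattice sum
\begin{equ}
\Lambda(\ell)\eqdef \sum_{k\in\Z^d_0}\frac{\indN{k,\ell-k}}{|k|^2+|\ell-k|^2}\,.
\end{equ}
The crux of the proof is thus to show that $\lambda_\eps^2\Lambda(\ell)\lesssim 1$ \emph{uniformly in $\ell$}. Bounding $|k|^2+|\ell-k|^2\ge|k|^2$ and using the support constraint $|k|\lesssim \eps^{-1}$ carried by $\indN{k,\ell-k}$, we reduce to $\sum_{0<|k|\le \eps^{-1}}|k|^{-2}$, which is of order $\log(\eps^{-1})$ in $d=2$ and of order $\eps^{-(d-2)}$ in $d\ge 3$; in both cases this exactly matches $\lambda_\eps^{-2}$ from \eqref{eq:lambdaeps}. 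This is the place where the dimension-dependent choice of the weak-coupling constant enters in an essential way, and is the main technical point of the argument. Combining everything, one further uses symmetry of $\hat f$ to rewrite $\sum_{\ell,k_{3:n+1}}|\ell|^2|\hat f(\ell,k_{3:n+1})|^2 = n^{-1}\sum_{k_{1:n}}|k_{1:n}|^2|\hat f(k_{1:n})|^2$, and bookkeeping of the factorial factors $(n+1)!$, $n!$ yields a ratio bounded independently of $n$, which is \eqref{e:Abound} for $\sigma=+$.

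For $\sigma=-$, the bound is obtained by duality rather than by a direct computation, which would give a suboptimal $n$-dependent constant. Indeed, from $\gena_- = -\gena_+^*$ and self-adjointness of $(-\gensy)^{\pm1/2}$ and $\sqrt\cN$, the operator $T_+\eqdef (-\gensy)^{-1/2}\gena_+(-\gensy)^{-1/2}(\sqrt\cN)^{-1}$ is the adjoint, up to a sign, of $(\sqrt\cN)^{-1}(-\gensy)^{-1/2}\gena_-(-\gensy)^{-1/2}$; the bound \eqref{e:Abound} for $\gena_+$ states exactly that $\|T_+\|\le C$, so the same bound holds for its adjoint. Rewriting in terms of $\psi = (-\gensy)^{-1/2}\phi$ and noting that $\gena_-\psi\in\fock_{n-1}$ while $\sqrt{n-1}\le\sqrt{n}$, this delivers \eqref{e:Abound} for $\sigma=-$ with the same uniform constant, concluding the proof.
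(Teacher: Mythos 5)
Your strategy is sound and somewhat more direct than the paper's. The paper proves a bilinear inequality $|\langle\rho,\genap\psi\rangle|\lesssim\sqrt n\,(\gamma\|(-\gensy)^{1/2}\psi\|^2+\gamma^{-1}\|(-\gensy)^{1/2}\rho\|^2)$ and passes to \eqref{e:Abound} for $\genap$ (and, separately, for $\genam$) via a variational characterisation of the $H^{-1}$-norm; you instead estimate the kernel of $(-\gensy)^{-1/2}\genap f$ directly by Cauchy--Schwarz and recover $\genam$ by operator duality from $\genam=-(\genap)^*$. Both routes hinge on the same lattice estimate $\lambda_\eps^2\sum_{\ell+m=q}\indN{\ell,m}/(|\ell|^2+|m|^2)\lesssim 1$ uniformly in $q$, which is \eqref{e:CrucialBound2}--\eqref{e:CrucialBound3+} in the paper and your bound $\lambda_\eps^2\Lambda(\ell)\lesssim1$. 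The duality argument for $\genam$ is correct, including the final observation $\sqrt{n-1}\le\sqrt n$.

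There is, however, a genuine error in your displayed inequality for $\genap$. You cannot simultaneously invoke $|\fw\cdot(k_i+k_j)|^2\le 2|\fw|^2(|k_i|^2+|k_j|^2)$ to discard that factor \emph{and} retain the $1/(|k_i|^2+|k_j|^2)$ in the sum: those moves cancel. And if you actually cancel and discard both, the change of variables $\ell=k_i+k_j$ produces the \emph{unweighted} lattice count $\sum_{k_1+k_2=\ell}\indN{k_1,k_2}\sim\eps^{-d}$, and $\lambda_\eps^2\eps^{-d}\to\infty$ for every $d\ge2$, so the argument collapses. The correct display, obtained from Cauchy--Schwarz with the weight $|k_i|^2+|k_j|^2$ alone and \emph{without} majorising the $\fw$-factor at that stage, keeps $(\fw\cdot(k_i+k_j))^2$:
\begin{equ}
\frac{|\cF(g)(k_{1:n+1})|^2}{|k_{1:n+1}|^2}\lesssim \frac{\lambda_\eps^2\, n}{(n+1)^2}\sum_{i<j}\frac{(\fw\cdot(k_i+k_j))^2\,\indN{k_i,k_j}\,|\hat f(k_i+k_j,k_{\{1:n+1\}\setminus\{i,j\}})|^2}{|k_i|^2+|k_j|^2}\,.
\end{equ}
That factor must survive to the change of variables, where it becomes $(\fw\cdot\ell)^2\lesssim|\ell|^2$: precisely this $|\ell|^2$ weight on $|\hat f(\ell,\cdot)|^2$ produces the $H^1$ norm of $f$ and the extra factor $n^{-1}$ coming from the symmetrisation identity you invoke at the end. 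As written, your display could at best yield $\|(-\gensy)^{-1/2}\genap f\|\lesssim n\|f\|$, which misses \eqref{e:Abound} by a factor $\sqrt n$; this loss is not harmless, since the $\sqrt j$ in the graded bound is exactly what makes the series in Proposition~\ref{P:AprioriGeneratorEq} summable after choosing $a$ large. Your ``combining everything'' sentence already presupposes the corrected form, so the fix is purely local, but as stated the displayed step is wrong.
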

\begin{proof}
Clearly, once we establish~\eqref{e:Abound},~\eqref{e:Tbound} follows immediately 
by choosing $\psi$ therein to be $(-\gensy)^{-1/2}\rho$ for $\rho\in\fock$.

For~\eqref{e:Abound}, we claim that we only need to prove that for any $\psi\in\fock_n$ and $\rho\in\fock_{n+1}$, we have
\begin{equ}[e:AprioriFirst]
|\langle\rho,\genap \psi\rangle|\lesssim \sqrt{n}\Big(\gamma\|(-\gensy)^{1/2}\psi\|^2+\frac{1}{\gamma} \|(-\gensy)^{1/2}\rho\|^2\Big)\,.
\end{equ}
We will show~\eqref{e:AprioriFirst} at the end. 
Assuming it holds, we first derive~\eqref{e:Abound} for $\genap$. 
The variational characterisation of the $(-\gensy)^{-1/2}\fock$-norm gives 
\begin{equs}
\|&(-\gensy)^{-1/2}\genap\psi\|^2=\sup_{\rho\in \fock_{n+1}}\left(2\langle \rho,\genap  \psi\rangle-
    \| (-\cL_0)^{\frac12}\rho\|^2\right)\\
&\leq \sup_{\rho\in \fock_{n+1}}\left(2C_0\sqrt{n}\Big(\gamma \|(-\gensy)^{1/2}\psi\|^2 + \frac{1}{\gamma}\|(-\gensy)^{1/2}\rho\|^2\Big)-
    \| (-\cL_0)^{\frac12}\rho\|^2\right)\\
    &\lesssim n\|(-\gensy)^{1/2}\psi\|^2=\|\sqrt{\cN} (-\gensy)^{1/2}\psi\|^2
\end{equs}
where in the first step we used~\eqref{e:AprioriFirst} (and $C_0$ is the universal 
constant implicit in that inequality) while in the last we chose $\gamma\eqdef 2C_0\sqrt{n}$.

For $\genam$ instead, we use that, by~\eqref{eq:split}, $\genam=-(\genap)^\ast$. Invoking
again the variational formula above, we deduce
\begin{equs}
\|&(-\gensy)^{-1/2}\genam\psi\|^2=\sup_{\rho\in \fock_{n-1}}\left(2\langle \rho,\genam  \psi\rangle-
    \| (-\cL_0)^{\frac12}\rho\|^2\right)\\
 &=\sup_{\rho\in \fock_{n-1}}\left(-2\langle \genap\rho, \psi\rangle-
    \| (-\cL_0)^{\frac12}\rho\|^2\right)\\
&\leq \sup_{\rho\in \fock_{n-1}}\left(2C_0\sqrt{n}\Big(\gamma \|(-\gensy)^{1/2}\rho\|^2 + \frac{1}{\gamma}\|(-\gensy)^{1/2}\psi\|^2\Big)-
    \| (-\cL_0)^{\frac12}\rho\|^2\right)\\
    &\lesssim n\|(-\gensy)^{1/2}\psi\|^2=\|\sqrt{\cN} (-\gensy)^{1/2}\psi\|^2
\end{equs}
where this time we chose $\gamma\eqdef 1/(2C_0\sqrt n)$.  
\medskip

It remains to prove~\eqref{e:AprioriFirst}. By definition of $\genap$, we have
\begin{equs}
|\langle\rho,\genap \psi\rangle|=&n!\frac{2}{(2\pi)^{\frac{d}2}} \lambda_\eps\Big|\sum_{k_{1:n+1}}\hat\rho(-k_{1:n+1})\times\\
&\times\sum_{1\le i<j\le n+1} [\fw\cdot (k_i+k_j)]\indN{k_i,k_j}\hat \psi(k_i+k_j,k_{\{1:n+1\}\setminus\{i,j\}})\Big|
\\
=&\frac{(n+1)! n}{(2\pi)^{\frac{d}2}}  \lambda_\eps\Big|\sum_{k_{1:n+1}}\hat\rho(-k_{1:n+1})[\fw\cdot(k_1+k_2)]\indN{k_1,k_2} \hat\psi(k_1+k_2, k_{3:n+1})\Big|\,.
\end{equs}
Let us look at the sum over $k_1,k_2$. This equals
\begin{equs}
&\Big|\sum_{k_{1:2}}\hat\rho(-k_{1:n+1})[\fw\cdot (k_1+k_2)]\indN{k_1,k_2} \hat\psi(k_1+k_2, k_{3:n+1})\Big|\\
&=\Big|\sum_{q} (\fw\cdot q)\hat\psi(q, k_{3:n+1})\sum_{k_1+k_2=q}\indN{k_1,k_2} \hat\rho(-k_{1:n+1})\Big|\\
&\lesssim \Big(\sum_q (\fw\cdot q)^2|\hat\psi(q, k_{3:n+1})|^2\Big)^{1/2}\Big(\sum_q \Big|\sum_{k_1+k_2=q}\indN{k_1,k_2} \hat\rho(-k_{1:n+1})\Big|^2\Big)^{1/2}\\
&=:A\times\one\label{e:Adef}
\end{equs}
We leave $A$ as it stands and focus on $\one$. Note that this can be bounded by 
\begin{equs}
\one&=\Big(\sum_q \Big|\sum_{k_1+k_2=q}\indN{k_1,k_2} \hat\rho(-k_{1:n+1})\Big|^2\Big)^{1/2}\\
&\leq \Big(\sum_q \sum_{k_1+k_2=q}|k_{1:2}|^2|\hat\rho(-k_{1:n+1})|^2\sum_{k_1+k_2=q}\frac{\indN{k_1,k_2}}{|k_{1:2}|^2}\Big)^{1/2}\\
&\lesssim \lambda_\eps^{-1} \Big(\sum_q \sum_{k_1+k_2=q}|k_{1:2}|^2|\hat\rho(k_{1:n+1})|^2\Big)^{1/2}=:\lambda_\eps^{-1} B\label{e:Bdef}
\end{equs}
where we used that for $d=2$, $\lambda_\eps^2=(\log\eps^{-2})^{-1}$ and 
\begin{equ}[e:CrucialBound2]
\lambda_\eps^2\sum_{k_1+k_2=q}\frac{\indN{k_1,k_2}}{|k_{1:2}|^2}\leq \frac{\eps^2}{\log\eps^{-2}}\sum_{\eps<|\eps k_1|\leq 1}\frac{1}{|\eps k_{1}|^2}\lesssim  \frac{1}{\log\eps^{-2}}\int_{|x|\in[\eps,1]}\frac{\dd x}{|x|^2}\lesssim 1
\end{equ}
while for $d\geq 3$, $\lambda_\eps^2=\eps^{d-2}$ and 
\begin{equs}[e:CrucialBound3+]
\eps^{d-2}\sum_{k_1+k_2=q}\frac{\indN{k_1,k_2}}{|k_{1:2}|^2}&\leq\eps^{d}\sum_{|\eps k_1|\leq 1}\frac{1}{|\eps k_{1}|^2}\lesssim \int_{|x|\leq 1}\frac{\dd x}{|x|^2}\lesssim 1\,.
\end{equs}
As a consequence, for any $\gamma>0$, we have 
\begin{equs}
|\langle\rho,\genap \psi\rangle|&\lesssim (n+1)! n\sum_{k_{3:n+1}}AB\leq (n+1)! n\Big(\frac{\gamma}{\sqrt n} \sum_{k_{3:n+1}} A^2 +\frac{\sqrt n}{\gamma}\sum_{k_{3:n+1}} B^2\Big)\\
&\leq (n+1)! n\Big(\frac{1}{n!}\frac{\gamma}{n^{3/2}}\|(-\gensy)^{1/2}\psi\|^2 +\frac{1}{(n+1)!}\frac{n^{-1/2}}{\gamma}\|(-\gensy)^{1/2}\rho\|^2\Big)\\
&\leq \sqrt n\gamma\|(-\gensy)^{1/2}\psi\|^2 +\frac{\sqrt n}{\gamma}\|(-\gensy)^{1/2}\rho\|^2\label{e:AlmostDoneBound}
\end{equs}
where we used that, by the definition of $A$ and $B$ in~\eqref{e:Adef} and~\eqref{e:Bdef} respectively, 
we have 
\begin{equ}
\sum_{k_{3:n+1}} A^2\lesssim\sum_{k_{1:n}}  |k_1|^2|\hat\psi(k_{1:n})|^2\leq\frac{n^{-1}}{n!} \|(-\gensy)^{1/2}\psi\|^2
\end{equ}
and similarly for $B$. Then,~\eqref{e:AprioriFirst} follows. 
\end{proof}

Before turning to the analysis of the generator equation, we need some further estimates which are 
specific to the case $d=2$.

\subsubsection{The replacement lemma: $d=2$}\label{sec:ReplLemma}


{While in dimension $d\geq 3$ the inverse Laplacian is integrable close to the origin, 
and this will be crucial in identifying the limiting diffusivity, }
in $d=2$, this is not the case. To overcome the lack of integrability, we devise an alternative route, 
which (morally) approximates the inverse of the generator with a linear 
and  {\it diagonal} (both in Fourier and in the chaos) operator, 
given by a non-trivial order $1$ perturbation of $\gensy$. 

To be more precise, we need to introduce some notation. For $\eps>0$, let $\Ll$ be the function 
defined on $[1/2,\infty)$ as 
\begin{equ}[e:L]
\Ll(x)\eqdef \lambda_\eps^2 \log \left(1+\frac{1}{\eps^2 x}\right)\,.
\end{equ}
Further, let $G$ be the function
\begin{equ}
  \label{eq:defG}
G(x)\eqdef \frac1{|\fw|^2}\left[\left(\frac{3|\fw|^2}{2\pi} x+1\right)^{\frac{2}{3}}-1\right],
\end{equ}
and $\cGN$ be the operator on $\fock$ given by 
\begin{equ}[e:G]
\cGN\eqdef G(\Ll(-\gensy))
\end{equ}
which means that for $\psi\in\fock_n$, $n\in\N$, the action of $\cGN$ on $\psi$ is 
\begin{equ}
\cF(\cGN\psi)(k_{1:n})\eqdef G(\Ll(\tfrac12|k_{1:n}|^2))\hat\psi(k_{1:n})\,,\qquad k_{1:n}\in\Z^{2n}_0\,.
\end{equ}
We are now ready to state the main result of this section, namely, the replacement lemma. 
Recall the definition of the scalar product $\langle\cdot,\cdot\rangle$ on $\fock$ given in~\eqref{e:ScalarPFock}.

\begin{lemma}[Replacement Lemma]\label{l:Replacement}
There exists a constant $C>0$ such that for every $\psi_1,\psi_2\in\fock$ we have 
\begin{equs}\label{e:Replacement}
|\langle \big[-\genam(-\gensy-\gensy^\fw\cGN)^{-1}&\genap+\gensy^\fw\cGN\big]\psi_1,\psi_2\rangle|\\
&\leq C\lambda_\eps^2 \|\cN(-\gensy)^{1/2}\psi_1\|\|\cN(-\gensy)^{1/2}\psi_2\|
\end{equs}
where $\cGN$ is the operator defined according to~\eqref{e:G}  and \eqref{eq:defG} and, for $n\in\N$, 
$\gensy^\fw$ is the operator acting on $\psi\in\fock_n$ as 
\begin{equ}[e:L0w]
\cF(\gensy^\fw\psi)(k_{1:n})=-\tfrac12(\fw\cdot k)^2_{1:n}\hat\psi(k_{1:n})\,,\qquad \text{for all $k_{1:n}\in\Z^{2n}_0$}
\end{equ}
and $(\fw\cdot k)_{1:n}^2\eqdef \sum_{i=1}^n(\fw\cdot k_i)^2$.
\end{lemma}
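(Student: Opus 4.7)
My plan is to bound the operator $\cH^\eps \eqdef -\genam(-\gensy-\gensy^\fw\cGN)^{-1}\genap + \gensy^\fw\cGN$ by diagonalising it in the Fourier basis on each chaos $\fock_n$—which it preserves—and quantifying its matrix elements. Fix $\psi_1,\psi_2\in\fock_n$. Using the adjoint relation $\genam=-\genap^\ast$ from Lemma~\ref{lem:essid} and the fact that $(-\gensy-\gensy^\fw\cGN)^{-1}$ is diagonal in Fourier with eigenvalue $1/D(p_{1:n+1})$, where
\begin{equ}
D(p_{1:n+1})=\tfrac12|p_{1:n+1}|^2+\tfrac12(\fw\cdot p)^2_{1:n+1}\,G(\Ll(\tfrac12|p_{1:n+1}|^2))\,,
\end{equ}
I would rewrite $\langle\psi_2,-\genam(-\gensy-\gensy^\fw\cGN)^{-1}\genap\psi_1\rangle=\langle\genap\psi_2,(-\gensy-\gensy^\fw\cGN)^{-1}\genap\psi_1\rangle$ and expand each $\genap\psi_i$ via Lemma~\ref{lem:essid} as a sum over pairs $(i,j)$ with $i<j$. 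This produces a double sum over such pairs, which I would split into a \emph{diagonal} part (the two pairs coincide) and an \emph{off-diagonal} part.

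The heart of the argument is to show that the diagonal part equals $\langle\psi_2,\gensy^\fw\cGN\psi_1\rangle$ up to the claimed error. By symmetry of the Fock-space tensors and of $\indN{\cdot,\cdot}$, this part reduces, modulo combinatorial prefactors, to a representative term of the form
\begin{equ}
\text{const}\cdot\lambda_\eps^2\sum_{k_{1:n}}\overline{\hat\psi_2(k_{1:n})}\hat\psi_1(k_{1:n})\sum_{j=1}^n(\fw\cdot k_j)^2\sum_{\ell+m=k_j}\frac{\indN{\ell,m}}{D(\ell,m,k_{\{1:n\}\setminus\{j\}})}\,.
\end{equ}
To leading order, the regime that dominates the innermost sum is $|\ell|$ large compared to the external momenta $k_{1:n}$, so that $D(\ell,m,\dots)\simeq|\ell|^2+(\fw\cdot\ell)^2G(\Ll(|\ell|^2))$. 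Converting the lattice sum to a polar integral $\int r\,\dd r\,\dd\theta$, performing the angular integration via the identity $\int_0^{2\pi}\dd\theta/(1+a\cos^2\theta)=2\pi/\sqrt{1+a}$ and changing variable $x=\Ll(r^2)$ with $\dd r/r\simeq-\dd x/(2\lambda_\eps^2)$, the inner sum becomes (up to sub-leading corrections)
\begin{equ}
\frac{\pi}{\lambda_\eps^2}\int_0^{\Ll(|k_{1:n}|^2/2)}\frac{\dd x}{\sqrt{1+|\fw|^2G(x)}}\,.
\end{equ}
The explicit form \eqref{eq:defG} of $G$ is precisely equivalent to an ODE of the form $G'(x)\sqrt{1+|\fw|^2G(x)}=\text{const}$, which makes the $x$-integral a primitive that evaluates to a constant multiple of $G(\Ll(|k_{1:n}|^2/2))$. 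After plugging back, the diagonal contribution matches $\langle\psi_2,\gensy^\fw\cGN\psi_1\rangle$ exactly at leading order.

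The off-diagonal terms, where the two pairs $(i,j)\neq(i',j')$ share at most one index, involve additional constraints pinning one (or both) intermediate momenta in terms of external $k_i$'s. This removes the logarithmic divergence of the inner sum that is responsible for the $O(\lambda_\eps^{-2})$ enhancement in the diagonal case, leaving an $O(1)$ sum. After Cauchy--Schwarz, with the $n$-dependent combinatorial factors absorbed into the number operator, the off-diagonal contribution is controlled by $O(\lambda_\eps^2)\,\|\cN(-\gensy)^{1/2}\psi_1\|\,\|\cN(-\gensy)^{1/2}\psi_2\|$ by arguments analogous to those in the proof of Lemma~\ref{l:GenaBound}.

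The main obstacle I anticipate is the quantitative control of the error in the diagonal approximation to the stated $\lambda_\eps^2$ accuracy. Concretely, one has to show that each of the following approximations contributes only $O(\lambda_\eps^2)$ (uniformly in the external momenta and, modulo the $\cN$-weights, in the chaos level): (i) replacing $\sum_{\ell+m=k_j}$ by $\int\dd\ell$; (ii) the expansion $|\ell|^2+|m|^2=2|\ell|^2+O(|\ell||k_j|)$; (iii) the substitution $G(\Ll(\tfrac12|p_{1:n+1}|^2))\simeq G(\Ll(|\ell|^2))$; and (iv) the approximation $\dd r/r\simeq-\dd x/(2\lambda_\eps^2)$ which drops the factor $1+\eps^2r^2$ produced by the change of variables. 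Pushing these corrections through while preserving the closure given by the ODE for $G$ is the delicate part, and I expect the corresponding uniform estimates to be the content of Appendix~\ref{a:Approx}.
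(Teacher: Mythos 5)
Your proposal follows the paper's proof essentially step for step: the same reduction via $\genam=-(\genap)^\ast$ to $\langle(-\gensy-\gensy^\fw\cGN)^{-1}\genap\psi_1,\genap\psi_2\rangle$, the same split into diagonal and off-diagonal contributions, the same identification of the diagonal sum with $G(\Ll(\cdot))$ through the angular integral identity and the fixed-point ODE~\eqref{e:IntegralG}, and the same Cauchy--Schwarz control of the off-diagonal terms with $n$-dependent combinatorics absorbed into $\cN$. You correctly anticipate that the quantitative $O(\lambda_\eps^2)$ error analysis of the diagonal Riemann sum is exactly what is deferred to Appendix~\ref{a:Approx} (Proposition~\ref{p:Approx}); the only detail the paper makes slightly more explicit is the further split of the off-diagonal part into type~1 (one shared index, prefactor $O(n^2)$) and type~2 (no shared index, prefactor $O(n^3)$), handled by importing the bound~\eqref{e:thesame} from \cite{CETWeak}.
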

\begin{remark}\label{rem:G}
As it appears from the proof of Proposition~\ref{p:Approx} in the appendix, 
the form of the function $G$ in \eqref{e:L} is dictated by the
following fixed point equation 
\begin{equ}[e:IntegralG]
G(x)=\frac{1}{\pi}\int_0^x   \frac{\dd y}{\sqrt{1+|\fw|^2 G(y)}}\,.
\end{equ}
Now, if we had chosen a different regularisation for the nonlinearity, 
e.g. the $|\cdot|_\infty$-distance in~\eqref{eq:J} instead of the Euclidean one or a smooth regularisation 
instead of the Fourier cut-off in~\eqref{e:nonlin1}, then we would have not obtained an explicit $G$ 
but the statement above would have remained true. 

Apart from working with an explicit function, the advantage of our choice lies in the fact that 
it allows us to make an explicit connection with the work of Yau~\cite{Yau} on the two-dimensional ASEP. 
Indeed, the exponent 
$2/3$ in~\eqref{eq:defG} precisely reflects that obtained therein and strongly suggests that, if $\lambda_\eps$ were 
taken to be a constant independent of $\eps$, then the diffusivity would diverge as $(\log t)^{2/3}$ for $t\to\infty$. 
\end{remark}
\begin{proof}
Notice first that, for any $n\in\N$, the operator $\genam(-\gensy-\gensy^\fw\cGN)^{-1}\genap$ maps $\fock_n$ 
into itself, so that, to establish~\eqref{e:Replacement}, it suffices to consider $\psi_1,\psi_2\in\fock_n$. Moreover, we can write
\begin{equ}[e:ScalProd]
  \langle \big[-\genam(-\gensy-\gensy^\fw\cGN)^{-1}\genap\big]\psi_1,\psi_2\rangle = \langle (-\gensy-\gensy^\fw\cGN)^{-1}\genap\psi_1,\genap\psi_2\rangle.
\end{equ}
Recalling the form of $\genap$ in~\eqref{e:gen}, we see that the scalar product at the right hand side can be split into a
{\it diagonal part}, corresponding to making the same choice for the indices $i,j$ in the two occurrences of $\genap$, 
an {\it off-diagonal part of type 1}, corresponding to choosing $i,j$ and $i',j'$ with one element in common, 
and {\it off-diagonal part of type 2}, corresponding to all remaining choices.  
(See for instance \cite[Lemma 3.6]{CET}, where a similar distinction was made.)

To write this in formulas, let $\cS^\eps \eqdef (-\gensy-\gensy^\fw\cGN)^{-1}$ and 
denote with $\sigma^\eps =(\sigma^\eps _n)_{n\ge1}$ its Fourier multiplier, i.e. for $\psi\in\Gamma L^2_n$ 
$\cF(\cS^\eps  \psi)(k_{1:n})=\sigma^\eps _n(k_{1:n})\hat \psi(k_{1:n})$ where 
\begin{equs}[e:sigman]
  \sigma^\eps _n(k_{1:n})&=\frac2{|k_{1:n}|^2+(\fw\cdot k)_{1:n}^2G(\Ll(\frac12|k_{1:n}|^2))}\,.
\end{equs}
Then, the scalar product in~\eqref{e:ScalProd} satisfies
\begin{equ}
	\langle \cS^\eps \genap\psi_1,\genap\psi_2\rangle=\langle \cS^\eps \genap\psi_1,\genap\psi_2\rangle_{\Di} +\sum_{i=1}^2 \langle \cS^\eps \genap\psi_1,\genap\psi_2\rangle_{\oD_i}
\end{equ}
where diagonal part, given by the first summand, is defined as 
\begin{equs}\label{e:Diag}
	\langle \cS^\eps \genap\psi_1,&\genap\psi_2\rangle_{\Di}\eqdef n!\, n \,\frac{4 \lambda_\eps^2}{(2\pi)^2} \times\\
				& \times\sum_{k_{1:n}}\tfrac12(\fw\cdot k_1)^2\overline{\hat\psi_1(k_{1:n})}{\hat\psi_2(k_{1:n})}\sum_{\ell+m=k_1}\sigma^\eps _{n+1}(\ell,m,k_{2:n})  \indN{\ell,m}
\end{equs}
while off-diagonal parts of type $1$ and $2$ are respectively given by 
\begin{equs}\label{e:OffDiag1}
	\langle &\cS^\eps \genap\psi_1,\genap\psi_2\rangle_{\oDi}\eqdef n! \,n\, c_{\oDi}(n)\sum_{k_{1:n+1}} \sigma^\eps _{n+1}(k_{1:n+1})     \indN{k_1,k_2}\indN{k_1,k_3}\times\\
	&\times(\fw\cdot (k_1+k_2))  (\fw\cdot (k_1+k_3))
			\overline{\hat{\psi_1}(k_1+k_2,k_3,k_{4:n+1})}{\hat\psi_2(k_1+k_3,k_2,k_{4:n+1})}\,,
\end{equs}
and
\begin{equs}\label{e:OffDiag2}
\langle &\cS^\eps \genap\psi_1,\genap\psi_2\rangle_{\ooDi}\eqdef n! \,n\, c_{\ooDi}(n)\sum_{k_{1:n+1}} \sigma^\eps _{n+1}(k_{1:n+1})     \indN{k_1,k_2}\indN{k_1,k_3}\times\\
			&\times (\fw\cdot (k_1+k_2))  (\fw\cdot (k_3+k_4))\overline{\hat \psi_1(k_1+k_2,k_{3:4},k_{5:n+1})}{\hat \psi_2(k_3+k_4,k_{1:2},k_{5:n+1})}
\end{equs}
where, for $i=1,\,2$, $c_{\oD_i}(n)$ is an explicit positive constant only depending on $n$ 
and such that $c_{\oD_i}(n)= O(n^{i+1})$.
\medskip

We are ready to control the left hand side of~\eqref{e:Replacement}. At first, we split all the diagonal parts 
from the off-diagonal 
\begin{equs}
|\langle &\big[-\genam(-\gensy-\gensy^\fw\cGN)^{-1}\genap+\gensy^\fw\cGN\big]\psi_1,\psi_2\rangle\label{e:Di}|\\
&\leq  \Big|\langle\cS^\eps \genap\psi_1, \genap\psi_2\rangle_{\Di}+ \langle \gensy^\fw\cGN\psi_1,\psi_2\rangle\Big|
+\sum_{i=1,2}|\langle\cS^\eps \genap\psi_1, \genap\psi_2\rangle_{\mathrm{off}_i}|\,.
\end{equs}
Let us begin by estimating the first summand. 
By~\eqref{e:G},~\eqref{e:L0w} and~\eqref{e:Diag}, we
see that
\begin{equs}
\Big|\langle&(-\gensy-\gensy^\fw\cGN)^{-1}\genap\psi_1, \genap\psi_2\rangle_{\Di}+ \langle \gensy^\fw\cGN\psi_1,\psi_2\rangle\Big|\\
&=n! n\Big|\sum_{k_{1:n}}\tfrac12(\fw\cdot k_1)^2\hat\psi_1(-k_{1:n})\hat\psi_2(k_{1:n})\Big(P^\eps(k_{1:n})- G(\Ll(\tfrac12|k_{1:n}|^2))\Big)\Big|
\end{equs}
where $P^\eps$ is defined as 
\begin{equ}[e:P1]
P^\eps(k_{1:n})\eqdef\frac{\lambda_\eps^2}{\pi^2} \sum_{\ell + m=k_1} \frac{\indN{\ell,m}}{\tilde\Gamma + \tilde\Gamma^wG(\Ll(\tilde\Gamma))}
\end{equ}
and, to shorten the notations, we wrote
\begin{equ}
\tilde\Gamma\eqdef\tfrac12(|\ell|^2+|m|^2+|k_{2:n}|^2)\,,\qquad\tilde\Gamma^w\eqdef \tfrac12((\fw\cdot\ell)^2+(\fw\cdot m)^2+(\fw\cdot k)^2_{2:n})\,.
\end{equ}
By Proposition~\ref{p:Approx}, 
we can upper bound the above by
\begin{equs}
&n! n\sum_{k_{1:n}}\tfrac12(\fw\cdot k_1)^2|\hat\psi_1(k_{1:n})||\hat\psi_2(k_{1:n})|\Big|P^\eps(k_{1:n})- G(\Ll(\tfrac12|k_{1:n}|^2))\Big|\\
&\lesssim \lambda_\eps^2 n! n \sum_{k_{1:n}}|k_1|^2|\hat\psi_1(k_{1:n})||\hat\psi_2(k_{1:n})|=\lambda_\eps^2 n! \sum_{k_{1:n}}|k_{1:n}|^2|\hat\psi_1(k_{1:n})||\hat\psi_2(k_{1:n})|\\
&\lesssim \lambda_\eps^2 \|(-\gensy)^{1/2}\psi_1\|\|(-\gensy)^{1/2}\psi_2\|\label{e:CSH1}
\end{equs}
so that this term satisfies~\eqref{e:Replacement}. 

We are now left to estimate the off diagonal terms. 
This is done by following {\it mutatis mutandis} the same steps as in the
proof of~\cite[Lemma 3.4]{CETWeak}, so we will only point out the necessary changes.
For the off-diagonal terms of type 1, we need to control
\begin{equs}[e:od1]
|\langle\cS^\eps \genap\psi_1, \genap\psi_2&\rangle_{\mathrm{off}_1}|\lesssim  n!c_{\oD_1}(n)\lambda_\eps^2\sum_{j_{1:3},k_{3:n}}|j_1+j_2|\, |j_1+j_3| \\
 & \times \overline{\hat \psi_1(j_1+j_2,j_3,k_{3:n})}
  \hat \psi_2(j_1+j_3,j_2,k_{3:n})\sigma^\eps _{n+1}(j_{1:3},k_{3:n})
\end{equs}
where $\sigma^\eps _{n+1}$, given in~\eqref{e:sigman}, is the Fourier multiplier of $\cS^\eps $ in $\fock_{n+1}$
and, as usual, we have bounded $|\fw\cdot k|\lesssim |k|$.
As in \cite[Eq. (3.21)]{CETWeak}, an application of Cauchy-Schwarz shows that \eqref{e:od1} is upper  bounded by
\begin{equ}[e:od12]
  \lambda_\eps^2c_{\oD_1}(n)\prod_{i=1}^2\Big(n!\sum_{k_{1:n}}|\hat\psi_i(k_{1:n})|^2 |k_1|^2 |k_2|\sum_{j_1+j_2=k_1}\frac1{|j_2|}\sigma^\eps _{n+1}(j_{1:2},k_{2:n})
  \Big)^{\frac12}\,.
\end{equ}
This is the same expression as in \cite[Eq. (3.21)]{CETWeak}, with $\sigma^\eps _{n+1}$ 
replacing $J^N$ there. Since $\sigma^\eps _{n+1}$ satisfies
\[
\sigma^\eps _{n+1}(j_{1:2},k_{2:n})\lesssim \frac1{|j_{1:2}|^2+|k_{2:n}|^2},
  \]
  which is the same estimate that $J^N$ satisfies, one can argue as in \cite{CETWeak} and deduce
  \begin{equ}[e:thesame]
\sum_{j_1+j_2=k_1}\frac1{|j_2|}\sigma^\eps _{n+1}(j_{1:2},k_{2:n})\lesssim \frac1{|k_{1:n}|}\le \frac1{|k_2|}.
\end{equ}
Now, since $c_{\oD_1(n)}\lesssim n^2$,~\eqref{e:od12} is upper bounded 
by a constant times $\lambda_\eps^2 \|\sqrt{\cN}(-\gensy)^{\frac12}\psi_1\|$ $\|\sqrt{\cN}(-\gensy)^{\frac12}\psi_2\|$.

As for the off-diagonal terms of type 2, we proceed similarly, 
recalling that, this time, $ c_{\oD_2}(n)\lesssim n^3$. Namely, 
applying Cauchy-Schwarz and exploiting once more~\eqref{e:thesame}, 
we bound them as
\begin{equs}
  |\langle&\cS^\eps \genap\psi_1, \genap\psi_2\rangle_{\mathrm{off}_2}|\\
  &\lesssim  \lambda_\eps^2c_{\oD_2}(n)\prod_{i=1}^2\Big(n!\sum_{k_{1:n}}|\hat\psi_i(k_{1:n})|^2 |k_1|^2 |k_2||k_3|\sum_{j_1+j_2=k_1}\frac{\sigma^\eps _{n+1}(j_{1:2},k_{2:n})}{|j_1||j_2|}
  \Big)^{\frac12}\\
  &\lesssim \lambda_\eps^2 n! c_{\oD_2}(n)\prod_{i=1}^2\Big(
    \sum_{k_{1:n}}|\hat\psi_i(k_{1:n})|^2\frac{|k_1| |k_2||k_3|}{|k_{1:n}|}
    \Big)^{1/2}\\
    &\lesssim \lambda_\eps^2 n^3 \prod_{i=1}^2\Big(
      n!    \sum_{k_{1:n}}|\hat\psi_i(k_{1:n})|^2|k_1|^2
      \Big)^{1/2}\lesssim \lambda_\eps^2 \prod_{i=1}^2 \|\mathcal N(-\gensy)^{1/2}\psi_i\|
    \end{equs}
    as desired.
\end{proof}

\subsection{A priori estimates on the truncated generator equation}\label{sec:apriori}

The goal of this section is to derive a priori estimates on suitable elements of the 
$n$-th inhomogeneous Fock space, which will be crucial in characterising the limit $\eps\to 0$ 
of the solution to~\eqref{e:BurgersScaled}. The specific functions we need will depend on the dimension,  
as for $d=2$ we want to take full advantage of the Replacement Lemma~\ref{l:Replacement}. Let us begin with some definitions. 

Let $n\in\N$, $i\geq 2$ and $P^n_i$ be the projection onto $n$-th inhomogeneous Fock space with the first 
$i-1$ chaos removed, i.e. onto $\bigoplus_{j=i}^n\Gamma L^2_j$. 
The truncated generator $\genn$, and the corresponding truncated operators $\genan$, $\genapn$ 
and $\genapn$ are respectively defined as 
\begin{equ}[e:genn]
\genn\eqdef \gensy +\genan\,,\quad \genan\eqdef P_i^n\gena P_i^n\quad \text{and}\quad\cA_{i,n}^{\eps,\sigma}\eqdef P_i^n\gena_\sigma P_i^n\,,
\end{equ}
for $\sigma\in\{+,-\}$. 
\medskip

Let $g\in\bigoplus_{j=i}^m\Gamma L^2_j$, $i\leq m\le n$. For $d\geq 2$, let $u^{\eps,n}$ be the unique solution 
of the {\it truncated generator equation} which is defined as 
\begin{equation}\label{eq:TruncGenEq}
-\genn u^{\eps,n}=g\,.
\end{equation}
In the specific case of $d=2$, we further consider $\tilde u^\eps$ which satisfies 
\begin{equ}[e:AnsatzNon]
\tilde u^{\eps}\eqdef (-\gensy-\gensy^\fw\cGN)^{-1}\genap \tilde u^{\eps} +(-\gensy-\gensy^\fw\cGN)^{-1}g
\end{equ}
where $\gensy^\fw$ and $\cGN$ are defined according to~\eqref{e:L0w} and~\eqref{e:G} respectively. 
Note that, even if in~\eqref{e:AnsatzNon} there is no truncation in the chaos, the equation admits a unique solution. 
Indeed, it is a triangular equation 
which can be explicitly solved starting from $\tilde u^{\eps}_{1}=\dots=\tilde u^{\eps}_{i-1}=0$, and then inductively 
setting 
\begin{equ}
\tilde u^{\eps}_{j}\eqdef (-\gensy-\gensy^\fw\cGN)^{-1}\genap \tilde u^{\eps}_{j-1} +(-\gensy-\gensy^\fw\cGN)^{-1}g_{j}\,,\qquad j\geq i\,.
\end{equ}
That said, we will be concerned with the projection of $\tilde u^\eps$ onto $\oplus_{j=i}^n\fock_j$, that 
we denote by $\tilde u^{\eps,n}$ and can be easily seen to solve
\begin{equ}[e:Ansatz]
\tilde u^{\eps,n}\eqdef (-\gensy-\gensy^\fw\cGN)^{-1}\genap P_i^{n-1} \tilde u^{\eps,n} +(-\gensy-\gensy^\fw\cGN)^{-1}g\,.
\end{equ}
\medskip

{As we will see in the proof of the main convergence theorem (see Section~\ref{sec:Conv}), 
a case which will play a crucial role for us is when the input $g$ above 
coincides with the action of $\genap$ on a smooth element of a fixed Fock space. 
For this, consider
test functions $\phi, \psi\in\cS(\T^d)$ 
and let $f_1\eqdef\phi\in\fock_1$ and \[f_2=[\phi\otimes\psi]_{sym}\eqdef \frac{\phi\otimes\psi+\psi\otimes \phi}2\] be the symmetric version of $\phi\otimes\psi$, 
which lives in $\fock_2$.}
For $i=2,3$, we will study $\vN$ (that we distinguish from 
$u^{\eps,n}$ as the right-hand side is fixed $\genap f_{i-1}$), 
that is the solution to the equation 
\begin{equ}[e:nonlinGenEq]
-\genn \vN={\genap f_{i-1}}\,.
\end{equ}
and for $d=2$, $\tvN$ that solves
\begin{equ}[e:nonlinAnsatz]
\tvN= (-\gensy-\gensy^\fw\cGN)^{-1}\genap P_i^{n-1} \tvN +(-\gensy-\gensy^\fw\cGN)^{-1}{\genap f_{i-1}}\,.
\end{equ} 

We are ready to state the main result of this section. 

\begin{theorem}\label{thm:GenEqu}
{Let $\phi,\psi\in\cS(\T^d)$, $f_1\eqdef\phi$ and $f_2=[\phi\otimes\psi]_{sym}$ be the symmetric version of $\phi\otimes\psi$}.  {Let $i=2,3$}. For $n\in\N$, let $\wN$ 
be the solution of the truncated generator equation $\vN$ in~\eqref{e:nonlinGenEq} if $d\geq 3$ or be $\tvN$
given by~\eqref{e:nonlinAnsatz} if $d=2$. Then, 
for any $k\in\N$, there exists a constant $C>0$, depending only on $k,m$ and the dimension $d$, 
and $\eps_d(n)>0$ such that for any $\eps<\eps_d(n)$ the following estimate holds
\begin{equ}
\|(-\gensy)^{-\frac12}(-\gen \wN-{\genap f_{i-1}+\genam \wN_i)\|\leq \frac{C}{n^{k}}\|(-\gensy)^{\frac{1}{2}}f_{i-1}\|}\,,\label{e:ApproxH1}
\end{equ}
where we recall that $ \wN_i$ is the component of $w^{\varepsilon,n}$ in the $i^{th}$ chaos.
If $d\geq 3$, $\eps_d(n)$ can be taken to be $1$. 
\end{theorem}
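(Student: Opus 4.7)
My approach would be to begin by rewriting the left-hand side of \eqref{e:ApproxH1} as an explicit ``boundary term'' produced by the truncation. For $d\ge 3$, $\vN$ lies in $\bigoplus_{j=i}^n\fock_j$ and satisfies $-\genn \vN=\genap f_{i-1}$; splitting $\gen=\genn+(\gen-\genn)$ and observing that $(\gena-\genan)\vN$ consists only of the components of $\gena\vN$ that exit the truncation range, namely $\genam \vN_i$ at chaos $i-1$ and $\genap \vN_n$ at chaos $n+1$, gives the clean identity
\[
-\gen \vN-\genap f_{i-1}+\genam \vN_i=-\genap \vN_n.
\]
For $d=2$, an analogous manipulation of \eqref{e:nonlinAnsatz}, combined with the identity $P^n_{i+1}\tvN=(-\gensy-\gensy^\fw\cGN)^{-1}\genap P^{n-1}_i\tvN$ (which follows from the ansatz because $(-\gensy-\gensy^\fw\cGN)^{-1}\genap f_{i-1}$ is supported in the $i^{th}$ chaos), leads to
\[
-\gen\tvN-\genap f_{i-1}+\genam \tvN_i=\gensy^\fw\cGN\tvN_n-\genap\tvN_n+R^\eps,
\]
where $R^\eps=\bigl[\gensy^\fw\cGN-\bigl(-\genam(-\gensy-\gensy^\fw\cGN)^{-1}\genap\bigr)\bigr]P^{n-1}_i\tvN$ is precisely the object controlled in quadratic form by the Replacement Lemma \ref{l:Replacement} with a gain of $\lambda_\eps^2$.

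In both cases the problem therefore reduces to proving a polynomial decay in $n$ of $\|(-\gensy)^{1/2}\wN_n\|$: Lemma \ref{l:GenaBound} then controls the boundary term $\genap\wN_n$ with at most an extra $\sqrt n$ factor, and in $d=2$ the additional summand $\gensy^\fw\cGN\tvN_n$ is handled analogously, using that $G\circ \Ll$ is uniformly bounded on the spectrum of $-\gensy$ (since $\Ll$ is $O(1)$ in two dimensions). I would first establish the global a priori estimate $\|(-\gensy)^{1/2}\wN\|\lesssim\|(-\gensy)^{1/2}f_{i-1}\|$ by testing the equation against $\wN$ and using the skew-symmetry of $\genan$ together with \eqref{e:Tbound}. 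To promote this to chaos-wise decay, I would iterate downward from the top chaos (where the equation reduces to $-\gensy \wN_n=\genap \wN_{n-1}$ in $d\ge 3$, or its modified counterpart in $d=2$), using the graded-sector bound of Lemma \ref{l:GenaBound} together with the integrability estimate \eqref{e:CrucialBound3+} (for $d\ge 3$) or \eqref{e:CrucialBound2} and the Replacement Lemma (for $d=2$) to extract an effective self-energy whose repeated application produces the required decay. In $d\ge 3$ the integrability of the inverse Laplacian at the origin makes this contraction $\eps$-uniform, which is why $\eps_d(n)$ can be taken to be $1$; in $d=2$ the analogous bound is only $O(1)$ and one must instead substitute $\genam(-\gensy)^{-1}\genap$ by $\gensy^\fw\cGN$ via Lemma \ref{l:Replacement}, at the price of accumulating $\lambda_\eps^2$-errors.

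The heart of the difficulty lies in this last step. Lemma \ref{l:GenaBound} alone supplies only $\|(-\gensy)^{-1/2}\gena_\sigma\|\lesssim\sqrt n$ on the $n$-th chaos, which is not contractive between adjacent levels, so the argument must simultaneously exploit the skew-symmetry of $\genan$ (at the level of the energy estimate) and the approximately diagonal effective self-energy produced either by the integrable Green's function ($d\ge 3$) or by the Replacement Lemma ($d=2$). In the two-dimensional case each iteration of the Replacement Lemma introduces a quadratic-form remainder involving the weighted norm $\|\cN(-\gensy)^{1/2}\cdot\|$, so the chain of replacements consumes a portion of the $\lambda_\eps^2$ smallness; tracking this accounting carefully is what forces the threshold $\eps_d(n)$ to shrink with $n$. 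Once the polynomial decay of $\|(-\gensy)^{1/2}\wN_n\|$ is in place, combining it with Lemma \ref{l:GenaBound} on the $\genap\wN_n$ term, with the diagonal bound for $\gensy^\fw\cGN\tvN_n$, and with Lemma \ref{l:Replacement} on $R^\eps$, yields \eqref{e:ApproxH1}.
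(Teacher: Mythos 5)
Your reduction of \eqref{e:ApproxH1} to explicit boundary terms is correct and matches the paper's: for $d\ge3$ the identity $-\gen\vN-\genap f_{i-1}+\genam\vN_i=-\genap\vN_n$ is exactly \eqref{e:genu}, and for $d=2$ your decomposition into $-\genap\tvN_n+\gensy^\fw\cGN\tvN_n$ plus a remainder controlled by the Replacement Lemma is \eqref{e:genAnsatz} specialised to $g=\genap f_{i-1}$ (modulo a sign slip in your $R^\eps$). You also correctly identify that everything then hinges on faster-than-polynomial decay in $n$ of $\|(-\gensy)^{1/2}\wN_n\|$.

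The gap is in how you obtain that decay. You propose to ``iterate downward from the top chaos'' and to extract ``an effective self-energy whose repeated application produces the required decay'', while yourself noting that Lemma~\ref{l:GenaBound} only supplies a $\sqrt n$ per-level bound that is not contractive. That observation is fatal to the iteration as described: the self-energy $-\genam(-\gensy)^{-1}\genap$ (or its $d=2$ surrogate $-\gensy^\fw\cGN$) is a nonnegative, order-one perturbation of $-\gensy$; inserting it improves positivity but carries no smallness in $n$, so its repeated application cannot produce the $n^{-k}$ factor. What actually produces the decay in the paper is Proposition~\ref{P:AprioriGeneratorEq}, a single \emph{global weighted energy estimate} in the style of Landim--Yau: one multiplies the $j$-th level identity $\langle u_j,(-\gensy)u_j\rangle=\langle u_j,(-\gen)u\rangle-\big[\langle u_{j+1},\genap u_j\rangle-\langle u_j,\genap u_{j-1}\rangle\big]$ by $(a+j^{2k})$ and sums by parts, so that the off-diagonal term is hit only by the discrete derivative $j^{2k}-(j-1)^{2k}\sim j^{2k-1}$ of the weight; the $\sqrt j$ loss from \eqref{e:Abound} then yields $j^{2k-1/2}$, which is absorbed into the diagonal weight $j^{2k}$ for large $j$ (the constant $a$ handling small $j$). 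This gives $\|\cN^k(-\gensy)^{1/2}\wN\|\lesssim\|(-\gensy)^{-1/2}\genap f_{i-1}\|$ uniformly in $n,\eps$, whence $\|(-\gensy)^{1/2}\wN_n\|\lesssim n^{-k}\|(-\gensy)^{1/2}f_{i-1}\|$. In $d=2$ the same bound is transferred to $\tvN$ by comparing it with the true solution $\vN$ of the truncated generator equation (Lemma~\ref{l:AnsatzvsGenEq}), again via the Replacement Lemma; it is this comparison, not the downward iteration you sketch, that forces the threshold $\eps_2(n,k)$ with $\lambda_{\eps}^2 n^{2+k}<1$. Without the weighted a priori estimate, your argument does not close.
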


%

As a first step in the proof of the previous theorem, we derive weighted estimates 
on the solution to~\eqref{e:nonlinGenEq} which hold in {\it any} dimension $d\geq 2$. 
The argument we exploit follows closely 
that of \cite[Lemma 2.5]{LandimYau1997} and since the proof works for any choice of $\psi$, 
we formulate and prove the result in the more 
general case of~\eqref{eq:TruncGenEq}.

\begin{proposition}\label{P:AprioriGeneratorEq}
Let $m\in\N$, {$i=2,3$} and $g\in\bigoplus_{j=i}^m\Gamma L^2_j$. For $n\in\N$, $n\geq m$, let $u^{\eps,n}$ 
be the solution of the truncated generator equation in~\eqref{eq:TruncGenEq} in any dimension $d\geq 2$. 
Then, there exists a 
positive constant $C=C(m,k)$ independent of $n,\eps$ and $\psi$, such that
\begin{equation}\label{e:AprioriMain}
\|\mathcal N^k (-\gensy)^\frac12 u^{\eps,n}\|\leq C\|(-\gensy)^{-\frac12}g\|\,,
\end{equation}
where $\cN$ is the number operator in Definition~\ref{def:NoOp}. 
\end{proposition}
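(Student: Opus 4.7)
The plan is to test the truncated generator equation $-\genn u^{\eps,n}=g$ against the weighted solution $\mathcal N^{2k}u^{\eps,n}$ and proceed by induction on the weight, increasing $k$ by steps of $1/4$ starting from the base case $k=0$. Writing $u=u^{\eps,n}$ for short, the base case is the standard energy estimate: since $\gena_\pm$ are mutual adjoints, $\genan$ is skew-symmetric, so $\langle u,\genan u\rangle=0$ and
\[
\|(-\gensy)^{1/2}u\|^2=\langle u,-\genn u\rangle=\langle u,g\rangle\leq \|(-\gensy)^{1/2}u\|\,\|(-\gensy)^{-1/2}g\|,
\]
which yields \eqref{e:AprioriMain} at $k=0$.

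For the inductive step one tests against $\mathcal N^{2k}u$. Since $\mathcal N$ and $\gensy$ are both diagonal in the Fock decomposition, the symmetric part produces $\|\mathcal N^{k}(-\gensy)^{1/2}u\|^2$, while Cauchy-Schwarz together with the fact that $g$ is supported on chaoses up to level $m$ gives $|\langle \mathcal N^{2k}u,g\rangle|\leq m^k\|\mathcal N^{k}(-\gensy)^{1/2}u\|\,\|(-\gensy)^{-1/2}g\|$. The crux is the asymmetric term $\langle \mathcal N^{2k}u,\genan u\rangle$, which does not vanish because $\mathcal N$ fails to commute with $\gena_\pm$. The key observation is that it telescopes: writing out the Fock decomposition of $\genan u$, using $\gena_-=-(\gena_+)^*$, and reindexing one of the two resulting sums, one obtains
\[
\langle \mathcal N^{2k}u,\genan u\rangle=\sum_{l=i+1}^{n}\bigl[l^{2k}-(l-1)^{2k}\bigr]\,\langle u_l,\gena_+u_{l-1}\rangle,
\]
and the discrete derivative of the weight satisfies $l^{2k}-(l-1)^{2k}\lesssim_k l^{2k-1}$.

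By Lemma \ref{l:GenaBound} applied on each chaos, $|\langle u_l,\gena_+u_{l-1}\rangle|\lesssim \sqrt{l-1}\,\|(-\gensy)^{1/2}u_l\|\,\|(-\gensy)^{1/2}u_{l-1}\|$, and Young's inequality in the index $l$ then bounds the telescoping sum by $C_k\|\mathcal N^{k-1/4}(-\gensy)^{1/2}u\|^2$. Combining the three contributions and absorbing the cross term via Young, one obtains
\[
\|\mathcal N^{k}(-\gensy)^{1/2}u\|^2\leq C_k\,\|\mathcal N^{k-1/4}(-\gensy)^{1/2}u\|^2+C\,m^{2k}\,\|(-\gensy)^{-1/2}g\|^2,
\]
so that the inductive hypothesis at weight $k-1/4$ closes the estimate, and any fixed $k$ is reached after $\lceil 4k\rceil$ iterations. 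The main obstacle is precisely the lack of commutation between $\mathcal N$ and the asymmetric part of the generator; it is handled by the telescoping combined with the graded sector bound of Lemma \ref{l:GenaBound}, which extracts exactly the power $\sqrt{l}$ needed to reduce the $\mathcal N$-weight by $1/4$ at each step.
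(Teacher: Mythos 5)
Your proof is correct and follows essentially the same route as the paper: test against the $j^{2k}$-weighted solution, telescope the asymmetric term using $\genam=-(\genap)^*$ and orthogonality of the chaoses, and control the resulting $\bigl[l^{2k}-(l-1)^{2k}\bigr]\langle u_l,\genap u_{l-1}\rangle$ terms via the graded sector bound of Lemma~\ref{l:GenaBound}, which costs a factor $\sqrt{l}$ and leaves a subcritical weight $l^{2k-1/2}$. The only (cosmetic) difference is how that subcritical term is absorbed: the paper uses the weight $a+j^{2k}$ with $a$ chosen large so that $1+a/j^{2k}-C/\sqrt{j}\ge 1/2$ for all $j$, closing the estimate in one pass, whereas you iterate an induction in quarter-steps of the exponent; both are valid and yield the same $n,\eps$-independent constant.
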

\begin{proof}
As $n$ and $\eps$ are fixed throughout the proof, we will write $u$ and $u_j,\,j=1,\dots,n$ in place of 
$u^{\eps,n}$ and $u^{\eps,n}_j$, respectively. 
Also, by convention, we let $u_{n+1}=0=u_{i-1}=\dots=u_1$. To denote constants which do not depend on $\eps,n$ or $g$ 
we will use $C$, and such $C$ might change from line to line. 
%

Let us test the $j$-th component, $j\geq i$, with $\gen u$, which, since by Lemma~\ref{lem:essid} 
$(\genap)^\ast=-\genam$, gives
  \begin{equs}
    \langle u_j,\gen u\rangle&= \langle u_j, \cL_0 u_j\rangle+\langle u_j,\genap u_{j-1}\rangle+\langle u_j,\genam u_{j+1}\rangle\\
    &=  \langle u_j, \cL_0 u_j\rangle- \Big[\langle u_{j+1},\genap u_j\rangle-\langle u_{j},\genap u_{j-1}\rangle\big].
  \end{equs}
Via a summation by parts, for any $a>0$, we have
  \begin{equs}
    \sum_{j=i}^n (a+j^{2k})\langle u_j,(-\cL_0) u_j\rangle=  &\sum_{j=i}^n (a+j^{2k})\langle u_j,(-\gen) u\rangle\\
    &+\sum_{j=i}^n (j^{2k}-(j-1)^{2k})\langle u_j,\genap u_{j-1}\rangle.
  \end{equs}
  Next, note that $-\gen u =g-\genap u_n-\genam u_i$ so that, by orthogonality of Fock spaces with different indices,
  \begin{equ}
    |\langle u_j,(-\gen) u\rangle|= |\langle u_j,g_j\rangle|\le \frac12 \langle u_j,(-\cL_0)u_j\rangle+\frac12\langle g_j,(-\cL_0)^{-1}g_j\rangle.
  \end{equ}
  As a consequence, there exists some finite strictly positive constant $C=C(a,m,k)$, 
  which might change from line to line,  such that
  \begin{equs}[eq:freddo]
\sum_{j=i}^n &(a+j^{2k})\langle u_j,(-\cL_0) u_j\rangle\\
&\le C\Big(\|(-\cL_0)^{-\frac12}g\|^2+\sum_{j=i}^n (j^{2k}-(j-1)^{2k})\langle u_j,\genap  u_{j-1}\rangle\Big)\\
&\leq C\Big(\|(-\cL_0)^{-\frac12}g\|^2+\sum_{j=i}^n j^{2k-1}\langle u_j,\genap  u_{j-1}\rangle \Big)   
  \end{equs}
where $ C= C(k)$ is a constant 
chosen in such a way that $(j^{2k}-(j-1)^{2k})\sqrt j\le  C j^{2k-\frac12}$. 
To handle the last sum, we bound
  \begin{equs}[e:Gammaaa]
    |\langle u_j,\genap  u_{j-1}\rangle |&\le \|(-\cL_0)^{\frac12} u_j\| \| (-\cL_0)^{-\frac12}\genap  u_{j-1}\|\\
    &\leq C\sqrt{j} \|(-\cL_0)^{\frac12} u_j\| \| (-\cL_0)^{\frac12} u_{j-1}\|\\
   &\leq C\sqrt{j}\Big(\frac12 \|(-\cL_0)^{\frac12} u_j\|^2+\frac12\| (-\cL_0)^{\frac12} u_{j-1}\|^2\Big)
  \end{equs}
where in the second bound we used~\eqref{e:Abound} 
in Lemma~\ref{l:GenaBound}. We now plug the result into~\eqref{eq:freddo} and rearrange the terms, 
so that we conclude 
\begin{equ}\label{eq:freddo2}
\sum_{j=i}^n j^{2k}\Big(1+\frac a{j^{2k}}-\frac{C}{\sqrt j}\Big)\langle u_j,(-\cL_0) u_j\rangle\le C(1+\|(-\cL_0)^{-\frac12}g\|^2)\,. \end{equ}
Choosing $a$ sufficiently large, 
in such a way that $(1+\frac a{j^{2k}}-\frac{C}{\sqrt j})\ge 1/2$ for every $j\ge2$,~\eqref{e:AprioriMain} follows.
\end{proof}

In the next two lemmas, we consider the specific case of $d=2$. 
In the first one, we show that, for any $g$, $\tilde u^{\eps,n}$ defined according to~\eqref{e:Ansatz} 
can be estimated in terms of $g$. 

\begin{lemma}\label{l:aprioriAnsatz}
For $d=2$, let $g\in\bigoplus_{j=i}^m\Gamma L^2_j$, $n\in\N$, $i\leq m\leq n$ 
and $\tilde u^{\eps,n}$ be given by~\eqref{e:Ansatz}. 
Then, 
there exists a constant $C>0$ independent of $n,\eps$ and $\eps_2(n)>0$ such that for every $\eps<\eps_2(n)$ 
\begin{equ}[e:apriorivtilde]
\|(-\gensy)^{1/2}\tilde u^{\eps,n}\|^2
\leq C \|(-\gensy)^{-1/2}g\|^2\,.
\end{equ}
\end{lemma}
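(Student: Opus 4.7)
The plan is to apply $D := -\gensy - \gensy^\fw\cGN$ to both sides of~\eqref{e:Ansatz} to obtain the equivalent operator equation $D u - \genap P_i^{n-1} u = g$, with $u := \tilde u^{\eps,n}$ and $v := P_i^{n-1} u$, and then to take the scalar product of both sides with $u$:
\begin{equ}[eq:planlem]
\langle u, D u\rangle \;=\; \langle u, \genap v\rangle + \langle u, g\rangle.
\end{equ}
Since $-\gensy$ and $-\gensy^\fw\cGN$ are both diagonal and nonnegative in the Fourier/chaos basis, one has $\langle u, D u\rangle = \|(-\gensy)^{1/2}u\|^2 + \|(-\gensy^\fw\cGN)^{1/2}u\|^2$. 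The goal is to bound the right-hand side of~\eqref{eq:planlem} by $C\|(-\gensy)^{-1/2}g\|\,\|(-\gensy)^{1/2}u\|$, modulo an error of size $\lambda_\eps^2 n^2\|(-\gensy)^{1/2}u\|^2$ that can be absorbed into the left-hand side once $\eps$ is small enough in terms of $n$.

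The decisive step is the Replacement Lemma. Substituting $u = D^{-1}\genap v + D^{-1}g$ from~\eqref{e:Ansatz} into $\langle u,\genap v\rangle$ and using $(\genap)^* = -\genam$, one rewrites
\begin{equ}
\langle u, \genap v\rangle \;=\; \langle \genap v, D^{-1}\genap v\rangle + \langle D^{-1}g, \genap v\rangle \;=\; -\langle v, \genam D^{-1}\genap v\rangle + \langle D^{-1}g, \genap v\rangle.
\end{equ}
Lemma~\ref{l:Replacement} with $\psi_1 = \psi_2 = v$ then yields
\begin{equ}
-\langle v, \genam D^{-1}\genap v\rangle \;=\; -\langle v, \gensy^\fw\cGN v\rangle + E, \qquad |E| \leq C\lambda_\eps^2\|\mathcal N (-\gensy)^{1/2} v\|^2 \leq C\lambda_\eps^2 n^2 \|(-\gensy)^{1/2} u\|^2,
\end{equ}
the last inequality using that $v$ carries chaoses of order at most $n-1$. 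Plugging this back into~\eqref{eq:planlem} and noting $\langle u, -\gensy^\fw\cGN u\rangle = \langle v, -\gensy^\fw\cGN v\rangle + \|(-\gensy^\fw\cGN)^{1/2}u_n\|^2$, the $\gensy^\fw\cGN$-contributions cancel on the two sides, leaving
\begin{equ}
\|(-\gensy)^{1/2}u\|^2 + \|(-\gensy^\fw\cGN)^{1/2}u_n\|^2 \;\leq\; |E| + |\langle D^{-1}g, \genap v\rangle| + |\langle u, g\rangle|.
\end{equ}

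The two remaining inner products are handled by elementary means. For $\langle u, g\rangle$, Cauchy--Schwarz gives $|\langle u, g\rangle|\leq \|(-\gensy)^{1/2}u\|\,\|(-\gensy)^{-1/2}g\|$. For $\langle D^{-1}g, \genap v\rangle = \langle -\genam D^{-1}g, v\rangle$, one combines Cauchy--Schwarz with the graded sector bound~\eqref{e:Abound} of Lemma~\ref{l:GenaBound} and the operator inequality $D\geq -\gensy$ (which implies $\|(-\gensy)^{1/2}D^{-1}g\|\leq \|(-\gensy)^{-1/2}g\|$) to arrive at $|\langle D^{-1}g, \genap v\rangle|\leq C\sqrt{m}\,\|(-\gensy)^{-1/2}g\|\,\|(-\gensy)^{1/2}u\|$. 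Finally, choosing $\eps_2(n)$ so that $C\lambda_\eps^2 n^2\leq \tfrac12$ for $\eps<\eps_2(n)$, which is possible since $\lambda_\eps^2 = 1/\log\eps^{-2}\to 0$, the term $|E|$ is absorbed into the left-hand side and a standard Young inequality closes the estimate. The main obstacle is precisely the $n$-dependence of the Replacement Lemma error $E\sim \lambda_\eps^2 n^2$, which is what forces the smallness condition $\eps<\eps_2(n)$ appearing in the statement.
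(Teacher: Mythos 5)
Your argument is correct and is essentially the paper's proof in a slightly different arrangement: both test the fixed-point relation~\eqref{e:Ansatz} against $\tilde u^{\eps,n}$, use the Replacement Lemma with $\psi_1=\psi_2=P_i^{n-1}\tilde u^{\eps,n}$ to trade $-\genam(-\gensy-\gensy^\fw\cGN)^{-1}\genap$ for $-\gensy^\fw\cGN$ up to an error $O(\lambda_\eps^2 n^2)\|(-\gensy)^{1/2}\tilde u^{\eps,n}\|^2$ that is absorbed for $\eps<\eps_2(n)$, and control the $g$-terms via Cauchy--Schwarz together with the graded sector bound of Lemma~\ref{l:GenaBound} and $-\gensy-\gensy^\fw\cGN\ge-\gensy$. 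The only cosmetic difference is that the paper evaluates $-\gen\tilde u^{\eps,n}$ and uses skew-symmetry of $\gena$, whereas you pair directly with $(-\gensy-\gensy^\fw\cGN)\tilde u^{\eps,n}$ and observe the cancellation of the $\gensy^\fw\cGN$-contributions; these are the same identity.
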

\begin{proof}
At first, we study how the generator $\gen$ acts on $\tilde u^{\eps,n}$. We have
\begin{equs}[e:genAnsatz]
-\gen \tilde u^{\eps,n}=&(-\gensy-\gensy^\fw\cGN)\tilde u^{\eps,n}-\genap  \tilde u^{\eps,n}-\genam \tilde u^{\eps,n}+\gensy^\fw\cGN \tilde u^{\eps,n}\\
=&g -\genam (-\gensy-\gensy^\fw\cGN)^{-1}g -\genap \tilde u^{\eps,n}_n+\gensy^\fw\cGN \tilde u^{\eps,n}_n\\
&+\big[-\genam(-\gensy-\gensy^\fw\cGN)^{-1}\genap+\gensy^\fw\cGN\big]P_i^{n-1} \tilde u^{\eps,n}
\end{equs}
where in the last step we used~\eqref{e:Ansatz}. 
Now, $\genap \tilde u^{\eps,n}_n\in\fock_{n+1}$, so that, since $ \tilde u^{\eps,n}\in\oplus_{j=i}^n\fock_j$, 
the two are orthogonal. Therefore, by testing both sides by $ \tilde u^{\eps,n}$, we obtain
\begin{equs}
\|&(-\gensy)^{1/2} \tilde u^{\eps,n}\|^2\\
&=\langle  \tilde u^{\eps,n}, g-\genam (-\gensy-\gensy^\fw\cGN)^{-1}g\rangle+\langle  \tilde u^{\eps,n}_n, \gensy^\fw\cGN \tilde u^{\eps,n}_n\rangle\\
&\quad+\langle  \tilde u^{\eps,n}, \big[-\genam(-\gensy-\gensy^\fw\cGN)^{-1}\genap+\gensy^\fw\cGN\big]P_i^{n-1} \tilde u^{\eps,n}\rangle\\
&\leq \tfrac12\|(-\gensy)^{1/2} \tilde u^{\eps,n}\|^2+\tfrac12\|(-\gensy)^{-1/2}[g-\genam (-\gensy-\gensy^\fw\cGN)^{-1}g]\|^2 \\
&\quad- \|(-\gensy^\fw\cGN)^{1/2} \tilde u^{\eps,n}_n\|^2+C\lambda_\eps^2 n^2 \|(-\gensy)^{1/2} \tilde u^{\eps,n}\|^2\\
&\leq C'\|(-\gensy)^{-1/2}g\|^2+(\tfrac12 + Cn^2\lambda_\eps^2)\|(-\gensy)^{1/2} \tilde u^{\eps,n}\|^2\label{e:apriorivtilde1}
\end{equs}
where in the last step we neglected the negative term at the right hand side, we used~\eqref{e:Tbound} together with 
the positivity of $-\gensy^\fw\cGN$ and the fact that 
the operators $\gensy,\gensy^\fw$ and $\cGN$ commute to control the second term containing $g$, 
and the Replacement Lemma~\ref{l:Replacement} to bound the third summand. 
Now, we choose $\eps_2(n)$ in such a way that $C n^2\lambda_{\eps_2(n)}^2<1/2$. 
Therefore,~\eqref{e:apriorivtilde} follows upon rearranging the terms in~\eqref{e:apriorivtilde1}. 
\end{proof}

In the next lemma, we turn our attention to $\tvN$ given in~\eqref{e:nonlinAnsatz} and
prove that it satisfies the same weighted estimates as those in~\eqref{e:AprioriMain}. 
To do so, in view of Proposition~\ref{P:AprioriGeneratorEq} and the previous Lemma, 
it suffices to control the difference between $\tvN$ and the solution of the truncated generator equation $\vN$. 

\begin{lemma}\label{l:AnsatzvsGenEq}
Let $d=2$. For $n\in\N$ and $i=2,3$, 
let $\tvN$ be given by~\eqref{e:nonlinAnsatz} and {$f_1,f_2$ be as in Theorem~\ref{thm:GenEqu}}. 
Then, for any $k\in\N$ there exists a constant $C>0$ and $\eps_2(n,k)>0$ such that for every $\eps<\eps_2(n,k)$ 
\begin{equ}[e:AprioriAnsatz]
\|\cN^k(-\gensy)^{\frac12}\tvN\|\leq C{\|(-\gensy)^{\frac12}f_{i-1}\|}\,.
\end{equ}
\end{lemma}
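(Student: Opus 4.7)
My plan is to write $\tvN=\vN+\delta$ and to control the difference $\delta$ via a generator equation amenable to Proposition~\ref{P:AprioriGeneratorEq}, which has already been established for $\vN$. The bound on $\tvN$ then follows by the triangle inequality.

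I would first derive an explicit equation for $\delta$. Since $\tvN\in\bigoplus_{j=i}^{n}\fock_{j}$, a direct computation using $\genn=\gensy+P_i^n\gena P_i^n$ gives $\genn\tvN=\gen\tvN-\genap\tvN_n-\genam\tvN_i$, and applying identity~\eqref{e:genAnsatz} with $g=\genap f_{i-1}$ yields
\[
-\genn\tvN=\genap f_{i-1}+\gensy^\fw\cGN\tvN_n+T P_i^{n-1}\tvN+\bigl[\genam\tvN_i-\genam(-\gensy-\gensy^\fw\cGN)^{-1}\genap f_{i-1}\bigr],
\]
where $T\eqdef-\genam(-\gensy-\gensy^\fw\cGN)^{-1}\genap+\gensy^\fw\cGN$ is the operator controlled by the Replacement Lemma~\ref{l:Replacement}. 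The bracket vanishes: taking the $\fock_i$-component of the ansatz~\eqref{e:nonlinAnsatz} and using $\tvN_{i-1}=0$ gives $\tvN_i=(-\gensy-\gensy^\fw\cGN)^{-1}\genap f_{i-1}$. Subtracting from this the identity $-\genn\vN=\genap f_{i-1}$ then produces
\[
-\genn\delta=\gensy^\fw\cGN\tvN_n+T P_i^{n-1}\tvN\ =:\ R'.
\]

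Since $R'\in\bigoplus_{j=i}^n\fock_j$, Proposition~\ref{P:AprioriGeneratorEq} applied to $\delta$ gives $\|\cN^k(-\gensy)^{1/2}\delta\|\lesssim\|(-\gensy)^{-1/2}R'\|$, and the next step is to bound each of the two pieces of $R'$ by $\|(-\gensy)^{1/2}\tvN\|$. Since $\Ll(x)\lesssim\lambda_\eps^{2}\log(1+2\eps^{-2})\lesssim 1$ for $x\ge 1/2$, the composition $G\circ\Ll$ defined in~\eqref{e:G}--\eqref{eq:defG} is uniformly bounded in $\eps$, whence $\|(-\gensy)^{-1/2}\gensy^\fw\cGN\tvN_n\|\lesssim|\fw|^{2}\|(-\gensy)^{1/2}\tvN\|$. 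Moreover, since $T$ preserves the chaos, in the variational characterization
\[
\|(-\gensy)^{-1/2}T P_i^{n-1}\tvN\|=\sup_{\psi\ne 0}\frac{|\langle T P_i^{n-1}\tvN,\psi\rangle|}{\|(-\gensy)^{1/2}\psi\|}
\]
the test vector $\psi$ may be restricted to $\bigoplus_{j=i}^{n-1}\fock_j$, on which $\cN\le n$; invoking Lemma~\ref{l:Replacement} then yields $\|(-\gensy)^{-1/2}T P_i^{n-1}\tvN\|\lesssim\lambda_\eps^{2}n^{2}\|(-\gensy)^{1/2}\tvN\|$. Combining with Lemma~\ref{l:aprioriAnsatz} (applied to $g=\genap f_{i-1}$) and Lemma~\ref{l:GenaBound}, both of which give $\|(-\gensy)^{1/2}\tvN\|\lesssim\|(-\gensy)^{1/2}f_{i-1}\|$, and choosing $\eps_2(n,k)$ small enough that $\lambda_\eps^{2}n^{2}\le 1$, I obtain $\|(-\gensy)^{-1/2}R'\|\lesssim\|(-\gensy)^{1/2}f_{i-1}\|$.

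Since Proposition~\ref{P:AprioriGeneratorEq} applied to $\vN$, together with Lemma~\ref{l:GenaBound}, gives $\|\cN^k(-\gensy)^{1/2}\vN\|\lesssim\|(-\gensy)^{-1/2}\genap f_{i-1}\|\lesssim\|(-\gensy)^{1/2}f_{i-1}\|$, the triangle inequality will conclude~\eqref{e:AprioriAnsatz}. The main obstacle will be the bookkeeping in the derivation of the equation for $\delta$ — in particular verifying the cancellation of the $\fock_{i-1}$-components through the explicit form of $\tvN_i$ — and the careful use of the chaos-preserving property of $T$ to justify the restriction of the test vector in the variational characterization; without this restriction the Replacement Lemma would only produce an unbounded ratio $\|\cN(-\gensy)^{1/2}\psi\|/\|(-\gensy)^{1/2}\psi\|$, whereas the truncation turns it into $n^{2}\lambda_\eps^{2}$, which is negligible for $\eps<\eps_2(n,k)$.
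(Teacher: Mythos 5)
Your skeleton — writing $\tvN=\vN+\delta$, deriving an equation for $\delta$ whose right-hand side is $R'=\gensy^\fw\cGN\tvN_n+TP_i^{n-1}\tvN$, and invoking the Replacement Lemma for the $T$-term — is exactly the paper's, and your derivation of the equation for $\delta$ (including the cancellation via $\tvN_i=(-\gensy-\gensy^\fw\cGN)^{-1}\genap f_{i-1}$) is correct. The gap is in the step where you feed $R'$ into Proposition~\ref{P:AprioriGeneratorEq}. That proposition's constant is $C(m,k)$ where $m$ is the highest chaos level of the input $g$: in its proof the weight $(a+j^{2k})$ multiplies $\langle g_j,(-\cL_0)^{-1}g_j\rangle$, so the input is really measured in $\|\cN^k(-\gensy)^{-1/2}g\|$, which costs a factor $m^{k}$. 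Your $R'$ has the component $\gensy^\fw\cGN\tvN_n\in\fock_n$, so $m=n$ and the resulting bound on $\|\cN^k(-\gensy)^{1/2}\delta\|$ carries an uncontrolled factor of order $n^{k}$. Your estimate for that component, $\|(-\gensy)^{-\frac12}\gensy^\fw\cGN\tvN_n\|\lesssim\|(-\gensy)^{\frac12}\tvN_n\|\lesssim\|(-\gensy)^{\frac12}f_{i-1}\|$, has no compensating smallness in $n$ (unlike the $T$-term, where $\lambda_\eps^2n^2$ is killed by shrinking $\eps$), and obtaining $\|(-\gensy)^{\frac12}\tvN_n\|\lesssim n^{-k}$ is precisely the content of the lemma you are proving — so the argument becomes circular. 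The $n$-uniformity of the constant cannot be sacrificed: Theorem~\ref{thm:GenEqu} uses \eqref{e:AprioriAnsatz} exactly to extract the decay $n^{-k}$ of the top-chaos component.

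The paper closes this hole differently: it tests the equation $-\gen\delta=\cdots$ against $\delta$ itself. Then the dangerous term becomes $\langle\tvN-\vN,\gensy^\fw\cGN\tvN_n\rangle$; the purely quadratic piece $-\langle\tvN_n,(-\gensy^\fw\cGN)\tvN_n\rangle$ is discarded by negativity of $\gensy^\fw\cGN$, and in the surviving cross term $\langle\vN_n,\gensy^\fw\cGN\tvN_n\rangle$ the factor $\vN_n$ carries the smallness $n^{-k}$, because Proposition~\ref{P:AprioriGeneratorEq} applies to $\vN$ with input $\genap f_{i-1}\in\fock_{i}$ in a \emph{fixed} chaos, so its constant is genuinely $n$-independent. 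This yields $\|(-\gensy)^{\frac12}\delta\|^2\lesssim(n^{-j}+n^2\lambda_\eps^2)\|(-\gensy)^{-\frac12}\genap f_{i-1}\|^2$ for arbitrary $j$, after which the crude bound $\cN^k\le n^k$ on the truncated space is affordable. You should replace your direct application of Proposition~\ref{P:AprioriGeneratorEq} to $\delta$ by this energy-type argument, or otherwise produce an independent source of $n^{-k}$ decay for $\gensy^\fw\cGN\tvN_n$.
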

\begin{proof}
Let $\vN$ be the solution of~\eqref{e:nonlinGenEq} in $d=2$. Notice that, trivially, 
\begin{equ}[e:TwoSum]
\|\cN^k(-\gensy)^{1/2}\tvN\|\leq n^k\|(-\gensy)^{1/2}(\tvN-\vN)\|+\|\cN^k(-\gensy)^{1/2}\vN\|\,.
\end{equ}
Now, in view of Proposition~\ref{P:AprioriGeneratorEq}, the second summand is bounded by (a constant times) 
$\|(-\gensy)^{-1/2}{\genap f_{i-1}}\|$. For the first, 
we claim that for any $j$ there exists $C>0$ such that for $\eps$ small enough, we have
\begin{equ}[e:AprioriDiff]
\|(-\gensy)^{1/2}(\tvN-\vN)\|^2\leq C\big( n^{-j} +n^2\lambda_\eps^2\big)\|(-\gensy)^{-1/2}{\genap f_{i-1}}\|^2\,.
\end{equ}
Assuming the claim, we choose in~\eqref{e:AprioriDiff} 
$j=k$ and $\eps_2(n,k)>0$ such that $n^{k+2}\lambda_{\eps_2(n,k)}<1$, 
{ so that we obtain 
\begin{equ}
\|\cN^k(-\gensy)^{1/2}\tvN\|\lesssim \|(-\gensy)^{-1/2}\genap f_{i-1}\|\lesssim \|(-\gensy)^{1/2} f_{i-1}\|^2
\end{equ}
where the last step follows by~\eqref{e:Abound}. Hence,~\eqref{e:AprioriAnsatz} is proved. }
\medskip

\newcommand{\pes}{g_\eps^\sharp}
We now prove~\eqref{e:AprioriDiff}. 
To shorten the notation, set $\pes\eqdef (-\gensy-\gensy^\fw\cGN)^{-1}{\genap f_{i-1}}$. 
We begin by evaluating $-\gen$ on $\tvN-\vN$. 
To do so, we exploit~\eqref{e:genAnsatz} and the fact that, as noted in the proof of Proposition~\ref{P:AprioriGeneratorEq}, 
since $\vN$ solves~\eqref{e:nonlinGenEq}, we have 
\begin{equ}
-\gen \vN-{\genap f_{i-1}+\genam \vN_i}
=  -\genap \vN_n\,.
\end{equ}
This means that 
\begin{equs}[e:genDiff]
-\gen (\tvN-\vN)=&-\genam (\pes-\vN_i)-\genap(\tvN_n-\vN_n)+\gensy^\fw\cGN \tvN_n\\
&+\big[-\genam(-\gensy-\gensy^\fw\cGN)^{-1}\genap+\gensy^\fw\cGN\big]P_i^{n-1} \tvN\,.
\end{equs}
Now, since $\pes\in\fock_i$, $\genam (\pes-\vN_i)\in\fock_{i-1}$ and 
$\genap(\vN_n-\tvN_n)\in\fock_{n+1}$, which implies that they are orthogonal 
to both $\tvN$ and $\vN$ as these belong to $\oplus_{j=i}^n\fock_j$. 
Hence, by testing both sides of~\eqref{e:genDiff} by $\tvN-\vN$, we obtain
\begin{equs}\label{e:aprioriDiff1}
\|(-\gensy)^{1/2}&(\tvN-\vN)\|^2=\langle \tvN-\vN, \gensy^\fw\cGN \tvN_n\rangle\\
&+\langle \tvN-\vN, \big[-\genam(-\gensy-\gensy^\fw\cGN)^{-1}\genap+\gensy^\fw\cGN\big]P_i^{n-1}\tvN\rangle\,.
\end{equs}
Let us analyse the two terms at the right hand side separately. 
For the first, note that the operator $\gensy^\fw\cGN$ is negative, so that 
\begin{equs}
\langle \tvN-\vN, \gensy^\fw\cGN\tvN_n\rangle&=\langle \vN_n, \gensy^\fw\cGN\tvN_n\rangle -\langle \tvN_n, (-\gensy^\fw\cGN)\tvN_n\rangle\\
&\leq \langle -\vN_n, (-\gensy^\fw\cGN)\tvN_n\rangle\\
&\lesssim \|(-\gensy)^{1/2}\vN_n\|\|(-\gensy)^{1/2}\tvN_n\|
\end{equs}
and in the last bound we used that $-\gensy^\fw\cGN\lesssim -\gensy$ and that $\gensy, \gensy^\fw$ and $\cGN$ commute. 
Now, the a priori estimates in Proposition~\ref{P:AprioriGeneratorEq} and 
Lemma~\ref{l:aprioriAnsatz} allow to upper bound the previous by
\begin{equs}
 n^{-k} \|\cN^k(-\gensy)^{1/2}\tvN\|\|(-\gensy)^{1/2}\vN_n\|\lesssim n^{-k} \|(-\gensy)^{-1/2}{\genap f_{i-1}}\|^2\,.
\end{equs}
For the second term in~\eqref{e:aprioriDiff1}, we apply the Replacement Lemma~\ref{l:Replacement} 
first and the same a priori estimates as above, thus getting
\begin{equs}
\langle \tvN-\vN, &\big[\genam(-\gensy-\gensy^\fw\cGN)^{-1}\genap+\gensy^\fw\cGN\big]P_i^{n-1}\tvN\rangle\\
&\leq C \lambda_\eps^2n^2\|(-\gensy)^{1/2}(\tvN-\vN)\|\|(-\gensy)^{1/2}\tvN\|\\
&\leq \tfrac12C\lambda_\eps^2n^2\Big(\|(-\gensy)^{1/2}(\tvN-\vN)\|^2+\|(-\gensy)^{1/2}\tvN\|^2\Big)\\
&\leq \tfrac12C\lambda_\eps^2n^2\Big(\|(-\gensy)^{1/2}(\tvN-\vN)\|^2+\|(-\gensy)^{-1/2}{\genap f_{i-1}}\|^2\Big)\,,
\end{equs}
where the constant $C$ changed in the last two lines. Now, by using that for $\eps<\eps_2(n,k)$, 
$C n^2\lambda_\eps^2<1$, collecting the previous bounds and rearranging the terms,~\eqref{e:AprioriDiff} follows. 
\end{proof}

We are now ready for the proof of Theorem~\ref{thm:GenEqu}, which is an easy consequence 
of Lemma~\ref{l:GenaBound} and, for $d\geq 3$, Proposition~\ref{P:AprioriGeneratorEq}, while, 
for $d=2$ Lemmas~\ref{l:aprioriAnsatz} and~\ref{l:AnsatzvsGenEq}.

\begin{proof}[of Theorem~\ref{thm:GenEqu}]
Let us first treat the case of $d\geq 3$. As noted in the proof of Proposition~\ref{P:AprioriGeneratorEq}, 
since $\vN$ solves~\eqref{e:nonlinGenEq}, we have 
\begin{equ}[e:genu]
-\gen \vN-{\genap f_{i-1}+\genam \vN_i}
=  -\genap \vN_n\,.
\end{equ}
By~\eqref{e:Abound}, for any $k\ge 1$, we have 
\begin{equ}[e:chaosdecay]
\|(-\gensy)^{-\tfrac12}\genap \vN_n\|\lesssim \sqrt{n}\|(-\gensy)^{\tfrac12}\vN_n\|\leq \frac1{n^{k}}\|\cN^{k+1}(-\gensy)^{\tfrac12} \vN\|\,.
\end{equ}
Hence,~\eqref{e:ApproxH1} follows directly by~\eqref{e:AprioriMain} and~\eqref{e:Abound}. 
\medskip

We now turn to $d=2$, in which case $\tvN$ satisfies~\eqref{e:nonlinAnsatz}. 
Notice that, by the recursive definition of $\tvN$, $\tvN_1=\dots=\tvN_{i-1}=0$ and therefore
\begin{equ}
\tvN_i=(-\gensy-\gensy^\fw\cGN)^{-1}\genap f_{i-1}\,.
\end{equ}
Hence, upon replacing $\psi$ with $\genap f_{i-1}$ in~\eqref{e:genAnsatz}, we see that 
\begin{equs}
-\gen \tvN-{\genap f_{i-1}}+&{\genam \tvN_i}\\
=&-\gen \tvN-{\genap f_{i-1}}+\genam (-\gensy-\gensy^\fw\cGN)^{-1}{\genap f_{i-1}}\\
=&-\genap \tvN_n+\gensy^\fw\cGN \tvN_n\\
&+\big[-\genam(-\gensy-\gensy^\fw\cGN)^{-1}\genap+\gensy^\fw\cGN\big]P_i^{n-1} \tvN\,.
\end{equs} 
Now, for the first two terms we use Lemma~\ref{l:GenaBound} and $-\gensy^\fw\cGN\lesssim-\gensy$, 
so that we can bound them by 
\begin{equs}
\|(-\gensy)^{-1/2}[-\genap \tvN_n+\gensy^\fw\cGN \tvN_n]\|&\lesssim \sqrt{n}\|(-\gensy)^{1/2}\tvN_n\|\\
&\lesssim n^{-k}\|(-\gensy)^{1/2}{f_{i-1}}\|
\end{equs}
where the last step follows by Lemma~\ref{l:AnsatzvsGenEq}. 
For the third term, we note that 
the $\fock$-norm satisfies a variational formulation, i.e.
\begin{equs}
\|&(-\gensy)^{-1/2}\big[-\genam(-\gensy-\gensy^\fw\cGN)^{-1}\genap+\gensy^\fw\cGN\big]P_i^{n-1} \tvN\|\\
&=\sup_{\|\rho\|=1}\langle\rho, (-\gensy)^{-1/2}\big[-\genam(-\gensy-\gensy^\fw\cGN)^{-1}\genap+\gensy^\fw\cGN\big]P_i^{n-1} \tvN\rangle\\
&=\sup_{\|\rho\|=1}\langle(-\gensy)^{-1/2}\rho, \big[-\genam(-\gensy-\gensy^\fw\cGN)^{-1}\genap+\gensy^\fw\cGN\big]P_i^{n-1} \tvN\rangle\\
&\lesssim C \lambda_\eps^2 \sup_{\|\rho\|=1}\|\rho\|\|\cN^2(-\gensy)^{1/2}\tvN\|=C \lambda_\eps^2\|\cN^2(-\gensy)^{1/2}\tvN\|\\
&\leq C \lambda_\eps^2n^{2}\|(-\gensy)^{1/2}{f_{i-1}}\|\label{e:VarRepl}
\end{equs}
where in the third step we used the Replacement Lemma~\ref{l:Replacement} and in the last, 
Lemma~\ref{l:aprioriAnsatz}. By choosing $\eps_2(n,k)>0$ such that $\lambda_{\eps_2(n,k)}^2n^{2+k}<1$,~\eqref{e:ApproxH1} follows at once. 
%
\end{proof}

\subsection{The diffusivity}

{The main advantage of Theorem~\ref{thm:GenEqu} is that it allows to replace the nonlinearity with 
the sum of two terms: one involves the generator of our process, and therefore encodes its fluctuations, 
while the other consists of $\genam$ applied to an element in the Fock space of degree $i=2,3$,
so that overall it is of degree $i-1$. This suggests that this latter term should give us the additional 
diffusivity appearing in the limit~\eqref{e:SHEintro}. 
The goal of this section is to analyse this object and determine its behaviour. 

The analysis for $d=2$ and $d\geq 3$ will be very different, but the limits we need to establish 
have the same expression. Let us first state the main theorem and then provide different proofs in 
the two cases. 

\begin{theorem}\label{thm:MainConv2}
  Let $\phi,\psi\in\cS(\T^d)$, $f_1\eqdef\phi$ and $f_2=[\phi\otimes\psi]_{sym}$. For
  $n\in\N$, let $\wN$ be the solution of the truncated generator
  equation $\vN$ in~\eqref{e:nonlinGenEq} if $d\geq 3$ or be $\tvN$
  given by~\eqref{e:nonlinAnsatz} if $d=2$. {Let
    $i=2,3$}. Then
\begin{equ}
\lim_{n\to\infty}\limsup_{\eps\to 0} \|(-\gensy)^{-\frac12}[\genam\wN_i-\cD f_{i-1}]\|=0\,,\label{e:Diffusivity2}
\end{equ}
where $\cD$ is the operator introduced in Theorem~\ref{thm:main}, given by $\cD\eqdef D_{\SHE}\gensy^\fw$ 
for $\gensy^\fw$ defined according to~\eqref{e:L0w} and $D_\SHE>0$ as in the aforementioned statement. 
Further, we have  
\begin{equ}
\lim_{\eps\to0}\|\wN\|=0\,.\label{e:L2norm2}
\end{equ}
\end{theorem}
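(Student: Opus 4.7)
The plan is to prove the two statements separately, distinguishing the critical $d=2$ case (treated via the Replacement Lemma) from the super-critical $d\ge 3$ (requiring a direct symbol analysis).

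\textbf{Identification of $\cD$ in~\eqref{e:Diffusivity2}.} In the critical case $d=2$, projection of the ansatz~\eqref{e:nonlinAnsatz} onto $\fock_i$ gives the $n$-independent identity $\tvN_i = (-\gensy-\gensy^\fw\cGN)^{-1}\genap f_{i-1}$, since $\genap P_i^{n-1}\tvN$ lives only in chaoses of order $\ge i+1$. Applying the Replacement Lemma~\ref{l:Replacement} with $\psi_1 = f_{i-1}$ and varying $\psi_2\in\fock_{i-1}$ in the variational characterisation of the $(-\gensy)^{-1/2}$-norm (observing that $\cN$ acts as the scalar $i-1$ on $\fock_{i-1}$) yields
\[
\|(-\gensy)^{-1/2}[\genam \tvN_i-\gensy^\fw\cGN f_{i-1}]\|\lesssim \lambda_\eps^2\|(-\gensy)^{1/2}f_{i-1}\|\xrightarrow{\eps\to 0}0.
\]
It then remains to show $\gensy^\fw \cGN f_{i-1}\to D_\SHE\gensy^\fw f_{i-1}$ in the same norm: this is a diagonal Fourier multiplier whose symbol $G(\Ll(|k|^2/2))$ converges pointwise to $G(1)=D_\SHE$ as $\eps\to 0$ by inspection of~\eqref{e:L} and~\eqref{eq:defG}, and dominated convergence using the smoothness of $f_{i-1}\in\cS(\T^2)$ provides the majorant. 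The outer $n\to\infty$ limit is then trivial.

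For $d\ge 3$, the Replacement Lemma is unavailable and $\vN_i$ genuinely depends on $n$. The plan is to expand $\vN_i = (-\gensy)^{-1}[\genap f_{i-1} + \genam \vN_{i+1}]$, bound the second summand via the graded sector inequality~\eqref{e:Tbound} combined with the chaos-decay provided by Theorem~\ref{thm:GenEqu} (so that its contribution to $\genam\vN_i$ vanishes uniformly in $\eps$ as $n\to\infty$), and then identify the limit of $\genam(-\gensy)^{-1}\genap f_{i-1}$ by direct analysis of its Fourier symbol, which is essentially the discrete convolution $\lambda_\eps^2\sum_{k_1+k_2=q}\indN{k_1,k_2}(\fw\cdot q)^2/|k_{1:2}|^2$; this converges as $\eps\to 0$ to an explicit integral producing $D_\SHE(\fw\cdot q)^2$, the integrability of $|x|^{-2}$ near the origin in $d\ge 3$ being critical here.

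\textbf{Vanishing of $\|\wN\|$.} The central observation is that via either~\eqref{e:nonlinAnsatz} or the triangular system solving~\eqref{e:nonlinGenEq}, every chaos component of $\wN$ inherits at least one factor of $\lambda_\eps$ from the source $\genap f_{i-1}$; combined with the additional $|k|^{-2}$ smoothing produced by the accompanying resolvent and the summability of $|k|^{-4}$ on $\Z^d_0$ for $d\ge 2$, a direct symbol-level estimate yields $\|\wN_j\|\to 0$ as $\eps\to 0$ for each fixed $j\in\{i,\dots,n\}$. Summing over the finite range for fixed $n$ gives $\|\wN\|\to 0$.

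\textbf{Main obstacle.} The hardest step is the identification of $D_\SHE$ in the super-critical case $d\ge 3$, where the absence of the Replacement Lemma forces both a fine dimension-dependent computation of the convolution symbol of $\genam(-\gensy)^{-1}\genap$ and a careful control of the higher-chaos tail $\genam\vN_{i+1}$, the latter being necessary to legitimately interchange the limits $\eps\to 0$ and $n\to\infty$.
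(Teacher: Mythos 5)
Your $d=2$ argument is essentially the paper's: the triangular structure of~\eqref{e:nonlinAnsatz} gives $\genam\tvN_i=\genam(-\gensy-\gensy^\fw\cGN)^{-1}\genap f_{i-1}$ exactly, the Replacement Lemma~\ref{l:Replacement} trades this for $\gensy^\fw\cGN f_{i-1}$ up to an $O(\lambda_\eps^2)$ error, and the diagonal multiplier $G(\Ll(\cdot))\to G(1)=D_\SHE$ pointwise with dominated convergence from the smoothness of $f_{i-1}$; the bound $\|(-\gensy-\gensy^\fw\cGN)^{-1}\genap\psi\|\lesssim\lambda_\eps\|\cN(-\gensy)^{1/2}\psi\|$ then gives~\eqref{e:L2norm2}. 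This part is fine.

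The $d\ge3$ part has a genuine gap. The decomposition $\vN_i=(-\gensy)^{-1}[\genap f_{i-1}+\genam\vN_{i+1}]$ is correct, but your claim that the contribution of $\genam\vN_{i+1}$ vanishes as $n\to\infty$ uniformly in $\eps$ is false. The chaos decay coming from Proposition~\ref{P:AprioriGeneratorEq} is decay in the chaos index $j$, namely $\|(-\gensy)^{1/2}\vN_j\|\lesssim j^{-k}$, and Theorem~\ref{thm:GenEqu} only controls the truncation error $\genap\vN_n$ at the \emph{top} of the hierarchy; neither says anything about the fixed low-order component $\vN_{i+1}$, which remains of order one in both $\eps$ and $n$. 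Consequently, identifying $D_\SHE$ with the $\eps\to0$ limit of the one-loop symbol $\lambda_\eps^2\sum_{\ell+m=q}\indN{\ell,m}/(|\ell|^2+|m|^2)$ captures only the first term of an infinite expansion and would yield the wrong constant. The paper's Proposition~\ref{p:epsLim} instead resums the full hierarchy: writing $(-\genn)^{-1}=(-\gensy)^{-1/2}(I-T^\eps_{i,n})^{-1}(-\gensy)^{-1/2}$, expanding the resolvent as $\int_0^\infty e^{-s}e^{sT^\eps_{i,n}}\dd s$ and decomposing into products indexed by random-walk excursions, one finds $D^{n+2-i}=\int_0^\infty e^{-s}\sum_{a\ge1}\tfrac{s^{2a}}{(2a)!}\sum_{p\in\Pi^{(n+2-i)}_{2a,1}}c(p)\,\dd s$, where the contributions $c(p)$ of excursions of length $2a\ge4$ do not vanish (Proposition~\ref{p:limite}, Lemma~\ref{l:RiemanConv}). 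This resummation, together with the Cauchy property of $\{D^n\}_n$ and the variational lower bound for positivity, is exactly the content you flag as ``the main obstacle'' without supplying an argument.

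A related misconception: \eqref{e:nonlinGenEq} is \emph{not} triangular (each $\vN_j$ is coupled to both $\vN_{j-1}$ and $\vN_{j+1}$), and in $d\ge3$ the heuristic that each chaos component retains a surviving factor $\lambda_\eps$ fails — the normalised operators $(-\gensy)^{-1/2}\gena_{\pm}(-\gensy)^{-1/2}$ are $O(1)$, not $O(\lambda_\eps)$, which is precisely why the nonlinearity contributes in the limit. The paper obtains~\eqref{e:L2norm2} for $d\ge3$ not from a direct symbol estimate but from the high-frequency concentration of Lemma~\ref{l:finernullo}: $(-\gensy)^{1/2}\vN$ lives, up to negligible errors, on Fourier modes of size at least $\eps^{-\gamma}$, so the additional factor $(-\gensy)^{-1/2}$ gains $\eps^{\gamma}$.
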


Let us begin with the proof the above theorem in $d=2$ since, thanks to the replacement lemma, it is way easier. 

\begin{proof}[of Theorem~\ref{thm:MainConv2} in $d=2$]
Notice first that the recursive definition of $\tvN$ in~\eqref{e:nonlinAnsatz} immediately gives
\begin{equ}[e:A-D2]
\genam \tvN_i=\genam (-\gensy-\gensy^\fw\cGN)^{-1}\genap f_{i-1}\,.
\end{equ}
As a consequence, we have 
\begin{equs}
\|(-\gensy)^{-\frac12}&[\genam\tvN_i-\cD f_{i-1}]\|\\
=&\|(-\gensy)^{-\frac12}\big[\genam (-\gensy-\gensy^\fw\cGN)^{-1}\genap f_{i-1}-\cD f_{i-1}]\|\\
\leq&\|(-\gensy)^{-\frac12}\big[\genam (-\gensy-\gensy^\fw\cGN)^{-1}\genap-\gensy^\fw\cGN\big] f_{i-1}]\|\\
&+\|(-\gensy)^{-\frac12}[\gensy^\fw\cGN-\cD]f_{i-1}\|
\end{equs}
For the first term at the right hand side, we use the same variational formulation as in~\eqref{e:VarRepl} so that 
following the same steps, we deduce
\begin{equs}
\|(-\gensy)^{-\frac12}&\big[\genam (-\gensy-\gensy^\fw\cGN)^{-1}\genap-\gensy^\fw\cGN\big] f_{i-1}]\|\\
&=\sup_{\|\rho\|=1}\langle (-\gensy)^{-\frac12}\rho, \big[-\genam(-\gensy-\gensy^\fw\cGN)^{-1}\genap+\gensy^\fw\cGN\big] f_{i-1}\rangle\\
&\lesssim  \lambda_\eps^2 \sup_{\|\rho\|=1}\|\rho\|\|(-\gensy)^{\frac12}f_{i-1}\|= \lambda_\eps^2\|(-\gensy)^{\frac12}f_{i-1}\|
\end{equs}
where in the last step we used the Replacement Lemma~\ref{l:Replacement}. 
For the second, the analysis is identical for $f_1$ or $f_2$, so we only consider the latter. 
Using the explicit expression of $\gensy^\fw$, $\cGN$,  $\cD$ and $f_2$, we see that
\begin{equs}
\|(-\gensy)^{-\frac12}&\big[\gensy^\fw\cGN-\cD\big]f_2\|^2\\
&=\frac12\sum_{k_{1:2}} \frac{(\fw\cdot k)_{1:2}^4}{|k_{1:2}|^2}\Big[G(\Ll(\tfrac12|k_{1:2}|^2))- D_\SHE\Big]^2
\\&\times(|\hat\phi(k_1)|^2|\hat\psi(k_2)|^2+\hat\phi(k_1)\hat\phi(k_2)\hat\psi(-k_1)\hat\psi(-k_2))\,.
\end{equs} 
Since $\phi$ is smooth we can take the limit in $\eps$ inside the sum and, provided we 
take $D_\SHE=G(1)$, we conclude that the right hand side goes to $0$. 
Hence, the proof of~\eqref{e:Diffusivity2} is completed. 
\medskip

Concerning~\eqref{e:L2norm2}, we note that for any $j\in\N$ and $\psi\in\fock_j$, 
\begin{equs}
\|(-\gensy-&\gensy^\fw\cGN)^{-1}\genap\psi\|^2\\
&\lesssim  j^2\lambda_\eps^2\sum_{k_{1:j+1}}\frac{\indN{k_1,k_2}(\fw\cdot (k_1+k_2))^2|\hat\psi(k_1+k_2, k_{3:j+1})|^2}{[\frac12 |k_{1:j+1}|^2+\frac12(\fw\cdot k)^2_{1:j+1}G(\Ll(\frac12|k_{1:j+1}|^2))]^2}\\
&\lesssim j^2\lambda_\eps^2\sum_{k_{1:j}}(\fw\cdot k_1)^2|\hat\psi(k_{1:j})|^2\sum_{\ell+m=k_1}\frac{1}{ (|\ell|^2+|m|^2)^2}\,.
\end{equs}
Now, the inner sum is clearly finite, and therefore we deduce 
\begin{equ}[e:L2control]
\|(-\gensy-\gensy^\fw\cGN)^{-1}\genap\psi\|^2\lesssim\lambda_\eps^2\|\cN (-\gensy)^{1/2}\psi\|^2\,.
\end{equ}
We apply the previous estimate to the definition of $\tvN$, so that 
we obtain
\begin{equs}[e:uffa]
\|\tvN\|^2&\leq \|(-\gensy-\gensy^\fw\cGN)^{-1}\genap [\tvN +f_{i-1}]\|^2\\
&\lesssim \lambda_\eps^2\|\cN(-\gensy)^{1/2} [\tvN +f_{i-1}]\|^2
\end{equs}
where in the first step, we replaced the truncated operator $\cA^{\eps,+}_{i,n}$ with $\genap$. 
By the weighted a priori estimate on $\tvN$ in Lemma~\ref{l:AnsatzvsGenEq}, 
we see that the norm at the right hand side of~\eqref{e:uffa} is bounded uniformly in $\eps$. 
Now, since $\lambda_\eps$ goes to $0$ as $\eps\to 0$,~\eqref{e:L2norm2} follows. 
\end{proof}

We now turn to the case $d\geq 3$. The proof of~\eqref{e:Diffusivity2} (and~\eqref{e:L2norm2}) requires 
to be able to pass to limit first in $\eps$ and then in $n$. 
Since the first limit is the most delicate, we single it out in the statement below, which 
also includes~\eqref{e:L2norm2}, and postpone its proof 
to the next subsection. 

\begin{proposition}\label{p:epsLim}
  Consider the setting of Theorem~\ref{thm:MainConv2} for $d\ge 3$. 
Then, for every $n$ fixed, there exists a unique constant $D^n>0$ 
such that, for $i=2,3$,
\begin{equ}[e:nH1limit]
\lim_{\eps\to 0} \|(-\gensy)^{-\frac12}[\genam\vN_i-\cD^{n-i+2} f_{i-1}]\|=0\,,
\end{equ}
where $\cD^n$ is the operator given by $\cD^n\eqdef D^n\gensy^\fw$ 
for $\gensy^\fw$ defined according to~\eqref{e:L0w}.

Further, as $\eps\to 0$, $\vN$ converges to $0$ in $\fock$.
\end{proposition}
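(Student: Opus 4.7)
Since $n$ is fixed and $\gena_\pm$ shift the Fock chaos index by $\pm 1$, the truncated equation $-\genn\vN=\genap f_{i-1}$ is a finite tridiagonal system in $\bigoplus_{j=i}^n\fock_j$. The plan is to (i) solve it explicitly by downward recursion to obtain a finite Neumann representation of $\genam\vN_i$, (ii) pass to the $\eps\to 0$ limit termwise by the discrete-to-continuum Riemann-sum mechanism available in $d\ge 3$, and (iii) identify the limit with $D^{n-i+2}\gensy^\fw f_{i-1}$ via the momentum-commutation structure of Lemma~\ref{lem:essid}.

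\textbf{Step 1: downward recursion.} Set $\vN_{n+1}:=0$ and write $\vN_j=S^\eps_j\vN_{j-1}$ for $i<j\le n$. The tridiagonal structure gives $S^\eps_n=(-\gensy)^{-1}\genap$ and $S^\eps_j=(I-(-\gensy)^{-1}\genam S^\eps_{j+1})^{-1}(-\gensy)^{-1}\genap$ for $i<j<n$, so that
\[
\genam\vN_i=\genam\bigl(I-(-\gensy)^{-1}\genam S^\eps_{i+1}\bigr)^{-1}(-\gensy)^{-1}\genap f_{i-1}.
\]
Since $n$ is fixed, each resolvent is a finite Neumann series of length at most $n-i$, and the whole operator applied to $f_{i-1}$ becomes an explicit finite sum of multiple discrete momentum integrals. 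That each building block is well-defined and uniformly bounded in $\eps$ follows from Lemma~\ref{l:GenaBound} together with Proposition~\ref{P:AprioriGeneratorEq}.

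\textbf{Step 2: limit and identification.} Each Neumann term is a multiple discrete sum whose integrand is a rational function of the integration momenta, with exactly as many factors of $\lambda_\eps=\eps^{(d-2)/2}$ as there are applications of $\genap$ or $\genam$. The fundamental balance $\lambda_\eps^2\cdot\eps^{-d}\cdot\eps^2=1$ in $d\ge 3$ (already used in~\eqref{e:CrucialBound3+}) turns each such sum, after the change of variables $\ell=x/\eps$, into a Riemann sum for an integral of a fixed rational function over a product of unit balls; since $|x|^{-2}$ is integrable at the origin in $d\ge 3$, these integrals are absolutely convergent and dominated convergence (justified by the uniform bound of Lemma~\ref{l:GenaBound}) gives termwise passage to the limit. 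By Lemma~\ref{lem:essid} every operator commutes with the total-momentum operator, so the limit operator $\cK$ on $\fock_{i-1}$ does too. On a smooth $f_{i-1}$ the external momenta $k_1,\dots,k_{i-1}$ are $O(1)$ while the internal momenta introduced by $\genap$ live on the scale $1/\eps$ and dominate every Laplacian denominator $|\ell|^2+\cdots$; external momenta therefore survive in the limit only through the two outermost $\fw\cdot(\text{momentum})$ vertices produced by the first $\genap$ and the final $\genam$. Combined with the $\fw$-linearity and the $k\mapsto -k$ symmetry, this forces $\cK$ to have Fourier multiplier proportional to $\sum_{j=1}^{i-1}(\fw\cdot k_j)^2$, so $\cK=D^{n-i+2}\gensy^\fw$ for a uniquely determined constant $D^{n-i+2}$. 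Strict positivity is read off from the leading Neumann term ($S^\eps_{i+1}=0$), namely $\genam(-\gensy)^{-1}\genap f_{i-1}$, whose limit is an explicit positive Riemann integral over $\{|x|\le 1\}$ of $|x|^{-2}$-type integrands.

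\textbf{Step 3: $\vN\to 0$ in $\fock$, and main obstacle.} For the lowest chaos a direct computation gives
\[
\|\vN_i\|^2\lesssim \lambda_\eps^2\sum_{\ell+m=k}\frac{\indN{\ell,m}}{(|\ell|^2+|m|^2)^2}\lesssim \eps^{d-2}\cdot\eps^{4-d}=\eps^2\to 0
\]
in $d\ge 3$; the same scaling argument applied to each higher-chaos Neumann term (which only adds extra denominators and extra factors of $\lambda_\eps$) yields $\|\vN_j\|\to 0$ for every $j$, and summing the finitely many chaos levels gives $\|\vN\|\to 0$. The principal technical difficulty lies in Step 2, specifically in rigorously justifying the "external momenta are negligible compared with $1/\eps$" heuristic and in tracking signs and constants through the finite Neumann expansion, so that the limit really has the form $D^{n-i+2}\gensy^\fw$ for a unique and positive $D^{n-i+2}$ rather than only a bounded perturbation thereof.
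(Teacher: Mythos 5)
There is a genuine gap, and it sits at the foundation of your Step 1. You assert that the resolvents in your downward recursion, and hence the whole representation of $\genam\vN_i$, reduce to a \emph{finite} Neumann series "of length at most $n-i$". This is false: the operator $(-\gensy)^{-1}\genam S^\eps_{j+1}$ maps $\fock_j$ into itself (it goes up one chaos and comes back down), so its powers never terminate, and the same is true if one expands $(-\genn)^{-1}\genap f_{i-1}=(-\gensy)^{-1/2}(I-T^\eps_{i,n})^{-1}T^{\eps,+}(-\gensy)^{1/2}f_{i-1}$ directly. Worse, the series $\sum_b (T^\eps_{i,n})^b$ does not converge absolutely, because Lemma~\ref{l:GenaBound} only gives $\|T^\eps_{i,n}\|\lesssim \sqrt{n}$, not a bound below $1$. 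Consequently your Step 2 — termwise passage to the limit $\eps\to0$ in "each Neumann term" — has no justification: you cannot interchange the $\eps$-limit with an infinite sum whose tail you do not control uniformly in $\eps$. The paper's way around this is the key device of the whole section: since $T^\eps_{i,n}$ is skew-Hermitian, one writes $(I-T^\eps_{i,n})^{-1}=\int_0^\infty e^{-s}e^{sT^\eps_{i,n}}\,\dd s$ with $\|e^{sT^\eps_{i,n}}\|\le 1$, and the expansion $e^{sT}=\sum_b s^bT^b/b!$ is then absolutely summable with weights $e^{-s}s^b/b!$, which is what licenses dominated convergence in $b$ and the termwise limits (see \eqref{e:LimScalarProd}--\eqref{e:LimNorm}). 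Without this (or an equivalent uniform-in-$\eps$ control of the full expansion), your argument does not close.

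Two further points. First, the limit of each individual term is itself nontrivial: the integrands of the Riemann sums are ratios $P_g/Q_g$ whose denominators degenerate on lower-dimensional sets, so "dominated convergence justified by the uniform bound of Lemma~\ref{l:GenaBound}" is not enough — an operator-norm bound does not dominate the integrand pointwise. The paper proves uniform integrability by introducing the $\delta$-perturbed operators $T^{\eps,\delta,\sigma}$ of \eqref{e:TdeltaNonSym} and showing they remain bounded for $\delta<(d-2)/2$; and the identification of the limiting multiplier as a constant times $\sum_j(\fw\cdot k_j)^2$ (with the \emph{same} constant for $i=2$ and $i=3$, which is what makes $\cD^{n-i+2}$ consistent) requires the graph analysis of Lemmas~\ref{l:fastidi}, \ref{l:Card}, \ref{lemma:kernels} and \ref{l:RiemanConv}, not just a symmetry heuristic. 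You flag this as the "principal technical difficulty", but it is precisely the content of Proposition~\ref{p:limite} and cannot be waved through. Second, strict positivity of $D^n$ cannot be read off the leading term of the expansion, since the higher-order contributions carry factors $\iota^a$ and could a priori cancel it; the paper obtains positivity from the variational characterisation \eqref{e:princvar} of $\|(-\gensy)^{1/2}\vN[-k]\|^2$, restricting the supremum to $\fock_2$ and using \eqref{e:Abound} to bound the penalty term.
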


We now turn to the proof of Theorem~\ref{thm:MainConv2} for $d\geq 3$, for which, thanks to Proposition~\ref{p:epsLim} 
we only need to ensure that the limit in $n$ of the constants $D^n$ is unique and strictly positive.

\begin{proof}[of Theorem~\ref{thm:MainConv2} in $d\geq 3$]
Notice that~\eqref{e:L2norm2} was established in Proposition~\ref{p:epsLim}, so that we only need to 
focus on~\eqref{e:Diffusivity2}. For this, in light of~\eqref{e:nH1limit}, it suffices to prove that 
the sequence of operators $\{\cD^n\}_n$ converges to a unique limit, which translates into 
showing that the sequence of constants $\{D^n\}_n$ is Cauchy. 
The definition of the operator $\cD^n$ ensures that for any $f_1=\phi\in\cS(\T^d)$, we have 
\begin{equ}[e:D1]
\|(-\gensy)^{-\frac12}[\cD^{n+1}-\cD^n]f_1\|=|D^{n+1}-D^n|\|(-\gensy)^{-\frac12}(-\gensy^\fw)f_1\|\,.
\end{equ}
Furthermore,  the left hand side can be bounded by 
\begin{equs}[e:D2]
\|(-\gensy)^{-\frac12}[\cD^{n+1}-\cD^n]f_1\|\leq& \|(-\gensy)^{-\frac12}[\genam v^{\eps,n+1}_2-\cD^{n+1} f_{1}]\|\\
&+\|(-\gensy)^{-\frac12}[\genam\vN_2-\cD^{n} f_{1}]\| \\
&+\|(-\gensy)^{-\frac12}\genam(v^{\eps,n+1}_2-\vN_2)\|\,,
\end{equs}
where, for $\eps>0$, any $n\in\N$ and $\phi=f_1$ any smooth function, 
$\vN$ is the solution of the corresponding generator equation~\eqref{e:nonlinGenEq} truncated at level $n$.
To control the last term, we apply~\eqref{e:Abound} to get
\begin{equ}[e:A-Cauchy]
\|(-\gensy)^{-\frac12}\genam(v^{\eps,n+1}_2-\vN_2)\|\lesssim \|(-\gensy)^{\frac12}(v^{\eps,n+1}-\vN)\|\,.
\end{equ}
By definition of $v^{\eps,n+1}$ and $\vN$, we deduce that 
\begin{equs}
-\gen (v^{\eps,n+1}-v^{\eps,n})=&-\gen_{2,n+1}v^{\eps,n+1}-\genap v_{n+1}^{\eps,n+1}-\genam v_2^{\eps,n+1}\\
&+\gen_{2,n}v^{\eps,n}+\genap v_n^{\eps,n}+\genam v_2^{\eps,n}\\
=&-\genap (v_{n+1}^{\eps,n+1}-v_n^{\eps,n})-\genam (v_{2}^{\eps,n+1}-v_2^{\eps,n})\,.
\end{equs}
We now test both sides by $v^{\eps,n+1}-v^{\eps,n}$ and, using the antisymmetry of $\gena$, obtain
\begin{equs}
\|(-\gensy)^{1/2}(v^{\eps,n+1}-v^{\eps,n})\|^2&=\langle v^{\eps,n+1}-v^{\eps,n}, -\genap (v_{n+1}^{\eps,n+1}-v_n^{\eps,n})\rangle\\
&=\langle v^{\eps,n+1}-v^{\eps,n}, \genap v_n^{\eps,n}\rangle\label{e:DiffBoundn}
\end{equs}
where the terms containing $\genam (v_{2}^{\eps,n+1}-v_2^{\eps,n})\in\fock_1$ and 
$\genap v_{n+1}^{\eps,n+1}\in\fock_{n+2}$ dropped out because of  
orthogonality of Fock spaces with different indices 
(recall that $v_{1}^{\eps,n+1}=v_1^{\eps,n}=0=v_{n+2}^{\eps,n+1}=v_{n+1}^{\eps,n}$). 
Now, the right hand side can be bounded above by 
\begin{equs}
\langle v^{\eps,n+1}-&v^{\eps,n}, \genap v_n^{\eps,n}\rangle\\
&\leq \tfrac12 \|(-\gensy)^{1/2}(v^{\eps,n+1}-v^{\eps,n})\|^2 +2\|(-\gensy)^{-1/2}\genap v_n^{\eps,n}\|^2\,.
\end{equs}
We can go back to~\eqref{e:DiffBoundn} 
and get 
\begin{equ}[e:Intermezzo]
\|(-\gensy)^{\frac12}(v^{\eps,n+1}-v^{\eps,n})\|^2\lesssim \|(-\gensy)^{-\frac12}\genap v_n^{\eps,n}\|^2\lesssim \|\cN(-\gensy)^{\frac12}v_n^{\eps,n}\|^2
\end{equ}
where the last step follows by Lemma~\ref{l:GenaBound}. To control the last term, 
we apply Proposition~\ref{P:AprioriGeneratorEq}, which, for any $j>1$, gives 
\begin{equs}
\|\cN(-\gensy)^{\frac12}v_n^{\eps,n}\|^2\leq n^{-2(j-1)}\|\cN^{j}(-\gensy)^{\frac12}v_n^{\eps,n}\|^2\lesssim n^{-2j} \|(-\gensy)^{\frac12}f_1\|^2\,.
\end{equs}
At this point, we upper bound the right hand side of~\eqref{e:D2} with the above so that, 
by using~\eqref{e:D1} and taking the limit in $\eps$, ~\eqref{e:nH1limit} ensures that 
\begin{equ}
|D^{n+1}-D^n|\|(-\gensy)^{-\frac12}(-\gensy^\fw)f_1\|\lesssim n^{-2j} \|(-\gensy)^{\frac12}f_1\|^2\,.
\end{equ} 
Since $f_1$ is an arbitrary smooth test function, we can choose it to be such that the norm at the left hand side 
is non-zero. Consequently, the sequence $\{D^n\}_n$ is Cauchy and therefore converges to a unique 
constant $D_\SHE$, so that the proof of~\eqref{e:Diffusivity2} is concluded. 
\medskip

It remains to show that $D_\SHE$ is strictly positive. To do so, fix $f_1\eqdef e_{k}$ for 
$k\in\Z^2$ such that $w\cdot k\neq 0$ and set $\vN[-k]$ to be  
the solution of the truncated generator equation whose input function is $f_1$. Notice that 
\begin{equs}\label{e:Dpos}
D^n \frac{(w\cdot k)^2}{\sqrt 2|k|}&=\|(-\gensy)^{-\frac12}(-\cD^n)f_1\|\\
&\geq \|(-\gensy)^{-\frac12}\genam\vN_2[-k]\| -\|(-\gensy)^{-\frac12}[\genam\vN_2[-k]-\cD^{n} f_{1}]\|\,.
\end{equs}
Let us consider the first term at the right hand side more carefully. 
Since $\genam \vN_2[-k]\in\fock_1$, we can expand it in the Fourier basis, i.e. 
\begin{equs}[e:TransInv]
\genam \vN_2[-k]&=\sum_{j\in\Z^d_0} \langle \genam \vN_2[-k], e_{-j} \rangle e_j\\
&=-\sum_{j\in\Z^d_0} \langle \vN_2[-k], \genap e_{-j} \rangle e_j=- \langle  \vN_2[-k], \genap e_{-k} \rangle e_{k}\,,
\end{equs}
where the last step is a consequence of the translation invariance of
the equation, i.e. the commutation between the operators $\gensy$, $\gena$ with the momentum operator 
as stated in Lemma~\ref{lem:essid}.  We can now exploit orthogonality of
Fock spaces with different indices and the antisymmetry of $\gena$, to
see that
\begin{equs}
\langle \vN_2[-k], \genap e_{-k} \rangle&= \langle \vN[-k], -\gen \vN[k] \rangle \\
&=\langle \vN[-k], -\gensy \vN[k] \rangle=\|(-\gensy)^{1/2}\vN[-k]\|^2\,. 
\end{equs}
It follows that 
\begin{equ}
\|(-\gensy)^{-\frac12}\genam\vN_2[-k]\| = \frac{1}{|k|} \|(-\gensy)^{1/2}\vN[-k]\|^2
\end{equ}
so that we will only establish a lower bound on the latter norm. 
We have 
\begin{equs}
  \label{e:princvar}
\|&(-\gensy)^{1/2}\vN[-k]\|^2=\langle \overline{\genap e_{-k}}, (-\cL^\eps_{2,n})^{-1}\genap e_{-k}\rangle\\
&=\sup_{\rho\in\fock} \left\{2\langle \rho, \genap e_{-k}\rangle -\langle \rho, (-\gensy)\rho\rangle-\langle \rho, (-\gena_{2,n})^\ast(-\gensy)^{-1}(-\gena_{2,n})\rho\rangle\right\}\\
&\geq \sup_{\rho\in\fock_2} \left\{2\langle \rho, \genap e_{-k}\rangle -\langle \rho, (-\gensy)\rho\rangle-\langle \genap\rho, (-\gensy)^{-1}\genap\rho\rangle\right\}\,,
\end{equs}
where in the second equality we used \cite[Theorem 4.1]{Komorowski2012}, in
last step we restricted the supremum to $\fock_2$, thus making it smaller, and exploited 
Lemma~\ref{lem:essid} and the truncation $P^n_2$ in the definition of $\gena_{2,n}$ in~\eqref{e:genn} to 
simplify the last term. 
According to Lemma~\ref{e:Abound}, for $\psi\in\fock_2$, there exists a constant $C>0$ such that 
\begin{equ}
\langle \genap\psi, (-\gensy)^{-1}\genap\psi\rangle=\|(-\gensy)^{-1/2}\genap \psi\|^2\leq C\|(-\gensy)^{1/2} \psi\|^2
\end{equ}
which implies a further lower bound of the form 
\begin{equs}
\|(-\gensy)^{1/2}\vN[-k]\|^2&\geq\sup_{\psi\in\fock_2} \left\{2\langle \psi, \genap e_{-k}\rangle -\langle \psi, (-\gensy)\psi\rangle-C\langle \psi, (-\gensy)\psi\rangle\right\}\\
&=\frac{1}{1+C}\|(-\gensy)^{-1/2}\genap e_{-k}\|^2\,.
\end{equs}
We are left with estimating $\|(-\gensy)^{-1/2}\genap e_{k}\|^2$ from below. Thanks to~\eqref{e:gen}, 
we have an explicit expression for the latter, which reads
\begin{equs}
  \label{e:whichreads}
\|(-\gensy)^{-1/2}\genap e_{k}\|^2&=\frac{2}{(2\pi)^{d}}\lambda_\eps^2\sum_{\ell+m=k} \indN{\ell,m}\frac{(\fw\cdot(\ell+m))^2}{|\ell|^2+|m|^2}\\
&=\frac{2}{(2\pi)^{d}}(\fw\cdot k)^2 \left(\lambda_\eps^2\sum_{\ell+m=k}\frac{\indN{\ell,m}}{|\ell|^2+|m|^2}\right)\,.
\end{equs}
As a consequence, from~\eqref{e:Dpos} and the previous estimates, we obtain 
\begin{equs}
D^n \gtrsim \left(\lambda_\eps^2\sum_{\ell+m=k}\frac{\indN{\ell,m}}{|\ell|^2+|m|^2}\right) -\frac{|k|}{(w\cdot k)^2}\|(-\gensy)^{-\frac12}[\genam\vN_2[-k]-\cD^{n} f_{1}]\|\,. 
\end{equs}
Now, taking the $\eps\to0$ limit at both sides we see that, by~\eqref{e:nH1limit}, the second term converges to $0$, 
while the first goes to a strictly positive constant independent of $n$. 
Thus, the strict positivity of $D_\SHE$ follows by passing to the limit in $n$ so that the proof of the theorem is concluded. 
\end{proof}
}

\subsection{Proof of Proposition~\ref{p:epsLim}}\label{sec:Diff3}

The proof of Proposition~\ref{p:epsLim} is the most delicate part of the present paper. 
Since in $d\geq 3$ we have no analogue of the replacement lemma (and we do not even expect it to hold), we need to resort to 
a different strategy. Our proof is centred around a suitable expansion of the norms of $\vN$ (see e.g.~\eqref{e:LimNorm} below) 
which involves several subsequent applications of the operators $\genap$, $\genam$ interposed 
by inverses of $-\gensy$. In order to see why such an expansion is meaningful, 
it is convenient to write it in terms of operators which are skew-Hermitian and bounded in $\fock$ (uniformly in $\eps$), 
and this is the first thing we will do. 

Let $n\in\N$, $n\geq 2$, be fixed throughout the section. For $i=2,3$, let $\eps>0$ and 
$\cA^\eps_{i,n}$ be given as in \eqref{e:genn}. 
We introduce the operators $T^\eps_{i,n}$ and for $\sigma\in\{+,-\}$, $T^{\eps,\sigma}, T^{\eps,\sigma}_{i,n}$, 
according to
\begin{equs}[eq:papiro]
  T^{\eps,\sigma}&\eqdef (-\cL_0)^{-1/2}(\cA^\eps_{\sigma})(-\cL_0)^{-1/2}\,,\\
  T^\eps_{i,n}&\eqdef (-\cL_0)^{-1/2}(\cA^\eps_{i,n})(-\cL_0)^{-1/2}\,,\\
  T^{\eps,\sigma}_{i,n}&\eqdef (-\cL_0)^{-1/2}(\cA^{\eps,\sigma}_{i,n})(-\cL_0)^{-1/2}\,.
\end{equs}
Note that, by Lemma~\ref{l:GenaBound} and in particular~\eqref{e:Tbound}, 
each of the operators above is  bounded in $\fock$ uniformly in $\eps$ (the bound depends on $n$, 
but this is irrelevant as $n$ is fixed) and, by Lemma~\ref{lem:essid}, it is easy to see that $T^\eps_{i,n}$ 
is skew-Hermitian. 

To have a notation for multiple applications of the operators $T^{\eps,\sigma}$, $\sigma\in\{+,-\}$, 
we set, for $m\ge1$ and $a\ge 0$, $\Pi^{(n)}_{a,m}$ to be the set
of simple random walk paths $p=(p_0,\dots,p_a)$ consisting of $a$ steps, starting at height $1$, i.e. $p_0=1$,
ending at height $m$, i.e. $p_a=m$, that do not reach height $n+1$ and that can have
height $1$ only at the endpoints.  
Given $p\in \Pi^{(n)}_{a,m}$ we let $|p|\eqdef a$ and
$\mathcal T^\eps_p$ be defined according to 
\begin{equ}[e:TTT]
\ST^\eps_p\eqdef  T^{\eps,\sigma_a}T^{\eps,\sigma_{a-1}}\dots T^{\eps,\sigma_1}
\end{equ}
where $\sigma_i=+$ if $p_{i}-p_{i-1}=1$, and $\sigma_i=-$ if $p_{i}-p_{i-1}=-1$, and $\mathcal T^\eps_p\eqdef 1$ if $|p|=0$.
In other words, there is a factor $T^{\eps,+}$
(resp. $T^{\eps,-}$) whenever the path takes a step up
(resp. down)\footnote{For instance, for $a=4$ the path
  $p=(1,2,3,2,3)\in \Pi^{(n)}_{4,3}, n\ge 4$ corresponds to $\mathcal T^\eps_p=T^{\eps,+}T^{\eps,-}T^{\eps,+}T^{\eps,+}$.}.

We enclose the most technical aspect of the proof of Proposition~\ref{p:epsLim} 
in Proposition~\ref{p:limite} below. Before giving the proof of the latter, 
we will show how it implies the former. 

\begin{proposition}
  \label{p:limite} 
For $j_1,j_2\in\Z^d_0$, let $\bj$ be either $j_1$ or $j_{1:2}$, and denote by $e_{\bj}$ either $e_{j_1}$ or 
$e_{j_{1:2}}\eqdef [e_{j_1}\otimes e_{j_2}]_{sym}$,  the symmetric version  of $e_{j_1}\otimes e_{j_2}$, 
where $e_{j_i}$ is the $j_i$-th element of the Fourier basis. 
In the notations introduced above, for every $a_1,a_2\geq0$ and $p^r\in \Pi^{(n)}_{a_r,1}$, $r=1,2$  (so that in particular $a_r\in 2\N$)
the limit\footnote{in \eqref{e:cp}, the overbar denotes complex conjugation}
\begin{equation}
  \label{e:cp}
  \lim_{\eps\to0} \langle \overline {\mathcal T^\eps_{p^1} e_{\bj}},\mathcal T^\eps_{p^2} e_{\bj}\rangle= {c(p^1)} c(p^2)
\left(\frac{\|(-\gensy)^{-\frac12}(-\gensy^\fw) e_{\bj}\|}{|\bj|/\sqrt{2}}\right)^{\sum_{r=1,2}\1_{a_r\ne0}}
\end{equation}
exists, with $c(p)\in\R$ depending only on the path $p$ and such that 
 $c(p)=1$ if $|p|=0$. 

Furthermore, for every $a_r>0$, $m_r\ge 2$, 
and $p^r\in \Pi^{(n)}_{a_r,m_r}$, $r=1,2$, one has
\begin{equation}
\label{e:nullo}  
\lim_{\eps\to0} \langle \overline {\mathcal T^\eps_{p^1} e_\bj},(-\gensy)^{-1}\mathcal T^\eps_{p^2} e_\bj\rangle= 0\,.
\end{equation}
\end{proposition}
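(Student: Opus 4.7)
The plan is to compute both scalar products by expanding $\mathcal T^\eps_{p^r} e_{\bj}$ explicitly in Fourier variables using formulas~\eqref{e:gen} and then analysing the limit as a multi-dimensional Riemann sum. Each application of $T^{\eps,+}$ from an element in chaos $k$ introduces a new summation variable, cut off by $\indN{\cdot,\cdot}$ and weighted by a factor $2/|k_{1:k+1}|^2$ coming from the two $(-\gensy)^{-1/2}$ factors; each $T^{\eps,-}$ contracts a pair of variables into a single one with analogous weight. By Lemma~\ref{lem:essid}, the momentum operator commutes with every $T^{\eps,\sigma}$, so the total Fourier momentum of $\mathcal T^\eps_{p^r} e_{\bj}$ equals that of $\bj$.

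Writing out the scalar product $\langle \overline{\mathcal T^\eps_{p^1} e_{\bj}},\mathcal T^\eps_{p^2} e_{\bj}\rangle$, one obtains a finite sum with prefactor $\lambda_\eps^{|p^1|+|p^2|}$. After imposing momentum conservation and the matching of Fock levels between the two paths, the number of independent summation indices is $(|p^1|+|p^2|)/2$ (using $a_r\in 2\N$). Since $\lambda_\eps^2=\eps^{d-2}$ in $d\ge 3$, each free index contributes a Riemann sum that converges, as in~\eqref{e:CrucialBound3+}, to a bounded integral over $\{|x|_\infty\le 1\}$, so the overall limit exists and is a finite multiple integral.

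To identify the limit with the factorised form in~\eqref{e:cp}, the key observation is that each internal summation variable is of order $1/\eps$ and therefore dominates the fixed external momentum $\bj$ inside both the cut-off indicators (which converge to $\{|x|_\infty\le 1\}$ independently of $\bj$) and the denominators (for instance $|\ell|^2+|\ell-k|^2 \to 2|\ell|^2$ whenever $\ell$ is internal and $k$ involves $\bj$). The two paths therefore decouple except at the \emph{terminal} legs---the first $T^{\eps,+}$ lifting $e_{\bj}$ out of its initial chaos and the last $T^{\eps,-}$ returning to it---where one collects exactly a factor $\fw\cdot k$ per Fourier index $k$ of $\bj$, together with the denominator $|\bj|^2$. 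A direct comparison with the spectral action of $\gensy$ and $\gensy^\fw$ on $e_{\bj}$, given by~\eqref{e:L0w}, identifies the resulting factor with $\|(-\gensy)^{-1/2}(-\gensy^\fw)e_{\bj}\|/(|\bj|/\sqrt{2})$, raised to the number of nontrivial paths; the remaining purely diagrammatic integral yields the shape-dependent constant $c(p)$.

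For~\eqref{e:nullo}, the same expansion produces one extra denominator $|k_{1:m_2}|^2/2$ coming from the additional $(-\gensy)^{-1}$. Since $m_2\ge 2$, this denominator involves at least two summation variables of order $1/\eps$, yielding an extra suppression of order $\eps^2$ (with possible logarithmic corrections in $d=3,4$) that cannot be absorbed into the critical Riemann-sum scaling, so the limit vanishes. The hard part of the plan is the decoupling argument: one must separate, for each $\eps>0$, a \emph{diagonal} contribution in which the internal momenta of the two paths never cross from the \emph{off-diagonal} ones, in the spirit of the decomposition underlying the proof of Lemma~\ref{l:Replacement}, and show that only the diagonal one survives in the $\eps\to 0$ limit.
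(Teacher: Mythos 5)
Your outline follows the same broad strategy as the paper (expand $\mathcal T^\eps_p e_{\bj}$ in Fourier variables, exhibit the result as a rescaled Riemann sum with prefactor $\lambda_\eps^{|p^1|+|p^2|}=\eps^{(d-2)(|p^1|+|p^2|)/2}$, and pass to a limiting integral), but two essential steps are missing, and you yourself flag one of them as "the hard part of the plan" without executing it.

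First, convergence of the Riemann sums. The bound \eqref{e:CrucialBound3+} only gives \emph{uniform boundedness} of the sums; it does not give convergence to the integral, because after rescaling the integrand is a ratio $P_g/Q_g$ of polynomials whose denominator $Q_g$ vanishes on a set of measure zero, so the piecewise-constant approximants do not converge uniformly. The paper closes this gap by a uniform-integrability argument: it introduces $\delta$-weighted operators $T^{\eps,\delta,\pm}$ (see \eqref{e:GenaNonSymDelta}--\eqref{e:TdeltaNonSym}) whose uniform boundedness for $\delta<(d-2)/2$ shows that the integrands are bounded in $L^{1+\delta}$ uniformly in $\eps$, which is what makes the limit exist and be finite. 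Relatedly, your claim that the internal momenta are "of order $1/\eps$" is not automatic --- the sums run over all $|\ell|\le 1/\eps$ --- and is itself a lemma (the low-mode contribution is $O(\eps^{(1-\gamma)(d-2)})$), which is then the actual mechanism behind \eqref{e:nullo}: for $m\ge2$ at least one momentum in the final state is newly created, hence larger than $\eps^{-\gamma}$, and the extra $(-\gensy)^{-1}$ produces a factor $\eps^{2\gamma}$.

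Second, the factorisation $c(p^1)c(p^2)$ and the two-particle case $\bj=j_{1:2}$. For paths ending at height $1$ the factorisation is not a "diagonal vs.\ off-diagonal" decoupling in the sense of Lemma~\ref{l:Replacement}; it follows because $\ST^\eps_p e_{\bj}$ then lives in a fixed low chaos and, by momentum conservation, is supported on a single Fourier mode, so the scalar product of the two paths literally splits as a product of two one-path quantities. The genuinely delicate point, which your proposal does not address, is showing that the \emph{same} constants $c(p)$ govern both $\bj=j_1$ and $\bj=j_{1:2}$. This requires (i) showing that all graphs in which the second particle undergoes any branching or merging contribute zero in the limit, and (ii) a combinatorial correspondence matching each one-particle graph with exactly two two-particle graphs carrying the same limiting constant. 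Without these ingredients the right-hand side of \eqref{e:cp}, with its exponent $\sum_r\1_{a_r\ne0}$ and its $\bj$-dependence entering only through $\|(-\gensy)^{-1/2}(-\gensy^\fw)e_{\bj}\|/(|\bj|/\sqrt2)$, cannot be established.
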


\begin{proof}[of Proposition \ref{p:epsLim} given Proposition \ref{p:limite}] 
Let us first introduce a notation thanks to which we can express both the norm in~\eqref{e:nH1limit} 
and the $\fock$-norm of $\vN$. For this, let $\cS$ and $c$ be, respectively, a linear operator and 
a constant given by either $\cS=P_{i-1}\genam$ and $c=1$ or $\cS=(-\gensy)^{1/2}$ and $c=0$, 
with $P_{i-1}$ the orthogonal projector on the $(i-1)$-th chaos, $\Gamma L^2_{i-1}$.
Then, the quantity of interest in \eqref{e:nH1limit}  is 
\begin{equ}
\|(-\gensy)^{-\frac12}[\cS\vN-c\cD^{n-i+2}f_{i-1}]\|\,.
\end{equ}
The rest of the proof will be divided in several steps, the first of which reduces our 
study to the case where the test functions $\phi$ and $\psi$ in the definition of $f_{i-1}$ are 
two given elements of the Fourier basis. 
\medskip

\noindent\textit{Step 1: Reduction to Fourier basis.} 
For this step, we focus on the case $i=3$ since $i=2$ is easier and can be argued similarly. 
For $j_1,j_2\in\Z^d_0$, let $\vN[-j_{1:2}]$ be the solution to~\eqref{e:nonlinGenEq} in which we take $f_2$ to be 
$e_{j_{1:2}}=[
e_{j_1}\otimes e_{j_2}]_{sym}$. By linearity, we clearly have
$\vN=\sum_{j_1,j_2}\hat\phi(j_1)\hat\psi(j_2)\vN[-j_{1:2}]$. 
The definition of the norm in $\fock$ immediately gives 
\begin{equ}
\|(-\gensy)^{-\frac12}[\cS\vN-c\cD^{n-1}f_2]\|^2=\sum_{k_1,k_2}\frac2{ |k_{1:2}|^2}\Big|\langle \cS\vN-c\cD^{n-1}f_2, \frac{e_{-k_{1:2}}}{\|e_{k_{1:2}}\|}\rangle\Big|^2\,.
\end{equ}
Now, for $\alpha\geq 0$, we can apply Cauchy-Schwarz to get
\begin{equs}
&\Big|\langle \cS\vN-c\cD^{n-1}f_2, e_{-k_{1:2}}\rangle\Big|^2\\
&=\Big|\sum_{j_1,j_2}\hat\phi(j_1)\hat\psi(j_2)\langle \cS\vN[-j_{1:2}]-c\cD^{n-1}e_{j_{1:2}}, e_{-k_{1:2}}\rangle\Big|^2\\
&\leq \sum_{j_1,j_2}|j_{1:2}|^{2\alpha}|\hat\phi(j_1)|^2|\hat\psi(j_2)|^2\sum_{j_1,j_2}\frac{1}{|j_{1:2}|^{2\alpha}}\Big|\langle \cS\vN[-j_{1:2}]-c\cD^{n-1}e_{j_{1:2}}, e_{-k_{1:2}}\rangle\Big|^2\,,
\end{equs}
so that we conclude
\begin{equs}
&\|(-\gensy)^{-\frac12}[\cS\vN-c\cD^{n-1}f_2]\|^2\\
&\lesssim \|(-\gensy)^{\alpha}f_2\|^2\sum_{j_{1:2}}\frac{1}{|j_{1:2}|^{2\alpha}}\sum_{k_1,k_2}\frac{2}{ |k_{1:2}|^2}\Big|\langle \cS\vN[-j_{1:2}]-c\cD^{n-1}e_{j_{1:2}}, e_{-k_{1:2}}\rangle\Big|^2\\
&= \|(-\gensy)^{\alpha}f_2\|^2\sum_{j_{1:2}}\frac{1}{|j_{1:2}|^{2\alpha}}\|(-\gensy)^{-\frac12}[\cS\vN[-j_{1:2}]-c\cD^{n-1}e_{j_{1:2}}]\|^2\,.
\end{equs}
Now, the norm inside the sum is bounded by (a constant times) $|j_{1:2}|^2$. 
Indeed, if $\cS=P_2\genam$ and $c=1$, thanks to~\eqref{e:Abound}, 
the apriori estimates in Proposition~\ref{P:AprioriGeneratorEq}
and the definition of $\cD^{n-1}$ in Proposition~\ref{p:epsLim}, we get 
\begin{equs}
\|(-\gensy)^{-\frac12}&[\genam\vN_3[-j_{1:2}]-\cD^{n-1}e_{j_{1:2}}]\|^2\\
&\lesssim \|(-\gensy)^{1/2}\vN_3[-j_{1:2}]\|^2 + \|(-\gensy)^{-\frac12}\cD^{n-1}e_{j_{1:2}}\|^2\\
&\lesssim \|(-\gensy)^{-1/2}\genap e_{j_{1:2}}\|^2+\|(-\gensy)^{1/2}e_{j_{1:2}}\|^2\\
&\lesssim \|(-\gensy)^{1/2}e_{j_{1:2}}\|^2\lesssim |j_{1:2}|^2\,.
\end{equs}
In instead, $\cS=(-\gensy)^{1/2}$ and $c=0$, then 
\begin{equ}
\|\vN[-j_{1:2}]\|^2\leq \|(-\gensy)^{1/2}\vN[-j_{1:2}]\|^2\leq \|(-\gensy)^{1/2}e_{j_{1:2}}\|^2\lesssim |j_{1:2}|^2\,.
\end{equ}
As a consequence, upon choosing $\alpha$ big enough, and we are allowed to since $f_2$ is smooth, 
by dominated convergence the statement follows if we prove that for all given $j_1, j_2\in\Z^d_0$, 
the norm in the sum vanishes. 
Now, by opening up the squared norm and using the definition of $\cD^{n-1}$ we see that this  
is equivalent to showing that 
\begin{equ}[e:Norm]
\lim_{\eps\to 0} \|(-\gensy)^{-\frac12}\cS\vN[-j_{1:2}]\|=c D^{n-1} \|\gensy^{-\frac12}\gensy^\fw e_{j_{1:2}}\|
\end{equ}
and, if $c\neq 0$, that
\begin{equ}[e:ScalarProd]
\lim_{\eps\to 0} \langle e_{-j_{1:2}}, (-\gensy)^{-\frac12}\cS\vN[-j_{1:2}]\rangle=D^{n-1} \|\gensy^{-\frac12}\gensy^\fw e_{j_{1:2}}\|\,,
\end{equ}
and similarly for $i=2$, in which case instead of $\vN[-j_{1:2}]$, $\cD^{n-1}$ and $e_{j_{1:2}}$, we have 
$\vN[-j_1]$, $\cD^n$ and $e_{j_1}$ respectively. 

From now on, we adopt the notations of Proposition~\ref{p:limite} and let $\bj$ be either $j_1$ if $i=2$ or $j_{1:2}$ if $i=3$. 
\medskip

\noindent\textit{Step 2: Analysis of $\cS\vN[-\bj]$.} Recall the definition of the operators 
$T^\eps_{i,n}$ and $T^{\eps,\sigma}$, $\sigma\in\{+,-\}$ in~\eqref{eq:papiro}. 
Thanks to \eqref{e:nonlinGenEq},
\begin{equs}[e:ven-k]
\vN[-\bj]&=(-\genn)^{-1}\genap e_{\bj}=(-\gensy)^{-1/2}(I-T^\eps_{i,n})^{-1}T^{\eps,+}(-\gensy)^{1/2}e_{\bj}\\
&=\frac{|\bj|}{\sqrt{2}}(-\gensy)^{-1/2}(I-T^\eps_{i,n})^{-1}T^{\eps,+}e_{\bj}\\
&=\frac{|\bj|}{\sqrt{2}}(-\gensy)^{-1/2}\Big(\int_0^\infty e^{-s} e^{s T^\eps_{i,n}}\dd s\Big)T^{\eps,+}e_{\bj} \,. 
\end{equs}
Hence, the scalar product in~\eqref{e:ScalarProd} satisfies
\begin{equs}
\frac{\sqrt{2}}{|\bj|}\langle e_{-\bj}, &(-\gensy)^{-\frac12}\cS\vN[-\bj]\rangle\\
&=\langle e_{-\bj}, (-\gensy)^{-\frac12}\cS(-\gensy)^{-1/2}\Big(\int_0^\infty e^{-s} e^{s T^\eps_{i,n}}\dd s\Big)T^{\eps,+}e_{\bj}\rangle\\
&=\int_0^\infty e^{-s}\langle e_{-\bj}, (-\gensy)^{-\frac12}\cS(-\gensy)^{-1/2} e^{s T^\eps_{i,n}}T^{\eps,+}e_{\bj}\rangle\dd s
\end{equs}
where in the last step we applied Fubini, which is allowed as the operator 
$e^{s T^\eps_{2,n}}$ has norm at most $1$, since the spectrum of $T^\eps_{i,n}$ is purely imaginary. 
For the same reason, once we take the $\eps$-limit at both sides we can apply dominated convergence 
and pass the limit inside the integral in $s$, i.e. 
\begin{equs}\label{e:LimScalarProd}
&\lim_{\eps\to 0} \frac{\sqrt{2}}{|\bj|}\langle e_{-\bj}, (-\gensy)^{-\frac12}\cS\vN[-\bj]\rangle\\
&=\int_0^\infty e^{-s}\lim_{\eps\to 0}\langle e_{-\bj}, (-\gensy)^{-\frac12}\cS(-\gensy)^{-1/2} e^{s T^\eps_{i,n}}T^{\eps,+}e_{\bj}\rangle\dd s\\
&=\int_0^\infty e^{-s}\lim_{\eps\to 0}\langle e_{-\bj}, (-\gensy)^{-\frac12}\cS(-\gensy)^{-1/2} \sum_{b\ge0}\frac{s^b}{b!} (T^\eps_{i,n})^bT^{\eps,+}e_{\bj}\rangle\dd s\\
&=\int_0^\infty e^{-s} \sum_{b\ge0}\frac{s^b}{b!}\lim_{\eps\to 0} \langle e_{-\bj}, (-\gensy)^{-\frac12}\cS(-\gensy)^{-\frac12} (T^\eps_{i,n})^{b}T^{\eps,+}e_{\bj}\rangle\dd s 
\end{equs}
where in the last step we could exchange the limit with both the summation over $b$ and the scalar product 
because, for fixed $n,s$, the norm of $s T^\eps_n$ is uniformly bounded (recall that after the first, in the 
subsequent steps $s$ can be treated as fixed). 

For the norm in~\eqref{e:Norm}, we can follow the exact same arguments so that we are led to 
\begin{equs}[e:LimNorm]
&\lim_{\eps\to 0}\frac{2}{|\bj|^2}\|(-\gensy)^{-\frac12}\cS\vN[-\bj]\|^2\\
&=\lim_{\eps\to 0}\frac{2}{|\bj|^2}\langle \overline{(-\gensy)^{-\frac12}\cS\vN[-\bj]}, (-\gensy)^{-\frac12}\cS\vN[-\bj]\rangle\\
&=\int_0^\infty\int_0^\infty \dd s_1\dd s_2 \,e^{-s_1-s_2} \sum_{b_1,b_2\ge0}\frac{s_1^{b_1}s_2^{b_2}}{b_1! b_2!}\times\\
&\times \lim_{\eps\to 0}\langle \overline{(-\gensy)^{-\frac12}\cS(-\gensy)^{-\frac12} (T^\eps_{i,n})^{b_1}T^{\eps,+}e_{\bj}}, (-\gensy)^{-\frac12}\cS(-\gensy)^{-\frac12} (T^\eps_{i,n})^{b_2}T^{\eps,+}e_{\bj}\rangle\,.
\end{equs}

The previous arguments guarantee that 
we are left to determine, for every $b_1,b_2\ge0$ and $\theta\in\{0,1\}$, the limit 
\begin{equ}[e:AtLast]
\lim_{\eps\to0}\langle \overline{[(-\gensy)^{-\frac12}\cS(-\gensy)^{-\frac12} (T^\eps_{i,n})^{b_1}T^{\eps,+}]^\theta e_{\bj}}, (-\gensy)^{-\frac12}\cS(-\gensy)^{-\frac12} (T^\eps_{i,n})^{b_2}T^{\eps,+}e_\bj\rangle
\end{equ}
and to do so we will separately consider the case in which $\cS=P_{i-1}\genam$ and $c=1$, or
$\cS=(-\gensy)^{\frac12}$ and $c=0$, starting with the former. 
\medskip

\noindent\textit{Step 3: The case $\cS=P_{i-1}\genam$ and $c=1$.} With this choice of $\cS$ and $c$ we need to 
prove both~\eqref{e:Norm} and~\eqref{e:ScalarProd}, which respectively correspond to taking $\theta=1$ and 
$\theta=0$ in~\eqref{e:AtLast}. Furthermore, by~\eqref{eq:papiro}, we have 
\begin{equ}
(-\gensy)^{-\frac12}\cS(-\gensy)^{-\frac12}=(-\gensy)^{-\frac12}P_{i-1}\genam(-\gensy)^{-\frac12}=P_{i-1}  T^{\eps,-}
\end{equ}
so that the scalar product in~\eqref{e:AtLast} becomes
\begin{equ}
\langle \overline{[P_{i-1} T^{\eps,-}(T^\eps_{i,n})^{b_1}T^{\eps,+}]^\theta e_\bj}, P_{i-1} T^{\eps,-} (T^\eps_{i,n})^{b_2}T^{\eps,+}e_\bj\rangle\,.
\end{equ}
Let us begin with a few remarks. First of all, because of the projection $P_{i-1}$ and the fact that $e_\bj\in\fock_{i-1}$, 
$b_2$ must necessarily be even, hence we set $a_2\in\N\setminus\{0\}$ 
to be such that $b_2=2a_2-2$. The same holds for $b_1$ if $\theta=1$ (and we write $b_1=2a_1-2, a_1\in\N\setminus\{0\}$) while if $\theta=0$, $b_1$ plays no role and for convenience we will set $a_1\equiv 0$.
Next, we expand 
\begin{equs}\label{e:expansion}
\langle &\overline{[P_{i-1}  T^{\eps,-}(T^\eps_{i,n})^{2a_1-2}T^{\eps,+}]^\theta e_\bj}, P_{i-1} T^{\eps,-} (T^\eps_{i,n})^{2a_2-2}T^{\eps,+}e_\bj\rangle\\
&=\langle \overline{[P_{i-1}  T^{\eps,-}(T^{\eps,+}_{i,n}+T^{\eps,-}_{i,n})^{2a_1-2}T^{\eps,+}]^\theta e_\bj}, \\
&\qquad\qquad\qquad\qquad\qquad\qquad\qquad\qquad P_{i-1} T^{\eps,-} (T^{\eps,+}_{i,n}+T^{\eps,-}_{i,n})^{2a_2-2}T^{\eps,+}e_\bj\rangle\\
&=
\sum_{\substack{p_r\in\widetilde\Pi_{2a_r,1}\\r=1,2}}\langle \overline{[T^{\eps,\sigma_{2a_1}}T^{\eps,\sigma_{2a_1-1}}_{i,n}\dots T^{\eps,\sigma_2}_{i,n} T^{\eps,\sigma_1}]^\theta e_\bj}, \\
&\qquad\qquad\qquad\qquad\qquad\qquad\qquad\qquad T^{\eps,\sigma_{2a_2}}T^{\eps,\sigma_{2a_2-1}}_{i,n}\dots T^{\eps,\sigma_2}_{i,n} T^{\eps,\sigma_1} e_\bj\rangle
\end{equs}
where we define $\widetilde\Pi_{2a,1}$ to be the set of all walks $p$ which start at time $1$ from $1$, i.e. $p_0=1$, 
end at time $2a$ at $1$, i.e. $p_{2a}=1$, have increments of size $1$ and are such that 
$p_1-p_0=1=p_{2a-1}-p_{2a}$. If either of the previous 
conditions were violated the projection onto $P_{i-1}$ would set the scalar product to $0$.
Moreover, the last condition on 
the $p$'s ensures that $\sigma_1=+$ and $\sigma_{2a}=-$ (so that the $T^{\eps,-}$ and $T^{\eps,+}$ in the line 
above are present). 

Because of the
projector $P^n_i$ entering the definition of $T^{\eps,\sigma}_{i,n}$,
if a path $p_r$ reaches height $n+3-i$ or height $i-1$ (except at the
endpoints), then the corresponding scalar product is annihilated. 
Hence, we can replace the index set of the sum at the right hand side of~\eqref{e:expansion} to 
$\Pi^{(n+2-i)}_{2a_r,1}$, and get
\begin{equs}
\,&\langle \overline{[P_{i-1}  T^{\eps,-}(T^\eps_{i,n})^{2a_1-2}T^{\eps,+}]^\theta e_\bj}, P_{i-1} T^{\eps,-} (T^\eps_{i,n})^{2a_2-2}T^{\eps,+}e_\bj\rangle\\
& =
\sum_{\substack{p_r\in\Pi^{(n+2-i)}_{2a_r,1}\\ r=1,2}}\langle \overline{[T^{\eps,\sigma_{2a_1}}T^{\eps,\sigma_{2a_1-1}}\dots T^{\eps,\sigma_2} T^{\eps,\sigma_1}]^\theta e_\bj}, 
T^{\eps,\sigma_{2a_2}}\dots T^{\eps,\sigma_2} T^{\eps,\sigma_1} e_\bj\rangle\\
&=
\sum_{\substack{p_r\in\Pi^{(n+2-i)}_{2a_r,1}\\ r=1,2}}\langle \overline{\ST^\eps_{p_1} e_\bj}, \ST^\eps_{p_2} e_\bj\rangle\\
\end{equs}
where in the first step we replaced all instances of $T^{\eps,\sigma}_{i,n}$ with $T^{\eps,\sigma}$, since 
for $p_r\in\Pi^{(n+2-i)}_{2a_r,1}$ the projection $P^n_i$ acts as the identity operator,
and in the second 
we used the convention that  $a_1=0$ if $\theta=0$, in which case $p_1$ has length zero and $\ST^\eps_{p_1} =1$.

At last, getting back to~\eqref{e:LimScalarProd}, and applying~\eqref{e:cp} in Proposition~\ref{p:limite}, 
we obtain
\begin{equs}
\lim_{\eps\to 0} &\langle e_{-\bj}, (-\gensy)^{-\frac12}\cS\vN[-\bj]\rangle \\
&= \frac{|\bj|}{\sqrt{2}}\int_0^\infty e^{-s} \sum_{a\geq 1}\frac{s^{2a}}{(2a)!}\lim_{\eps\to 0} \sum_{p\in\Pi^{(n+2-i)}_{2a,1}}\langle  e_{-\bj}, \ST^\eps_{p} e_\bj\rangle\dd s \\
&= \frac{|\bj|}{\sqrt{2}}\int_0^\infty e^{-s} \sum_{a\geq 1}\frac{s^{2a}}{(2a)!} \sum_{p\in\Pi^{(n+2-i)}_{2a,1}} \lim_{\eps\to 0}\langle  e_{-\bj}, \ST^\eps_{p} e_\bj\rangle\dd s \\
&=\|(-\gensy)^{-\frac12}(-\gensy^\fw) e_{\bj}\|\int_0^\infty e^{-s} \sum_{a\geq 1}\frac{s^{2a}}{(2a)!}\sum_{p\in\Pi^{(n+2-i)}_{2a,1}} c(p)\dd s\\
&=:\|(-\gensy)^{-\frac12}(-\gensy^\fw) e_{\bj}\|D^{n+2-i}\label{e:defDn}
\end{equs}
where the last constant $D^{n+2-i}$ is uniquely defined by the expression before, so that~\eqref{e:ScalarProd} 
follows at once. 
Similarly,~\eqref{e:Norm} holds since, from~\eqref{e:LimNorm} and by~\eqref{e:cp}, we get
\begin{equs}
&\lim_{\eps\to 0}\|(-\gensy)^{-\frac12}\cS\vN[-\bj]\|^2\\
&=\frac{|\bj|^2}{2}\int_0^\infty\int_0^\infty \dd s_1\dd s_2 \,e^{-s_1-s_2} \sum_{\substack{a_1\geq 1\\a_2\geq 1}}\frac{s_1^{2a_1}s_2^{2a_2}}{(2a_1)! (2a_2)!} \sum_{\substack{p_r\in\Pi^{(n+2-i)}_{2a_r,1}\\ r=1,2}}\lim_{\eps\to 0}\langle \overline{\ST^\eps_{p_1} e_\bj}, \ST^\eps_{p_2} e_\bj\rangle\\
&=\|(-\gensy)^{-\frac12}(-\gensy^\fw) e_{\bj}\|^2\Big(\int_0^\infty e^{-s} \sum_{a\geq 1}\frac{s^{2a}}{(2a)!}\sum_{p\in\Pi^{(n+2-i)}_{2a,1}} c(p)\dd s\Big)^2\\
&=\|(-\gensy)^{-\frac12}(-\gensy^\fw) e_{\bj}\|^2(D^{n+2-i})^2
\end{equs}
for $D^{n+2-i}$ be given as above. Therefore, the proof of~\eqref{e:Norm} is concluded and so is that of~\eqref{e:nH1limit}. 
\medskip

\noindent\textit{Step 4: The case $\cS=(-\gensy)^{\frac12}$ and $c=0$.} This choice of $\cS$ turns 
the scalar product in~\eqref{e:AtLast} into 
\begin{equ}[e:AtLastNew]
\langle \overline{(T^\eps_{i,n})^{b_1}T^{\eps,+} e_{\bj}}, (-\gensy)^{-1} (T^\eps_{i,n})^{b_2}T^{\eps,+}e_\bj\rangle\,.
\end{equ}
By expanding and arguing as in the previous step, we deduce that 
\begin{equs}
&\langle \overline{(T^\eps_{i,n})^{b_1}T^{\eps,+} e_{\bj}}, (-\gensy)^{-1} (T^\eps_{i,n})^{b_2}T^{\eps,+}e_\bj\rangle\\
&=\langle \overline{(T^{\eps,+}_{i,n}+T^{\eps,-}_{i,n})^{b_1}T^{\eps,+} e_{\bj}}, (-\gensy)^{-1} (T^{\eps,+}_{i,n}+T^{\eps,-}_{i,n})^{b_2}T^{\eps,+}e_\bj\rangle\\
&=\sum_{r=1}^2\sum_{m_r=i}^{b_r+i}\sum_{p_r\in\widetilde\Pi_{b_r,m_r}}\langle \overline{T^{\eps,\sigma_{b_1}}_{i,n}\dots T^{\eps,\sigma_2}_{i,n} T^{\eps,\sigma_1} e_\bj}, (-\gensy)^{-1}T^{\eps,\sigma_{b_2}}_{i,n}... T^{\eps,\sigma_2}_{i,n} T^{\eps,\sigma_1} e_\bj\rangle\\
&=\sum_{r=1}^2\sum_{m_r=i}^{b_r+i}\sum_{p_r\in\Pi^{(n+2-i)}_{b_r,m_r}}\langle \overline{T^{\eps,\sigma_{b_1}}_{i,n}\dots T^{\eps,\sigma_2}_{i,n} T^{\eps,\sigma_1} e_\bj}, (-\gensy)^{-1}T^{\eps,\sigma_{b_2}}_{i,n}... T^{\eps,\sigma_2}_{i,n} T^{\eps,\sigma_1} e_\bj\rangle\\
&=\sum_{r=1}^2\sum_{m_r=i}^{b_r+i}\sum_{p_r\in\Pi^{(n+2-i)}_{b_r,m_r}}\langle \overline{T^{\eps,\sigma_{b_1}}\dots T^{\eps,\sigma_2} T^{\eps,\sigma_1} e_\bj}, (-\gensy)^{-1}T^{\eps,\sigma_{b_2}}... T^{\eps,\sigma_2} T^{\eps,\sigma_1} e_\bj\rangle\\
&=\sum_{r=1}^2\sum_{m_r=i}^{b_r+i}\sum_{p_r\in\Pi^{(n+2-i)}_{b_r,m_r}}\langle \overline{\ST^\eps_{p^1} e_\bj}, (-\gensy)^{-1}\ST^\eps_{p^2} e_\bj\rangle
\end{equs}
where $\widetilde\Pi_{b_r,m_r}$, $r=1,2$, 
is the set of all walks $p_r$ which start at time $1$ from $1$, i.e. $p_0=1$, 
end at time $b_r$ at $m_r$, i.e. $p_{b_r}=b_r$, have increments of size $1$ and are such that 
$p_1-p_0=1$. In the third step, because of the projector $P^n_i$ in the definition of $T^{\eps,\sigma}_{i,n}$, 
we replaced the sum over $\widetilde\Pi_{b_r,m_r}$ with that over $\Pi^{(n+2-i)}_{b_r,m_r}$ and 
consequently removed the projection. 

Now, since the scalar product at the right hand side converges to $0$ by~\eqref{e:nullo}, from~\eqref{e:LimNorm} 
we deduce that $\|\vN\|^2$ converges to $0$ 
so that the proof of Proposition~\ref{p:epsLim} is concluded. 
\end{proof}

We now turn to the proof of Proposition~\ref{p:limite}, for which we will need some notation and 
a few preliminary results. 
First of all, the action of the operators $\genap,\genam$ of \eqref{e:gen} on
$\Gamma L^2_n$ can be rewritten as follows:
\begin{equs}[e:GenaOpenUp]
  \cF(\genap f) (k_{1:n+1})&=\frac{1}{n+1}\sum_{1\le i<i'\le n+1} \genap[(i,i')]f(k_{1:n+1})\\
  \cF(\genam  f) (k_{1:n-1})&=n\sum_{q=1}^{n-1} \genam[q]f(k_{1:n-1})\,,
\end{equs}
where, for $1\le i<i'\le n+1$ and $1\le q\le n-1$, 
\begin{equs}[e:GenaNonSym]
\genap[(i,i')]f(k_{1:n+1})&=-\frac{2\iota \eps^{\frac{d}{2}-1}}{(2\pi)^{d/2}} 
  [\fw\cdot (k_i+k_{i'})]\indN{k_i,k_{i'}}\hat f(k_i+k_{i'},k_{\{1:n+1\}\setminus\{i,i'\}})\\
\genam[q]f(k_{1:n-1})&=\frac{2\iota}{(2\pi)^{d/2}} \eps^{\frac{d}{2}-1}(\fw\cdot k_q) \sum_{\ell+m=k_q}\indN{\ell,m}\hat f(\ell,m,k_{\{1:n-1\}\setminus\{q\}})\,.
\end{equs}
Let $T^{\eps,+}[(i,i')]$ and $T^{\eps,-}[q]$ be defined as
in~\eqref{eq:papiro}, but with $\cA^{\eps,+}$, $\cA^{\eps,-}$ 
replaced by $\genap[ (i,i')]$, $\genam[ q]$.

The advantage of this decomposition is that  the operators in~\eqref{e:GenaNonSym} 
 can be extended to functions which are not necessarily symmetric, i.e. they 
can be viewed as operators on $\bigoplus_n L^2_n\eqdef \bigoplus_n L^2((\T^d)^n)$. 

Before detailing their properties, we also introduce operators $\cA^{\eps,\delta}_{\pm}$, $\delta\geq 0$, 
which will allow us to derive a {\it uniform integrability}-type condition. This will be the essential 
tool to prove the limit in~\eqref{e:cp}. 
For $f\in L^2_n$,  $1\le i<i'\le n+1$, $1\le q\le n-1$  and $\delta\geq 0$, set 
\begin{equs}[e:GenaNonSymDelta]
\cA^{\eps,\delta}_{+}[(i,i')]f(k_{1:n+1})&=\eps^{\frac{d}{2}} \Big|\frac{2\eps^{-1}[\fw\cdot (k_i+k_{i'})]}{(2\pi)^{d/2}}
  \Big|^{1+\delta}\indN{k_i,k_{i'}}\hat f(k_i+k_{i'},k_{\{1:n+1\}\setminus\{i,i'\}})\\
\cA^{\eps,\delta}_{-}[q]f(k_{1:n-1})&=\eps^{\frac{d}{2}}\Big|\frac{2\eps^{-1} \fw\cdot k_q}{(2\pi)^{d/2}}\Big|^{1+\delta}\sum_{\ell+m=k_q}\indN{\ell,m}\hat f(\ell,m,k_{\{1:n-1\}\setminus\{q\}})\,,
\end{equs}
and 
\begin{equs}[e:TdeltaNonSym]
T^{\eps,\delta,+}[ (i,i')]\eqdef(-\gensy)^{-\frac{1+\delta}{2}}\cA^{\eps,\delta}_{+}[(i,i')](-\gensy)^{-\frac{1+\delta}{2}}
\end{equs}
and similarly for $T^{\eps,\delta,-}[ q]$. 
In the following lemma, we collect estimates on the operators introduced above. 

\begin{lemma}\label{l:AnonsymProp}
  Let $\eps>0$ and $n\in\N$. For every $1\le i<i'\le n+1$ and $1\le q\le n-1$, the linear 
  operators $\genap[(i,i')]$ and $\genam[q]$
  in~\eqref{e:GenaNonSym} map $L^2_n$ to $L^2_{n+1}$ and $L^2_{n-1}$
  respectively, and commute with the momentum operator defined in
  Lemma~\ref{lem:essid}. 
 Moreover, there exists a constant $C>0$ such that for every $f\in L^2_n$, 
\begin{equ}[e:TboundNew]
\|T^{\eps,+}[(i,i')] f\|_{L^2_{n+1}}\vee \|T^{\eps,-}[q] f\|_{L^2_{n-1}}\leq C \|f\|_{L^2_n}
\end{equ}
and for every $\delta\in[0,(d-2)/2)$ we also have 
\begin{equ}[e:TboundDelta]
\|T^{\eps,\delta,+}[(i,i')] f\|_{L^2_{n+1}}\vee \|T^{\eps,\delta,-}[q] f\|_{L^2_{n-1}}\leq C(\delta) \|f\|_{L^2_n}\,.
\end{equ}
\end{lemma}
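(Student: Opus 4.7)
The mapping properties $\genap[(i,i')]\colon L^2_n\to L^2_{n+1}$ and $\genam[q]\colon L^2_n\to L^2_{n-1}$, as well as the commutation with the momentum operator, follow directly from \eqref{e:GenaNonSym}: both transformations either collapse two Fourier indices $(k_i,k_{i'})$ into their sum or split a single $k_q$ into $(\ell,m)$ with $\ell+m=k_q$, hence the total momentum $\sum_j k_j$ is preserved at the level of the argument of $\hat f$, while the multiplicative factors are functions of sums/differences only. The commutation with $M_i$ is then automatic.

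For the operator bounds \eqref{e:TboundNew} and \eqref{e:TboundDelta}, the plan is to treat the general $\delta$--bound once and for all, since \eqref{e:TboundNew} is the special case $\delta=0$. Expanding $T^{\eps,\delta,+}[(i,i')]f$ in Fourier via \eqref{e:GenaNonSymDelta} and \eqref{e:TdeltaNonSym}, the squared modulus of its $k_{1:n+1}$-th coefficient becomes, up to a universal constant,
\[
\frac{\eps^{d-2(1+\delta)}\,|\fw\cdot(k_i+k_{i'})|^{2(1+\delta)}\,\indN{k_i,k_{i'}}\,|\hat f(k_i+k_{i'},k_{\{1:n+1\}\setminus\{i,i'\}})|^2}{|k_{1:n+1}|^{2(1+\delta)}\,(|k_i+k_{i'}|^2+|k_{\{1:n+1\}\setminus\{i,i'\}}|^2)^{1+\delta}}.
\]
Setting $q\eqdef k_i+k_{i'}$ and using $|\fw\cdot q|\le |\fw||q|$, $|k_{1:n+1}|^{2(1+\delta)}\ge |k_i|^{2(1+\delta)}$ and $(|q|^2+|k_{\ldots}|^2)^{1+\delta}\ge |q|^{2(1+\delta)}$, the $|q|$-factors cancel. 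The sum over the free momentum $k_i$ then reduces to the elementary estimate
\[
\eps^{d-2(1+\delta)}\sum_{0<|k_i|\le \eps^{-1}}\frac{1}{|k_i|^{2(1+\delta)}}\lesssim 1,
\]
which is uniform in $\eps$ precisely when $d>2(1+\delta)$, i.e.\ $\delta<(d-2)/2$. The remaining summation over the momenta $(q,k_{\{1:n+1\}\setminus\{i,i'\}})$ reconstructs $\|f\|^2_{L^2_n}$ by a change of variable, giving \eqref{e:TboundDelta} (and, at $\delta=0$, \eqref{e:TboundNew}, where the condition becomes $d\ge 3$, matching the dimension hypothesis).

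The argument for $T^{\eps,\delta,-}[q]$ is identical up to one extra step: the inner sum runs over pairs $(\ell,m)$ with $\ell+m=k_q$, so one first applies Cauchy--Schwarz to separate the deterministic kernel $\indN{\ell,m}/(|\ell|^2+|m|^2+|k_{\ldots}|^2)^{1+\delta}$ from the data $|\hat f(\ell,m,k_{\ldots})|^2$. Using $(|\ell|^2+|m|^2+|k_{\ldots}|^2)^{1+\delta}\ge |\ell|^{2(1+\delta)}$, the kernel sum is controlled by the same divergence estimate as above; the prefactor $|\fw\cdot k_q|^{2(1+\delta)}/|k_{1:n-1}|^{2(1+\delta)}$ is trivially bounded by $|\fw|^{2(1+\delta)}$; and summing the $|\hat f|^2$-factor over the remaining momenta recovers $\|f\|^2_{L^2_n}$. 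The only substantive point in the entire proof is the cancellation between the prefactor $\eps^{d-2(1+\delta)}$ and the harmonic sum $\sum_{|k|\le\eps^{-1}}|k|^{-2(1+\delta)}\lesssim \eps^{-(d-2(1+\delta))}$, which is precisely where the assumption $\delta<(d-2)/2$ enters. Everything else is bookkeeping, and no tool beyond those already used in Lemma~\ref{l:GenaBound} is required.
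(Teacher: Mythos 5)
Your proposal is correct and follows essentially the same route as the paper: prove \eqref{e:TboundDelta} directly, obtain \eqref{e:TboundNew} as the case $\delta=0$, reduce everything to the single divergence estimate $\eps^{d-2(1+\delta)}\sum_{0<|\ell|\le\eps^{-1}}|\ell|^{-2(1+\delta)}\lesssim 1$ for $\delta<(d-2)/2$, and use Cauchy--Schwarz for the $T^{\eps,\delta,-}$ case. The only (immaterial) difference is the bookkeeping of which of the two $(-\gensy)^{-(1+\delta)/2}$ denominators is used to absorb the $|\fw\cdot(k_i+k_{i'})|^{2(1+\delta)}$ numerator and which produces the harmonic sum over the free momentum.
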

\begin{proof}
Commutativity with the momentum operator is immediately checked from the definition of the operators.
Concerning~\eqref{e:TboundNew}, it follows from~\eqref{e:TboundDelta} upon noting 
that for every $f\in L^2_n$
\begin{equ}
\|T^{\eps,+}[(i,i')] f\|_{L^2_{n+1}}\leq \|T^{\eps,\delta,+}[(i,i')] f\|_{L^2_{n+1}}
\end{equ}
with $\delta=0$, and similarly for $T^{\eps,-}[q]$. 
Let $\delta\in[0,(d-2)/2)$ and $f\in L^2_n$. Then, with a simple change of variables, we have
\begin{equs}
\|T^{\eps,\delta,+}[(i,i')] &f\|_{L^2_{n+1}}^2=\|(-\gensy)^{-\frac{1+\delta}{2}}\cA^{\eps,\delta}_+[(i,i')] (-\gensy)^{-\frac{1+\delta}{2}}f\|^2_{L^2_{n+1}}\\
&\lesssim\sum_{k_{1:n}} \Big(\frac{|\fw\cdot k_1|}{|k_{1:n}|}\Big)^{2+2\delta}|\hat f(k_{1:n})|^2\eps^{d-2(1+\delta)} \sum_{\ell+m=k_1}\frac{\indN{\ell,m}}{(|\ell|^2+|m|^2)^{1+\delta}}\\
&\lesssim \|f\|_{L^2_n}^2 \eps^{d-2(1+\delta)} \sum_{|\ell|\leq \eps^{-1}}\frac{1}{|\ell|^{2+2\delta}}\,.
\end{equs}
Since the sum above equals
 \begin{equ}[eq:thesum]
    \eps^{d-2(1+\delta)} \sum_{|\ell|\leq \eps^{-1}}\frac{1}{|\ell|^{2+2\delta}}=\eps^{d}\sum_{|\eps\ell|\leq 1} \frac{1}{|\eps\ell|^{2+2\delta}}\lesssim_d  \int_{|x|\leq 1} \frac{\dd x}{|x|^{2+2\delta}}
  \end{equ}
and the right hand side is bounded as long as $\delta<(d-2)/2$,~\eqref{e:TboundDelta} follows. 
Concerning $T^{\eps,\delta,-}[q]$, we apply Cauchy-Schwarz so to obtain
\begin{equs}
\|&T^{\eps,\delta,-}[q] f\|_{L^2_{n-1}}=\|(-\gensy)^{-\frac{1+\delta}{2}}A^{\eps,\delta}_-[q] f(-\gensy)^{-\frac{1+\delta}{2}}\|_{L^2_{n-1}}\\
&\lesssim \sum_{k_{1:n-1}}\Big(\frac{|\fw\cdot k_q|}{|k_{1:n-1}|}\Big)^{2+2\delta}\times\\
&\qquad\times\Big|\eps^{\frac{d}{2}-(1+\delta)}\sum_{\ell+m=k_q}\frac{\indN{\ell,m}}{(|\ell|^2+|m|^2+|k_{1:n-1\setminus\{q\}}|^2)^{\frac{1+\delta}{2}}}\hat f(\ell,m, k_{1:n-1\setminus\{q\}})\Big|^2\\
&\leq \sum_{k_{1:n}} |\hat f(k_{1:n})|^2 \eps^{d-(2+2\delta)}\sum_{|\ell|\leq \eps^{-1}}\frac{1}{|\ell|^{2+2\delta}}\label{e:CSchepalle}
\end{equs}
and once again the inner sum is bounded uniformly over $k_q$ thanks to~\eqref{eq:thesum}, so that 
the proof of~\eqref{e:TboundDelta} is complete. 
\end{proof}

In the following lemma, we show that the main contribution to the norm
of $T^{\eps,+}[(i,i')] f$ and $T^{\eps,-}[q] f$ comes from Fourier
modes whose size diverges with $\eps$.
 
\begin{lemma}\label{l:HighModes}
In the setting of Lemma~\ref{l:AnonsymProp}, let $\gamma\in(0,1)$ and, for $f\in L^2_n$, define the operators 
\begin{equs}[e:Ttilde]
\widetilde T^{\eps,+}[(i,i')] f(k_{1:n+1})&\eqdef \1_{|k_i|\wedge|k_{i'}|>\eps^{-\gamma}}T^{\eps,+}[(i,i')] f(k_{1:n+1})\\
\widetilde T^{\eps,-}[q] f(k_{1:n-1})&\eqdef T^{\eps,-}[q] f^{>}(k_{1:n-1})
\end{equs}
where $\hat f^{>}(k_{1:n})\eqdef \1_{|k_1|\wedge|k_2|>\eps^{-\gamma}}\hat f(k_{1:n})$. 
Then, there exists a constant $C>0$ such that 
\begin{equ}[e:ApproxLarge]
\|(T^{\eps,+}[(i,i')]-\widetilde T^{\eps,+}[(i,i')])f\|_{L^2_{n+1}}\leq C\eps^{(1-\gamma)(d-2)}\|f\|_{L^2_n}
\end{equ}
 and the same bound holds for $\|(T^{\eps,-}[q]-\widetilde T^{\eps,-}[q])f\|_{L^2_{n-1}}$. 
\end{lemma}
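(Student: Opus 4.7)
The plan is to follow the scheme already used in Lemma~\ref{l:AnonsymProp}, and simply track how the restriction $|k_i|\wedge|k_{i'}|\leq \eps^{-\gamma}$ (resp.\ $|k_1|\wedge|k_2|\leq \eps^{-\gamma}$) replaces the cut-off $\eps^{-1}$ by $\eps^{-\gamma}$ inside the key inner sum $\sum_{0<|\ell|\leq N}|\ell|^{-2}$. The crucial input throughout is the Riemann-sum estimate
\[
\sum_{0<|\ell|\leq\eps^{-\gamma}}\frac{1}{|\ell|^{2}}\;\lesssim_{d}\;\eps^{-\gamma(d-2)},
\]
valid for $d\geq 3$ (this is the only place the dimension assumption enters), obtained by comparing with $\int_{|x|\leq\eps^{-\gamma}}|x|^{-2}\,dx$.

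For the plus case I would first observe that, directly from \eqref{e:Ttilde},
\[
(T^{\eps,+}[(i,i')]-\widetilde{T}^{\eps,+}[(i,i')])f(k_{1:n+1})=\mathds{1}_{|k_i|\wedge|k_{i'}|\leq\eps^{-\gamma}}\,T^{\eps,+}[(i,i')]f(k_{1:n+1}).
\]
Squaring, summing, and changing variables to $q=k_i+k_{i'}$, $r=k_{\{1:n+1\}\setminus\{i,i'\}}$, $\ell=k_i$ as in the proof of Lemma~\ref{l:AnonsymProp}, I obtain
\[
\|(T^{\eps,+}[(i,i')]-\widetilde{T}^{\eps,+}[(i,i')])f\|_{L^{2}_{n+1}}^{2}\lesssim \eps^{d-2}\sum_{q,r}\frac{|\fw\cdot q|^{2}|\hat f(q,r)|^{2}}{|q|^{2}+|r|^{2}}\,I_{\eps}(q,r),
\]
with $I_{\eps}(q,r):=\sum_{\ell+m=q}\indN{\ell,m}\,\mathds{1}_{|\ell|\wedge|m|\leq\eps^{-\gamma}}\,(|\ell|^{2}+|m|^{2}+|r|^{2})^{-1}$. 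Using the symmetry $\ell\leftrightarrow m$ to reduce to $|\ell|\leq\eps^{-\gamma}$ and then discarding $|m|^{2}+|r|^{2}\geq 0$, the Riemann-sum bound gives $I_{\eps}(q,r)\lesssim \eps^{-\gamma(d-2)}$ uniformly in $(q,r)$. Combining this with the trivial estimate $|\fw\cdot q|^{2}\leq|\fw|^{2}(|q|^{2}+|r|^{2})$ and Plancherel for $f$ produces the factor $\eps^{d-2}\cdot\eps^{-\gamma(d-2)}=\eps^{(1-\gamma)(d-2)}$, which is the required decay (up to taking square roots as needed).

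For the minus case, by the definition \eqref{e:Ttilde} one has $(T^{\eps,-}[q]-\widetilde{T}^{\eps,-}[q])f = T^{\eps,-}[q]g$, where $g\in L^{2}_{n}$ is the restriction of $f$ in Fourier to the set $\{|k_{1}|\wedge|k_{2}|\leq\eps^{-\gamma}\}$. I would then repeat the Cauchy--Schwarz argument of \eqref{e:CSchepalle} applied to the inner sum over $\ell+m=k_{q}$: splitting the summand symmetrically, one factor becomes exactly $I_{\eps}(k_{q},r)\lesssim \eps^{-\gamma(d-2)}$ by the same bound, while the other, after summing the remaining variables, is controlled by $\|f\|_{L^{2}_{n}}^{2}$. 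The $\eps^{d-2}$ prefactor again yields the announced rate.

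The only delicate point, and the one I would check carefully, is that the two reductions above really are uniform in all the ``spectator'' variables $(q,r)$ and $(k_{q},r)$: discarding $|m|^{2}+|r|^{2}$ in the denominator of $I_{\eps}$ is extremely wasteful, but it is precisely the Riemann-sum estimate that makes this waste harmless, and it works only because $d\geq 3$ (in $d=2$ the analogous sum is logarithmic, which is consistent with the lemma being irrelevant there).
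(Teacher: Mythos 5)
Your argument is correct and coincides with the paper's own proof: in both cases the difference is just the operator restricted to $|k_i|\wedge|k_{i'}|\le\eps^{-\gamma}$, and the key step is bounding the truncated inner sum $\eps^{d-2}\sum_{0<|\ell|\le\eps^{-\gamma}}|\ell|^{-2}$ via the Riemann-sum comparison with $\int_{|x|\le\eps^{1-\gamma}}|x|^{-2}\,\dd x\lesssim\eps^{(1-\gamma)(d-2)}$, with the minus case handled through the Cauchy--Schwarz step of \eqref{e:CSchepalle} exactly as you propose. The only caveat, which is present in the paper's proof as well, is that this controls the \emph{squared} norm, so strictly one obtains the exponent $(1-\gamma)(d-2)/2$ on the norm itself; this is still ample for the application.
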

\begin{proof}
By the definition of the operator $\cA^\eps_+[(i,i')]$ in~\eqref{e:GenaNonSym} 
and that of $\widetilde T^{\eps,+}[(i,i')]$ in~\eqref{e:Ttilde}, we have 
\begin{equs}
(T^{\eps,+}[(i,i')]-&\widetilde T^{\eps,+}[(i,i')])f(k_{1:n+1})=\1_{|k_i|\wedge|k_{i'}|\leq\eps^{-\gamma}}T^{\eps,+}[(i,i')] f(k_{1:n+1})\,.
\end{equs}
Hence, 
\begin{equs}
\|&(T^{\eps,+}[(i,i')]-\widetilde T^{\eps,+}[(i,i')])f\|_{L^2_{n+1}}^2\\
&\lesssim \sum_{k_{1:n+1}}\frac{\eps^{d-2}}{|k_{1:n+1}|^2}\frac{[\fw\cdot (k_i+k_{i'})]^2 \1_{|k_i|\wedge|k_{i'}|\leq\eps^{-\gamma}}}{|k_i+k_{i'}|^2+|k_{1:n+1\setminus\{i,i'\}}|^2}|\hat f(k_i+k_{i'}, k_{1:n+1\setminus\{i,i'\}})|^2\\
&\leq \sum_{k_{1:n+1}}\frac{\eps^{d-2}}{|k_{1:n+1}|^2}\1_{|k_i|\wedge|k_{i'}|\leq\eps^{-\gamma}}|\hat f(k_i+k_{i'}, k_{1:n+1\setminus\{i,i'\}})|^2\\
&=\sum_{k_{1:n}}|\hat f(k_{1:n})|^2\sum_{\ell+m=k_1}\eps^{d-2}\frac{\1_{|\ell|\wedge|m|\leq\eps^{-\gamma}}}{|\ell|^2+|m|^2+|k_{1:n+1\setminus\{i,i'\}}|^2}
\end{equs}
where in the last step we simply relabelled all the variables in the sum. 
Now, the inner sum can be bounded as
\begin{equs}[e:TruncSum]
\sum_{\ell+m=k_1}&\eps^{d-2}\frac{\1_{|\ell|\wedge|m|\leq\eps^{-\gamma}}}{|\ell|^2+|m|^2+|k_{1:n+1\setminus\{i,i'\}}|^2}\leq \sum_{\ell+m=k_1}\eps^{d-2}\frac{\1_{|\ell|\wedge|m|\leq\eps^{-\gamma}}}{(|\ell|\wedge|m|)^2}\\
&\lesssim \sum_{|\ell|\leq \eps^{-\gamma}}\eps^d\frac{1}{|\eps\ell|^2}\lesssim \int_{|x|\leq \eps^{1-\gamma}}\frac{\dd x}{|x|^2}
\lesssim\int_0^{\eps^{1-\gamma}}r^{d-3}\dd r \leq \eps^{(1-\gamma)(d-2)}
\end{equs}
from which~\eqref{e:ApproxLarge} follows. 
The analogous statement for $T^{\eps,-}[q]-\widetilde T^{\eps,-}[q]$ can be obtained 
as in the previous lemma, i.e. applying Cauchy-Schwarz as in~\eqref{e:CSchepalle} and then 
controlling the resulting sum with~\eqref{e:TruncSum}. 
\end{proof}

Before proceeding, we want to rewrite~\eqref{e:TTT} in terms of the operators 
we just introduced. Let $a\geq 0$, $m\geq 1$, $p\in \Pi^{(n)}_{a,m}$, $\bj,e_\bj$ be as above. 
For $f\in L^2_i$, let $\kappa(f)=i$ denote the eigenvalue of the number operator, 
so that $\kappa\eqdef\kappa(e_\bj)=1$ if $\bj=j_1$ and $\kappa=2$ if $\bj=j_{1:2}$. 
Replacing each of the $T^{\eps,+}$, $T^{\eps,-}$ with the sum of the $T^{\eps,+}[(i,i')]$, $T^{\eps,+}[q]$ 
(as in~\eqref{e:GenaOpenUp}), we obtain 
\begin{equ}[e:NewRep]
\ST^\eps_p e_\bj=\frac{1}{C_{m,\kappa}}\sum_{g\in \cG^\kappa[p]}\ST^\eps_p[g] e_\bj\eqdef \frac{1}{C_{m,\kappa}}\sum_{g\in \cG^\kappa[p]} T^{\eps,\sigma_{a}}[g_a]\dots T^{\eps,\sigma_1}[g_1] e_\bj
\end{equ}
where $C_{m,\kappa}=\prod_{j=1}^{m-1}(\kappa+j)$ (in particular, $C_{1,\kappa}=1$), 
$\cG^\kappa[p]$ is a set whose elements $g=(g_s)_{s=1,\dots,a}$ are of the form
\begin{equ}
g_s\eqdef
\begin{cases}
(i_s,i'_s)\,,&\text{if $\sigma_s=+$}\\
q_s\,,&\text{if $\sigma_s=-$,}
\end{cases}
\end{equ}
for $1\le i_s<i'_s,q_s\le p_{s}+\kappa-1$. 
In particular, the cardinality of $\cG^\kappa[p]$ is finite and independent of $\eps$. 
\medskip

Thanks to the representation in~\eqref{e:NewRep}, we are ready to state the 
next lemma which will give~\eqref{e:nullo} as a corollary. 

\begin{lemma}\label{l:finernullo}
In the same setting as Proposition~\ref{p:limite}, let $a> 0$, $m\geq 2$ and $p\in \Pi^{(n)}_{a,m}$. 
Then, for every $g\in \cG^\kappa[p]$, we have 
\begin{equ}[e:finernullo]
\lim_{\eps\to 0} \|(-\gensy)^{-\frac12}\ST^\eps_{p}[g] e_\bj\|_{L^2_{m_\kappa}}=0\,, 
\end{equ}
where $\kappa=\kappa(e_{\bj})$ is defined as above, and $m_\kappa\eqdef m+\kappa-1$. 
\end{lemma}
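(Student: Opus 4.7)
The plan is to combine the uniform operator bounds of Lemma~\ref{l:AnonsymProp} with the high-frequency truncation of Lemma~\ref{l:HighModes}. Fix $\gamma\in(0,1)$ and let $\widetilde\ST^\eps_p[g]$ denote the product obtained from $\ST^\eps_p[g]$ by replacing every $T^{\eps,\sigma_s}[g_s]$ with its truncated version $\widetilde T^{\eps,\sigma_s}[g_s]$. A telescoping estimate combining the uniform operator bound $\|T^{\eps,\sigma}[g]\|_{\mathrm{op}}\le C$ with the operator-difference bound from Lemma~\ref{l:HighModes} yields
\begin{equ}
\bigl\|\ST^\eps_p[g]e_\bj-\widetilde\ST^\eps_p[g]e_\bj\bigr\|_{L^2_{m_\kappa}}\le C_a\,\eps^{(1-\gamma)(d-2)}\longrightarrow 0\,,
\end{equ}
and since $(-\gensy)^{-1/2}$ is bounded on $L^2_{m_\kappa}$ (because $|k_{1:m_\kappa}|^2\ge m_\kappa\ge 1$ on the Fourier support), it suffices to prove $\|(-\gensy)^{-1/2}\widetilde\ST^\eps_p[g]e_\bj\|\to0$.

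Next, I would analyse the Fourier support of $\widetilde\ST^\eps_p[g]e_\bj$ by induction on the length $a$ of the path, tracking how the nested indicators propagate through the composition. Each factor $\widetilde T^{\eps,+}[(i,i')]$ forces its two ``new'' output positions to exceed $\eps^{-\gamma}$ in modulus, and additionally transports the existing high-mode constraints of its input (which reappear, after the index shift, as constraints on the partial sum $|k_i+k_{i'}|>\eps^{-\gamma}$). Each factor $\widetilde T^{\eps,-}[q]$ requires its two first input positions to be high, so that the ``pass-through'' output positions inherited from input positions $3,\ldots,n$ retain any high-mode property. Combined with momentum conservation $\sum_i k_i=j$ inherited from the fixed source $e_\bj$ and the structural fact that $p$ starts at height $1$ (so $\sigma_1=+$), one can show that the Fourier support of $\widetilde\ST^\eps_p[g]e_\bj$ is contained in $\{k_{1:m_\kappa}:\ \max_i|k_i|\gtrsim\eps^{-\gamma}\}$.

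Once this support property is in place, the prefactor $(-\gensy)^{-1/2}$ supplies the decay. Indeed, using $|k_{1:m_\kappa}|^2\ge\max_i|k_i|^2\gtrsim\eps^{-2\gamma}$ on the support together with the uniform norm bound $\|\widetilde\ST^\eps_p[g]e_\bj\|_{L^2_{m_\kappa}}\lesssim 1$ from Lemma~\ref{l:AnonsymProp},
\begin{equ}
\bigl\|(-\gensy)^{-1/2}\widetilde\ST^\eps_p[g]e_\bj\bigr\|^2 \;\lesssim\; \eps^{2\gamma}\,\|\widetilde\ST^\eps_p[g]e_\bj\|^2 \;\lesssim\; \eps^{2\gamma}.
\end{equ}
Together with the truncation error, this yields an overall bound of order $\eps^{(1-\gamma)(d-2)/2}+\eps^\gamma$ on $\|(-\gensy)^{-1/2}\ST^\eps_p[g]e_\bj\|$, which vanishes as $\eps\to0$ for any $\gamma\in(0,1)$.

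The main obstacle lies in the inductive support analysis of the second step. The difficulty is that $\widetilde T^{\eps,-}[q]$ collapses two positions (each $>\eps^{-\gamma}$) into their sum, which could in principle be $O(1)$, so a priori the high-mode structure could be destroyed at each down-step. The crucial input that closes the induction is the combination of the constraint $m\ge 2$ (which forces $m_\kappa\ge 2$ and guarantees that sufficiently many ``pass-through'' positions survive to the end of the path), the permutation bookkeeping induced by each $\genap[(i,i')]$ and $\genam[q]$, and momentum conservation against the fixed external mode $e_\bj$, which rigidifies the allowed Fourier configurations and rules out the scenario where every output coordinate is simultaneously below $\eps^{-\gamma}$.
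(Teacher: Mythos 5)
Your strategy coincides with the paper's: truncate each factor to high modes via Lemma~\ref{l:HighModes} (the telescoping error estimate, using the uniform operator bounds of Lemma~\ref{l:AnonsymProp}, is fine, as is the boundedness of $(-\gensy)^{-1/2}$ on $L^2_{m_\kappa}$), then establish that the truncated output is supported on $\{|k_{1:m_\kappa}|>\eps^{-\gamma}\}$, and finally gain a factor $\eps^{2\gamma}$ from the prefactor $(-\gensy)^{-1/2}$ together with the uniform $L^2$ bound on $\widetilde\ST^\eps_p[g]e_\bj$. Your first and third steps are correct and identical to the paper's.

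The gap is exactly the step you yourself flag as ``the main obstacle'': the support property is the entire content of the lemma, and you assert it rather than prove it. Moreover, the mechanism you invoke is not the right one. Momentum conservation only fixes the total $\sum_i k_i=j_1$ (or $j_1+j_2$), which is perfectly compatible with every individual output mode being $O(1)$, so it cannot rule out the all-low-mode scenario; and ``sufficiently many pass-through positions survive'' is not enough either, since a surviving pass-through coordinate may carry a momentum that originated at a merging point (the sum of two modes each exceeding $\eps^{-\gamma}$, which can well be $O(1)$ --- the very difficulty you identify) or at the fixed input $e_\bj$. The paper closes the step with a counting argument absent from your sketch: since $m\ge2$, the output depends on $m_\kappa=m+\kappa-1>\kappa$ variables, strictly more than the input $e_\bj$, so at least one of the final variables $k_1,\dots,k_{m_\kappa}$ must be a mode that was \emph{created} by some $\widetilde T^{\eps,+}[g_s]$ and then transported unchanged along pass-through edges (never entering a merge) to the last column; the indicator $\1_{|k_i|\wedge|k_{i'}|>\eps^{-\gamma}}$ built into $\widetilde T^{\eps,+}$ then forces that variable, hence $|k_{1:m_\kappa}|$, to exceed $\eps^{-\gamma}$ on the support. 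Without this (or an equivalent) combinatorial argument your proof is incomplete, so you should either reproduce it or supply your inductive support analysis in full --- and in doing so you will find that the input that closes it is the variable count $m_\kappa>\kappa$, not momentum conservation.
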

\begin{proof}
By the definition of $\ST^\eps_{p}[g]$ in~\eqref{e:NewRep}, we have 
\begin{equs}
\|(-\gensy)^{-\frac12}\ST^\eps_{p}[g] e_\bj\|_{L^2_{m_\kappa}}=\|(-\gensy)^{-\frac12}T^{\eps,\sigma_{a}}[g_a]\dots T^{\eps,\sigma_1}[g_1] e_\bj\|_{L^2_{m_\kappa}}\,.
\end{equs}
Let $\gamma\in(0,1)$ and recall the definition of $\widetilde T^{\eps,\sigma}[g]$ from Lemma~\ref{l:HighModes}. 
Thanks to~\eqref{e:ApproxLarge}, we can replace every instance of $T^{\eps,\sigma}[g]$ with 
$\widetilde T^{\eps,\sigma}[g]$ since, by~\eqref{e:TboundNew}, 
the $T^{\eps,\sigma_{a}}[g_a]$'s are bounded operators and $e_\bj$ has clearly finite $L^2$ norm.  
Therefore we are left to consider 
\begin{equs}[e:Finalnullo]
\|(-\gensy)^{-\frac12}&\widetilde T^{\eps,\sigma_{a}}[g_a]\dots \widetilde T^{\eps,\sigma_1}[g_1] e_\bj\|_{L^2_{m_\kappa}}^2 \\
&=\sum_{k_{1:m_{\bj}}}\frac{2}{|k_{1:m_{\bj}}|^2}\Big|\widetilde T^{\eps,\sigma_{a}}[g_a]\dots \widetilde T^{\eps,\sigma_1}[g_1] e_\bj(k_{1:m_\kappa})\Big|^2\\
&\leq \eps^{2\gamma}\sum_{k_{1:m_{\bj}}}\Big|\widetilde T^{\eps,\sigma_{a}}[g_a]\dots \widetilde T^{\eps,\sigma_1}[g_1] e_\bj(k_{1:m_\kappa})\Big|^2\\
&=\eps^{2\gamma}\|\widetilde T^{\eps,\sigma_{a}}[g_a]\dots \widetilde T^{\eps,\sigma_1}[g_1] e_\bj\|_{L^2_{m_\kappa}}^2\,
\end{equs}
where, in the second step, we used the fact that $|k_{1:m_\kappa}|>\eps^{-\gamma}$. 
To see this, notice that, by definition, the new Fourier modes produced by $\widetilde T^{\eps,+}[g]$ 
have modulus bigger than $\eps^{-\gamma}$. Now, the assumption $m\geq 2$ ensures 
that the number of variables on which 
$\widetilde T^{\eps,\sigma_{a}}[g_a]\dots \widetilde T^{\eps,\sigma_1}[g_1] e_\bj$ depends (i.e. $m_\kappa$) is 
strictly bigger than that of $e_\bj$. Therefore, at least one among $k_1,\dots, k_{m_\kappa}$ 
must be generated by a $\widetilde T^{\eps,+}[g]$, which then implies $|k_{1:m_\kappa}|>\eps^{-\gamma}$. 
To conclude, it suffices to observe that by~\eqref{e:ApproxLarge} and~\eqref{e:TboundNew}, 
the operators $\widetilde T^{\eps,\sigma}[g]$ are bounded and that $e_\bj$ has finite $L^2$-norm, 
so that the right hand side of~\eqref{e:Finalnullo} 
converges to $0$, thus proving~\eqref{e:finernullo}. 
\end{proof}

To prove~\eqref{e:cp}, let $a\geq 0$, $m\geq 1$, $p\in \Pi^{(n)}_{a,m}$ and $g\in\cG^\kappa[p]$, 
for $\kappa=1,2$. 
We introduce a graphical representation for $g\in\cG^\kappa[p]$, 
which consists of associating to it a graph as that depicted in Figure~\ref{fig:1}.

\begin{figure}
\begin{center}
\includegraphics[width=5cm]{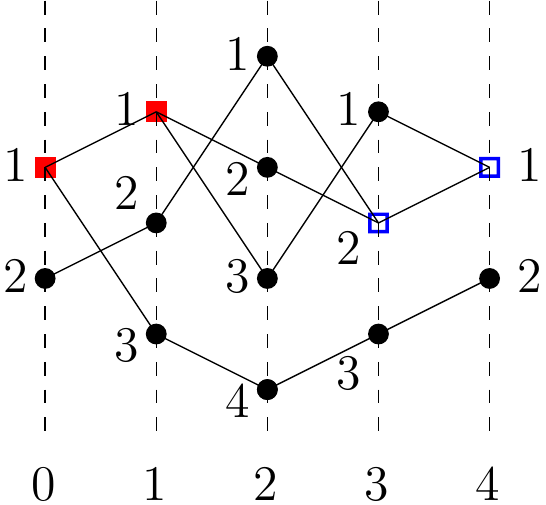}
\caption{A graphical representation for a graph
  $g\in \mathcal G^\bj[p]$. In this
  example, $p\in \Pi^{(n)}_{a,m}$ with $a=|p|=4$, $n\ge 3$ and $m=1$, $\kappa=2$ and 
  $g=\{(1,3), (2,3), 2, 1\}$. 
  Columns are labelled from $0$ to $a$ starting from the left, vertices are
  labelled by increasing integers starting from top. Vertices marked as empty squares are
  merging points, full squares are branching points. The number of
  vertices in column $j$ is $p_j+1$.}
\label{fig:1}
\end{center}
\end{figure}
The vertices of the graph are divided in $a+1$ columns, labeled from $0$ to $a$. The column with label $s$ contains
 $p_{s}+\kappa-1$ vertices; 
 the vertices inside each column are labeled with  increasing positive integers, starting from the uppermost vertex.
The edges only connect vertices in consecutive columns, with each vertex in one column 
connected to at least one and at most two vertices in the next. The edges between 
column $s-1$ and $s$, $s\in \{1,\dots,a\}$, are drawn starting from the vertices involved in the definition of $g_s$:   
if $g_s=(i_s,i'_s)$ then the first vertex from the top in column $s-1$ 
is called a {\it branching point} and 
we draw two edges connecting it 
to the vertices with label $i_s$ and $i'_s$ in column $s$; if instead $g_s=q_s$, 
then the vertex with label $q_s$ in column $s$ is called a {\it merging point}  and 
we draw an edge from the vertices with labels $1$ and $2$ in column $s-1$ to  
$q_s$; for the other vertices in column $s-1$ (i.e. those whose label is not $1$ if $g_s=(i_s,i'_s)$, 
and neither $1$ nor $2$ if $g_s=q_s$), 
we inductively draw an edge from the vertex with the lowest label to that of lowest label in column $s$ 
which is not already connected to another vertex in column $s-1$. 
\medskip

One should think of the vertices as carrying a momentum, and of the edges as the relation the 
momenta on the vertices connected by them satisfy. In particular, when $\ST^\varepsilon_p[g]$ is applied to  $e_{j_1}$ or\footnote{here we atually mean $e_{j_1}\otimes e_{j_2}$, not its symmetrized version. Recall that the action of the  operators $\mathcal T^\varepsilon_p[g]$ is well-defined on not necessarily symmetric functions} $e_{j_1}\otimes e_{j_2}$,
the vertex in position $1$ of the first column 
carries momentum $j_1$, that in position $2$ (if there is one) carries momentum $j_2$, a branching point determines 
the {\it creation} of a new momentum, while a merging point, the {\it annihilation} of one. Moreover, 
if $x,y,z$ are vertices then
\begin{itemize}[noitemsep]
\item if $x$ in column $s$ is either a branching or a merging point, and is connected to $y,z$ in column $s+1$ or $s-1$ respectively, then the sum of the momenta at $y,z$ must coincide with the momentum at $x$;  
\item if there is an edge between $x$ in column $s$ and $y$ in column $s+1$, $x$ is not a branching point and 
$y$ is not a merging point, then the momenta at $x$ and $y$ coincide. 
\end{itemize}
In the next lemma, we identify a large class of elements $g\in\cG^{2}[p]$ for which 
$\ST^\eps_p[g] (e_{j_1}\otimes e_{j_2})$ 
does not contribute to the limit  in \eqref{e:cp}. To state it, let us define a {\it path} $\pi$ in the graph associated to $g$ 
as a directed sequence of connected vertices in which $\pi(j)$ 
is the label of the vertex $\pi$ encounters in column $j$. 
%
%
%

\begin{lemma}\label{l:fastidi}
Let $a\geq 0$, $p\in \Pi^{(n)}_{a,1}$ and $g\in\cG^2[p]$. 
If in the graph associated to $g$ there exists a path $\pi$ starting from the second vertex  
of the $0$-th column that contains either a branching or a merging point, then 
\begin{equ}[e:fastidi]
\lim_{\eps\to 0} \|\ST^\eps_p[g] (e_{j_1}\otimes e_{j_2})\|_{L^2_2}=0\,.
\end{equ}
\end{lemma}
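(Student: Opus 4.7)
The strategy follows that of Lemma~\ref{l:finernullo}. First, by Lemma~\ref{l:HighModes} together with the uniform operator bound \eqref{e:TboundNew} of Lemma~\ref{l:AnonsymProp}, one may replace each $T^{\eps,\sigma_s}[g_s]$ in the product defining $\ST^\eps_p[g]$ by its tilde counterpart $\widetilde T^{\eps,\sigma_s}[g_s]$, at a total cost of $O(\eps^{(1-\gamma)(d-2)})\to 0$. It is thus enough to prove that
\[
\lim_{\eps\to 0}\bigl\|\widetilde T^{\eps,\sigma_a}[g_a]\cdots\widetilde T^{\eps,\sigma_1}[g_1](e_{j_1}\otimes e_{j_2})\bigr\|_{L^2_2}=0.
\]

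The key structural observation is that, until the first column $s^*$ at which the path $\pi$ meets a branching or merging point, $\pi$ transports the fixed momentum $j_2$ unchanged. If $\pi(s^*)=q_{s^*}$ is a merging point then $\pi(s^*-1)\in\{1,2\}$ carries momentum $j_2$, and since $\widetilde T^{\eps,-}[g_{s^*}]$ forces both its first two input modes to have magnitude at least $\eps^{-\gamma}$ while $|j_2|<\eps^{-\gamma}$ for $\eps$ small enough, this operator annihilates its input and the desired limit is immediate. If instead $\pi(s^*-1)=1$ is a branching point, $\widetilde T^{\eps,+}[g_{s^*}]$ splits $j_2$ into two ``large'' modes $\ell^*,j_2-\ell^*$ of modulus $\geq\eps^{-\gamma}$, which propagate through the remaining operators. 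A case analysis on the graph then shows that either (i) some later merging operator in the chain receives a small input mode (for instance the original $j_1$, or a fixed sum such as $j_2$ produced by re-merging $\ell^*$ with $j_2-\ell^*$) at one of its first two input positions, so the argument of the previous case applies; or (ii) the squared norm, expanded as an iterated sum over the internal momenta, contains a net factor $\eps^\alpha$ with $\alpha>0$. The latter is obtained by noting that the summation over the free variable $\ell^*$ introduced at step $s^*$ couples, through the $1/|\ell^*|^2$-type weights coming from the $(-\gensy)^{-1}$-blocks between consecutive operators, with the global prefactor $\lambda_\eps^{|p|}$ in a way that prevents the $O(1)$ saturation typical of the ``good'' closed branching--merging pairs underlying Lemma~\ref{l:finernullo}.

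The main obstacle is case (ii) of the branching scenario: one must carefully account for the propagation of $\ell^*$ through the graph and its couplings with the other internal momenta. A convenient quantitative tool is the family of $\delta$-operators of Lemma~\ref{l:AnonsymProp}, which trade the growth $|\ell^*|^\delta$ at high modes for an $\eps^\delta$-gain, providing the decay needed to conclude.
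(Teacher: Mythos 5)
Your treatment of the merging case is correct and is in fact a legitimate alternative to the paper's: since the path $\pi$ transports the fixed momentum $j_2$ into position $\pi(\bar a-1)\in\{1,2\}$ of the column preceding the merging point, and $\widetilde T^{\eps,-}[q]$ restricts its first two input modes to modulus $>\eps^{-\gamma}$, the term vanishes identically once $\eps^{-\gamma}>|j_2|$ (the paper instead keeps the untruncated $T^{\eps,-}[q]$, observes that the constraint $\ell=j_2$ collapses the merging sum to a single summand, and extracts an explicit factor $\eps^{d-2}$). Your preliminary replacement of every $T^{\eps,\sigma}[g_s]$ by $\widetilde T^{\eps,\sigma}[g_s]$ via Lemma~\ref{l:HighModes} and \eqref{e:TboundNew} is also sound for $d\ge3$.

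The branching case, however, contains a genuine gap, and it is precisely the part you flag as ``the main obstacle''. Your dichotomy (i)/(ii) is asserted, not proved, and the mechanism you propose for (ii) does not produce the decay. Concretely: after the first branching point of $\pi$ splits $j_2$ into $\ell^*$ and $j_2-\ell^*$, the sum over $\ell^*$ is of the form $\eps^{d-2}\sum_{\ell^*+m=j_2}\indN{\ell^*,m}(|\ell^*|^2+|m|^2+\cdots)^{-1}$, which by \eqref{eq:thesum} is $O(1)$ --- \emph{not} small --- so no ``net factor $\eps^\alpha$'' comes from that summation, nor from its interplay with the $\lambda_\eps$ prefactors (those are already absorbed into the operator normalisations). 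Tracking $\ell^*$ through the remaining operators, as you propose, is both unnecessary and unlikely to close the argument. The paper's route is shorter: discard the operators \emph{after} the branching column $\bar a$ using the uniform bound \eqref{e:TboundNew}, and write the surviving quantity as (bounded inner sum over the split of $j_2$) $\times\, [\fw\cdot j_2]^2\,\|(-\gensy)^{-1/2}\Psi\|^2$, where $\Psi$ is the prefix $T^{\eps,\sigma_{\bar a-1}}[g_{\bar a-1}]\cdots T^{\eps,\sigma_1}[g_1](e_{j_1}\otimes e_{j_2})$; the weight $(|j_2|^2+|k_{1:M-1}|^2)^{-1}$ coming from the branching operator is exactly what converts $\|\Psi\|^2$ into $\|(-\gensy)^{-1/2}\Psi\|^2$, and this vanishes by Lemma~\ref{l:finernullo} because $\Psi$ is (up to the errors of Lemma~\ref{l:HighModes}) supported on modes of size $>\eps^{-\gamma}$. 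In short, the smallness lives in the prefix $\Psi$, not in the propagation of $\ell^*$; your sketch needs this truncation-plus-\ref{l:finernullo} step to become a proof.
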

\begin{proof} 

Let $\pi$ be a path starting from vertex with label $2$ in the column of label $0$ 
and containing either a branching or a merging point. See Fig. \ref{fig:2}.
\begin{figure}
\begin{center}
\includegraphics[width=4cm]{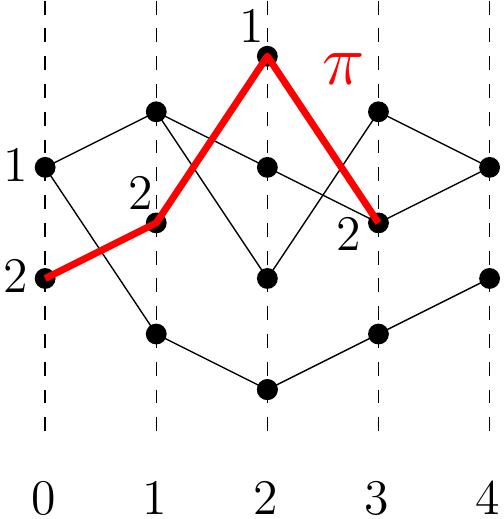}\qquad \qquad\qquad\includegraphics[width=4cm]{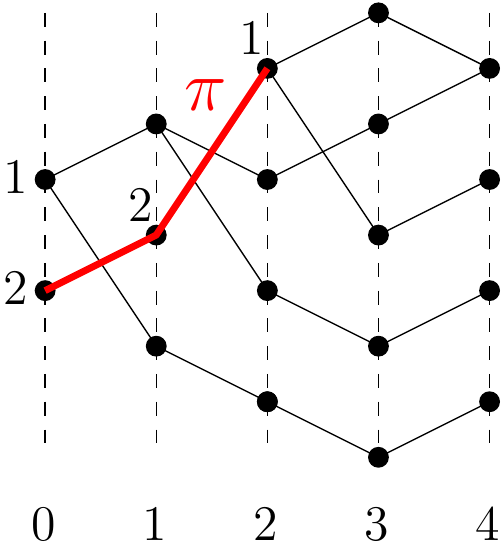}
\caption{Left drawing: the path $\pi$ from the vertex $2$ of the $0$-th column, until the first merging point (in column labeled $\bar a=3$). Right: the path $\pi$ until the first branching point (in column labeled $\bar a-1=2$).}
\label{fig:2}
\end{center}
\end{figure}
Let $\bar a\in\{2,\dots, a\}$ be such that, in the former case, the first branching point $\pi$ encounters 
lies in the $(\bar a-1)$-th column, so that in particular $\pi(\bar a-1)=1$. In the latter case, 
$\bar a$ is such that $\pi(\bar a)$ is a merging point, and $\pi(\bar a-1)\in\{1,2\}$.  
Then, invoking~\eqref{e:TboundNew}, we upper bound the norm
  in~\eqref{e:fastidi} as
\begin{equs}
\|\ST^\eps_p[g] (e_{j_1}\otimes e_{j_2})\|_{L^2_2}&=\|T^{\eps,\sigma_{a}}[g_a]\dots T^{\eps,\sigma_1}[g_1] (e_{j_1}\otimes e_{j_2})\|_{L^2_2}\\
&\lesssim \|T^{\eps,\sigma_{\bar a}}[g_{\bar a}]\dots T^{\eps,\sigma_1}[g_1] (e_{j_1}\otimes e_{j_2})\|_{L^2_{M+p_{\bar a}-p_{\bar a-1}}}\,.
\end{equs}
for $M\eqdef 1+p_{\bar a-1}$. 
To simplify the notation, set 
\begin{equ}[e:Psiuffa]
\Psi\eqdef T^{\eps,\sigma_{\bar a-1}}[g_{\bar a-1}]\dots T^{\eps,\sigma_1}[g_1] (e_{j_1}\otimes e_{j_2})\,,
\end{equ} 
and $\Psi\in L^2_M$. 

Let us begin by considering the case in which the path reaches a
branching point.  Then, by definition, $\sigma_{\bar a}=p_{\bar a}-p_{\bar a-1}=+$ and there
exist $i_{\bar a}< i'_{\bar a}$ such that $g_{\bar a}= (i_{\bar a},
i'_{\bar a})$. Let us immediately point out that, by the description
of the graph we have given above, the momentum carried by the vertex
at the branching point is necessarily $j_2$ since this is the first
branching point on $\pi$ by construction. Also, the sum of the momenta
carried by the vertices at $i_{\bar a}, i'_{\bar a}$ must be
$j_2$. Hence, the quantity we need to control equals
\begin{equs}
\|&T^{\eps,+}[(i_{\bar a}, i'_{\bar a})]\Psi\|_{L^2_{M+1}}^2\\
&=\frac{16}{(2\pi)^d}\eps^{d-2}\sum_{k_{1:M+1}}\frac{1}{|k_{1:M+1}|^2} \frac{[\fw\cdot (k_{i_{\bar a}}+k_{i'_{\bar a}})]^2}{|k_{i_{\bar a}}+k_{i'_{\bar a}}|^2+|k_{1:M+1\setminus\{i_{\bar a}, i'_{\bar a}\}}|^2}\times\\
&\qquad\qquad\qquad\times\indN{k_{i_{\bar a}},k_{i'_{\bar a}}}|\hat \Psi(k_{i_{\bar a}}+k_{i'_{\bar a}}, k_{1:M+1\setminus\{i_{\bar a}, i'_{\bar a}\}})|^2\\
&=\frac{16}{(2\pi)^d}\sum_{k_{1:M-1}}\frac{[\fw\cdot j_2]^2}{|j_2|^2+ |k_{1:M-1}|^2}|\hat \Psi(j_2, k_{1:M-1})|^2\times\\
&\qquad\qquad\qquad\times \eps^{d-2}\sum_{\ell+m=j_2}\frac{\indN{\ell,m}}{|k_{1:M-1}|^2+|\ell|^2+|m|^2}
\end{equs}
where in the last step we renamed the variables $(k_{i_{\bar a}},k_{i'_{\bar a}})=(\ell,m)$ and  
removed the variable corresponding to $j_2$ from the outer sum since $j_2$ is fixed. 
Now, the inner sum is bounded uniformly in $k_{1:M-1}$ (as can be seen by~\eqref{eq:thesum} with $\delta=0$) 
so that we deduce 
\begin{equs}
\|&T^{\eps,+}[ (i_{\bar a}, i'_{\bar a})]\Psi\|_{L^2_{M+1}}^2
\lesssim \sum_{k_{1:M-1}}\frac{[\fw\cdot j_2]^2}{|k_{1:M-1}|^2+|j_2|^2}|\hat \Psi( j_2,k_{1:M-1})|^2\\
&\leq  [\fw\cdot j_2]^2\sum_{k_{1:M}}\frac{2}{|k_{1:M}|^2}|\hat \Psi(k_{1:M})|^2=[\fw\cdot j_2]^2\|(-\gensy)^{-\frac12}\Psi\|_{L^2_M}^2\,.
\end{equs}
Since by definition~\eqref{e:Psiuffa}, $\Psi=\ST^\eps_{p'}[g']$ for a suitable choice of $p'$ and $g'$, 
the last norm at the right hand side converges to $0$ by~\eqref{e:finernullo}. 
\medskip

We now turn to the case in which $\pi$ reaches a merging point.  
This time, $\sigma_{\bar a}=-$ and there exists $q_{\bar a}$ such that 
$g_{\bar a}=q_{\bar a}$, and $\pi(\bar a-1)\in\{1,2\}$. Without loss of generality, assume $\pi(\bar a-1)=1$.  
The norm under study is
\begin{equs}
\|&T^{\eps,-}[q_{\bar a}]\Psi\|_{L^2_{M-1}}^2\\
&=\frac{16}{(2\pi)^{d}} \sum_{k_{1:M-1}}
\frac{(\fw\cdot k_{q_{\bar a}})^2}{|k_{1:M-1}|^2} \Big|\eps^{\frac{d}{2}-1}\sum_{\ell+m=k_{q_{\bar a}}}\indN{\ell,m}\frac{\hat \Psi(\ell,m, k_{1:M-1\setminus\{q_{\bar a}\}})}{(|\ell|^2+|m|^2+|k_{1:M-1\setminus\{q_{\bar a\}}}|^2)^{1/2}}\Big|^2\\
&\lesssim \sum_{k_{1:M-1}} \Big|\eps^{\frac{d}{2}-1}\sum_{\ell+m=k_{q_{\bar a}}}\indN{\ell,m}\frac{\hat \Psi(\ell,m, k_{1:M-1\setminus\{q_{\bar a}\}})}{(|\ell|^2+|m|^2+|k_{1:M-1\setminus\{q_{\bar a\}}}|^2)^{1/2}}\Big|^2\label{e:merge}
\end{equs}
Now, in the sum above, if $\ell$ is the momentum carried by the vertex at $\pi(\bar a-1)$, 
then $\ell$ must be $j_2$. In particular, $\ell=j_2$ is fixed and, therefore, so is $m$ since it must coincide with 
$k_{q_{\bar a}}-j_2$ ($k_{q_{\bar a}}$ is already summed in the external sum). 
This means that the sum inside the square at the right hand side of~\eqref{e:merge} has a unique summand. 
Hence, by applying the change of variables $k_{q_{\bar a}}\mapsto k_{q_{\bar a}}-j_2$ and lower bounding the denominator with $1$, we deduce that the right hand side of~\eqref{e:merge} 
is given by
\begin{equs}
\eps^{d-2}\sum_{k_{1:M-1}} \Big|\hat \Psi(j_2,k_{q_{\bar a}},k_{1:M-1\setminus\{q_{\bar a}\}})\Big|^2\leq \eps^{d-2}\|\Psi\|^2_{L^2_M}\,.
\end{equs}
Since the norm of $\Psi$ is bounded uniformly in $\eps$,~\eqref{e:fastidi} follows at once. 
\end{proof}

Recall that our final goal is to prove Proposition \ref{p:limite} and that each 
$\mathcal T_p^\varepsilon e_{\bf j}$ has been decomposed as a sum over $g\in \cG^\kappa[p]$.
Thanks to Lemma \ref{l:fastidi}, we are left to consider only $g\in \cG^1[p]$
and those $g\in \cG^2[p]$  whose associated graph has two connected 
components, one of which is a path with no branching points. We call the collection of such graphs $\widetilde \cG^2[p]$; see Fig. \ref{fig:3} for an example. In the next lemma we determine a correspondence between 
$\cG^1[p]$ and $\widetilde \cG^2[p]$ valid in case $m=1$. 

\begin{lemma}\label{l:Card}
  Let $a\geq 0$ be even and $p\in \Pi^{(n)}_{a,1}$. Then, for every
  $g\in\cG^1[p]$ there exist exactly two elements of
  $\widetilde \cG^2[p]$, called $\hat g$ and $\tilde g$, such
  that the graph associated to $g$ can be obtained from that of
  $\hat g$ and $\tilde g$ by removing their connected component
  corresponding to the single path $\pi$.  
\end{lemma}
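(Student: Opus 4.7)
The plan is to parametrise every $\hat g \in \widetilde\cG^2[p]$ whose main component is a fixed $g \in \cG^1[p]$ by the sequence $(b_s)_{s=0}^{a}$, where $b_s \in \{1,\ldots,p_s+1\}$ is the position of the bystander vertex in column $s$. The natural bijection $j \mapsto j + \1_{j \ge b_s}$ between the $p_s$ non-bystander vertices of $\hat g$-column $s$ and the vertices of $g$-column $s$ dictates the shifted operations $\hat i_s = i_s + \1_{i_s \ge b_s}$, $\hat i'_s = i'_s + \1_{i'_s \ge b_s}$ when $\sigma_s = +$, and $\hat q_s = q_s + \1_{q_s \ge b_s}$ when $\sigma_s = -$. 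Writing $\rho_s \eqdef b_s - 1$, the requirement that the bystander be a genuine pass-through vertex (neither source nor product of any operation), together with the pass-through rule on non-operated vertices, translates into the fixed-point recursions
\begin{equation*}
\rho_s = \rho_{s-1} - 1 + \1_{i_s \le \rho_s} + \1_{i'_s \le \rho_s}\ \ (\sigma_s = +),\qquad \rho_s = \rho_{s-1} - 2 + \1_{q_s \le \rho_s}\ \ (\sigma_s = -),
\end{equation*}
with $\rho_0 = 1$ and side constraints $\rho_{s-1} \ge 1$ (resp.\ $\ge 2$) when $\sigma_s = +$ (resp.\ $-$).

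The key claim is that the diagonal sequence $\rho_s = p_s$ is a solution of these recursions for all $0 \le s \le a-1$ and is the unique one in that range, while at $s = a$ exactly two extensions exist. Existence of the diagonal solution is immediate: since $i_s, i'_s, q_s \le p_s$ by construction, substituting $\rho_s = p_s$ makes all indicators equal to $1$ and the right-hand sides collapse to $p_{s-1} \pm 1 = p_s$. The side constraints are automatic because $p$ is a nearest-neighbour walk in $\Pi^{(n)}_{a,1}$ that stays at height $\ge 1$: one has $p_s \ge 1$ before each $+$ step and $p_s \ge 2$ before each $-$ step.

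Uniqueness I prove by downward induction from $s = a-1$. Since $p_a = 1$ forces $\sigma_a = -$, the side constraint $\rho_{a-1} \ge 2$ combined with the a priori bound $\rho_{a-1} \le p_{a-1} = 2$ pins down $\rho_{a-1} = 2 = p_{a-1}$. Assuming inductively $\rho_s = p_s$, the recursion reduces to the linear equation $\rho_{s-1} = p_s \mp 1 = p_{s-1}$ (using once more $i_s, i'_s, q_s \le p_s$), which determines $\rho_{s-1}$ uniquely. Finally, at $s = a$, with $\rho_{a-1} = 2$ and $q_a = 1$, the $-$-recursion degenerates to $\rho_a = \1_{\rho_a \ge 1}$, whose fixed points are exactly $\rho_a \in \{0, 1\}$; these produce two distinct elements $\hat g, \tilde g \in \widetilde\cG^2[p]$ differing precisely in $\hat q_a \in \{1, 2\}$.

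The main obstacle is to set up cleanly the combinatorial dictionary between the graphical description of $\hat g$ and the sequence $(b_s)_s$: one needs to verify that the prescriptions above really capture every $\hat g$ whose bystander component is a single connected path free of branching and merging, and conversely that every solution of the recursion actually yields an element of $\widetilde\cG^2[p]$ (in particular that the bystander vertices at positions $b_s$ are pairwise connected by an edge and disconnected from the main component). Once this equivalence is in place, the backward induction on the recursion and the two-dimensional endpoint analysis make the claimed two-to-one correspondence explicit.
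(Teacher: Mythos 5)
Your proof is correct and takes essentially the same route as the paper's: the paper exhibits $\hat g$ (the path $\pi$ attached at the bottom of every column) and $\tilde g$ (the same, modified only at the last step) and argues uniqueness by an induction ``following the graph from right to left'', which is exactly your downward induction pinning $\rho_s=p_s$ for $s\le a-1$ and leaving the two choices $\rho_a\in\{0,1\}$ at the final merging step. Your fixed-point recursions correctly encode the paper's edge-drawing rules, and the ``dictionary'' you flag as the remaining obstacle is precisely the routine bookkeeping the paper delegates to its figures, so there is no genuine gap.
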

\begin{proof}
  The construction of the graphs $\hat g,\tilde g\in  \cG^2[p]$ given $g\in \cG^1[p]$ is explained in Figure \ref{fig:biiezione}. The fact that there is no other $g'\in \cG^2[p]$ that reduces to
  $g$ once the path $\pi$ is removed, follows from the fact that the path $\pi$ can cross the connected component starting at the vertex $1$ of colum labelled $0$ only in the step from the second to last to the last column. This can be seen for instance by induction, following the graph from right to left. 
\end{proof}

\begin{figure}\begin{center}\includegraphics[width=3.7cm]{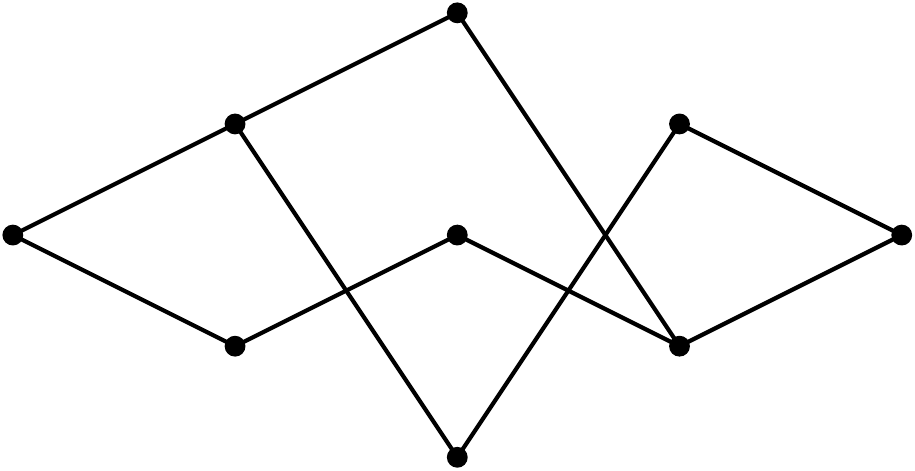}\qquad
    \includegraphics[width=3.7cm]{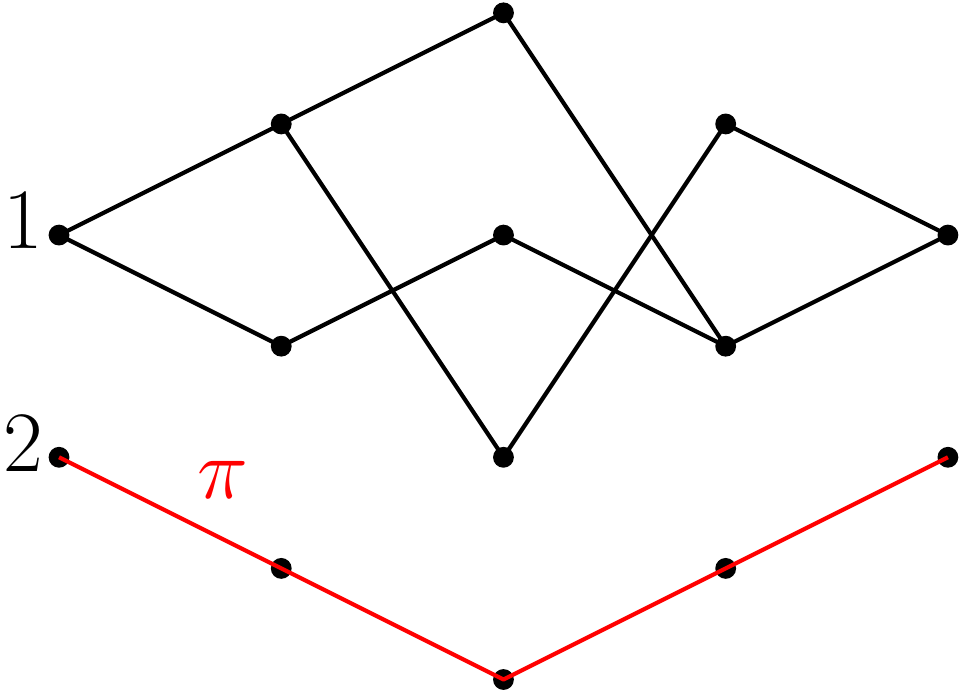}\qquad\includegraphics[width=3.7cm]{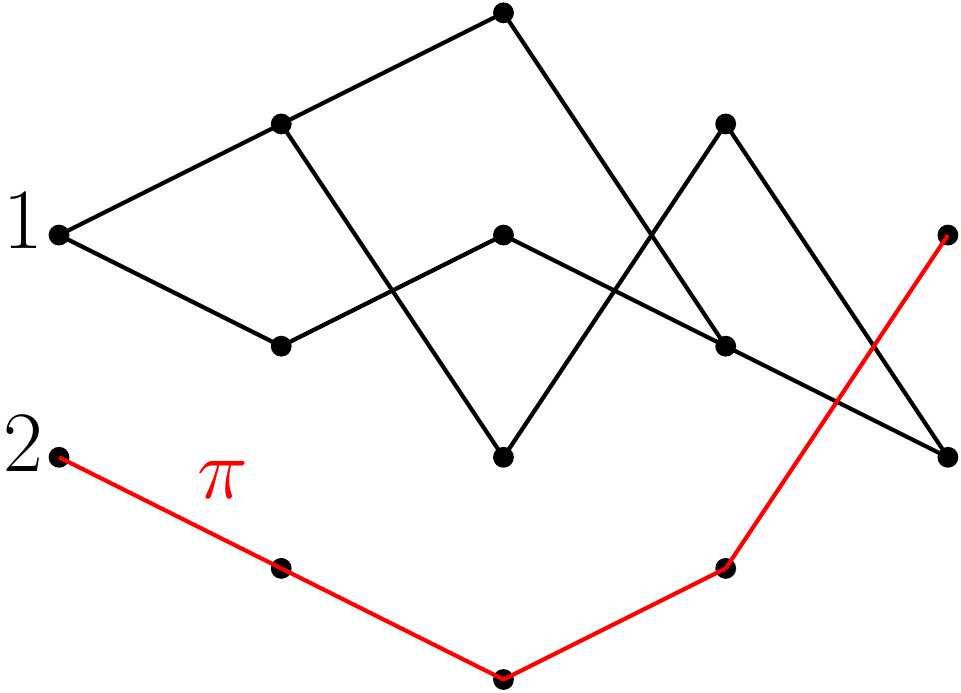}
\caption{Left: a graph $g\in  \cG^1[p]$. Center and right: the graphs $\hat g,\tilde g\in  \cG^2[p]$ corresponding to $g$. In particular, $\hat g$ is obtained by simply attaching the path $\pi$ below $g$, and $\tilde g$ is obtained from $\hat g$ by changing $\pi$ at the very last step. Note that, when $\pi$ is removed from $\tilde g$, the last column contains a single vertex, so the resulting graph is again $g$. The path $\pi$ cannot cross the other connected component anywhere else.} 
\label{fig:biiezione}
\end{center}
\end{figure}

\begin{figure}\begin{center}\includegraphics[width=5cm]{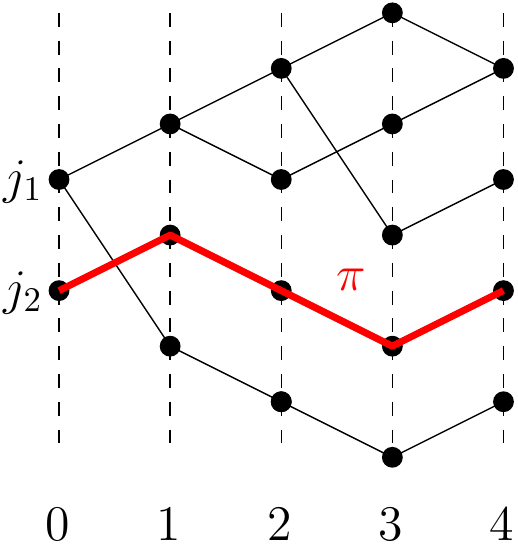}
\caption{A graph corresponding to $\bj=j_{1:2}$ and to $p\in \Pi^{(n)}_{a,m}$ with $n\ge 4$, $a=4$, $m=3$. In this graph, the path $\pi$ has neither branching nor merging points: the graph is disconnected. The label of the unique vertex of $\pi$ in column $i$ defines $\pi(i)$.} 
\label{fig:3}
\end{center}
\end{figure}

Let $p\in \Pi^{(n)}_{a,m}$ and $g\in\cG^1[p]$ or in $\widetilde\cG^2[p]$. 
Let, as above, $m_\kappa=m+\kappa-1$, where $\kappa=1$ if $g\in\cG^1[p]$ and $2$ in the other case. 
Moreover, let
\begin{equ}[eq:mfe]
{\mathfrak e}_\bj\eqdef 
\begin{cases}
e_{j_1}\,, & \text{ if } \bj=j_1 \\
e_{j_1}\otimes e_{j_2}\,&  \text{ if } \bj=j_{1:2}
\end{cases}
\end{equ}
(note that in the latter case, ${\mathfrak e_\bj}\ne e_\bj$ if $j_1\ne j_2$).
Then, $\ST^\eps_p[g] {\mathfrak e}_{\bj}\in L^2_{m_\kappa}$, 
and we write the Fourier transform of its kernel in the form 
\begin{equ}\label{eq:formkernel}
f_g^\eps(k_{1:m_\kappa},\bj)= \eps^{\frac d2(m_\kappa-1)}F^\eps_g(\eps k_{1:m_\kappa};\eps \bj)\,,
\end{equ}
in which we singled out the scaling factor $\eps^{\frac d2(m_\kappa-1)}$. 
We derive an expression for $F^\eps_g$ in terms of ratios of polynomials, whose form and main properties are summarised 
in the following lemma.

\begin{lemma}\label{lemma:kernels}
  Given $p\in \Pi^{(n)}_{a,m}$ and $a\ge 1$, let
  $M=(1+a-m)/2\,(\ge 0)$ be the number of $T^{\eps,-}$ in the product
  $\ST^\eps_p$.
  
  Let $j_1,j_2\in\Z_0^d$, $\bj=j_{1:2}$ and $\bz=(z_1,z_2)$. For 
  $g\in\widetilde\cG^2[p]$, the kernel
  $F_g^\eps(x_{1:m+1};\bz)$ in \eqref{eq:formkernel}, corresponding
  to $\ST^\eps_p[g]{\mathfrak e_\bj}$, can be written as
\begin{equs}\label{eq:formkernelG}
F_g^\eps&(x_{1:m+1};\bz)=\1_{\sum_{i\neq \pi(a)} x_i= z_1}\1_{x_{\pi(a)}=z_2}\frac{(\fw\cdot z_1)}{|\bz|} 
\frac{\iota^a }{|x_{1:m+1}|}\times\\
&\times \eps^{d M}\sum_{\substack{y_{1:M}\in (\eps \Z^d_0)^M:\\|y_i|_
\infty\le 1, i\le M}} \frac{P_g(y_{1:M};x_{1:m+1\setminus\{\pi(a)\}})}{Q_g(y_{1:M};x_{1:m+1})}I_g(y_{1:M};x_{1:m+1\setminus\{\pi(a)\}})\,,
\end{equs}
where $\pi(a)$ is the label of the vertex of the path $\pi$ belonging to the $a$-th column and
\begin{enumerate}[noitemsep]
\item  [(i)] $I_g$ is a product of  indicator functions, each  imposing that certain linear combinations of its arguments have $|\cdot|_\infty$ norm in $(0,1]$,
    
  \item [(ii)] $P_g$ (resp. $Q_g$) is a homogenous  real-valued polynomial of degree $a-1$ (resp. $2(a-1)$) 
  in $y_{1:M};x_{1:m+1\setminus\{\pi(a)\}}$ (resp. $y_{1:M};x_{1:m+1}$). Further, $Q_g\ge0$ and it does not vanish on the support of $I_g$,
    
  \item [(iii)] if $m=1$, then $Q_g(y_{1:M};0)>0$ for almost every $y_{1:M}$\,.
    
  \end{enumerate}
Moreover, if $m=1$ (so that, in particular, $a$ is even and $a\ge2$), 
then there exists a polynomial $\tilde P_g$ of degree $a-2$ such that for every $j_1', j_2'\in\Z^d_0$ 
and $\bj'=j'_{1:2}$, we have 
  \begin{equs}\label{eq:casem1}
    \langle {\mathfrak e}_{-\bj'}\,,&\ST^\eps_p[g] {\mathfrak e}_\bj\rangle_{L^2_{m_\kappa}}\\
    &=\1_{A_{\bj; \bj'}(g)}\iota^a \frac{(\fw\cdot j_1)^2}{|\bj|^2}
     \eps^{d M}\sum_{\substack{y_{1:M}\in (\eps \Z^d_0)^M:\\|y_i|_\infty\le 1,\, i\le M}} \frac{\tilde P_g(y_{1:M};\eps j_1)}{Q_g(y_{1:M};\eps \bj)}I_g(y_{1:M};\eps j_1)
  \end{equs}
where $\mathfrak{e}_\bj$ is defined according to~\eqref{eq:mfe} and 
 $A_{\bj; \bj'}(g)=\{j_{\pi(a)^c}'=j_1\;;\;j_{\pi(a)}'=j_2\}$, 
for $\pi(a)^c\eqdef\{1,2\}\setminus\{\pi(a)\}$. 

Similarly, if $j_1,j_1'\in \Z^d_0$ and  $g\in \cG^1[p]$, 
then the analog of \eqref{eq:formkernelG} and \ref{eq:casem1} holds upon
setting $\bz=z_1$, removing the dependence on $z_2$ (and $\pi(a)$) 
from the right hand side of~\eqref{eq:formkernelG}, replacing both $x_{1:m+1}$ and $x_{1:m+1\setminus\{\pi(a)\}}$ with $x_{1:m}$ in~\eqref{eq:formkernelG} and setting $A_{\bj; \bj'}(g)=\{j_1'=j_1\}$ in \eqref{eq:casem1}.
\end{lemma}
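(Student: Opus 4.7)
The plan is to proceed by induction on the length $a$ of the path $p$, expanding $\ST^\eps_p[g]=T^{\eps,\sigma_a}[g_a]\cdots T^{\eps,\sigma_1}[g_1]$ and showing that at each step the structure of the kernel asserted in \eqref{eq:formkernelG} is preserved, with $P_g$, $Q_g$ and $I_g$ evolving in a controlled way. The base case $a=1$ is handled by direct computation: a single $T^{\eps,+}[(i,i')]$ applied to $\mathfrak{e}_\bj$ gives a kernel with $P_g$ of degree $0=a-1$, $Q_g$ of degree $2=2(a-1)$, and $I_g$ equal to the single indicator produced by $\cA^{\eps,+}[(i,i')]$, in perfect agreement with the form in \eqref{eq:formkernelG} once the global prefactor is extracted.

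For the inductive step, note that $T^{\eps,\sigma_s}[g_s]=(-\gensy)^{-1/2}\cA^{\eps,\sigma_s}[g_s](-\gensy)^{-1/2}$ multiplies the kernel produced at step $s-1$ by four distinct contributions: (a) one factor $\fw\cdot k$ from $\cA^\eps$, which appends a new linear factor to $P_g$ and a new $\iota$ to the overall phase; (b) two inverse norms $|k_{1:\bullet}|^{-1}$ from the two sandwiched $(-\gensy)^{-1/2}$, one of which combines with the right $(-\gensy)^{-1/2}$ of the previous operator to form an effective $(-\gensy)^{-1}$ between consecutive $\cA^\eps$'s, adding an overall factor of degree $2$ to $Q_g$; (c) the cut-off indicator $\indN{\cdot,\cdot}$, enlarging $I_g$; and, when $\sigma_s=-$, (d) a new momentum $\ell$ constrained by $\ell+m=k_q$ and $\indN{\ell,m}$, which after the rescaling $y\eqdef\eps\ell$ becomes a sum over $y\in\eps\Z_0^d$ with $|y|_\infty\le 1$ and contributes one factor $\eps^d$. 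After the $a$ applications, exactly one factor $\fw\cdot k$ is extracted from $P_g$ to produce the global prefactor $\fw\cdot z_1$ (it corresponds to a momentum pinned by conservation on the main connected component to equal $z_1$), while the outermost $(-\gensy)^{-1/2}$ acting on $\mathfrak{e}_\bj$ on the right and the one on the left yield the global factors $1/|\bz|$ and $1/|x_{1:m+1}|$, respectively. Hence $\deg P_g=a-1$, $\deg Q_g=2(a-1)$, the global sign is $\iota^a$, and the cumulative $\eps$-prefactor is $\eps^{dM}$ with $M=(a-m+1)/2$ the number of down-steps.

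Properties (i) and (ii) of $P_g,Q_g,I_g$ then follow directly: $I_g$ is by construction a product of $\indN{\cdot,\cdot}$ indicators; $Q_g$ is a product of squared Euclidean norms, hence nonnegative, of even total degree $2(a-1)$, and strictly positive on the support of $I_g$, since each such squared norm is that of a momentum forced to lie in $\Z_0^d$ by one of the indicator factors. For property (iii), the claim that $Q_g(y_{1:M};0)\not\equiv 0$ when $m=1$, the plan is to unfold the graph associated to $g$ and track the factorization of $Q_g$ vertex-by-vertex: each factor is the squared norm of a specific linear combination of $y_{1:M}$ and of the external arguments; upon setting the external arguments to zero, each such linear form remains a nontrivial function of $y_{1:M}$ provided every branching or merging vertex of $g$ introduces a variable genuinely independent from the others. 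This is the most delicate point of the proof and the one likely to require the most care, as it must rule out accidental degeneracies arising from the combinatorial structure of $g$.

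Finally, equation \eqref{eq:casem1} will be obtained by evaluating $\langle\mathfrak{e}_{-\bj'},\ST^\eps_p[g]\mathfrak{e}_\bj\rangle$ via substitution of $x_{1:m+1}=\eps\bj'$ into \eqref{eq:formkernelG}: the two indicators $\1_{\sum_{i\neq \pi(a)} x_i=z_1}\1_{x_{\pi(a)}=z_2}$ collapse to the event $A_{\bj;\bj'}(g)$, while the prefactor $(\fw\cdot z_1)/(|\bz|\,|x_{1:m+1}|)$ evaluated on this event equals $(\fw\cdot j_1)/(\eps|\bj|^2)$. A further factor $\fw\cdot j_1$ will be extracted from $P_g$, namely the factor produced by the final $T^{\eps,-}$ (which must appear since $m=1$ forces the path to end with a down-step), so as to yield the announced prefactor $(\fw\cdot j_1)^2/|\bj|^2$ and a residual polynomial $\tilde P_g$ of degree $a-2$. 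The analog for $g\in\cG^1[p]$ will be strictly easier, since the path $\pi$ and the second connected component are absent, leaving only the single constraint $\1_{\sum_i x_i=z_1}$ and no $z_2$-dependence.
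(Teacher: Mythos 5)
Your overall strategy coincides with the paper's: induction on $a$, an explicit base case at $a=1$, and a step-by-step tracking of how each $T^{\eps,\sigma}[g_s]$ updates $P_g$, $Q_g$, $I_g$, the phase $\iota^a$ and the $\eps^{dM}$ prefactor. The bookkeeping of items (i)--(ii) and the derivation of \eqref{eq:casem1} (substituting $x_{1:2}=\eps\bj'$, collapsing the two indicators to $A_{\bj;\bj'}(g)$, and extracting the extra factor $\fw\cdot(\eps j_1)$ from the final down-step to define $\tilde P_g$) are all essentially what the paper does. A small slip: in the base case $a=1$ you assert $Q_g$ has ``degree $2=2(a-1)$'', which is inconsistent; for $a=1$ one has $Q_g\equiv 1$ (degree $0$), the two factors $|x_{1:3}|^{-1}$ and $|\bz|^{-1}$ being the global prefactors outside the fraction, exactly as your own degree count in the inductive step implies.

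The genuine gap is item (iii). You correctly identify it as the delicate point, but the plan you offer --- ``each linear form remains a nontrivial function of $y_{1:M}$ provided every branching or merging vertex introduces a genuinely independent variable'' --- is neither verified nor quite the right criterion, and as stated it does not rule out the failure mode. The danger is that some quadratic factor of $Q_g$ depends \emph{only} on the external variables $x_{1:2}$ (no $y_i$ at all), in which case it vanishes identically at $x=0$ regardless of any independence among the $y$'s. The paper closes this by strengthening the induction hypothesis: it carries along an auxiliary property $(iii')$ stating that $Q_g$ factors into quadratics $Q^r$, each containing $|x_{\pi(a)}|^2$ as a summand, and that no $Q^r$ equals $|x_{\pi(a)}|^2$, $|\sum_{i\neq\pi(a)}x_i|^2$, or their sum. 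Preserving $(iii')$ under a down-step is easy (the new factor has genuine $y_M$ dependence), but under an up-step it requires the structural observation that for $a>1$ with $\sigma_a=+$ one has $p_a=m>2$, so the new factor $|x_i+x_{i'}|^2+|x_{1:m+1\setminus\{i,i'\}}|^2$ has at least three summands and cannot degenerate to a forbidden form; one must also check that the previously constructed factors, re-evaluated at the new arguments, do not become forbidden, which is where the induction hypothesis is invoked. When $m=1$ the only external variables are $x_{\pi(a)}$ and $x_{\pi(a)^c}$, so $(iii')$ exactly excludes every factor that could vanish at $x=0$, yielding (iii). Without some substitute for this strengthened hypothesis, your induction does not deliver (iii), and (iii) is needed downstream (in Lemma~\ref{l:RiemanConv}) to make sense of the limiting integral defining $c_g(p)$.
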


We emphasise that the functions $P_g, Q_g, I_g$ and $\tilde P_g$ are {\it $\eps$-independent}. 
The only $\eps$-dependence in $F^\eps_p$ comes from the fact that the Riemann sums runs over $\eps \Z^d_0$. 

\begin{proof}
The argument for $\bj=j_1$ is identical (and simpler) than that for $\bj=j_{1:2}$, so we will 
only focus on the latter. 

The statement is proven via induction on $|p|=a\ge1$. In order to verify condition {\it (iii)}, we will 
simultaneously show that  $Q_g$ further satisfies
\begin{enumerate}
\item[$(iii')$] $Q_g(y_{1:M};x_{1:m+1})$ is the product of homogeneous quadratic polynomials $Q^r$, $1\leq r\leq a-1$, 
each being the sum of $|x_{\pi(a)}|^2$ and 
the squared norms of linear combinations of the other arguments (which are $(y_{1:M};x_{1:m+1\setminus\{\pi(a)\}})$).
For every $1\leq r\leq a-1$, 
$Q^r\neq |\sum_{i\neq \pi(a)}x_i|^2\,,|x_{\pi(a)}|^2$ or the sum of the two.
\end{enumerate}
We begin our induction with $a=1$. In this case, if $p\in  \Pi^{(n)}_{a,m}$, 
then necessarily $m=2,M=0$, $\sigma_1=+$ and $g_1=(i,i')$ for some $1\le i<i'\le 3$. 
From~\eqref{e:GenaNonSym} and the definition of $T^{\eps,+}[(i,i')]$, 
one sees that 
\begin{equ}\label{eq:casoa=1}
    F^\eps_g(x_{1:3};\bz)=\1_{\sum_{i\neq \pi(a)} x_i= z_1}\1_{ x_{\pi(a)}=z_2}\frac{(\fw\cdot z_1)}{|\bz|}\frac{\iota}{|x_{1:3}|}\frac{-4}{(2\pi)^{d/2}}\mathbb J^1_{x_{i},x_{i'}}
\end{equ}
where the first indicator function comes from the fact that $T^{\eps,+}[(i,i')]$ commutes 
with the momentum operator (see Lemma~\ref{l:AnonsymProp}) and that the kernel of 
$e_{j_1}\otimes e_{j_2}$ at $(k_1, k_2)$ is $\1_{k_1=j_1}\1_{k_2=j_2}$, 
This expression is of the correct form \eqref{eq:formkernelG}, 
with $P_g\equiv-4/(2\pi)^{d/2}$, $Q_g\equiv 1$, $I_g\equiv \mathbb J^1_{x_{i},x_{i'}}$, 
so that $(i),(ii)$ and $(iii')$ are clearly satisfied. 
As for the induction step, we assume the validity of the statement for a given $a\ge1$ and 
take $p\in\Pi^{(n)}_{a+1,m}$ and $g\in\widetilde\cG^2[p]$. 
Write $\ST^\eps_p[g]=T^{\eps,\sigma}[g_{a+1}]\mathcal T_{p'}^\eps[g']$ for some 
$p'\in\Pi^{(n)}_{a,m-\sigma_{a+1}}$ and $g'\in \widetilde\cG^2[p']$. 
Consider first the case $\sigma_{a+1}=+$ and let $i, i'$ be such that 
$g_{a+1}=(i, i')$. 
Recalling the definition of $T^{\eps,+}[(i, i')]$ and~\eqref{eq:formkernel}, we have 
 \begin{equs}
 (T^{\eps,+}[(i,i')]&f^\eps_{g'})(k_{1:m+1};\bj)=-\frac{4\iota}{(2\pi)^{\frac{d}{2}}}\eps^{\frac{d}{2}-1+\frac{d}{2}(m_\kappa-2)}\frac1{|k_{1:m+1}|}\times\\
&\times
\frac{\fw\cdot(k_i+k_{i'}) \indN{k_i,k_{i'}}}{\sqrt{|k_i+k_{i'}|^2+|k_{1:m+1\setminus\{i,i'\}}|^2}} F^\eps_{g'}(\eps k_i+\eps k_{i'},\eps k_{1:m+1\setminus\{i,i'\}};\eps \bj)\\
=&\eps^{\frac{d}{2}(m_\kappa-1)}F^\eps_{g}(\eps k_{1:m+1};\eps \bj), \label{eq:T+}
\end{equs}
with
\begin{equs}\label{eq:F+}
F^\eps_{g}&(x_{1:m+1};\bz)=\frac\iota{|x_{1:m}|}
\frac{-4}{(2\pi)^{\frac{d}{2}}}\times\\
&\times \frac{\fw\cdot (x_i+x_{i'})}{\sqrt{|x_i+x_{i'}|^2+|x_{1:m+1\setminus\{i,i'\}}|^2}}\mathbb J^1_{x_i, x_{i'}}F^\eps_{g'}(x_i+x_{i'}, x_{1:m+1\setminus\{i,i'\}};\bz)\,.
\end{equs}
By induction, $F_{g'}$ has the form in~\eqref{eq:formkernelG}, 
and therefore so does $F_g$ once we set 
\begin{equs}[e:casosale]
P_g(y_{1:M};x_{1:m+1\setminus\{\pi(a+1)\}})&=-\frac{4}{(2\pi)^{\frac{d}{2}}} (\fw\cdot (x_i+x_{i'}))P_{g'}\\
Q_g(y_{1:M};x_{1:m+1})&=(| x_i+x_{i'}|^2+|x_{1:m+1\setminus\{i,i'\}}|^2)Q_{g'}\\
    I_g(y_{1:M};x_{1:m+1\setminus\{\pi(a+1)\}})&= \mathbb J^1_{x_i,x_{i'}}I_{g'}\\
\end{equs}
where now the arguments of $P_{g'}$ and $I_{g'}$ are $(y_{1:M};x_i+x_{i'},x_{1:m+1\setminus\{i,i', \pi(a+1)\}})$ 
(and the lack of dependence on $x_{\pi(a+1)}$ follows from the fact that, by construction, $\pi(a+1)\neq i,i'$) 
while the arguments of  $Q_{g'}$ are $(y_{1:M};x_i+x_{i'},x_{1:m+1\setminus\{i,i'\}})$. 

If $\sigma_{a+1}=-$, then there exists $q$ such that $g_{a+1}=q$. By~\eqref{e:GenaNonSym}, 
\begin{equs}
  (T^{\eps,-}[q]f^\eps_{g'})(k_{1:m-1};\bj) =&\frac{4\iota }{(2\pi)^{\frac{d}{2}}}\eps^{\frac{d}{2}-1}\frac{(\fw\cdot k_q)}{|k_{1:m-1}|}\times\\
  &\times\sum_{y,z\in \Z^d\colon y+z=k_q}\indN{y,z}\frac{\eps^{\frac{d}{2}m_\kappa}F^\eps_{g'}(\eps y,\eps z,
    \eps k_{1:m-1\setminus\{q\}};\eps \bj)}{\sqrt{|y|^2+|z|^2+|k_{1:m-1\setminus\{q\}}|^2}}
  \\
  =&\eps^{\frac{d}{2}(m_\kappa-1)}F^\eps_{p}(\eps k_{1:m};\eps k),  
\end{equs}
where in this case
\begin{equs}  \label{eq:F-}
F^\eps_{g}(x_{1:m};\bz)&= \frac{4\iota}{(2\pi)^{\frac{d}{2}}}\frac1{|x_{1:m-1}|}(\fw\cdot x_q)\1_{0<|x_q|_\infty\le 1}\\
&\times\sum_{\substack{y\in \eps \mathbb Z^d_0:\\ |y|_\infty
    \le 1}}\eps^d\1_{0<|x_q-y|_\infty\le 1}\frac{F^\eps_{g'}(y,x_q-y, x_{1:m-1\setminus\{q\}};\bz)}{\sqrt{|y|^2+|x_q-y|^2+|x_{1:m-1\setminus\{q\}}|^2}}.
\end{equs}
Again, as $F^\eps_{g'}$ is of the form \eqref{eq:formkernelG} by the induction hypothesis, 
the same holds for $F^\eps_{g}$, where this time we need to set 
\begin{equs}
     P_g(y_{1:M};x_{1:m-1\setminus\{\pi(a+1)\}})&=\frac{4}{(2\pi)^{\frac{d}{2}}} (\fw\cdot x_q)P_{g'},\\
    Q_g(y_{1:M};x_{1:m-1})&=(|y_M|^2+|x_q-y_M|^2+|x_{1:m-1\setminus\{q\}}|^2)Q_{g'}\\
    I_g(y_{1:M};x_{1:m-1\setminus\{\pi(a+1)\}})&=\1_{0<|x_q|_\infty\le 1,0<|y_M-x_q|\le 1}I_{g'}\,,\label{e:casoscende}
\end{equs}
where the omitted arguments of the functions $P_{g'}$ and $I_{g'}$ are 
$(y_{1:M-1};y_M,x_q-y_M,x_{1:m-1\setminus \{q, \pi(a+1)\}})$, while those for $Q_{g'}$ 
are $(y_{1:M-1};y_M,x_q-y_M,x_{1:m-1\setminus \{q\}})$. Once again, the lack of dependence of 
$P_g$ and $I_g$ on $x_{\pi(a+1)}$ is due to the fact that, by construction, $q\neq \pi(a)$.

Now, for both $\sigma_{a+1}=\pm$,~\eqref{e:casosale} and~\eqref{e:casoscende} clearly ensure that 
if $P_{g'}$, $Q_{g'}$ and $I_{g'}$ satisfy conditions $(i)-(ii)$ 
so do $P_{g}$, $Q_{g}$ and $I_{g}$. The first part of $(iii')$ is obvious since the variable 
corresponding $x_{\pi(a)}$ is simply turned in $x_{\pi(a+1)}$ and is not affected by the operators $\cA^{\eps,\pm}[g_{a+1}]$ 
in the definition of $T^{\eps,\pm}[g_{a+1}]$. 
{For the second part of $(iii')$, note that the first factor in the expression for $Q_g$ in~\eqref{e:casosale} and~\eqref{e:casoscende}
is different from $|\sum_{i\neq \pi(a+1)}x_i|^2,\,|x_{\pi(a+1)}|^2$ or their sum. 
While this is clear for~\eqref{e:casoscende} because of the non-trivial $y_M$ dependence, 
for~\eqref{e:casosale} this follows by the fact that $m+1\geq 4$. Indeed, 
in the base induction step (see~\eqref{eq:casoa=1}) corresponding to $a=1$, $m+1=p_a+1=3$. 
Now, for $a>1$ and $\sigma_a=+$, by definition of $p\in \Pi^{(n)}_{a,m}$, $p_a=m>2$, which implies that the first factor 
in~\eqref{e:casosale} has at least three summands. It remains to show that 
 $Q_{g'}$, evaluated at its new arguments, does not contain a term of the aforementioned form. 
 But this can only happen if a similar term was there in the old variables and this is ruled out 
 by the induction hypothesis.  }
As for property $(iii)$, note that by $(iii')$, $Q_g$ can be written as the product of quadratic polynomials 
$Q^r$, $1\leq r\leq a-1$, and if $Q^r$ depends on at least one of the $y_i,\,i\le M$, then 
necessarily $Q^r(y_{1:M};0)\ne0$ for almost all $y_{1:M}$. 
The only potentially problematic case 
arises when one of the $Q^r$ is such that $Q^r(y_{1:M};x_{1:2})=|x_{1}|^2,\,|x_2|^2$ or their sum, but 
this is ruled out by $(iii')$ for $m=1$. 

At last, to verify~\eqref{eq:casem1}, we note that 
 \eqref{eq:formkernel} and \eqref{eq:formkernelG} give 
\begin{equs}\label{eq:plugged}
    \langle {\mathfrak e}_{-\bj'}\,,&\ST^\eps_p[g] {\mathfrak e}_\bj\rangle_{L^2_{m_\kappa}}\\
    &=\1_{A_{\bj;\bj'}(g)} \iota^a\frac{(\fw\cdot j_1)}{|\bj|} \frac{1}{\eps|\bj|}\eps^{d M}\sum_{\substack{y_{1:M}\in (\eps \Z^d_0)^M:\\|y_i|\le 1, i\le M}} \frac{P_g(y_{1:M};\eps j_1)}{Q_g(y_{1:M};\eps \bj)}I_g(y_{1:M};\eps j_1)\,.
\end{equs}
But, since $m=1$, the left-most operator in the product $\ST^\eps_p[g]$ is $T^{\eps,-}[q]$, $q\in\{1,2\}$, 
so that $\ST^\eps_p[q] {\mathfrak e}_\bj=T^{\eps,-}[q]\ST^\eps_{p'}[g']{\mathfrak e}_\bj$ 
and $\ST^\eps_{p'}[g']{\mathfrak e}_\bj\in L^2_{3}$.
Then, from \eqref{e:casoscende} we see that
\begin{equ}
    P_g(y_{1:M};\eps j_1)= \frac{4}{(2\pi)^{\frac{d}{2}}} (\fw\cdot (\eps j_1))P_{g'}(y_{1:M-1};y_M,\eps j_1-y_M)
\end{equ}
which,  upon defining
\begin{equ}
\tilde P_g(y_{1:M};\eps j_1)\eqdef \frac{8}{(2\pi)^{\frac{d}{2}}} P_{g'}(y_{1:M-1};y_M,\eps j_1-y_M)\,,
\end{equ}
reduces to \eqref{eq:casem1} once plugged into \eqref{eq:plugged}. 
\end{proof}

As a last result before turning to the proof of Proposition~\ref{p:limite}, 
in the next lemma we show that~\eqref{eq:casem1} converges in the limit for $\eps\to 0$. 

\begin{lemma}\label{l:RiemanConv}
In the setting and notation of the previous lemma, let $a\geq 1$ be even and $p\in \Pi^{(n)}_{a,1}$. 
Let $g$ be an element of either $\cG^1[p]$ or $\widetilde{\cG}^{2}[p]$.  
Then, there exists a constant $c_g(p)\in\R$ independent of $\bj$ such that for every $\bj'$
\begin{equ}[e:Const1]
\lim_{\eps\to 0} \langle {\mathfrak e}_{-\bj'}\,,\ST^\eps_p[g] {\mathfrak e}_\bj\rangle= \1_{A_{\bj; \bj'}(g)} \frac{(\fw\cdot j_1)^2}{|\bj|^2}c_g(p) \,.
\end{equ}
Furthermore, if $g\in \cG^1[p]$ and $\hat g$, $\tilde g\in\widetilde{\cG}^{2}[p]$ are those in Lemma~\ref{l:Card}, 
then 
\begin{equ}[e:ConstEqu]
c_g(p)=c_{\hat g}(p)=c_{\tilde g}(p)\,.
\end{equ}
\end{lemma}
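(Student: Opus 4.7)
The plan is to start from the explicit representation \eqref{eq:casem1} of Lemma~\ref{lemma:kernels} (and its analogue for $\cG^1[p]$), which expresses $\langle {\mathfrak e}_{-\bj'}\,,\ST^\eps_p[g] {\mathfrak e}_\bj\rangle$ as the product of the indicator $\1_{A_{\bj;\bj'}(g)}$, the prefactor $\iota^a(\fw\cdot j_1)^2/|\bj|^2$, and a Riemann sum over $y_{1:M}\in(\eps\Z^d_0)^M\cap\{|y_i|_\infty\le 1\}$ with integrand $\tilde P_g(y_{1:M};\eps j_1)/Q_g(y_{1:M};\eps\bj)\cdot I_g(y_{1:M};\eps j_1)$. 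Since the polynomials $\tilde P_g, Q_g, I_g$ are $\eps$-independent and the external arguments $\eps j_1, \eps\bj$ tend to zero, the natural candidate for the limit is
\[
c_g(p)\eqdef\iota^a\int_{[-1,1]^{dM}}\frac{\tilde P_g(y_{1:M};0)}{Q_g(y_{1:M};0)}I_g(y_{1:M};0)\,\dd y_{1:M},
\]
which is manifestly independent of $\bj$ and $\bj'$, and real since $a$ is even. Establishing \eqref{e:Const1} then amounts to justifying the passage from Riemann sum to Riemann integral and combining it with continuity of the integrand in its external arguments at $0$.

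First, I would verify the integrability of the limit integrand on $[-1,1]^{dM}$. The pointwise limit of the integrand is clear, and condition $(iii)$ of Lemma~\ref{lemma:kernels} ensures $Q_g(y_{1:M};0)>0$ for a.e.\ $y_{1:M}$. The delicate point is the integrability near the zero locus of $Q_g$. By condition $(iii')$, $Q_g(\cdot;0)$ factorises as a product of $a-1$ non-negative quadratic forms, while $\tilde P_g$ has degree $a-2$, so a homogeneity count shows the integrand is controlled by $C|y_{1:M}|^{-a}$ near the common zero locus. Since $m=1$ and $a$ is even, $M=a/2$, so the domain has dimension $dM=da/2$, and the singularity is integrable precisely when $dM-a>0$, i.e.\ $d\geq 3$ -- which is our standing assumption. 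A standard truncation of the summand near the origin, together with dominated convergence applied away from it, would then yield the announced convergence of the Riemann sum to the integral defining $c_g(p)$.

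For the final identity $c_g(p)=c_{\hat g}(p)=c_{\tilde g}(p)$, I would exploit the correspondence from Lemma~\ref{l:Card}: the graphs $\hat g, \tilde g\in\widetilde\cG^2[p]$ are obtained from $g\in\cG^1[p]$ by adjoining a connected component given by the path $\pi$, which carries only the constant momentum $j_2$ and contains neither branching nor merging points. Tracking this through the inductive construction of $\tilde P, Q, I$ via \eqref{e:casosale}--\eqref{e:casoscende}, each step along $\pi$ contributes only factors that depend on $x_{\pi(\cdot)}$ (the momentum carried by the vertices of $\pi$) and not on the integration variables $y_{1:M}$, because $\pi$ has no merging step that would introduce a new $y$. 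Setting $x_{\pi(\cdot)}=0$, as happens in the $\eps\to 0$ limit since $\eps j_2\to 0$, these factors collapse to $1$, and the resulting integrand coincides with the one associated to $g$. The sole distinction between $\hat g$ and $\tilde g$ lies in the terminal crossing of $\pi$ at column $a$, which again affects only $x_{\pi(\cdot)}$-dependent factors and so does not survive the limit.

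The hard part will be the last step, namely the precise verification that the insertion of the ``passive'' path $\pi$ truly factorises away after setting $x_{\pi(\cdot)}=0$, in particular that no $Q$-factor produced along $\pi$ retains a non-trivial dependence on any of the $y_i$. This follows inductively from \eqref{e:casosale}--\eqref{e:casoscende}: at each step the new quadratic factor added to $Q$ involves only the Fourier variables of the endpoints of the edges created at that step, and the vertices of $\pi$ are by construction disjoint from those of the other connected component, so their contributions decouple once $x_{\pi(\cdot)}$ is set to zero. With this bookkeeping in place, the common value of the integral defining $c_g, c_{\hat g}, c_{\tilde g}$ is established, completing the proof.
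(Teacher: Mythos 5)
Your overall strategy coincides with the paper's: the candidate limit is the same integral of $\tilde P_g(\cdot;0)/Q_g(\cdot;0)\,I_g(\cdot;0)$ over $[-1,1]^{dM}$, and the identity $c_g(p)=c_{\hat g}(p)=c_{\tilde g}(p)$ is obtained exactly as you describe, by observing that the passive path $\pi$ contributes no new integration variables and that its momentum $\eps j_2$ disappears in the limit. However, there is a genuine gap in the step you yourself flag as requiring justification, namely the convergence of the Riemann sum and the integrability of the limit. Your homogeneity count (degree $a-2$ over degree $2(a-1)$, giving $|y_{1:M}|^{-a}$, integrable since $dM=da/2>a$ for $d\ge3$) only controls the singularity \emph{at the origin} of $y$-space. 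But by property $(iii')$ of Lemma~\ref{lemma:kernels}, $Q_g(\cdot;0)$ is a product of factors $Q^r(y_{1:M};0)$, each a sum of squared norms of linear combinations of the $y_i$; its zero set is therefore a \emph{union of linear subspaces} of $[-1,1]^{dM}$, not just the point $y=0$. Near a generic point of one such subspace the integrand blows up like $\mathrm{dist}^{-2}$, and on intersections of several subspaces the singularity is of higher order; a single global homogeneity bound does not establish integrability there, and ``dominated convergence away from the origin'' has no dominating function. The same issue affects the sum-to-integral step: lattice points $y_{1:M}\in(\eps\Z^d_0)^M$ can fall arbitrarily close to these subspaces, so pointwise convergence of the summand does not by itself give convergence of the Riemann sums.

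The paper closes this gap differently, and this is the one essential ingredient missing from your proposal: it rewrites the sum as an integral of a piecewise-constant extension against the uniform measure and then proves \emph{uniform integrability} of the integrand, by recognising that the integral of $|\tilde P_g/Q_g|^{1+\delta}I_g$ is (up to explicit prefactors) the pairing $\langle \mathfrak e_{-\bj'},\ST^{\eps,\delta}_p[g]\mathfrak e_\bj\rangle$ built from the deformed operators $T^{\eps,\delta,\sigma}$ of~\eqref{e:TdeltaNonSym}, which are uniformly bounded for $\delta\in[0,(d-2)/2)$ by~\eqref{e:TboundDelta}. This bounds the $(1+\delta)$-moment uniformly in $\eps$, handles all strata of the zero locus of $Q_g$ simultaneously, and yields both finiteness of the limit and convergence of the Riemann sums. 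To repair your argument you would either need to import this uniform-integrability device or carry out a genuine stratified power-counting over the zero locus of $Q_g(\cdot;0)$; the one-line homogeneity estimate is not sufficient.
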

\begin{proof}
Since $a$ is even, set $a=2b$ so that $M=b$. 
Our goal is to show that the Riemann-sum in~\eqref{eq:casem1} converges 
and that the limit is independent of $\bj$. Formally, such limit should be 
\begin{equ}[e:cg]
2^{bd}\int \mu(\dd y_{1:b}) \frac{\tilde P_g(y_{1:b};0)}{Q_g(y_{1:b};0)}I_g(y_{1:b};0)=: c[g]\in\R
\end{equ}
where\footnote{note that we added the volume factor $2^{ad}$ 
to pass from the Lebesgue measure to the uniform one. The scope of this change of measure will soon become apparent.} 
$\mu$ is the uniform probability measure on $[-1,1]^{bd}$ 
and  the integrand is well-defined and finite (except possibly on a zero-measure set) thanks to the  properties of $Q_g$ stated in Lemma \ref{lemma:kernels}. 
Note that if $c[g]$ is given by the expression above, then not only it is independent of $\bj$ but also~\eqref{e:ConstEqu} 
clearly holds. 
Hence, it remains to prove that the sum indeed converges to the integral, and that the integral is finite. 

At first, we want to write the Riemann-sum in~\eqref{eq:casem1} as an integral with respect to the 
uniform probability measure on $[-1,1]^{bd}$. 
This is quite standard as it suffices to take a piece-wise constant extension of the polynomials 
$\tilde P_g$ and $Q_g$, but since the quotient displays singularities, we provide the details below. 

Recall that we assume $1/\eps\in\mathbb N+1/2$. Then, the box
$[-1,1]^d$ contains exactly $(2/\eps)^d$ points of $\eps\Z^d$ and it can be written
as the union of cubes $C(y^*)$ of side $\eps$, centred at each of the
points $y^*\in \eps\Z^d\cap[-1,1]^d$. Given $y\in [-1,1]^d$, we let
$y^*(y)$ denote the (unique, up to Lebesgue-measure zero sets) $y^*$ such that $y\in
C(y^*)$.  The sum in \eqref{eq:casem1} is then
\begin{equ}
  \label{eq:sumint}
2^{bd} \int \mu(\dd y_{1:b})
  \frac{\tilde P^{(\eps)}_g(y_{1:b};\eps j_1)}{Q^{(\eps)}_g(y_{1:b};\eps \bj)}I^{(\eps)}_g(y_{1:b};\eps j_1),
\end{equ}
where $\mu$ is the uniform probability measure on 
$[-1,1]^{ad}$, $P^{(\eps)}_g$ is given by
\begin{equ}
\tilde P^{(\eps)}_g(y_{1:b};\eps j_1)\eqdef \tilde P_g(y^*(y_1),\dots,y^*(y_b))\1_{y_{i}\not\in[-\eps/2,\eps/2]^d, i\le b}\,,
\end{equ} 
and similarly $Q^{(\eps)}_g,I^{(\eps)}_g$, and the indicator function has been added 
because in \eqref{eq:casem1} the sums are restricted to $y_i\ne0$.
By construction, the integrand is piecewise constant. 
\medskip

At this point, existence, finiteness of the $\eps\to0$ limit of \eqref{eq:sumint} and its independence
w.r.t. $\bj$ follow if we can prove uniform integrability of the
integrand.
For this, we will exploit the operators $T^{\eps,\delta, \sigma}[\cdot]$ introduced in~\eqref{e:TdeltaNonSym}, 
and their composition $\ST^{\eps,\delta}_p[g]$.  
The same argument as in Lemma~\ref{lemma:kernels} together with the derivation of~\eqref{eq:sumint}, 
shows that 
\begin{equs}\label{e:stanco}
  \langle {\mathfrak e}_{-\bj'},& \ST^{\eps,\delta}_p[g] {\mathfrak e}_{\bj}\rangle\\
  &=\1_{A_{\bj; \bj'}(g)} 2^{d b}\frac{|\fw\cdot j_1|^{2(1+\delta)}}{|\bj|^{2(1+\delta)}} \int\mu(\dd y_{1:b}) \left|\frac{\tilde P^{(\eps)}_g(y_{1:b},\eps j_1)}{Q^{(\eps)}_g(y_{1:b},\eps \bj)}\right|^{1+\delta}I^{(\eps)}_g(y_{1:b},\eps j_1)\,.
\end{equs}
Note that, the integral at the right hand side of~\eqref{e:stanco} is the same 
as that in~\eqref{eq:sumint}, but in which the modulus of the integrand is raised to the power $1+\delta$. 
Now, for any $\delta\in[0,(d-2)/2)$, the quantity at the left hand side is bounded uniformly in $\eps$ 
(but the bound might depend on $m$ and $b$, which is though irrelevant as they are fixed)
since $\ST^{\eps,\delta}_p[g]$ is just a composition of 
$T^{\eps,\delta, \sigma}[\cdot]$ and, by~\eqref{e:TboundDelta}, 
these operators are uniformly bounded. Hence, the integral at the right hand side of~\eqref{e:stanco} 
is also uniformly bounded which then ensures uniform integrability of the integrand in~\eqref{eq:sumint}. 
As a consequence,~\eqref{e:cg} is well-defined and~\eqref{e:Const1} holds. 
\end{proof}

Let us now collect all the results obtained so far and complete the proof of Proposition~\ref{p:limite}. 

\begin{proof}[of Proposition \ref{p:limite}] Let us begin with~\eqref{e:nullo}. 
Let $a_r>0$, $m_r\geq 2$ and $p^r\in\Pi^{(n)}_{a_r,m_r}$, $r=1,2$. 
By Cauchy-Schwarz, we have 
\begin{equ}
\langle \overline {\mathcal T^\eps_{p^1} e_\bj},(-\gensy)^{-1}\mathcal T^\eps_{p^2} e_\bj\rangle\leq \|(-\gensy)^{-\frac12}\mathcal T^\eps_{p^1} e_\bj\|\|(-\gensy)^{-\frac12}\mathcal T^\eps_{p^2} e_\bj\|\,,
\end{equ}
so that we can reduce to study the norm at the right hand side for a single $p\in \Pi^{(n)}_{a,m}$. 
We expand $\ST^\eps_p e_\bj$ as in~\eqref{e:NewRep}, thus obtaining 
\begin{equs}
\|(-\gensy)^{-\frac12}\ST^\eps[p]e_\bj\|\lesssim \sum_{g\in\cG^\kappa[p]}\|(-\gensy)^{-\frac12} T^{\eps,\sigma_{a}}[g_a]\dots T^{\eps,\sigma_1}[g_1] e_\bj\|\,.
\end{equs}
Since the sum above has finitely many terms whose number does not depend on $\eps$, 
it suffices to show that each summand 
converges to $0$, which in turn is a direct consequence of Lemma~\ref{e:finernullo}. 
\medskip

We now turn to~\eqref{e:cp}, for which we consider $p^r\in\Pi^{(n)}_{a_r,1}$, $r=1,2$. 
As above, we expand the scalar product via~\eqref{e:NewRep} 
\begin{equ}\label{e:NewLim}
\lim_{\eps\to 0}\langle \overline {\mathcal T^\eps_{p^1} e_\bj},\mathcal T^\eps_{p^2} e_\bj\rangle=\sum_{\substack{g^r\in\cG^\kappa[p^r]\\r=1,2}}\lim_{\eps\to 0}\langle \overline{\ST^\eps_{p^1}[g^1]e_\bj}, \ST^\eps_{p^2}[g^2]e_\bj\rangle_{L^2_{m_\kappa}}\,,
\end{equ}
where we used that $C_{1,\kappa}=1$ and, to pass the limit inside the sum, that the sums above have finitely many terms. 

First, we focus on the case $\bj=j_1$, so that $\kappa=1=m$. The limit in~\eqref{e:NewLim} is given by 
\begin{equs}
\lim_{\eps\to 0}&\langle \overline{\ST^\eps_{p^1}[g^1]e_{j_1}}, \ST^\eps_{p^2}[g^2]e_{j_1}\rangle
=\lim_{\eps\to 0}\sum_{j_1'}\langle e_{-j_1'}, \ST^\eps_{p^r}[g^r]e_{j_1}\rangle\langle e_{-j_1'}, \ST^\eps_{p^2}[g^2]e_{j_1}\rangle\\
&=\lim_{\eps\to 0}\prod_{r=1,2}\langle e_{-j_1}, \ST^\eps_{p^r}[g^r]e_{j_1}\rangle=\prod_{r=1,2}\Big(\frac{(\fw\cdot j_1)^2}{|j_1|^2}\Big)^{\1_{a_r\ne0}}c_{g^r}(p^r)\\
&=\Big(\frac{\|(-\gensy)^{-\frac12}(-\gensy^\fw) e_{j_1}\|}{|j_1|/\sqrt{2}}\Big)^{\sum_{r}\1_{a_r\ne0}}c_{g^1}(p^1)c_{g^2}(p^2)\label{e:nome}
\end{equs}
where the first equality follows by the fact that, by Lemma \ref{l:RiemanConv} the scalar products are real, the second 
by the fact that, since for every $\eps>0$ fixed, 
at the right hand side of~\eqref{eq:casem1} there is the indicator function $\1_{A_{j_1, j_1'}}(g)=\1_{j_1'=j_1}$, 
the sum over $j_1'$ only consists of $1$ term, the third comes from~\eqref{e:Const1} and the last 
from the definition of $\gensy$ and $\gensy^\fw$. 
Therefore, upon setting
\begin{equ}[e:cpFinal]
c(p)\eqdef \sum_{g\in\cG^1[p]}c_g(p)
\end{equ}
~\eqref{e:cp} follows at once. 

For $\bj=j_{1:2}$ (so that $\kappa=2$), assume first that the graph associated to 
$g^1$ (or, equivalently, $g^2$) is in $\cG^2[p^1]\setminus\widetilde\cG^2[p^1]$, 
Then, we apply Cauchy-Schwarz and obtain 
\begin{equ}
\langle \overline{\ST^\eps_{p^1}[g^1]e_\bj}, \ST^\eps_{p^2}[g^2]e_\bj\rangle\leq \|\ST^\eps_{p^1}[g^1]e_\bj\|\|\ST^\eps_{p^2}[g^2]e_\bj\|\lesssim \|\ST^\eps_{p^1}[g^1]e_\bj\|
\end{equ}
which holds since, by Lemma~\ref{l:AnonsymProp}, $\ST^\eps_{p^2}[g^2]$ is a bounded operator 
and $e_{\bj}$ has bounded $L^2$ norm. Now, 
\begin{equ}
\|\ST^\eps_{p^1}[g^1]e_\bj\|\leq \frac12\|\ST^\eps_{p^1}[g^1](e_{j_1}\otimes e_{j_2})\|+ \frac12\|\ST^\eps_{p^1}[g^1](e_{j_2}\otimes e_{j_1})\|
\end{equ}
and Lemma~\ref{l:fastidi} implies that both summands at the right hand side converge to $0$. 

As a consequence, for $\bj=j_{1:2}$, 
we can replace $\cG^2[p^r]$ with $\widetilde\cG^2[p^r]$ in~\eqref{e:NewLim}, 
the latter being defined as in Lemma~\ref{l:Card}. 
Arguing as in~\eqref{e:nome}, we have
\begin{equs}
\langle \overline {\mathcal T^\eps_{p^1} e_\bj},\mathcal T^\eps_{p^2} e_\bj\rangle&=\sum_{\substack{g^r\in\widetilde\cG^2[p^r]\\r=1,2}}\langle \overline{\ST^\eps_{p^1}[g^1]e_\bj}, \ST^\eps_{p^2}[g^2]e_\bj\rangle_{L^2_{2}}\\
&=\sum_{\substack{g^r\in\widetilde\cG^{2}[p^r]\\r=1,2}}\sum_{\bj'}\langle \mathfrak{e}_{-\bj'}, \ST^\eps_{p^1}[g^1]e_{\bj}\rangle_{L^2_{2}}\langle \mathfrak{e}_{-\bj'}, \ST^\eps_{p^2}[g^2]e_{\bj}\rangle_{L^2_{2}}\\
&=\sum_{\bj'}\sum_{\substack{g^r\in\widetilde\cG^{2}[p^r]\\r=1,2}}\prod_{r=1,2}\langle \mathfrak{e}_{-\bj'}, \ST^\eps_{p^r}[g^r]e_{\bj}\rangle_{L^2_{2}}
\end{equs}
where $\mathfrak{e}_{\bj'}$ is defined according to~\eqref{eq:mfe} and 
we used that, thanks to the indicator function at the right hand side of~\eqref{eq:casem1}, 
the sum over $\bj'$ has only finitely many terms corresponding to $\bj'=j_{1:2}$ and $\bj'=j_{2:1}$. 
Since by Lemma~\ref{l:Card}, to each $g\in\cG^1[p]$ there exist exactly two elements 
of $\widetilde\cG^2[p]$, denoted by $\hat g$ and $\tilde g$, whose associated graph 
turns into that of $g$ by removing the unique connected component consisting of a path, 
the sum over the $g^r$'s satisfies
\begin{equs}
\sum_{\substack{g^r\in\widetilde\cG^{2}[p^r]\\r=1,2}}&\prod_{r=1,2}\langle \mathfrak{e}_{-\bj'}, \ST^\eps_{p^r}[g^r]e_{\bj}\rangle\\
&=\sum_{\substack{g^r\in\cG^1[p^r]\\r=1,2}}\prod_{r=1,2}\Big(\langle \mathfrak{e}_{-\bj'}, \ST^\eps_{p^r}[\hat g^r]e_{\bj}\rangle+\langle \mathfrak{e}_{-\bj'}, \ST^\eps_{p^r}[\tilde g^r]e_{\bj}\rangle\Big)
\end{equs}
so that 
\begin{equs}\label{uffa3}
\lim_{\eps\to0}&\langle \overline {\mathcal T^\eps_{p^1} e_\bj},\mathcal T^\eps_{p^2} e_\bj\rangle\\
&=\sum_{\substack{g^r\in\cG^1[p^r]\\r=1,2}}\sum_{\bj'}\prod_{r=1,2}\lim_{\eps\to 0}\Big(\langle \mathfrak{e}_{-\bj'}, \ST^\eps_{p^r}[\hat g^r]e_{\bj}\rangle+\langle \mathfrak{e}_{-\bj'}, \ST^\eps_{p^r}[\tilde g^r]e_{\bj}\rangle\Big)\,.
\end{equs}
To compute the inner limit, we expand $e_\bj$ and use Lemma~\ref{l:RiemanConv}, from which we deduce 
\begin{equs}
\,&\lim_{\eps\to 0}\langle \mathfrak{e}_{-\bj'}, \ST^\eps_{p^r}[\hat g^r]e_{\bj}\rangle+\lim_{\eps\to 0}\langle \mathfrak{e}_{-\bj'}, \ST^\eps_{p^r}[\tilde g^r]e_{\bj}\rangle\\
&=\tfrac12\lim_{\eps\to 0}\langle \mathfrak{e}_{-\bj'}, \ST^\eps_{p^r}[\hat g^r](e_{j_1}\otimes e_{j_2})\rangle + \tfrac12\lim_{\eps\to 0}\langle \mathfrak{e}_{-\bj'}, \ST^\eps_{p^r}[\hat g^r](e_{j_2}\otimes e_{j_1})\rangle\\
&\quad +\tfrac12\lim_{\eps\to 0}\langle \mathfrak{e}_{-\bj'}, \ST^\eps_{p^r}[\tilde g^r](e_{j_1}\otimes e_{j_2})\rangle + \tfrac12\lim_{\eps\to 0}\langle \mathfrak{e}_{-\bj'}, \ST^\eps_{p^r}[\tilde g^r](e_{j_2}\otimes e_{j_1})\rangle\\
&=\tfrac12\Big[ \1_{A_{(j_1,j_2);\bj'}(\hat g^r)}\Big(\frac{(\fw\cdot j_1)^2}{|\bj|^2}\Big)^{\1_{a_r\neq 0} }+ \1_{A_{(j_2,j_1);\bj'}(\hat g^r)}\Big(\frac{(\fw\cdot j_2)^2}{|\bj|^2}\Big)^{\1_{a_r\neq 0} } \Big]c_{\hat g^r}(p^r)\\
&\quad +\tfrac12\Big[ \1_{A_{(j_1,j_2);\bj'}(\tilde g^r)}\Big(\frac{(\fw\cdot j_1)^2}{|\bj|^2}\Big)^{\1_{a_r\neq 0} }+ \1_{A_{(j_2,j_1);\bj'}(\tilde g^r)}\Big(\frac{(\fw\cdot j_2)^2}{|\bj|^2}\Big)^{\1_{a_r\neq 0} } \Big]c_{\tilde g^r}(p^r)\\
&=\tfrac12\Big(\frac{(\fw\cdot j_1)^2}{|\bj|^2}+\frac{(\fw\cdot j_2)^2}{|\bj|^2}\Big)^{\1_{a_r\neq 0} }c_{g^r}(p^r)\Big(\1_{\bj'=j_{1:2}}+\1_{\bj'=j_{2:1}}\Big)\\
&=\frac{1}{2\|e_\bj\|}\Big(\frac{\|(-\gensy)^{-\frac12}(-\gensy^\fw) e_{\bj}\|}{|\bj|/\sqrt{2}}\Big)^{\1_{a_r\ne0}}c_{g^r}(p^r)\Big(\1_{\bj'=j_{1:2}}+\1_{\bj'=j_{2:1}}\Big)
\end{equs}
where, in the third step, we used~\eqref{e:ConstEqu} 
and that the only $\bj'$ which contribute to the sum in~\eqref{uffa3} 
are $\bj'=j_{1:2}$ and $j_{2:1}$, so that exactly one of the indicator functions multiplying 
$(\fw\cdot j_i)^2/|\bj|^2$ is $1$. 
Hence, getting back to~\eqref{uffa3}, we obtain 
\begin{equs}
\,&\lim_{\eps\to0}\langle \overline {\mathcal T^\eps_{p^1} e_\bj},\mathcal T^\eps_{p^2} e_\bj\rangle\\
&=\Big(\frac{\|(-\gensy)^{-\frac12}(-\gensy^\fw) e_{\bj}\|}{|\bj|/\sqrt{2}}\Big)^{\sum_r\1_{a_r\ne0}}\sum_{\substack{g^r\in\cG^1[p^r]\\r=1,2}}c_{g^r}(p^r)\sum_{\bj'}\Big(\frac{\1_{\bj'=j_{1:2}}+\1_{\bj'=j_{2:1}}}{2\|e_\bj\|}\Big)^2\\
&=\Big(\frac{\|(-\gensy)^{-\frac12}(-\gensy^\fw) e_{\bj}\|}{|\bj|/\sqrt{2}}\Big)^{\sum_r\1_{a_r\ne0}}c(p^1)c(p^2)\,,
\end{equs}
using that $\|e_\bj\|=1$ or $=1/\sqrt 2$ according to whether $\bj =j_1$ or $\bj=j_{1:2}$,
so that~\eqref{e:cpFinal} is proven and the proof of Proposition~\ref{p:limite} is complete. 
\end{proof}

\section{Convergence of Burgers to renormalised SHE}\label{sec:ProofofMain}

Thanks to the results in the previous section, we have all the ingredients we need in order to 
prove Theorem~\ref{thm:main}. 
To do so, we will first show that the sequence $\eta^\eps$ is tight and then verify that every limit 
point solves the stationary stochastic heat equation (SHE) 
\begin{equ}[e:SHE2]
\partial_t \eta =\tfrac12(\Delta + D_\SHE(\fw\cdot\nabla)^2)\eta + (-\Delta - D_\SHE(\fw\cdot\nabla)^2)^{1/2}\xi\,,\qquad \eta(0,\cdot)=\mu
\end{equ}
where $\mu$ is a  zero-average space white noise and 
$D_\SHE>0$ is the constant identified in Theorem~\ref{thm:MainConv2}.

\subsection{Tightness}

The proof of tightness follows a by now standard route (see
e.g.~\cite{GubinelliJara2012, GT, CK, CES}) which exploits Mitoma's
criterion~\cite{Mitoma}, Kolmogorov's continuity theorem and, more
importantly, the so-called It\^o trick, introduced
in~\cite{GubinelliJara2012} and since then exploited in a variety of
contexts, see~\cite{Gubinelli2016} for a pedagogical introduction.
For the reader's convenience, we recall below the statement of the
latter:
\begin{lemma}[It\^o trick]\label{l:Ito}
Let $d\geq 2$, $\eta^\eps$ be the stationary solution to \eqref{e:BurgersScaled} with $\lambda_\eps$ 
given as in~\eqref{eq:lambdaeps}. For any
$p \ge 2$, $T > 0$ and $F\in L^2(\Omega)$ with finite chaos expansion, i.e. $F\in\oplus_{j=1}^n\cH_j$ for some $n\in\N$, 
there exists a constant $C=C(p,n)>0$ such that
  \begin{equ}[e:Itotrick]
    \mathbf E\left[\sup_{t\in[0,T]}\left|\int_0^t F(\eta^\eps_s)
      \dd s
      \right|^p
      \right]^{1/p}\leq C T^{1/2}\|(-\gensy)^{-1/2}F\|\,.
    \end{equ}
    For $p=2$, $C$ can be taken independent of $n$.
\end{lemma}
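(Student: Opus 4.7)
The plan is to carry out the standard It\^o trick, splitting $\int_0^t F(\eta^\eps_s)\dd s$ as the sum of a forward and a backward martingale. Set $G\eqdef (-\gensy)^{-1} F$, which is well-defined since $F\in\oplus_{j=1}^n \cH_j$ has no zeroth chaos and since $-\gensy$ is invertible on each $\cH_j$, $j\ge 1$. Using $\gen=\gensy+\gena$ and the fact (Lemma~\ref{lem:generalproperties}) that the adjoint of $\gen$ with respect to the invariant measure $\P$ is $\gen^*=\gensy-\gena$, one has $\gen G+\gen^* G=-2F$, so that
\begin{equ}
\int_0^t F(\eta^\eps_s)\dd s=-\tfrac12\int_0^t \gen G(\eta^\eps_s)\dd s-\tfrac12\int_0^t \gen^* G(\eta^\eps_s)\dd s.
\end{equ}

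By Dynkin's formula, $M^\eps_t\eqdef G(\eta^\eps_t)-G(\eta^\eps_0)-\int_0^t \gen G(\eta^\eps_s)\dd s$ is a (forward) martingale in the natural filtration of $\eta^\eps$. For the second piece, one introduces the time-reversed process $\hat\eta^\eps_s\eqdef \eta^\eps_{T-s}$, $s\in[0,T]$, whose generator is $\gen^*$: the Dynkin martingale $\hat M^\eps_s\eqdef G(\hat\eta^\eps_s)-G(\hat\eta^\eps_0)-\int_0^s \gen^* G(\hat\eta^\eps_r)\dd r$ produces, after the change of variable $r=T-u$, a bound on $\int_t^T \gen^* G(\eta^\eps_u)\dd u$. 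Combined with the forward analogue, this yields a decomposition of the form $\int_0^t F(\eta^\eps_s)\dd s=\tfrac12 M^\eps_t+\tfrac12 N^\eps_t$, with $M^\eps$ a forward martingale with respect to the natural filtration of $\eta^\eps$ and $N^\eps$ a backward martingale (i.e.\ a martingale, up to a time-change, with respect to the filtration of the reversed process).

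Applying Doob's maximal inequality and the Burkholder--Davis--Gundy inequality to each of $M^\eps$ and $N^\eps$, one reduces to estimating $\mathbf E[\langle M^\eps\rangle_T^{p/2}]$ (and analogously for $N^\eps$). The quadratic variation is governed by the carr\'e du champ
\begin{equ}
\Gamma(G,G)\eqdef \gen(G^2)-2G\,\gen G,
\end{equ}
whose expectation under the stationary measure equals $-2\E[G\,\gensy G]=2\|(-\gensy)^{1/2} G\|^2=2\|(-\gensy)^{-1/2}F\|^2$, since the skew-symmetric contribution of $\gena$ vanishes in expectation. For $p=2$, this already gives the claimed bound with a universal constant, proving the last sentence of the lemma.

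For $p>2$, Jensen's inequality yields $\langle M^\eps\rangle_T^{p/2}\le T^{p/2-1}\int_0^T \Gamma(G,G)^{p/2}(\eta^\eps_s)\dd s$, and stationarity reduces matters to controlling $\E[\Gamma(G,G)^{p/2}]$ by a constant multiple of $\E[\Gamma(G,G)]^{p/2}$. This is where the finite chaos assumption enters and constitutes the only real subtlety: since $F\in\oplus_{j=1}^n\cH_j$, the same is true of $G$, and $\Gamma(G,G)$ lives in a Wiener chaos of degree bounded by a function of $n$; Gaussian hypercontractivity (Nelson's inequality) then yields $\|\Gamma(G,G)\|_{L^{p/2}(\P)}\le C(p,n)\|\Gamma(G,G)\|_{L^1(\P)}$, and the constant $C(p,n)$ is the one appearing in~\eqref{e:Itotrick}. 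The only point requiring some care is the precise justification of the backward martingale structure and the corresponding Doob/BDG estimate in the reversed filtration, but both are standard for stationary Markov processes and have been used in essentially the same form in~\cite{GubinelliJara2012, Gubinelli2016, CES}.
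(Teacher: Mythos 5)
Your argument is correct and is essentially the proof the paper relies on: the paper simply cites \cite[Lemma 4.1]{CES}, which is exactly this forward--backward martingale decomposition with $G=(-\gensy)^{-1}F$, BDG, the carr\'e du champ identity $\E[\Gamma(G,G)]=2\|(-\gensy)^{-1/2}F\|^2$, and Gaussian hypercontractivity on the finite chaos for $p>2$ (the source of the $n$-dependence, as the paper's remark after the lemma confirms). No substantive difference.
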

We refer the reader to~\cite[Lemma 4.1]{CES}  for a proof. The $n$-dependence of $C$ for $p>2$ arises when estimating the $L^p$ norm in the right-hand side of \cite[Eq. (4.7)]{CES}
with the $L^2$ norm, using Gaussian hypercontractivity.

We now move to the proof of tightness. 

\begin{proposition}\label{p:Tight}
The sequence $\{\eta^\eps\}_\eps$ is tight in $C([0,T],\cS'(\T^d))$. 
\end{proposition}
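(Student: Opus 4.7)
The plan is to reduce the proof to a standard Kolmogorov-type argument on the real-valued processes $t\mapsto \eta^\eps_t(\phi)$, combined with the It\^o trick (Lemma~\ref{l:Ito}) and Mitoma's criterion. More precisely, by Mitoma's theorem, tightness of $\{\eta^\eps\}_\eps$ in $C([0,T],\cS'(\T^d))$ is equivalent to tightness of the real-valued processes $\{\eta^\eps(\phi)\}_\eps$ in $C([0,T],\R)$ for every fixed $\phi\in\cS(\T^d)$. Since $\eta^\eps_t$ is a spatial white noise at every time $t$ by stationarity, the one-dimensional marginals $\eta^\eps_t(\phi)\sim \cN(0,\|\phi\|_{L^2}^2)$ are automatically tight, so the only remaining task is to prove a uniform H\"older estimate of the type
\[
\mathbf E\big[|\eta^\eps_t(\phi)-\eta^\eps_s(\phi)|^p\big]\lesssim_{\phi,p}(t-s)^{p/2}\quad\text{for $p$ large enough,}
\]
and apply the Kolmogorov continuity criterion.

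To obtain the above, I will use the weak formulation~\eqref{e:SPDEweak} to decompose
\[
\eta^\eps_t(\phi)-\eta^\eps_s(\phi)=\int_s^t \tfrac12 \eta^\eps_u(\Delta\phi)\dd u+\int_s^t \cN^\eps_\phi(\eta^\eps_u)\dd u+\bigl(M^\eps_t(\phi)-M^\eps_s(\phi)\bigr)
\]
and estimate each summand separately. For the linear drift, stationarity and the Gaussian hypercontractivity of $\eta^\eps_u(\Delta\phi)$ give $\mathbf E|\int_s^t \eta^\eps_u(\Delta\phi)\dd u|^p\lesssim (t-s)^p\|\Delta\phi\|_{L^2}^p$. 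For the martingale part, the quadratic variation equals $(t-s)\|(-\Delta)^{1/2}\phi\|_{L^2}^2$, so the Burkholder--Davis--Gundy inequality yields $\mathbf E|M^\eps_t(\phi)-M^\eps_s(\phi)|^p\lesssim (t-s)^{p/2}\|(-\Delta)^{1/2}\phi\|_{L^2}^p$.

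The key point is the nonlinear drift, and this is where the It\^o trick comes in. Viewing $\phi\in\cS(\T^d)$ as an element of $\fock_1$, a direct computation using \eqref{e:gena} and \eqref{e:nonlin1} shows that $\cN^\eps_\phi=\gena\phi=\genap\phi\in\fock_2$ (since $\genam\phi=0$ on $\fock_1$). Applying Lemma~\ref{l:Ito} on the interval $[s,t]$ to the finite-chaos element $F=\cN^\eps_\phi\in\fock_2$, we get
\[
\mathbf E\Bigl[\sup_{r\in[s,t]}\Bigl|\int_s^r \cN^\eps_\phi(\eta^\eps_u)\dd u\Bigr|^p\Bigr]^{1/p}\lesssim (t-s)^{1/2}\|(-\gensy)^{-1/2}\genap\phi\|,
\]
and the bound~\eqref{e:Abound} of Lemma~\ref{l:GenaBound} with $n=1$ controls the right-hand side by $(t-s)^{1/2}\|(-\gensy)^{1/2}\phi\|\lesssim (t-s)^{1/2}\|\phi\|_{H^1}$, which is crucially \emph{uniform in $\eps$}.

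Combining the three bounds gives, for any $p\ge 2$,
\[
\mathbf E\bigl|\eta^\eps_t(\phi)-\eta^\eps_s(\phi)\bigr|^p\lesssim_p (t-s)^{p/2}\bigl(\|\phi\|_{H^2}^p+\|\phi\|_{H^1}^p\bigr),
\]
uniformly in $\eps$ and in $s,t\in[0,T]$. Choosing $p>2$ and invoking Kolmogorov's continuity theorem produces the required tightness of $\{\eta^\eps(\phi)\}_\eps$ in $C([0,T],\R)$, and Mitoma's criterion concludes the proof. The main subtlety is the nonlinear drift, but once one recognises $\cN^\eps_\phi$ as $\genap\phi$ the uniform bound follows cleanly from the graded sector estimate already established in Lemma~\ref{l:GenaBound}; everything else is routine.
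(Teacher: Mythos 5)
Your proof is correct and follows essentially the same route as the paper: Mitoma's criterion to reduce to the scalar processes $\eta^\eps(\phi)$, the It\^o trick combined with the bound \eqref{e:Abound} of Lemma~\ref{l:GenaBound} to control the nonlinear drift uniformly in $\eps$, and Kolmogorov's continuity criterion to conclude. The only (harmless) deviation is that you bound the linear drift by a crude stationarity/hypercontractivity estimate of order $(t-s)^p\|\phi\|_{H^2}^p$, whereas the paper applies the It\^o trick to that term as well, obtaining $(t-s)^{p/2}\|\phi\|_{H^1}^p$; both suffice for smooth $\phi$.
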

\begin{proof}
By Mitoma's criterion~\cite{Mitoma}, the sequence $\{\eta^\eps\}_\eps$ is tight in $C([0,T],\cS'(\T^d))$ if and only if 
$\{\eta^\eps(\phi)\}_\eps$ is tight in $C([0,T],\R)$ for all $\phi\in\cS(\T^d)$. Therefore, we fix $\phi\in\cS(\T^d)$ and 
consider the process $\{\eta^\eps_t(\phi)\}_{t\in[0,T]}$. In view of~\eqref{e:SPDEweak}, it suffices to show that 
each of the terms at the right hand side is tight. This is clear for the last, as it is independent of $\eps$, 
while for the first and second, Lemma~\ref{l:Ito} implies that, for every $0\leq t\leq T$ and $p>2$, we have 
\begin{equ}[e:Lapla]
\Exp\left[\left(\int_0^t\eta^\eps_s(\Delta\phi)\dd s\right)^p\right]^{\frac1{p}}\lesssim t^{1/2}\|(-\gensy)^{-1/2}(\Delta\phi)\|=t^{1/2}\|\phi\|_{H^1(\T^d)}
\end{equ} 
and 
\begin{equs}[e:NonlinT]
\Exp\left[\left(\int_0^t\cN^\eps_\phi(\eta^\eps)\dd s\right)^p\right]^{\frac1{p}}\lesssim t^{1/2}\|(-\gensy)^{-1/2}\genap\phi\|\lesssim t^{1/2}\|\phi\|_{H^1(\T^d)}
\end{equs}
where in the last step we used that the kernel of $\cN^\eps_\phi(\eta^\eps)$ is $\genap\phi$. 
Since the process $\{\eta^\eps_t(\phi)\}_{t\in[0,T]}$ is Markov,~\eqref{e:Lapla} and~\eqref{e:NonlinT} together with 
Kolmogorov's criterion ensure that the sequence $\{\eta^\eps(\phi)\}_\eps$ is tight in $C([0,T],\R)$, 
so that the proof of the statement is concluded. 
\end{proof}

\subsection{Convergence}\label{sec:Conv}

In order to identify the limit equation, we will prove that any limit point satisfies 
the martingale problem associated to~\eqref{e:SHE2} which we now recall. 

\begin{definition}\label{def:MPSHE}
Let $T>0$, $\Omega=C([0,T], \cS'(\T^d))$ and $\CB$ the canonical Borel $\sigma$-algebra on $C([0,T], \cS'(\T^d))$. 
Let $\mu$ be a zero-average space white noise on $\T^d$.
We say that a probability measure $\BP$ on $(\Omega,\CG)$ {\it solves the martingale problem for 
$\geneff\eqdef  \gensy+\cD= \frac12(\Delta+D_\SHE (\fw\cdot \nabla)^2)$
with initial distribution $\mu$}, {if for all $\phi\in\cS(\T^d)$, 
the canonical process $\eta$ under $\BP$ is such that 
\begin{equs}[e:Mart]
\BM_t(F_j)&\eqdef F_{j}(\eta_t) - F_{j}(\mu) - \int_0^t \geneff F_j(\eta_s)\dd s\,,\qquad j=1,2
\end{equs}
is a local martingale, where $F_1(\eta)\eqdef \eta(\phi)$ and $F_2(\eta)\eqdef \eta(\phi)^2-\|\phi\|^2_{L^2(\mathbb T^d)}$. }
\end{definition}

In the next theorem, we state well-posedness for the previous martingale problem. 

\begin{theorem}\label{thm:WellPosedMP}
The martingale problem for $\geneff$ with initial distribution $\mu$ in Definition~\ref{def:MPSHE} has a unique solution 
and uniquely characterises the law of the solution to~\eqref{e:SHE2} on $C([0,T], \cS'(\T^d))$.
\end{theorem}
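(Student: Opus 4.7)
My plan is to establish existence by explicit construction and uniqueness via L\'evy's characterisation, exploiting that~\eqref{e:SHE2} is a linear SPDE with additive Gaussian noise for which $\mu$ is invariant. For the existence half I would work mode-by-mode: setting $\alpha_k\eqdef |k|^2+D_\SHE(\fw\cdot k)^2>0$, equation~\eqref{e:SHE2} tested against the Fourier basis yields the decoupled family of complex Ornstein--Uhlenbeck equations
\begin{equ}
\dd \hat\eta_t(k)=-\tfrac{\alpha_k}{2}\hat\eta_t(k)\dd t +\sqrt{\alpha_k}\,\dd B_t(k),\qquad k\in\Z^d_0,
\end{equ}
with the $B(k)$ as in~\eqref{e:SPDEweak} and started at $\hat\eta_0(k)=\hat\mu(k)$. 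Each mode is then stationary with marginal equal to $\hat\mu(k)$, and reassembling them yields a centred Gaussian process $\eta\in C([0,T],\cS'(\T^d))$ with covariance determined by $\geneff$. A direct, mode-wise application of It\^o's formula (entirely analogous to the derivation of~\eqref{e:gens}--\eqref{e:gena} in Lemma~\ref{lem:essid}) shows that the $\BM_t(F_j)$, $j=1,2$, from Definition~\ref{def:MPSHE} are genuine martingales.

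For uniqueness I would fix an arbitrary solution $\BP$ and a test function $\phi\in\cS(\T^d)$. The same computations as in Lemma~\ref{lem:essid}, but with $\gensy$ replaced by the self-adjoint negative operator $\geneff$, yield
\begin{equ}
\geneff F_1(\eta)=\eta(\geneff\phi),\qquad \geneff F_2(\eta)=2\eta(\phi)\eta(\geneff\phi)+\|(-2\geneff)^{\frac12}\phi\|_{L^2}^{2}.
\end{equ}
Applying It\^o's formula to $F_1(\eta_t)^2$ under the semimartingale decomposition furnished by $\BM_t(F_1)$, and comparing with the decomposition of $F_1(\eta_t)^2=F_2(\eta_t)+\|\phi\|_{L^2}^{2}$ provided directly by $\BM_t(F_2)$, uniqueness of the semimartingale decomposition forces
\begin{equ}
\langle \BM(F_1)\rangle_t\;=\;t\,\|(-2\geneff)^{\frac12}\phi\|_{L^2}^{2}.
\end{equ}
By L\'evy's characterisation of Brownian motion, $\BM_\cdot(F_1)$ is then a Brownian motion with the above deterministic variance, so that the integral identity
\begin{equ}
\eta_t(\phi)=\mu(\phi)+\int_0^t \eta_s(\geneff\phi)\dd s+\BM_t(F_1)
\end{equ}
expresses $\eta_\cdot(\phi)$ as a Gaussian process with mean and covariance uniquely determined by $\geneff$ and $\mu$. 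Polarising in $\phi$ and extending to finite families $(\phi_1,\dots,\phi_n)$ at times $(t_1,\dots,t_n)$ pins down all finite-dimensional distributions of $\eta$ under $\BP$, and Mitoma's theorem upgrades this to equality of laws on $C([0,T],\cS'(\T^d))$; matching with the process built in the existence step completes the proof.

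The main (essentially only) technical point I expect to face is the rigorous identification of $\langle \BM(F_1)\rangle$, since a priori $\BM(F_1)$ and $\BM(F_2)$ are only local martingales. Turning them into genuine martingales with finite moments requires a mild a priori estimate on $\mathbf E[\eta_t(\phi)^{2}]$, which can be bootstrapped from the martingale property of $\BM(F_2)$ via a Gronwall argument (using that $\mathbf E[F_2(\mu)]=0$ under $\P$). Once this is in place, the rest of the uniqueness argument is standard linear-Gaussian bookkeeping.
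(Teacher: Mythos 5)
Your proposal is correct in substance and its key step coincides with the paper's: both arguments extract the quadratic variation of $\BM(F_1)$ from the $j=2$ martingale condition by exploiting the algebraic relation $F_1(\eta)^2=F_2(\eta)+\|\phi\|_{L^2}^2$ together with $\geneff F_2(\eta)=2\eta(\phi)\eta(\geneff\phi)+2\|(-\geneff)^{\frac12}\phi\|^2$ (your $\|(-2\geneff)^{1/2}\phi\|^2$ is the same constant). The paper phrases this as showing $(\BM_t(F_1))^2-2t\|(-\geneff)^{\frac12}\phi\|^2$ is a local martingale, which is equivalent to your identification of $\langle\BM(F_1)\rangle_t$. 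Where you diverge is in how the conclusion is drawn: the paper simply invokes a black-box well-posedness result (\cite[Appendix D, Theorem D.1]{MW}) stating that the drift condition for $j=1$ plus the deterministic quadratic variation uniquely determine the law, whereas you finish by hand via L\'evy's characterisation, joint Gaussianity of the family $\{\BM(\phi)\}_\phi$ by polarisation, and identification of finite-dimensional distributions. Your route is more self-contained and makes the mechanism transparent; the paper's is shorter and pushes the localisation/integrability issues into the cited reference.

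Two technical remarks on the part you flag yourself. First, the Gronwall bootstrap for $\mathbf E[\eta_t(\phi)^2]$ does not close for a generic $\phi$, since $\geneff F_2$ involves $\eta_s(\geneff\phi)$ and hence a \emph{different} test function; the clean fix is to run the argument with $\phi=e_k$, which are eigenfunctions of $\geneff$ with eigenvalue $-\alpha_k/2$, so the inequality closes mode by mode, and general $\phi$ follows by linearity. Second, to pass from the SDE representation $\eta_t(\phi)=\mu(\phi)+\int_0^t\eta_s(\geneff\phi)\dd s+\BM_t(F_1)$ to uniqueness of the law you should either solve the mild formulation (applying the martingale problem to the time-dependent test functions $s\mapsto e^{(t-s)\geneff}\phi$) or again restrict to eigenfunctions, where each mode is an explicit Ornstein--Uhlenbeck process driven by the Brownian motion you obtained from L\'evy; you also need to note that $\BM(F_1)$, being a Brownian motion for the canonical filtration, is independent of $\cF_0$ and hence of $\mu$. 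With these points made precise your argument is complete.
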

{\begin{proof}
Translating the results of~\cite[Appendix D]{MW} to our setting, we have that Theorem D.1 therein 
ensures that the martingale problem is well-posed provided that condition~\eqref{e:Mart} holds for $j=1$ and, in addition,
$(\BM_t(F_1))^2-2t\|(-\geneff)^{\frac12}\phi\|^2$ is a local martingale.
On the other hand, for $j=2$,   \eqref{e:Mart} gives
\begin{equ}
  \BM_t(F_2)=\eta_t(\phi)^2-\mu(\phi)^2-2 \int_0^t\eta_s(\phi)\eta_s(\geneff \phi)\dd s-2t \|(-\geneff)^{\frac12}\phi\|^2
\end{equ}
because, for $F_2(\eta)=\eta(\phi)^2-\|\phi\|^2_{L^2(\mathbb T^d)}=:\eta(\phi)^2:$, one has
\begin{equ}
\geneff F_2(\eta)=2:\eta(\phi) \eta(\geneff\phi):=2\eta(\phi)\eta(\geneff \phi)+2\|(-\geneff)^{\frac12}\phi\|^2\,.
\end{equ}
As a consequence,
\begin{equs}
  (\BM_t(F_1))^2&-2t\|(-\geneff)^{\frac12}\phi\|^2\\
  &=\BM_t(F_2)-2\mu(\phi)\BM_t(F_1)+\int_0^t \dd s\int_0^t \dd \bar s\;\eta_s(\geneff \phi) \eta_{\bar s}(\geneff \phi)\\
  &\qquad\qquad\qquad\qquad\qquad\qquad-2\int_0^t(\eta_t(\phi)-\eta_s(\phi))\eta_s(\geneff\phi)\dd s\\
  &=\BM_t(F_2)-2\mu(\phi)\BM_t(F_1)-2\int_0^t\eta_s(\geneff\phi)\left(\int_s^t\dd \BM_{\bar s}(F_1)\right)\dd s\\
  &=
  \BM_t(F_2)-2\mu(\phi)\BM_t(F_1)-2\int_0^t\left(\int_0^{\bar s}\eta_s(\geneff\phi)\dd s\right)\dd \BM_{\bar s}(F_1).
\end{equs}
Now, all terms at the right hand side are local martingales, which implies that so is the left hand side. 
Hence, the proof of the statement is complete. 
\end{proof} }

We are now ready to prove Theorem~\ref{thm:main}. 

\begin{proof}[of Theorem~\ref{thm:main}]
By Proposition~\ref{p:Tight}, we know that the sequence $\{\eta^\eps\}_\eps$ is tight in $C([0,T],\cS'(\T^d))$, 
hence it converges along subsequences. If we prove that any limit point is a solution of the martingale 
problem in Definition~\ref{def:MPSHE} then the statement follows by Theorem~\ref{thm:WellPosedMP}. 

Let $\eta\in C([0,T],\cS'(\T^d))$ be a limit point. To verify that $\eta$ satisfies the martingale problem, 
{ we need to show that for every given $\phi\in\cS(\T^d)$ the processes
$\BM(F_{i-1})$, $i=2,3$, defined according to~\eqref{e:Mart} with $F_1=\eta(\phi)$ and 
$F_2=\eta(\phi)^2-\|\phi\|^2_{L^2(\mathbb T^d)}$, are local martingales, 
for which it suffices to prove that for every $s\in[0,T]$ and 
$G\colon C([0,T],\cS'(\T^d))\to\R$ bounded continuous 
we have 
\begin{equ}[e:FINITA]
\Exp[\delta_{s,t}\BM_\cdot(F_{i-1}) G(\eta\restr_{[0,s]})]=0\,,
\end{equ} 
where we introduced a convenient notation for the time increment, i.e. $\delta_{s,t}f\eqdef f(t)-f(s)$. 
Now, by the definition 
of $\BM(F_{i-1})$, we deduce that 
\begin{equs}[e:Firstlim]
\Exp[(\BM_t&(F_{i-1})-\BM_s(F_{i-1})) G(\eta\restr_{[0,s]})]\\
&=\Exp\Big[\Big( F_{i-1}(\eta_t)- F_{i-1}(\eta_s)-\int_s^t \geneff  F_{i-1}(\eta_r)\dd r\Big) G(\eta\restr_{[0,s]})\Big]\\
&=\lim_{\eps\to 0}\Exp\Big[\Big( F_{i-1}(\eta^\eps_t)- F_{i-1}(\eta^\eps_s)-\int_s^t \geneff  F_{i-1}(\eta^\eps_r)\dd r\Big) G(\eta^\eps\restr_{[0,s]})\Big]
\end{equs}
where 
we used that $\eta^\eps$ converges in law to $\eta$ in
$C([0,T],\cS'(\T^d))$ together with~\cite[Eq. (5.11)]{CETWeak} to
approximate the first factor in the expectation with bounded
continuous functionals.  To analyse this latter term,} let us write
the equation for $\eta^\eps$ in such a way that we can identify the
elements which are relevant to the limit.

{
By Dynkin's formula and the weak formulation of~\eqref{e:BurgersScaled} in~\eqref{e:SPDEweak}, we have
\begin{equ}[e:Weak]
F_{i-1}(\eta^\eps_t)- F_{i-1}(\eta^\eps_s)-\int_s^t \gensy  F_{i-1}(\eta^\eps_r)\dd r-\int_s^t\gena F_{i-1}(\eta^\eps_r)\dd r=\delta_{s,t}M^\eps_\cdot( F_{i-1})
\end{equ}
where $M^\eps( F_{i-1})$ is the martingale whose quadratic variation is
\begin{equ}[e:M1]
\langle M^\eps_\cdot( F_{i-1})\rangle_t=\int_0^t\sum_k|k|^2|[D_k F_{i-1}](\eta^\eps_s)|^2 \dd s
\end{equ} 
and $D_k$ is the Malliavin derivative in~\eqref{e:Malliavin}. 

The term which is responsible for creating the new noise and the new Laplacian, is that containing $\genap F_{i-1}$. 
In order to describe it, notice that 
the kernel of $ F_1$ in $\fock$ is $ f_1=\phi$, while that of $F_2$ is  
$\phi\otimes\phi$. Then, we consider the random variable $\WN\in L^2(\P)$, 
for $n\in\N$, whose kernel is $\wN$, which is the solution of the truncated generator equation $\vN$ in~\eqref{e:nonlinGenEq} if $d\geq 3$ or $\tvN$
given by~\eqref{e:nonlinAnsatz} if $d=2$. 
By Dynkin's formula applied to $\WN$, we have 
\begin{equ}[e:DynkinCiao]
\WN(\eta^\eps_t)-\WN(\eta^\eps_s)-\int_s^t \gen\WN(\eta^\eps_r)\dd r=\delta_{s,t}M_\cdot^\eps(\WN)
\end{equ}
where $M^\eps(\WN)$ is the martingale whose quadratic variation is the same as~\eqref{e:M1} with 
$ F_{i-1}$ replaced by $\WN$. 
We now go back to~\eqref{e:Weak} which we rewrite as 
\begin{equ}[e:etaWeak]
 F_{i-1}(\eta^\eps_t)- F_{i-1}(\eta^\eps_s)-\int_s^t \geneff  F_{i-1}(\eta^\eps_r)\dd r=\delta_{s,t}(M^\eps_\cdot(F_{i-1}) +M_\cdot^\eps(\WN))+\delta_{s,t} R^{\eps,n}
\end{equ}
where $R^{\eps,n}\eqdef \sum_{j=1}^4 R_j^{\eps,n}$ and the $R_j^{\eps,n}$'s are defined as  
\begin{equs}
R_1^{\eps,n}(t)&\eqdef \WN(\mu)-\WN(\eta^\eps_t)\,,\\
R_2^{\eps,n}(t)&\eqdef \int_0^t \Big(\gen \WN+\genap F_{i-1}-\genam \WN_{i}\Big)(\eta^\eps_s)\dd s\,,\\
R_3^{\eps,n}(t)&\eqdef \int_0^t \Big(\genam \WN_{i}(\eta^\eps_s) -\cD  F_{i-1}(\eta^\eps_s)\Big)\dd s\,,\\
R_4^{\eps,n}(t)&\eqdef \int_0^t\genam F_{i-1}(\eta^\eps_s)\dd s\,.
\end{equs}
We now get back to~\eqref{e:Firstlim}, which, in view of~\eqref{e:DynkinCiao} equals
\begin{equ}
\lim_{n\to\infty}\lim_{\eps\to0}\Exp\Big[\Big( \delta_{s,t}(M^\eps_\cdot(F_{i-1}) +M_\cdot^\eps(\WN))+\delta_{s,t} R^{\eps,n}\Big) G(\eta^\eps\restr_{[0,s]})\Big]\,.
\end{equ}
Now, since $M^\eps(F_{i-1})$ and $M^\eps(\WN)$ are martingales, for every $n$ and every $\eps$ 
we have 
\begin{equ}[e:Secondlim]
\Exp\Big[\delta_{s,t}(M^\eps_\cdot(F_{i-1}) +M_\cdot^\eps(\WN)) G(\eta^\eps\restr_{[0,s]})\Big]=0\,.
\end{equ}
For the other term instead, we apply Cauchy-Schwarz and exploit the boundedness of $G$, 
from which we obtain
\begin{equ}
  \Exp\Big[\delta_{s,t} R^{\eps,n} G(\eta^\eps\restr_{[0,s]})\Big]\leq \|G\|_\infty\Exp\Big[| \delta_{s,t} R^{\eps,n}|^2\Big]^{\frac12}\leq  \|G\|_\infty\sum_{j=1}^4\Exp\Big[| \delta_{s,t} R_j^{\eps,n}|^2\Big]^{\frac12}\,,
\end{equ}
so that we are left to show that each of the summands at the right hand side converges to $0$. 
Let us begin with the first, which can be controlled as 
\begin{equs}
\Exp\Big[| \delta_{s,t} R_1^{\eps,n}|^2\Big]^{\frac12}&=\Exp\Big[| \WN(\eta^\eps_t)-\WN(\eta^\eps_s)|^2\Big]^{\frac12}\\
&\leq \Exp\Big[| \WN(\eta^\eps_t)|^2\Big]^{\frac12}+\Exp\Big[| \WN(\eta^\eps_s)|^2\Big]^{\frac12}=2\|\wN\|
\end{equs}
and the right hand side converges to $0$ as $\eps$ goes to $0$ by~\eqref{e:L2norm2}. 
For the others, we apply the It\^o trick, Lemma~\ref{l:Ito}, which gives
\begin{equs}
\Exp\Big[(\delta_{s,t}R_2^{\eps,n})^2\Big]^{\frac12}&\leq Ct^{\frac12}  \|(-\gensy)^{-\frac12}(-\gen \wN-\genap f_{i-1}+\genam \wN_{i})\|\,,\label{e:rem2}\\
\Exp\Big[(\delta_{s,t}R_3^{\eps,n})^2\Big]^{\frac12}&\leq C{t}^{\frac12}  \|(-\gensy)^{-1/2}[\genam\wN_{i}-\cD f_{i-1}\|\,,\label{e:rem3}\\
\Exp\Big[(\delta_{s,t}R_4^{\eps,n})^2\Big]^{\frac12}&\leq C{t}^{\frac12} \|(-\gensy)^{-1/2}\genam f_{i-1}\|\,,\label{e:rem4}
\end{equs}
with $C$ independent of $n$. Now,~\eqref{e:rem2} and~\eqref{e:rem3} converge to $0$ in the double 
limit as $\eps\to 0$ first and $n\to\infty$ then, by~\eqref{e:ApproxH1} and~\eqref{e:Diffusivity2}. 
Concerning~\eqref{e:rem4}, for $i=1$, $\genam f_1=0$, while for 
$i=2$ we leverage the smoothness of $ f_2$. Indeed,  
\begin{equs}
\|(-\gensy)^{-1/2}\genam f_2\|^2&\leq\sum_{k} \frac{(\fw\cdot k)^2}{|k|^2}\lambda_\eps^2\Big(\sum_{\ell+m=k}\indN{\ell,m}\hat f_2(\ell,m)\Big)^2\\
&\lesssim  \Big(\lambda_\eps^2\sum_{|\ell|<\eps^{-1}}\frac{1}{|\ell|^{2\alpha}}\Big) \sum_{\ell,m}(|\ell|^2+|m|^2)^{\alpha}|\hat f_2(\ell,m)|^2\\
&=\Big(\lambda_\eps^2\sum_{\ell}\frac{1}{|\ell|^{2\alpha}}\Big)\|(-\gensy)^{\alpha/2} f_2\|^2
\end{equs}
for $\alpha>1$. The last norm is finite and it is not hard to see that the quantity in parenthesis is converging to $0$, 
thus implying that the $\eps\to0$ limit of~\eqref{e:rem4} is $0$. 

Collecting the results above together with~\eqref{e:Firstlim} and~\eqref{e:Secondlim},~\eqref{e:FINITA} 
follows at once so that the proof of the theorem is complete. }
\end{proof}

\begin{appendix}


\section{Some technical results}\label{a:Approx}

In this appendix, we show the technical part of the Replacement Lemma~\ref{l:Replacement}. 

\begin{proposition}\label{p:Approx}
 For $\eps>0$, $n\in\N$ and $k_{1:n}\in\Z^{2n}_0$, define  $P^\eps$ as in~\eqref{e:P1}, i.e. 
\begin{equ}[e:P]
P^\eps(k_{1:n})\eqdef\frac{\lambda_\eps^2}{\pi^2} \sum_{\ell + m=k_1} \frac{\indN{\ell,m}}{\tilde\Gamma + \tilde\Gamma^wG(\Ll(\tilde\Gamma))}
\end{equ}
for
\begin{equ}
\tilde\Gamma\eqdef\tfrac12(|\ell|^2+|m|^2+|k_{2:n}|^2)\,,\qquad\tilde\Gamma^w\eqdef \tfrac12((\fw\cdot\ell)^2+(\fw\cdot m)^2+(\fw\cdot k)^2_{2:n})
\end{equ}
and $G$ is as in \eqref{eq:defG}.
Then, there exists a constant $C>0$ independent of $\eps,n$ such that 
\begin{equ}[e:Approx]
\sup_{n\in \N,k_{1:n}\in\Z^{2n}_0}\Big|P^\eps(k_{1:n})-G\big(\Ll(\tfrac12|k_{1:n}|^2)\big)   \Big| \leq C \lambda_\eps^2\,.
\end{equ}
\end{proposition}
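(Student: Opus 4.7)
The goal is to recognize $P^\eps(k_{1:n})$ as a Riemann approximation of an integral that, after substitution, is precisely the right-hand side of the fixed-point identity~\eqref{e:IntegralG} evaluated at $\mathrm{L}^\eps(\tfrac12|k_{1:n}|^2)$. Set $\Gamma_0 := \tfrac12|k_{1:n}|^2$.

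First I would symmetrize the summand: the change of variable $y = \ell - k_1/2$ yields $\tilde\Gamma = |y|^2 + \Gamma_0'$ and $\tilde\Gamma^w = (\fw\cdot y)^2 + \Gamma_0^w$, with $\Gamma_0' := \tfrac14|k_1|^2 + \tfrac12|k_{2:n}|^2 \in [\Gamma_0/2,\Gamma_0]$ and the analogous $\Gamma_0^w := \tfrac14(\fw\cdot k_1)^2 + \tfrac12(\fw\cdot k)^2_{2:n}$. Since $\mathrm{L}^\eps$ is logarithmic and $\Gamma_0'/\Gamma_0$ is bounded, $|\mathrm{L}^\eps(\Gamma_0') - \mathrm{L}^\eps(\Gamma_0)| = O(\lambda_\eps^2)$ and by the Lipschitz property of $G$ on $[0,\mathrm{L}^\eps(1/2)]$ it suffices to prove the claim with $\Gamma_0$ replaced by $\Gamma_0'$. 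Then I split the sum at $|y|^2 \le \Gamma_0'$: the inner ball contains $O(\Gamma_0')$ lattice points each contributing at most $2/\Gamma_0'$, for a total contribution of $O(\lambda_\eps^2)$. In the outer region, the summand and its first derivatives decay like $|y|^{-2}$ and $|y|^{-3}$ respectively, so standard Euler--Maclaurin estimates let me replace the sum by the corresponding integral with error $O(\lambda_\eps^2)$, uniform in $k_{1:n}$.

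Next I would switch to polar coordinates centred at $y=0$, with angle $\theta$ measured from $\fw$, and perform the $\theta$-integral explicitly: setting $G_r := G(\mathrm{L}^\eps(r^2 + \Gamma_0'))$, $A(r) := r^2 + \Gamma_0' + \Gamma_0^w G_r$ and $B(r) := |\fw|^2 r^2 G_r$, one has $\int_0^{2\pi} d\theta/(A + B\cos^2\theta) = 2\pi/\sqrt{A(A+B)}$. In the outer region $r^2 \ge \Gamma_0'$ the approximations $A(r) = r^2(1+O(\Gamma_0'/r^2))$ and $A(r)+B(r) = r^2(1+|\fw|^2 G_r)(1+O(\Gamma_0'/r^2))$ introduce errors that integrate to $O(\lambda_\eps^2)$, leaving the leading term
\[
\frac{\lambda_\eps^2}{\pi}\int_{\Gamma_0'}^{\eps^{-2}} \frac{dt}{t\sqrt{1+|\fw|^2 G(\mathrm{L}^\eps(t))}}.
\]
Substituting $u = \mathrm{L}^\eps(t)$, under which $\lambda_\eps^2\, dt/t = -(1+\eps^2 t)\,du$, and invoking \eqref{e:IntegralG}, this integral equals $G(\mathrm{L}^\eps(\Gamma_0')) - G(\mathrm{L}^\eps(\eps^{-2}))$ up to an $O(\lambda_\eps^2)$ correction coming from the factor $(1+\eps^2 t)$. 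Since $\mathrm{L}^\eps(\eps^{-2}) = \lambda_\eps^2\log 2$ and $G(0)=0$ with $G$ smooth near the origin, the subtracted term is itself $O(\lambda_\eps^2)$, and the desired identification is complete.

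The main obstacle will be uniformity in $n$ and $k_{1:n}$: $\Gamma_0'$ can range from $O(1)$ up to $n\eps^{-2}$, and one has to check that every error term above is genuinely $O(\lambda_\eps^2)$ across this whole range. In particular, boundary effects near $|y|\sim \eps^{-1}$ due to the nontrivial geometry of the indicator $\indN{\ell,m}$ (which enforces $|\ell|,|m|,|\ell+m|\le\eps^{-1}$ simultaneously), together with the $(1+\eps^2 t)$ correction in the substitution, are most active when $t$ approaches $\eps^{-2}$ and must be controlled by exploiting that in that regime the summand itself is of size $O(\eps^2)$. Conversely, for $\Gamma_0'\gtrsim \eps^{-2}$ the claim is essentially trivial since both $P^\eps$ and $G(\mathrm{L}^\eps(\Gamma_0))$ are $O(\lambda_\eps^2)$.
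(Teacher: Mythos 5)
Your plan is correct and follows essentially the same route as the paper's proof: simplify the denominator up to $O(\lambda_\eps^2)$ errors, replace the Riemann sum by an integral, pass to polar coordinates, substitute $u=\Ll(t)$ (with the same $(1+\eps^2 t)$ Jacobian correction), and close via the fixed-point identity~\eqref{e:IntegralG}. The only differences are organizational — you centre the sum at $k_1/2$ so that $\tilde\Gamma$ becomes exactly radial and evaluate the angular integral in closed form as $2\pi/\sqrt{A(A+B)}$ early on, whereas the paper bounds the cross term $\ell\cdot k_1$ directly and performs the $\theta$-integral only at the very end — and you have correctly flagged the genuine delicate points (uniformity over the range of $\Gamma_0'$, the lens-shaped support of $\indN{\ell,m}$, and the $t\sim\eps^{-2}$ boundary).
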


\begin{proof}
  Note first of all that we can assume that $|k_1|\le 1/(2\eps)$. Indeed,
  in the complementary case both $P^\eps(k_{1:n})$ and 
  $G(\Ll(\frac12|k_{1:n}|^2))$ would be $o(\lambda_\eps^2)$ (uniformly in
 $n$ and  $k_{1:n}$). For $G(\Ll(\frac12|k_{1:n}|^2))$, this easily follows from the definition of $G$ and $\Ll$. 
 As for $P^\eps(k_{1:n})$, if  $|k_1|\ge 1/(2\eps)$, then either $|\ell|$ or $|m|$ are larger than $1/(4\eps)$. 
 Assume, for instance, that the former is the case. Then,
  \begin{equ}
    \label{eq:isthecase}
P^\eps(k_{1:n})\lesssim \lambda_\eps^2\sum_{\frac1{4\eps}\le |\ell|\le\frac1\eps}\frac1{|\ell|^2}\lesssim \lambda_\eps^2.
  \end{equ}
  For  $|k_1|\le 1/(2\eps)$,  we  rewrite \eqref{e:P} as
  \begin{equ}
    \label{eq:PP}
    P^\eps(k_{1:n})\eqdef\frac{\lambda_\eps^2}{\pi^2} \sum_{\substack{\ell + m=k_1\\ |\ell|,|m|\le \eps^{-1}}} \frac{1}{\tilde\Gamma + \tilde\Gamma^wG(\Ll(\tilde\Gamma))}
  \end{equ}
Note  that, when $\ell+m=k_1$,
\begin{equs}
  \tilde\Gamma&=\Gamma-\ell\cdot k_1,
  \qquad\qquad\qquad \Gamma\eqdef
  |\ell|^2+\frac12|k_{1:n}|^2,\\ 
  \tilde \Gamma^w&=\Gamma^w-(\fw\cdot k_1)(\fw\cdot \ell), \qquad\Gamma^w\eqdef
  (\fw\cdot \ell)^2+\frac12(\fw\cdot k)^2_{1:n}.
\end{equs}
For future reference, we point out that $\Gamma$ and $\tilde\Gamma$ are comparable in that, 
by the triangular inequality, we have
\begin{equ}[e:lm]
\Gamma\lesssim \tilde\Gamma\lesssim \Gamma\,
\end{equ}
uniformly in $\ell,m=k_1-\ell$ and  $k_{1:n}\in\Z^{2n}_0$.
\medskip

The proof of Proposition~\ref{p:Approx} follows from a sequence of technical steps in which the summand
in the definition of $P^\eps$ is progressively simplified and 
the Riemann sum is replaced by an integral. More precisely, we will first replace $P_0^\eps\eqdef P^\eps$ 
with $P^\eps_1$ and then $P^\eps_i$ with $P^\eps_{i+1}$, $i=1,\dots, 4$, in such a way that 
for all $i=0,\dots,5$, 
\begin{equs}
  \label{eq:pallosissimi}
  \sup_{n\in \N,k_{1:n}\in\Z^{2n}_0}\Big|P^\eps_i(k_{1:n})-P_{i+1}^\eps(k_{1:n})  \Big| \lesssim \lambda_\eps^2\,. 
\end{equs}
At last, we will compute $P^\eps_5$ and show that it equals $G(\Ll)$. 
\medskip

\noindent {\bf Step 1} First of all, we define $P_1^\eps(k_{1:n})$ as $P^{\eps}(k_{1:n})$ in \eqref{eq:PP}, 
except that $\tilde\Gamma$ and $\tilde\Gamma^w$ are replaced by $\Gamma$ and $\Gamma^w$, respectively. 
The absolute value of the difference is upper bounded
by a constant times
\begin{equ}
   \lambda_\eps^2\sum_{\substack{\ell+m=k_1\\|\ell|,|m|\le 1/\eps}}\frac{A+B}
{(|\ell|^2+|k_1|^2)^2}
\end{equ}
where in the denominator we used $\Gamma\wedge \tilde \Gamma\gtrsim |\ell|^2+|k_1|^2$ and 
$\Gamma^w\wedge\tilde \Gamma^w\geq 0$, while in the numerator $A$ and $B$ are defined as
\begin{equs}
A&\eqdef  |\ell\cdot k_1+(\fw\cdot \ell)(\fw\cdot k_1)G(\Ll(\tilde\Gamma))|,\\
B&\eqdef |G(\Ll(\tilde\Gamma))-G(\Ll(\Gamma)) |\Big(|\ell|^2+\frac12|k_{1:n}|^2\Big)\,.
\end{equs}
Now, $A\lesssim |\ell||k_1|$ so that
\begin{equs}
\lambda_\eps^2\sum_{\substack{\ell+m=k_1\\|\ell|,|m|\le 1/\eps}}\frac A{(|\ell|^2+|k_1|^2)^2} \lesssim \lambda_\eps^2,
\end{equs} 
as can be immediately seen by splitting the sum according to  whether $|\ell|\le |k_1|$ or the converse.
The term containing $B$ is dealt with similarly. Indeed, since $|\Gamma-\tilde\Gamma|\le |\ell|| k_1|$,  one has 
\begin{equs}
  |G(\Ll(\tilde\Gamma))-G(\Ll(\Gamma)) |\lesssim |\ell||k_1|\sup_{y\in [\Gamma,\tilde \Gamma]}\left|\frac1{|\log\eps|}\frac{G'(\Ll(y))
    }{y(\eps y+1)}
  \right|\lesssim \frac{ |\ell||k_1|}{|\ell|^2+\frac12|k_{1:n}|^2}
\end{equs}
where we further used that the derivative of $G$ is bounded and \eqref{e:lm} holds. 
Hence, $B\lesssim |\ell||k_1|$ and one concludes as above. Inequality \eqref{eq:pallosissimi} for $i=0$ is then proven.
\medskip

\noindent {\bf Step 2}
Secondly, we define $P_2^\eps(k_{1:n})$ as $P_1^{\eps}(k_{1:n})$, except that $\Gamma^w$ is replaced by 
$(\fw\cdot \ell)^2$, that is,
\begin{equ}
  P_2^\eps(k_{1:n})\eqdef \frac{\lambda_\eps^2}{\pi^2}\sum_{\substack{\ell+m=k_1\\|\ell|,|m|\le \eps^{-1}}}\frac{1}{\Gamma+(\fw\cdot \ell)^2G(\Ll(\Gamma))}\,.
\end{equ}
The difference between $P^\eps_1$ and $P^\eps_2$ is bounded by a constant times
\begin{equs}
 \lambda_\eps^2\sum_{\substack{\ell+m=k_1\\|\ell|,|m|\le \eps^{-1}}}\frac{(\fw\cdot k)_{1:n}^2}{(|\ell|^2+|k_{1:n}|^2)^2}
\lesssim \lambda_\eps^2\sum_{\substack{\ell+m=k_1\\|\ell|,|m|\le \eps^{-1}}}\frac{|k_{1:n}|^2}{(|\ell|^2+|k_{1:n}|^2)^2}\lesssim \lambda_\eps^2\,,
\end{equs}
where we used that $G(\Ll(\Gamma))\lesssim 1$. 
\medskip

\noindent {\bf Step 3}
As a third step, we  replace $P_2^\eps(k_{1:n})$ with
\begin{equ}
  P_3^\eps(k_{1:n})\eqdef \frac{\lambda_\eps^2}{\pi^2}\sum_{|\ell|\le \eps^{-1}}\frac{1}{\Gamma+(\fw\cdot \ell)^2G(\Ll(\Gamma))}.
\end{equ}
With respect to $P^\eps_2$, we have only removed the indicator function of $|m|=|k_1-\ell|\leq \eps^{-1}$. 
Hence, the difference between $P_2^\eps$ and $P_3^\eps$ only involves the sum over those values of 
$\ell$ such that $|k_1-\ell|\ge 1/\eps$ while $|\ell|\le 1/\eps$. 
But, since $|k_1|\le 1/(2\eps)$, this corresponds to $1/(2\eps)\le |\ell|\le 1/\eps$ and
the resulting sum can be bounded as in \eqref{eq:isthecase}.
\medskip

\noindent {\bf Step 4}  Next, we turn the sum into an integral. Setting
$\alpha_\eps\eqdef \frac12|\eps k_{1:n}|^2$, we define $P_4^\eps$ as
\begin{equ}[e:P4]
  P_4^\eps(k_{1:n})\eqdef \frac{\lambda_\eps^2}{\pi^2}\int_{|x|\le 1}\frac{\dd x}{|x|^2+\alpha_\eps+(x\cdot \fw)^2G(\Ll(\eps^{-2}(|x|^2+\alpha_\eps)))}\,.
\end{equ}
To obtain~\eqref{eq:pallosissimi} for $i=3$, we argue as in~\cite[Lemma C.7]{CET}. 
Denoting by $I$ the integrand in~\eqref{e:P4} and by $Q_\ell^\eps$ the square of side-length $\eps$ centred at 
$\ell\in\Z^2_0$,
the difference between $P_3^\eps$ and $P_4^\eps$ is bounded above by
\begin{equs}
\lambda_\eps^2\sum_{1\leq |\ell|\leq \eps^{-1}}\int_{Q_\ell^\eps} |I(\eps\ell)-I(x)|\dd x&\leq \lambda_\eps^2\eps\sum_{1\leq |\ell|\leq \eps^{-1}}\int_{Q_\ell^\eps} \sup_{y\in Q^\eps_\ell} |\nabla I(y)|\dd x\\
&\leq\lambda_\eps^2\eps^3\sum_{1\leq |\ell|\leq \eps^{-1}}\sup_{y\in Q^\eps_\ell} |\nabla I(y)|\,.
\end{equs} 
Now, a simple computation yields  
\begin{equ}
\sup_{y\in Q^\eps_\ell} |\nabla I(y)|\lesssim \lambda_\eps^2\frac{1}{(|\eps\ell|^2 +\alpha_\eps)^{3/2}}
\end{equ}
so that~\eqref{eq:pallosissimi} follows at once. 
\medskip 

Before moving to the next step, note that, by passing to polar coordinates, 
i.e. setting $x=r v_\theta$ for $v_\theta=(\cos\theta,\sin\theta)$, we have 
\begin{equs}
P_4^\eps(k_{1:n})&= \frac{\lambda_\eps^2}{\pi^2}\int_0^{2\pi}\dd \theta\int_{0}^1\frac{r\dd r}{r^2+\alpha_\eps+r^2(\fw\cdot v_\theta)^2G(\Ll(\eps^{-2}(r^2+\alpha_\eps)))}\\
&=\frac{\lambda_\eps^2}{2\pi^2}\int_0^{2\pi}\dd \theta\int_{0}^1\frac{\dd r}{r+\alpha_\eps+r(\fw\cdot v_\theta)^2G(\Ll(\eps^{-2}(r+\alpha_\eps)))}\\
&=\frac{\lambda_\eps^2}{2\pi^2}\int_0^{2\pi}\dd \theta\int_{0}^1\frac{\dd r}{r+\alpha_\eps+r|\fw|^2 \cos^2(\theta-\theta_w)G(\Ll(\eps^{-2}(r+\alpha_\eps)))}\\
&=\frac{\lambda_\eps^2}{\pi^2}\int_{0}^1\dd r\int_0^{\pi}\frac{\dd \theta}{r+\alpha_\eps+r|\fw|^2 \cos^2(\theta)G(\Ll(\eps^{-2}(r+\alpha_\eps)))}
\end{equs}
where we set $w=|w|v_{\theta_w}$ and used periodicity of the integrand in $\theta$. 
\medskip

\noindent{\bf Step 5} In this step, we insert two elements at the denominator of the integrand of $P_4^\eps$, 
that will allow us to make a simple change of variables. 
More precisely, we replace $P_4^\eps$ with $P_5^\eps$, the latter being defined as  
\begin{equ}
P_5^\eps(k_{1:n})\eqdef \frac{\lambda_\eps^2}{\pi^2}\int_0^{\pi}\int_{0}^1\frac{\dd \theta\dd r}{(r+\alpha_\eps)(r+\alpha_\eps+1)[1+|\fw|^2 \cos^2(\theta)G(\Ll(\eps^{-2}(r+\alpha_\eps)))]}\,.
\end{equ}
Now,~\eqref{eq:pallosissimi} for $i=4$ 
can be easily seen to hold by arguing as in the proof of~\cite[Eq. (A.9)]{CETWeak}.
\medskip

At this point, it remains to analyse the last integral. With the change of variables
\begin{equs}
y= \Ll(\eps^{-2}(r+\alpha_\eps))&=\lambda_\eps^2\log\Big(1+\frac1{r+\alpha_\eps}\Big), \\
\frac{\dd r}{(r+\alpha_\eps)(1+r+\alpha_\eps)}&=- \lambda_\eps^{-2} \dd y,
\end{equs}
one finds
\begin{equs}
P_5^\eps(k_{1:n})=\frac{1}{\pi^2}\int_0^{\pi}\dd \theta \int_{\Ll(\eps^{-2}+\frac12|k_{1:n}|^2)}^{\Ll(\frac12|k_{1:n}|^2)}\frac{\dd y}{1+|\fw|^2 \cos^2(\theta)G(y)}
\end{equs}
It is then immediate to verify that, again at the price of an error of order $\lambda_\eps^2$, we can replace the lower 
integration index by $0$, and obtain
\begin{equs}[e:G?]
\frac{1}{\pi^2}\int_{0}^{\Ll(\frac12|k_{1:n}|^2)}&\dd y\int_0^{\pi} \frac{\dd \theta}{1+|\fw|^2 \cos^2(\theta)G(y)}\\
&=\frac{1}{\pi}\int_{0}^{\Ll(\frac12|k_{1:n}|^2)}\frac{\dd y}{\sqrt{1+|\fw|^2G(y)}}=G(\Ll(\frac12|k_{1:n}|^2))
\end{equs}
where the last step is a consequence of our choice of $G$ (actually, {\it we have chosen $G$ in such a way that 
it holds}). Hence,~\eqref{e:Approx} follows at once and 
the proof of the proposition is concluded. 
\end{proof}


\end{appendix}

%
%
%
%
%

\section*{Acknowledgments} 
The authors would like to thank L. Haunschmid-Sibitz for many
discussions in an early stage of this project, C.~Landim for
explaining some of the ideas in~\cite{Komorowski2012}, and R.~Bauerschmidt for useful discussions about RG.  A
special thank goes to L.~Gr\"afner, who presented us the strategy
in~\cite{GrPer} which allowed us to perform the steps
in~\eqref{e:LimScalarProd} and lead to the proof of the main theorem in the
super-critical case, and to H. Giles, who pointed out a mistake in an earlier version of the manuscript. 
G.~C. gratefully acknowledges financial support
via the UKRI FL fellowship ``Large-scale universal behaviour of Random
Interfaces and Stochastic Operators'' MR/W008246/1.  F.~T.  was
supported by the Austrian Science Fund (FWF): P35428-N.

\bibliography{bibtex}
\bibliographystyle{Martin}

\end{document}